\definecolor{mycolor}{rgb}{0.122, 0.435, 0.698}
\DeclareOldFontCommand{\rm}{\normalfont\rmfamily}{\mathrm}
\DeclareOldFontCommand{\sf}{\normalfont\sffamily}{\mathsf}
\DeclareOldFontCommand{\tt}{\normalfont\ttfamily}{\mathtt}
\DeclareOldFontCommand{\bf}{\normalfont\bfseries}{\mathbf}
\DeclareOldFontCommand{\it}{\normalfont\itshape}{\mathit}
\DeclareOldFontCommand{\sl}{\normalfont\slshape}{\@nomath\sl}
\DeclareOldFontCommand{\sc}{\normalfont\scshape}{\@nomath\sc}
\setlist{nolistsep} %
\newlist{asslist}{enumerate}{1} 
\setlist[asslist]{label=(\roman*), ref=\thethmT(\roman*)}
\newlist{asslisttief}{enumerate}{1} 
\setlist[asslisttief]{label=(\roman*), ref=\thethmlistT(\roman*)}
\newlist{thmlist}{enumerate}{1} 
\setlist[thmlist]{label=(\alph*), ref=\thethmT(\alph*)}
\definecolor{ocre_old}{RGB}{243,102,25} 
\definecolor{ocre}{rgb}{0.122, 0.435, 0.698}
\newtheoremstyle{ocrenumbox}
{0pt}
{0pt}
{\sl}
{}
{\small\bf\sffamily\color{ocre}}
{\;}
{0.25em}
{\small\sffamily\color{ocre}\thmname{#1}\nobreakspace\thmnumber{\@ifnotempty{#1}{}\@upn{#2}}
	\thmnote{\nobreakspace\the\thm@notefont\sffamily\bfseries\color{black}---\nobreakspace#3.}} 
\newtheoremstyle{ocrenumhypbox}
{0pt}
{0pt}
{}
{}
{\small\bf\sffamily\color{ocre}}
{\;}
{0.25em}
{\small\sffamily\color{ocre}\thmname{#1}\nobreakspace\thmnumber{\@ifnotempty{#1}{}\@upn{#2}}
	\thmnote{\nobreakspace\the\thm@notefont\sffamily\bfseries\color{black}---\nobreakspace#3.}} 
\newtheoremstyle{blacknumex}
{5pt}
{5pt}
{\sl}
{} 
{\small\bf\sffamily}
{\;}
{0.25em}
{\small\sffamily{\tiny\ensuremath{\blacksquare}}\nobreakspace\thmname{#1}\nobreakspace\thmnumber{\@ifnotempty{#1}{}\@upn{#2}}
	\thmnote{\nobreakspace\the\thm@notefont\sffamily\bfseries---\nobreakspace#3.}}
\newtheoremstyle{blacknumbox} 
{0pt}
{0pt}
{\normalfont}
{}
{\small\bf\sffamily}
{\;}
{0.25em}
{\small\sffamily\thmname{#1}\nobreakspace\thmnumber{\@ifnotempty{#1}{}\@upn{#2}}
	\thmnote{\nobreakspace\the\thm@notefont\sffamily\bfseries---\nobreakspace#3.}}
\newtheoremstyle{ocrenum}
{5pt}
{5pt}
{\sl}
{}
{\small\bf\sffamily\color{ocre}}
{\;}
{0.25em}
{\small\sffamily\color{ocre}\thmname{#1}\nobreakspace\thmnumber{\@ifnotempty{#1}{}\@upn{#2}}
	\thmnote{\nobreakspace\the\thm@notefont\sffamily\bfseries\color{black}---\nobreakspace#3.}} 
\theoremstyle{ocrenumbox}
\newtheorem{thmT}{Theorem}[section]
\newtheorem{theoT}{Theorem}
\newtheorem{theoremeT}[thmT]{Theorem}
\newtheorem{lemT}[thmT]{Lemma}
\newtheorem{corT}[theoT]{Corollary}
\theoremstyle{ocrenumhypbox}
\newtheorem{hypT}[thmT]{Hypothesis}
\theoremstyle{blacknumex}
\theoremstyle{blacknumbox}
\newtheorem{definitionT}[thmT]{Definition}
\newtheorem{notationT}[thmT]{Notation}
\newtheorem{conditionT}[thmT]{Condition}
\theoremstyle{ocrenum}
\newtheorem{proposition}[thmT]{Proposition}
\newtheorem{propT}[thmT]{Proposition}
\newtheorem{corollaryT}[thmT]{Corollary}
\newmdenv[skipabove=7pt,
skipbelow=7pt,
backgroundcolor=black!5,
linecolor=ocre,
innerleftmargin=5pt,
innerrightmargin=5pt,
innertopmargin=5pt,
leftmargin=0cm,
rightmargin=0cm,
innerbottommargin=5pt]{tBox}
\newmdenv[skipabove=7pt,
skipbelow=7pt,
rightline=false,
leftline=true,
topline=false,
bottomline=false,
backgroundcolor=ocre!10,
linecolor=ocre,
innerleftmargin=5pt,
innerrightmargin=5pt,
innertopmargin=5pt,
innerbottommargin=5pt,
leftmargin=0cm,
rightmargin=0cm,
linewidth=4pt]{eBox}	
\newmdenv[skipabove=7pt,
skipbelow=7pt,
rightline=false,
leftline=true,
topline=false,
bottomline=false,
linecolor=ocre,
innerleftmargin=5pt,
innerrightmargin=5pt,
innertopmargin=0pt,
leftmargin=0cm,
rightmargin=0cm,
linewidth=4pt,
innerbottommargin=0pt]{dBox}	
\newmdenv[skipabove=7pt,
skipbelow=7pt,
rightline=false,
leftline=true,
topline=false,
bottomline=false,
linecolor=gray,
backgroundcolor=black!5,
innerleftmargin=5pt,
innerrightmargin=5pt,
innertopmargin=5pt,
leftmargin=0cm,
rightmargin=0cm,
linewidth=4pt,
innerbottommargin=5pt]{cBox}
\newenvironment{theorem}{\begin{tBox}\begin{theoremeT}}{\end{theoremeT}\end{tBox}}
\newenvironment{theo}{\begin{tBox}\begin{theoT}}{\end{theoT}\end{tBox}}
\newenvironment{coralph}{\begin{tBox}\begin{corT}}{\end{corT}\end{tBox}}
\newenvironment{defi}{\begin{dBox}\begin{definitionT}}{\end{definitionT}\end{dBox}}	
\newenvironment{notation}{\begin{dBox}\begin{notationT}}{\end{notationT}\end{dBox}}	
\newenvironment{condition}{\begin{dBox}\begin{conditionT}}{\end{conditionT}\end{dBox}}	
\newenvironment{lem}{\begin{dBox}\begin{lemT}}{\end{lemT}\end{dBox}}	
\newenvironment{prop}{\begin{dBox}\begin{propT}}{\end{propT}\end{dBox}}
\newenvironment{corollary}{\begin{dBox}\begin{corollaryT}}{\end{corollaryT}\end{dBox}}	
\newenvironment{cor}{\begin{dBox}\begin{corollaryT}}{\end{corollaryT}\end{dBox}}	
\newenvironment{remark}{\par\vspace{10pt}\small 
	\begin{list}{}{
			\leftmargin=35pt 
			\rightmargin=25pt}\item\ignorespaces 
		\makebox[-2.5pt]{\begin{tikzpicture}[overlay]
				\node[draw=ocre!60,line width=1pt,circle,fill=ocre!25,font=\sffamily\bfseries,inner sep=2pt,outer sep=0pt] at (-15pt,0pt){\textcolor{ocre}{R}};\end{tikzpicture}} 
		\advance\baselineskip -1pt}{\end{list}\vskip5pt} 
\renewcommand{\@seccntformat}[1]{\llap{\textcolor{ocre}{\csname the#1\endcsname}\hspace{1em}}} 
\renewcommand{\section}{\@startsection{section}{1}{\z@}
	{-4ex \@plus -1ex \@minus -.4ex}
	{1ex \@plus.2ex }
	{\normalfont\large \bf \color{ocre}}}
\renewcommand{\subsection}{\@startsection {subsection}{2}{\z@}
	{-3ex \@plus -0.1ex \@minus -.4ex}
	{0.5ex \@plus.2ex }
	{\normalfont\large\bf\color{ocre} }}
\newcommand{\EGF}{{E(\GF)}}
\def\author#1{\gdef\autrun{\def\and{\unskip, }#1}\gdef\@author{#1}}
\newcommand{\tw}[1]{{}^#1\!}
\theoremstyle{definition}
\newtheorem{rem}[thmT]{Remark}
\theoremstyle{plain}
\theoremstyle{definition}
\numberwithin{equation}{section}
\numberwithin{table}{section}
\def\norm#1#2{{\operatorname N}_{#1}(#2)}
\newcommand{\Id}{\operatorname {Id}}
\newcommand{\wt}{\widetilde}
\newcommand{\wc}{\widecheck}
\newcommand{\wh}{\widehat }
\newcommand{\bN}{{\mathbf N}}
\newcommand{\Levi}{{\mathbf L}}
\newcommand{\wG}{{\widetilde G}}
\newcommand{\bC}{{\mathbf C}}
\newcommand{\la}{\ensuremath{\lambda}}
\newcommand{\bT}{{\mathbf T}}
\newcommand{\bB}{{\mathbf B}}
\newcommand{\bP}{{\mathbf P}}
\newcommand{\bG}{{{\mathbf G}}}
\newcommand{\bH}{{{\mathbf H}}}
\newcommand{\bL}{{{\mathbf L}}}
\newcommand{\bM}{{\mathbf M}}
\newcommand{\bS}{{\mathbf S}}
\newcommand{\HF}{{{\bH}^F}}
\newcommand{\wbG}{\wt{\mathbf G}}
\newcommand{\Irr}{\mathrm{Irr}}
\newcommand{\irr}{\mathrm{Irr}}
\newcommand\lie\mathfrak
\newcommand{\lcm}{\mathrm{lcm}}
\newcommand{\Irrl}{\mathrm{Irr_{\ell'}}}
\newcommand{\Gal}{\mathrm{Gal}}
\newcommand{\Res}{\operatorname{Res}}
\newcommand{\Ind}{\operatorname{Ind}}
\newcommand{\C}{\mathrm{C}}
\newcommand{\N}{\mathrm{N}}
\newcommand{\Lang}{\mathscr{L}}
\newcommand{\SL}{\operatorname{SL}}
\newcommand{\SU}{\operatorname{SU}}
\newcommand{\GU}{\operatorname{GU}}
\newcommand{\GL}{\operatorname{GL}}
\newcommand{\PGL}{\operatorname{PGL}}
\newcommand{\ZZ}{\ensuremath{\mathbb{Z}}}
\newcommand{\CC}{\ensuremath{\mathbb{C}}}
\newcommand{\ov}{\overline }
\newcommand{\Cent}{\ensuremath{{\rm{C}}}}
\newcommand{\NNN}{\ensuremath{{\mathrm{N}}}}
\newcommand{\Sym}{{\mathcal{S}}}
\def\restr#1|#2{\left.#1\right\rceil_{#2}}
\def\II#1@#2{\index{#1@$#2$}{{\color{ocre}#2}}}
\newcommand{\E}{\ensuremath{\operatornams{E}}}
\newcommand{\QQ}{\ensuremath{\mathbb{Q}}}
\newcommand{\tD}{\ensuremath{\mathrm{D}}}
\newcommand{\Cy}{\mathrm C}
\newcommand{\tC}{\mathrm C}
\newcommand{\tB}{\mathrm B}
\newcommand{\cE}{\mathcal E}
\newcommand{\cH}{\mathcal H}
\newcommand{\galh}{\mathcal H}
\newcommand{\galhl}{{{\mathcal H}_{\ell}}}
\newcommand{\cX}{\mathcal X}
\newcommand{\cV}{\mathcal V}
\newcommand{\rC}{\mathrm C}
\newcommand{\rZ}{\mathrm Z}
\newcommand{\gal}{\mathcal G}
\newcommand{\calC}{\mathcal C}
\newcommand{\calG}{\ensuremath{\mathcal G}}
\newcommand{\calH}{\ensuremath{\mathcal H}}
\newcommand{\al}{{\alpha}}
\newcommand{\UCh}{\operatorname{UCh}}
\newcommand{\FF}{{\mathbb{F}}}
\newcommand{\GG}{{\mathbb{G}}}
\newcommand{\si}{\ensuremath{\sigma}}
\newcommand{\GF}{{{\bG^F}}}
\newcommand{\wGF}{{{{\wbG}^F}}}
\newcommand{\subG}{\mathfrak{G}}
\newcommand{\subM}{\mathfrak{M}}
\def\Set#1{\Set@h#1@}
\def\Lset#1{\Lset@h#1@}
\def\Set@h#1|#2@{\left\{\left.#1\vphantom{#2}\hskip.1em\,\right\mid \,\relax #2\right\}}
\def\Lset@h#1@{\left\{#1\right\}}
\def\CALC#1{\CALC@h#1@}
\def\CALC@h#1|#2@{\calC^{#1}(#2)}
\def\CALCrad#1{\CALCrad@h#1@}
\def\CALCrad@h#1|#2@{\calC_\radic^{#1}(#2)}
\def\CALCNC#1{\CALCNC@h#1@}
\def\CALCNC@h#1|#2@{\calC_{\radic,nc}^{#1}(#2)}
\def\restr#1|#2{\left.#1\right\rceil_{#2}}
\def\spann<#1>{\left\langle#1\right\rangle}
\def\spa#1{\left\langle#1\right\rangle}
\def\Spann<#1>{\Spann@h#1@}
\def\Spann@h#1|#2@{\left\langle\left.#1\vphantom{#2}\hskip.1em\right.\mid\relax #2 \right\rangle}
\def\Set#1{\Set@h#1@}
\def\Set@h#1|#2@{\left\{\left.#1\vphantom{#2}\hskip.1em\,\right.
	\mid\relax #2\right\}}
\def\set#1{\set@h#1@}
\def\set@h#1@{\left\{#1\right\}}
\def\spann<#1>{\left\langle#1\right\rangle}
\newcommand{\Aut}{\mathrm{Aut}}
\newcommand{\Z}{\operatorname Z}
\newcommand{\Sp}{\operatorname{Sp}}
\newcommand{\calP}{\mathcal P}
\newcommand{\forevery}{{\text{\quad\quad for every }}}
\newcommand{\und}{{\text{ and }}}
\newcommand{\G}{{{{G}}}}
\renewcommand{\E}{\operatorname{E}}
\newcommand{\lra}{\longrightarrow}
\newcommand{\tE}{\mathrm E}
\newcommand{\tA}{\mathrm A}
\newcommand{\tF}{\mathrm F}
\newcommand{\tG}{\mathrm G}
\newcommand{\cC}{\mathcal C}
\newcommand{\Norm}{\operatorname{N}}
\newcommand{\ab}{\mathrm{ab}}
\DeclareMathOperator{\Lin}{Lin}
\newcommand{\wrt}{{with respect to\ }}
\title{Towards the inductive McKay--Navarro Condition for 
groups of Lie type 
}
\date{\today}
\author{Lucas Ruhstorfer, A. A. Schaeffer Fry,  Britta Sp{\"a}th, and Jay Taylor
		\thanks{The authors gratefully acknowledge support from a SQuaRE at the American Institute of Mathematics. The second-named author  gratefully acknowledges support from the National Science Foundation, Award No. DMS-2100912. 
         Some of the research of the third- and fourth-named authors was conducted in the framework of the research training group \textit{GRK 2240:
        Algebro-Geometric Methods in Algebra, Arithmetic and Topology}, funded by the DFG.}}
\newcommand{\Addresses}{{
  \bigskip
  \footnotesize

L.~Ruhstorfer, \textsc{School of Mathematics and Natural Sciences, University of Wuppertal, Gaußstr. 20,
		42119 Wuppertal, Germany}
\par\nopagebreak
  \textit{E-mail}: \texttt{ruhstorfer@uni-wuppertal.de}

\medskip
  A.~A.~Schaeffer Fry, \textsc{Department of Mathematics, University of Denver, Denver, CO 80210, USA}\par\nopagebreak
  \textit{E-mail}: \texttt{mandi.schaefferfry@du.edu}

  \medskip

  B.~Sp{\"a}th, \textsc{School of Mathematics and Natural Sciences, University of Wuppertal, Gaußstr. 20,
		42119 Wuppertal, Germany} \par\nopagebreak
  \textit{E-mail}: \texttt{bspaeth@uni-wuppertal.de}

  \medskip
  
J.~Taylor, \textsc{Department of Mathematics, The University of Manchester, Oxford Road, Manchester, M13 9PL, UK}
\par\nopagebreak
  \textit{E-mail}: \texttt{jay.taylor@manchester.ac.uk}

}}
\begin{document} 
	\maketitle
	\abstract{We gather tools for proving the inductive McKay--Navarro (or Galois--McKay) condition for groups of Lie type and odd primes. We use this to establish a bijection in the case of quasisimple groups of Lie type $\tA$ satisfying the equivariance properties needed for the condition. We also prove the inductive conditions for the subset of unipotent characters.}

\tableofcontents

\section{Introduction}
Since the time of Brauer, the study of the representation theory of a finite group through its relationship to local data has been a leading objective in the theory of finite groups.  
One of the most influential conjectures recently in this realm has been the McKay conjecture, along with its refinements. 
The McKay conjecture (recently announced as a theorem due to the third author and M. Cabanes \cite{CS24}) says that for a prime $\ell$ dividing the order of a finite group $G$, the number of irreducible characters of $\ell'$-degree, $\irr_{\ell'}(G)$, should be the same as that of $\Norm_G(P)$ for $P$ a Sylow $\ell$-subgroup of $G$. 

In this paper, we are concerned with the McKay--Navarro conjecture (also known as the Galois--McKay conjecture, \cite[Conjecture A]{nav04}), which suggests that a bijection between $\irr_{\ell'}(G)$
 and $\irr_{\ell'}(\Norm_G(P))$ can be further chosen to be equivariant with respect to certain Galois automorphisms 
 $\cH_\ell\leq \mathrm{Gal}(\QQ^{\mathrm{ab}}/\QQ)$. 
(See Definition \ref{defH} below.) Here, $\QQ^{\mathrm{ab}}$ denotes the extension of $\QQ$ generated by all roots of unity in $\CC$.

The McKay--Navarro conjecture was reduced by Navarro--Sp{\"a}th--Vallejo in \cite{NSV20} to the so-called ``inductive Galois--McKay" conditions for quasisimple groups. We will refer to these as the inductive McKay--Navarro (iMN) conditions. These iMN conditions build upon the inductive McKay conditions of Isaacs--Malle--Navarro in \cite{IMN}, adding $\galh_\ell$-compatibility. The first-named author
 completed these conditions when $G$ is a group of Lie type in defining characteristic in  \cite{R21}, and he and the second-named author completed the conditions when $\ell=2$ in \cite{SF22, RSF22, RSF22b}. 
 
The goal of this paper is to lay some foundations for dealing with the remaining cases -  namely, the case of groups of Lie type defined in characteristic $p\neq \ell$ when $\ell$ is odd.
We establish several results in this direction, which will be useful in the pursuit of the iMN conditions in this situation.
In particular, we introduce criterion in Section \ref{sec:criterion}  that will be useful for proving that groups of Lie type satisfy the iMN conditions - see Theorem \ref{crit_iGalMcK} and Corollary \ref{12vib} below.

 The iMN conditions can be roughly described as an ``equivariance" condition and an ``extension" condition.  In Sections \ref{sec:equivbij}--\ref{sec:transversallocal}, we primarily work with the  equivariance condition. Namely, this is the first part of \cite[Definition 3.1]{NSV20}, which we state here:

\begin{condition}\label{cond:equivcondition}
Let $\ell$ be a prime.
Let $G$ be a finite quasisimple group with $\ell\mid|G|$ and let $P\in\mathrm{Syl}_\ell(G)$.  
Then there is a proper $\Aut(G)_P$-stable subgroup $M$ of $G$ with $\Norm_G(P)\leqslant M$ and an 
$\Aut(G)_P\times\galh_\ell$-equivariant bijection 
$\irr_{\ell'}(G)\rightarrow\irr_{\ell'}(M)$ such that corresponding characters lie over the same character of $\Z(G)$.
\end{condition}

In Section \ref{sec:criterion}, we prove the following.

\begin{theo}\label{thm:critintro}
Let $G$ be the universal covering group of a nonabelian simple group and let $\ell\mid |G|$ be an odd prime. Assume that Conditions (i)--(vi) of Theorem \ref{crit_iGalMcK}  hold for $\galh=\galh_\ell$, $\subG=\Irr_{\ell'}(G)$ and $\subM=\Irr_{\ell'}(M)$, where $M\lneq G$ is a subgroup containing $\norm{G}{P}$ for $P$ a Sylow $\ell$-subgroup of $G$. Then Condition \ref{cond:equivcondition} holds for $G$ and $M$. 

If (vii) of Corollary \ref{12vib} additionally holds, then the inductive McKay--Navarro conditions \cite[Def.~3.1]{NSV20} hold for $G$ and the prime $\ell$.
\end{theo}

Our next main result is that Condition \ref{cond:equivcondition} holds for a group of Lie type $G$ defined in characteristic $p\neq \ell$ if certain local extension conditions hold. 
 
\begin{theo}\label{thm:thmA}
Assume $(\bG, F)$ is a finite reductive group such that $\bG$ is of simply connected type, and fix a Sylow $d$-torus $\bS$ of $(\bG,F)$, where $d=d_\ell(q)$, and a regular embedding $\bG\hookrightarrow\wt\bG$.

Denote by $\wt N:=\Norm_\wGF(\bS)$ and $N:=\wt N\cap \GF=\Norm_\GF(\bS)$ the normalizers of $\bS$ and set
\begin{align*}
\mathfrak{C}:= \{ \chi\in  \Irr(\Cent_\wGF(\bS))\mid  \Irr(N\mid \restr \chi|{\Cent_\GF(\bS)}) \cap \Irrl(N)\neq \emptyset \}.
\end{align*} 
Assume there exists some $\galh\ltimes (\Irr(\wt N/N)\rtimes  (\GF\EGF)_\bS
	)$-equivariant extension map $\wt \Lambda$ for $\mathfrak{C}$ with respect to $\Cent_\wGF(\bS)\lhd \wt{N}$, where $\galh\leq\galh_\ell$ is a subgroup of Galois automorphisms and $E(\bG^F)$ is the group of field and graph automorphisms as in Section~\ref{sec:bijpfthmA}.

Then there exists some  $\galh\ltimes (\Irr(\wGF/\GF)\rtimes (\GF\EGF)_{\bS})$-equivariant bijection
\begin{equation*}
\wt \Omega: \Irr(\wGF\mid \Irrl(\GF ))\lra\Irr(\wt N\mid \Irrl(N )),
\end{equation*}
such that 
$\Irr\left(\Z(\wt\bG^F)\mid \Omega(\chi)\right)=\Irr\left({\Z(\wt\bG^F)}\mid \chi\right)$
for every $\chi \in \Irr(\wGF\mid \Irrl(\GF ))$.
\end{theo}

 The proof of Theorem \ref{thm:thmA} is the goal of Section \ref{sec:equivbij}.
It turns out that it will sometimes be useful to use the result of Theorem \ref{thm:thmA} with respect to a different overgroup. In this case, Theorem \ref{thm:notbutterfly} will allow us to replace $\wt{\bG}^F$ with such an overgroup.

 In Section \ref{sec:transversal}, we use the theory of Generalized Gelfand Graev Representations to prove that there is a convenient transversal in the case of groups of type $\tA$ (see Theorem \ref{thm:typeAtransversalglobal}). We use this to prove that the equivariance condition holds for the groups $\SL_n(q)$ and $\SU_n(q)$, together with Theorem \ref{thm:thmA} and the work of Petschick and the third-named author \cite{SoniaSpaeth}, who have proved the necessary local conditions in these cases. 
 Along the way, we present candidates for the criterion in Theorem \ref{crit_iGalMcK} and Corollary \ref{12vib} in the case of type $\tA$, showing that the conditions in Theorem \ref{crit_iGalMcK} are satisfied.

  \begin{coralph}\label{cor:equivcondtypeA}
Let $G=\bG^F\in\{\SU_n(q), \SL_n(q)\}$ and let $\ell\nmid q$ be an odd prime dividing $|G|$. Then the conditions of Theorem \ref{crit_iGalMcK} are satisfied for $\galh:=\galh_\ell$, $\subG:=\Irr_{\ell'}(G)$, and $\subM:=\Irr_{\ell'}(M)$, taking $M:=\norm{G}{\bS}$ for $\bS$ a Sylow $d_\ell(q)$-torus of $(\bG, F)$, $\widecheck{G}:=\Lang^{-1}(\rZ(\bG))$, and  $B$ as in Corollary \ref{cor:Bglobal}. 

In particular, $G$ satisfies Condition \ref{cond:equivcondition}.
\end{coralph}

We remark that we anticipate the last condition of the criterion, namely the condition stated in Corollary \ref{12vib}, will also be satisfied in the situation of Corollary \ref{cor:equivcondtypeA}. (The first part of the additional condition, namely that $B$ normalizes $\wh M$, is satisfied in this situation - see Corollary \ref{cor:Bglobal}.) This would also ensure the ``extension" part (hence the full iMN condition) for type $\tA$, and will be the objective of future work by the authors.

In Sections \ref{sec:extunip1} and \ref{sec:extunip2}, we begin treating the ``extension" condition. This condition requires that the bijection from Condition \ref{cond:equivcondition} also behaves well when taking projective extensions of the characters to the stabilizer in the automorphism group. In the case of unipotent characters, we obtain the full iMN condition.

\begin{theo}\label{thm:unipcriterion}
Let $G=\bG^F$ with $\bG$ a connected reductive group of simply connected type and $F\colon \bG\rightarrow\bG$ a Frobenius endomorphism and suppose that $\bG$ is not of type $\tB_2,\tF_4$ if $p=2$ and $\tG_2$ if $p=3$ (so that $G$ admits no exceptional Suzuki or Ree automorphism). Let $\ell$ be an odd prime and let $\galh:=\galh_\ell$. Then $G$ satisfies the conclusion of Corollary \ref{12vib} for $\subG:=\Irr_{\ell'}(G)\cap\UCh(G)$ and $\subM:=\Omega(\subG)$, where $M:=\norm{G}{\bS}$ is the normalizer of a Sylow $d_\ell(q)$-torus and $\Omega$ is the McKay bijection given in \cite{Ma07}, and where we take $\widecheck{G}:=\wt{G}$ to be the group $\wt{G}:=\wt\bG^F$ obtained by a regular embedding $\bG\rightarrow \wt\bG$.

 That is, there exists a $\widecheck M\wh M\times\galh$-equivariant bijection $\Omega:\subG\lra\subM$ such that 
    $$((\widecheck G E)_{\chi^{\calH}}, G, \chi)_\cH \geq_c ((\widecheck{M}\wh M)_{\psi^\cH}, M, \psi)_{\cH}$$  for every $\chi\in  \subG$ and $\psi:=\Omega(\chi)$.
\end{theo}

\section{Preliminaries on the Galois automorphisms}
	
	We begin with some number-theoretic observations regarding the Galois automorphisms under consideration.
	
\begin{notation}\label{defH} As defined by Gabriel Navarro \cite{nav04}, we let $\cH_\ell$ denote the subgroup of the Galois group $\gal:=\Gal(\QQ^{\ab}/\QQ)$ comprised of those $\sigma\in\gal$ satisfying that there is some $e\geq 0$ such that  $\sigma$ acts on $\ell'$-roots of unity by $\zeta\mapsto \zeta^{\ell^e}$. 
 
For an integer $d\geq1$, we will also let $\cH_{[d]}$ denote the subgroup of $\gal$ stabilizing any $d$-th root of unity. 
\end{notation}

It will sometimes be useful to note that $\galh_\ell$ can be identified with the Galois group $\mathrm{Gal}(\QQ_\ell^{\mathrm{ab}}/ \QQ_\ell)$. As such, we observe the following immediate consequence of Hensel's Lifting Lemma, which could also be seen using Gauss sums and arguments as in \cite[Section 4]{SFT23}.

\begin{lem}\label{lem:square}
Let $\ell$ be an odd prime and let $a$ be an integer relatively prime to $\ell$ that  is a square modulo $\ell$. Then $\sqrt{a}^\sigma=\sqrt{a}$ for all $\sigma\in\galh_\ell$.
\end{lem}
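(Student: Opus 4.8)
The statement is that if $a$ is a unit square modulo $\ell$ (with $\ell$ odd), then $\sqrt a$ is fixed by every $\sigma\in\galh_\ell$. The natural framework is to identify $\galh_\ell$ with $\Gal(\QQ_\ell^{\ab}/\QQ_\ell)$, as the remark preceding the lemma suggests, and then show $\sqrt a\in\QQ_\ell$. Concretely, $\galh_\ell$ consists of those $\sigma$ that fix all $\ell'$-roots of unity; under class field theory (or simply by noting that $\QQ_\ell^{\ab}$ is generated over $\QQ_\ell$ by all roots of unity, and the $\ell'$-part of the cyclotomic tower is the maximal unramified subextension) this subgroup is exactly the one fixing the maximal subfield of $\QQ^{\ab}$ in which $\ell$ is unramified, i.e. $\galh_\ell$ acts trivially on any algebraic number lying in an extension of $\QQ$ unramified at $\ell$. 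So it suffices to show $\QQ(\sqrt a)/\QQ$ is unramified at $\ell$, equivalently that $a$ becomes a square in $\QQ_\ell$.

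First I would reduce to the case where $a$ is an $\ell$-adic unit: since $a$ is prime to $\ell$ it already is one, so $a\in\ZZ_\ell^\times$. Then I would invoke Hensel's Lemma for the polynomial $f(X)=X^2-a$: its reduction mod $\ell$ is $X^2-\bar a$, which by hypothesis has a root $b_0\in\FF_\ell$ with $b_0\neq 0$ (as $\bar a\neq 0$); then $f'(b_0)=2b_0$ is a unit mod $\ell$ because $\ell$ is odd, so Hensel lifts $b_0$ to a genuine square root $b\in\ZZ_\ell$ of $a$. Hence $\sqrt a\in\QQ_\ell$, and in fact lies in $\ZZ_\ell$. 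Choosing the embeddings compatibly, the element $\sqrt a\in\QQ^{\ab}\cap\QQ_\ell$ is fixed by $\Gal(\QQ_\ell^{\ab}/\QQ_\ell)\cong\galh_\ell$, so $\sqrt a^{\,\sigma}=\sqrt a$ for all $\sigma\in\galh_\ell$ — with the caveat that $\sqrt a$ is only defined up to sign, but $\galh_\ell$ fixes $+\sqrt a$ and $-\sqrt a$ as a pair, and since it fixes the subfield $\QQ_\ell\ni\sqrt a$ pointwise it fixes each individually, giving the claim for either choice of square root.

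The one genuine point requiring care — and the only place I would slow down — is the identification of $\galh_\ell$ with $\Gal(\QQ_\ell^{\ab}/\QQ_\ell)$ and the assertion that this group fixes every element of $\QQ^{\ab}$ that lies in an extension of $\QQ$ unramified at $\ell$. This is where the structure of the local cyclotomic tower enters: $\QQ_\ell(\zeta_{\ell'})$ is the maximal unramified extension of $\QQ_\ell$ inside $\QQ_\ell^{\ab}$, its Galois group is $\wh\ZZ$ topologically generated by the Frobenius $\zeta\mapsto\zeta^\ell$, and $\galh_\ell$ is by definition the preimage of the subgroup of $\Gal(\QQ(\zeta_{\ell'})/\QQ)$ generated by that same Frobenius. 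An element of $\QQ^{\ab}$ generating an extension unramified at $\ell$ sits inside $\QQ_\ell(\zeta_{\ell'})$ after embedding, and is therefore moved by $\galh_\ell$ only through powers of $\ell$-power Frobenius; but $\QQ(\sqrt a)$ with $a$ a square mod $\ell$ has residue degree $1$ at $\ell$, so Frobenius acts trivially on it. An alternative, entirely elementary route that sidesteps class field theory — mentioned in the remark via "Gauss sums and arguments as in \cite[Section 4]{SFT23}" — is to write $\sqrt{a}$ explicitly, when $a=c^2$ is a perfect square one is done trivially, and in general to use the Gauss sum expression $\sqrt{\legendre{a}{\ell}\ell}=\sum_{x}\legendre{x}{\ell}\zeta_\ell^x$ together with multiplicativity of the Legendre symbol to express $\sqrt a$ as a product of a rational and Gauss sums whose $\galh_\ell$-action is computed directly; since $a$ is a square mod $\ell$, $\legendre a\ell=1$ and the sign ambiguities cancel. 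I would present the Hensel argument as the main proof and relegate the Gauss-sum version to a remark, since Hensel's Lemma is cleanest and matches the statement's own hint.
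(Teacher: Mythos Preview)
Your proposal is correct and follows exactly the approach the paper intends: identify $\galh_\ell$ with $\Gal(\QQ_\ell^{\ab}/\QQ_\ell)$ and apply Hensel's Lemma to conclude $\sqrt{a}\in\QQ_\ell$ is fixed, with the Gauss-sum route mentioned as an alternative. The paper's proof is in fact just the one-line remark ``immediate consequence of Hensel's Lifting Lemma'' preceding the statement, so your write-up is if anything more detailed than what appears there.
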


Throughout, for an integer $q$ and odd prime $\ell$, we let $d_\ell(q)$ denote the multiplicative order of $q$ modulo $\ell$.  
\begin{lem} 
\label{lem:numtheory}
	Let $\ell$ be an odd prime and $d$ any integer dividing  $(\ell-1)$. (For  example, this is the case when $d$ is the order of an element of $\FF_\ell^\times$.) Then: 
	\begin{enumerate} 
	\item \label{H} $\cH_{\ell}\subseteq \cH_{[d]}$; 
	\item \label{2mH} if $d=2m$ with $m$ odd, then $\cH_{[d]}=\cH_{[m]}$; 
	\item \label{2msqrt} if $d=2m$ with $m$ odd and $q$ is an  integer with $d_\ell(q)=d$, then $\sqrt{-q}^\sigma=\sqrt{-q}$ for any $\sigma\in\galh_\ell$;
	\item \label{sqrt} if $d$ is odd and $q$ is an integer with $d_\ell(q)=d$, then $\sqrt{q}^\sigma=\sqrt{q}$ for any $\sigma\in\galh_\ell$.
	\end{enumerate}
	\end{lem}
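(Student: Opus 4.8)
The plan is to treat the four parts separately, reducing the two square-root statements to \cref{lem:square} and the other two to elementary facts about cyclotomic fields.

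For \ref{H}: since $d\mid\ell-1$ we have $\ell\equiv 1\pmod d$, hence $\ell^{e}\equiv1\pmod d$ for every $e\geq0$. Fix a primitive $d$-th root of unity $\zeta$; as $d<\ell$ this is an $\ell'$-root of unity, so any $\sigma\in\cH_{\ell}$ acts by $\zeta\mapsto\zeta^{\ell^{e}}=\zeta$ for the relevant $e$. Thus $\sigma$ fixes $\QQ(\zeta)=\QQ(\mu_{d})$ pointwise, i.e.\ $\sigma\in\cH_{[d]}$, giving $\cH_{\ell}\subseteq\cH_{[d]}$. For \ref{2mH}: when $m$ is odd, $-\zeta$ is a primitive $2m$-th root of unity whenever $\zeta$ is a primitive $m$-th root of unity, so $\QQ(\mu_{2m})=\QQ(\mu_{m})$; since $\cH_{[k]}$ is precisely the subgroup of $\gal$ fixing $\QQ(\mu_{k})$ pointwise, we get $\cH_{[2m]}=\cH_{[m]}$.

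For \ref{sqrt} and \ref{2msqrt}, note first that $\ell\nmid q$ because $d_{\ell}(q)$ is defined, and that by \cref{lem:square} it suffices to check that the relevant integer is a square modulo $\ell$. In case \ref{sqrt}, $d=d_{\ell}(q)$ is odd, so $d+1$ is even and, using $q^{d}\equiv1\pmod\ell$, we get $q\equiv q^{d+1}=(q^{(d+1)/2})^{2}\pmod\ell$; hence $q$ is a square mod $\ell$ and \cref{lem:square} gives $\sqrt{q}^{\sigma}=\sqrt{q}$ for all $\sigma\in\cH_{\ell}$. In case \ref{2msqrt}, write $d=2m$ with $m$ odd; then $q^{m}$ has order $2$ in the cyclic group $\FF_{\ell}^{\times}$, so $q^{m}\equiv-1\pmod\ell$ (the unique element of order $2$), and therefore $-q\equiv q^{m+1}=(q^{(m+1)/2})^{2}\pmod\ell$ since $m+1$ is even; thus $-q$ is a square mod $\ell$ and \cref{lem:square} yields $\sqrt{-q}^{\sigma}=\sqrt{-q}$.

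I do not expect a genuine obstacle here: the only points needing care are the reading of $\cH_{[d]}$ as the pointwise stabilizer of $\QQ(\mu_{d})$ (which makes \ref{H} and \ref{2mH} pure cyclotomic statements) and the translation of ``$q$ has order $d$ in $\FF_{\ell}^{\times}$'' into a quadratic-residue statement. The one substantive ingredient --- that a unit which is a square mod $\ell$ has its square root fixed by $\cH_{\ell}$ --- is exactly \cref{lem:square}, already established; everything else is bookkeeping.
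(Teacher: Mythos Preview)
Your proof is correct and follows essentially the same approach as the paper: both reduce parts \ref{2msqrt} and \ref{sqrt} to showing the relevant integer is a square modulo $\ell$ and then invoke \cref{lem:square}, and both handle \ref{H} and \ref{2mH} via the elementary cyclotomic facts $\ell\equiv1\pmod d$ and $\QQ(\mu_{2m})=\QQ(\mu_m)$ for $m$ odd. The only cosmetic difference is that you write the squareness explicitly as $q\equiv(q^{(d+1)/2})^{2}$ and $-q\equiv(q^{(m+1)/2})^{2}$, whereas the paper phrases it as ``an element of odd order in $\FF_\ell^\times$ is a square'' and ``$q$ is a square iff $-1$ is''; these are the same observations.
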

\begin{proof}
If $\xi$ is a $d$th root of unity (and hence an $(\ell-1)$st root of unity), then $\sigma(\xi)=\xi^{\ell^e}=\xi$ for any $\sigma\in\galh_\ell$, so \ref{H} holds. For \ref{2mH}, note that any member of $\gal$ must necessarily fix $-1$, and that any $d$th root of unity must be of the form $\pm\zeta_m$ with $\zeta_m$ an $m$th root of unity.
In the situation of \ref{2msqrt}, we have $\ell$ divides $q^{m}+1$, and hence $q^{m}\equiv -1\pmod \ell$. Since $m$ is odd, we see $q$ is a square in $\FF_\ell^\times$ if and only if $-1$ is. This means that  $-q$ is a square in $\FF_\ell^\times$, and we apply Lemma \ref{lem:square}.

For \ref{sqrt}, note that we may assume that $q$ is not a square, since otherwise the statement is clear. Hence $q$ is an odd power of a prime $p$. By construction, $d$ is the order of $q$ in $\FF_\ell^\times$. As $\ell-1$ is even and $d$ is odd, this shows that  $q$ is a square in $\FF_\ell^\times$, and we again apply Lemma \ref{lem:square}.
\end{proof}

 \section{Criterion for the inductive McKay--Navarro condition}\label{sec:criterion}

In this section, we introduce a statement that simplifies checking the inductive McKay--Navarro condition. The statement below (\Cref{crit_iGalMcK_full}) is modeled after   \cite[Thm.~2.12]{S12}, which is a criterion for verifying the inductive McKay condition and was applied to simple groups of Lie type in various cases.

\subsection{The inductive McKay--Navarro condition}

Before stating our criterion, we introduce the inductive conditions from \cite{NSV20}. We begin with some notation.

\begin{defi}\label{def:extmap}
Let $G\lhd A$ and  $\chi\in\Irr(G)$ invariant under $A$. Then $(A,G,\chi)$ is an ordinary character triple and there exists a projective representation $\calP$ of $A$ associated to $\chi$, see \cite[Def.~5.2]{Navarro_book}.
Assume that $H\leq A$ with $A=GH$ with $\Cent_A(G)\leq H$. If $\psi\in\Irr(M)$ is some $H$-invariant character for $M:=G\cap H$, then there exists some projective representation $\calP'$ of $H$ associated to $\psi$. We say that $(\calP,\calP')$ gives
\[(A,G,\phi)\geq_c(H,M,\psi),\]
whenever the factor set of $\calP$ restricted to $H\times H$ is the factor set of $\calP'$ and for each $c\in\Cent_A(G)$ the matrices $\calP(c)$ and $\calP'(c)$ are scalar matrices associated to the same scalar; see also \cite[Def.~10.14]{Navarro_book}. We also write $(A,G,\phi)\geq_c(H,M,\psi)$ if there exist $(\calP,\calP')$ giving that relation.
\end{defi}

For the inductive McKay--Navarro condition, we need adaptions of the above notions.

\begin{defi} Given $G\lhd A$, the group $\gal\times \Aut(A)_G$ acts on $\Irr(G)$. 
Let $\chi\in\Irr(G)$ and $\alpha \in \gal\times \Aut(A)_G$ with $\chi^\al =\chi$. 
If $\chi$ extends to a character $\wt \chi$ of $A$, then there exists a unique linear character $\mu\in \Irr(A)$ with 
        $G\leq \ker(\mu)$ and 
        \[\wt\chi^\alpha=\wt\chi\mu\] 
    	by Gallagher's theorem \cite[Cor.~6.17]{Isa}. We will write $[\wt\chi, \alpha]:=\mu$ in this situation.
The projective representation $\calP^\alpha$ is again a projective representation of $A$ associated to $\chi$ and hence similar to $\mu \calP$, i.e. $\calP^\al\sim \mu \calP$ for a unique map $\mu: A/G\lra \CC^\times$. We denote $\mu$ again by $[\calP,\al]$, see also \cite[Lem.~1.4, Lem.~2.3]{NSV20}. 

Given $\galh\leq \calG$, $G\lhd A$ and $\chi\in \Irr(G)$ we denote by $\chi^\galh$ the sum of all $\galh$-conjugates of $\chi$ and call $(A,G,\chi)_\cH$ an $\galh$--triple if $\chi^{\cH}$ is $A$-invariant, see \cite[\S 1]{NSV20}. Assume that $H\leq A$ with $A=GH$ and $\Cent_A(G)\leq H$. Let $\psi\in\Irr(M)$ such that $(H,M,\psi)_\galh$ is an $\galh$--triple. 
Assume $(A_\chi,G,\chi) \geq_c 
(H_\psi,M,\psi) $ given by $(\calP,\calP')$. 
If $(H\times \galh)_\chi=(H\times \galh)_\psi$ and $(\calP,\calP')$ can be chosen such that they satisfy 
\[  [\calP,\al](h)=[\calP',\al](h) \forevery h\in H_\psi, \]
then we write 
\[ 
(A,G,\phi)_\galh\geq_c(H,M,\psi)_\galh ,\]
see  \cite[Def.~1.5]{NSV20}.\end{defi}

Now the \textit {inductive McKay--Navarro (iMN) condition} for a given simple group $S$ and a prime $\ell$ is satisfied if for the universal covering group $G$ of $S$, any Sylow $\ell$-subgroup $P$ of $G$ and $\Delta=\Aut(G)_P$, there exists some $\Delta$-stable subgroup $M$ of $G$ with $\NNN_G(P)\leq M\lneq G$ and an 
$(\galh_\ell\times \Delta)$-equivariant bijection $\Omega: \Irrl(G)\lra \Irrl(M)$ such that 
\[ 
(G\rtimes \Delta_{\chi^{\galh_\ell}}, G,\chi)_\galhl\geq 
(M\rtimes \Delta_{\Omega(\chi)^{\galh_\ell}}, M,\Omega(\chi))_\galhl.\]

That is, the inductive McKay--Navarro conditions essentially require Condition \ref{cond:equivcondition}, together with the above character triple condition (which we sometimes call the ``extension condition").

\subsection{A criterion for the iMN condition}

As with the criterion in \cite[Thm.~2.12]{S12}, our criterion will require the use of extension maps, but now requiring compatibility with Galois automorphisms.

\begin{defi}
For $G\lhd A$ and subsets $\subG\subseteq \Irr(G)$ and $\mathfrak{A}\subseteq\Irr(A)$, we write $\Irr(A\mid\subG)$ for the irreducible characters of $A$ whose irreducible constituents on restriction to $G$ lie in $\subG$ and $\Irr(G\mid\mathfrak{A})$ for the irreducible constituents of characters in $\mathfrak{A}$ on restriction to $G$. Further, an \textit{extension map} \wrt $G\lhd A$ for $\subG$ is a map $\Lambda:\subG\lra \bigcup_{G\leq I \leq A} \Irr(I)$ such that for every $\chi\in\mathfrak{G}$, $\Lambda(\chi)$ is an extension of $\chi$ to $A_\chi$. If an extension map \wrt $G\lhd A$ for $\subG$ exists, we say that \textit{maximal extendibility holds \wrt $G\lhd A$ for $\subG$}, see \cite[4.1]{S09} and \cite[4.3]{S10a}.

Given an extension map $\Lambda$ and a subgroup $\galh\leq\gal$, we say that $\Lambda$ is \emph{$\galh$-equivariant} if $\Lambda(\la^\si)=\Lambda(\la)^\si$ for every $\si\in \cH$. 
\end{defi}

    We also note that a subset of linear characters in $\Irr(G)$, which is at the same time a group given by multiplication, acts on $\Irr(G)$ by multiplication.

We are now ready to introduce our criterion.
The following statement helps to construct bijections as required in the inductive McKay--Navarro condition in the case where $G$ is a group of Lie type. 
\begin{theorem}\label{crit_iGalMcK}
Let $G$ be a finite group, $\ell$ a prime with $\ell\mid |G|$, $Q$ a Sylow $\ell$--subgroup of $G$ and $M\lneq G$ a proper subgroup. Let $\galh\leq\gal$ be some subgroup.
Additionally, let $\subG\subseteq \Irr(G)$ and $\subM\subseteq \Irr(M)$.  Assume all of the following: 
\begin{asslist}
    \item There exists a group $\widecheck{G}$ with $G\lhd \widecheck{G}$ and a finite group $E$ such that $\widecheck{G}\rtimes E$ is defined, $\widecheck{G}/G$ is abelian, $\Cent_{\widecheck{G}\rtimes E}(G)=\Z(\widecheck{G})\Cent_{E}(G)$;  $G\lhd \widecheck{G}\rtimes E$; and $\subG$ and $\subM$ are $\galh\times \norm{\widecheck{G}}{M}$-stable.   
    \item The subgroup $M$ is $(\widecheck{G}\rtimes E)_Q$-stable and  $\NNN_G(Q)\leq M\lneq G$.
    \item \label{hauptprop_maxext} Maximal extendibility holds with respect to  $G\lhd \widecheck{G}$ and $M \lhd \widecheck M:=\norm{\widecheck{G}}{M}$ for $\subG$ and $\subM$, respectively.
    \item \label{glo-*}\label{1.10iv}
    For $\wh M:=\NNN_{GE}(M)$ there exist some $\wh M$-stable $\widecheck M$-transversals $\subG_0$ in $\subG$ and $\subM_0$ in $\subM$. Additionally there exist  $\wh M$-equivariant extension maps $\Phi_{glo}$ and $\Phi_{loc}$ \wrt $G\lhd GE$ for $\subG_0$ and \wrt $M\lhd \wh M$ for $\subM_0$, such that 
    \[ \Irr( \Cent_E(G) \mid  \Phi_{glo}(\chi)) =  \Irr( \Cent_E(G)\mid \Phi_{loc}(\psi) ) 
 \text{ for every }\chi\in\subG_0 \und \psi\in\subM_0.\]
 \item \label{assvi} 
    
    There exists a subgroup $B\leq (\widecheck{G} E \times \galh)$ with
    \[  \NNN_{\widecheck{G} E}(Q) B=\NNN_{\widecheck{G} E}(Q) \times \galh,\]
    that stabilizes $\subG_0$ and $\subM_0$.
    \item \label{crit_bij}
     For $\widecheck\subM:=\Irr(\widecheck M\mid\subM)$ and $\widecheck\subG:=\Irr(\widecheck{G}\mid \subG)$ there exists a $\Irr(\widecheck M/M)\rtimes(\wh M B )$-equivariant bijection $\widecheck \Omega:\widecheck\subG\lra \widecheck\subM$ with 
    $ \widecheck\Omega(\widecheck \subG\cap\Irr(\widecheck{G}\mid \nu))= \widecheck \subM\cap\Irr(\widecheck M\mid \nu)$ for every $\nu\in\Irr(\Z(\widecheck{G}))$.
 
    \end{asslist}

    \noindent Then there exists a $\wh M\widecheck M\times\galh$-equivariant bijection $\Omega:\subG\lra\subM$ such that 
    \begin{align}
    \label{geqcHct}
    ((\widecheck G E)_{\chi}, G, \chi)& \geq_c ((\widecheck{M}\wh M)_{\psi}, M, \psi)  \text{ for every }\chi\in  \subG \und \psi:=\Omega(\chi).
    \end{align}
    \end{theorem}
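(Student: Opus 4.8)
The goal is to build the bijection $\Omega\colon\subG\to\subM$ and verify the central-character-compatible $\geq_c$ relation between the two character triples. The natural strategy is to descend along the abelian quotient $\widecheck G/G$: assumption \ref{crit_bij} already hands us a bijection $\widecheck\Omega\colon\widecheck\subG\to\widecheck\subM$ on the level of the overgroups $\widecheck G$ and $\widecheck M=\norm{\widecheck G}{M}$, and assumption \ref{hauptprop_maxext} (maximal extendibility for $G\lhd\widecheck G$ and for $M\lhd\widecheck M$) says that each $\chi\in\subG$ and each $\psi\in\subM$ extends to its respective stabilizer. So the core of the proof is a Clifford-theoretic descent: given the $\widecheck G/G$-orbit of $\chi$, the characters of $\widecheck G$ lying over that orbit are parametrized (via Gallagher, using the extension provided by the extension map) by $\Irr(\widecheck G_\chi/G)$, and similarly on the $M$-side; matching orbits on $\widecheck G$ with orbits on $\widecheck M$ under $\widecheck\Omega$, and then using the $\Irr(\widecheck M/M)$-equivariance of $\widecheck\Omega$ together with the central-character condition $\widecheck\Omega(\widecheck\subG\cap\Irr(\widecheck G\mid\nu))=\widecheck\subM\cap\Irr(\widecheck M\mid\nu)$, one reads off a bijection $\subG\to\subM$ on $\widecheck G/G$-orbits, hence — after choosing the $\wh M$-stable $\widecheck M$-transversals $\subG_0,\subM_0$ from \ref{1.10iv} — on the full sets. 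This is exactly the mechanism of \cite[Thm.~2.12]{S12} adapted with the Galois layer, and I would follow that template closely.

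**Equivariance.** Next I would check that the $\Omega$ so produced is $\wh M\widecheck M\times\galh$-equivariant. The $\widecheck M$-equivariance is essentially built in, since $\widecheck\Omega$ matches $\widecheck M$-orbits and we descend orbit-wise. For the $\wh M$-part one uses that $\subG_0,\subM_0$ are $\wh M$-stable (\ref{1.10iv}) and that the extension maps $\Phi_{glo},\Phi_{loc}$ are $\wh M$-equivariant; for the $\galh$-part one uses that $\subG,\subM$ are $\galh$-stable (assumption (i)) and, crucially, the subgroup $B\le\widecheck G E\times\galh$ from \ref{assvi} with $\norm{\widecheck G E}{Q}B=\norm{\widecheck G E}{Q}\times\galh$: this lets one "spread" a given $\sigma\in\galh$ as a product of an element of $\norm{\widecheck G E}{Q}$ (which acts compatibly because $M$ is $(\widecheck G E)_Q$-stable by (ii) and $\NNN_G(Q)\le M$) and an element of $B$ (which stabilizes $\subG_0,\subM_0$ and is compatible with $\widecheck\Omega$ by the $\wh M B$-equivariance in \ref{crit_bij}). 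Combining these, $\sigma$ commutes with $\Omega$.

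**The $\geq_c$ relation.** This is where the real work lies. For a fixed $\chi\in\subG$ with $\psi=\Omega(\chi)$ one must produce projective representations $\calP$ of $(\widecheck G E)_\chi$ associated to $\chi$ and $\calP'$ of $(\widecheck M\wh M)_\psi$ associated to $\psi$ with matching factor sets on the smaller group and agreement of the central scalars on $\Cent_{(\widecheck G E)_\chi}(G)=\Z(\widecheck G)\Cent_E(G)$ (assumption (i) pins this centralizer down). One assembles $\calP$ from three ingredients glued over the three "directions" of $(\widecheck G E)_\chi$: along $\widecheck G_\chi/G$ use the extension of $\chi$ from \ref{hauptprop_maxext}; along $E$ use the extension map $\Phi_{glo}$ from \ref{1.10iv}; the central scalar on $\Z(\widecheck G)$ is governed by the central-character condition in \ref{crit_bij}, while the scalar on $\Cent_E(G)$ is governed by the condition $\Irr(\Cent_E(G)\mid\Phi_{glo}(\chi))=\Irr(\Cent_E(G)\mid\Phi_{loc}(\psi))$ in \ref{1.10iv} — this is precisely why that compatibility is imposed. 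One builds $\calP'$ on the $M$-side symmetrically from the $\widecheck M$-extension, $\Phi_{loc}$, and the matched central data, and then checks that the restriction of $\calP$ to $(\widecheck M\wh M)_\psi\times(\widecheck M\wh M)_\psi$ has the same factor set as $\calP'$ (using that $\widecheck\Omega$ respects the $\Irr(\widecheck M/M)$-action, which controls exactly the Gallagher twists that could spoil the factor set) and that the two agree as scalars on $\Cent(G)$. I expect this gluing — reconciling the projective representation coming from the $\widecheck G$-side with the one coming from the $E$-side so that the factor sets are literally equal, not merely cohomologous, on the overlap — to be the main obstacle; it is the step where the precise hypotheses about transversals, equivariant extension maps, and the central-character-preserving $\widecheck\Omega$ all have to be used in concert, and where one has to be careful that no spurious linear character is introduced. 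The Galois decoration $\cH$ from Corollary~\ref{12vib} is not part of the present statement, so for \eqref{geqcHct} it suffices to produce $(\calP,\calP')$ without tracking $[\calP,\alpha]=[\calP',\alpha]$; that refinement is deferred.
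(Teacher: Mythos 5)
Your proposal follows essentially the same route as the paper: build $\Omega$ by descending the $\Irr(\widecheck M/M)\rtimes(\wh M B)$-equivariant bijection $\widecheck\Omega$ through Clifford theory via the transversals $\subG_0,\subM_0$, then produce the $\geq_c$ relation by assembling projective representations from the maximal extension in (iii) and the extension maps $\Phi_{glo},\Phi_{loc}$ in (iv), using the central-character condition in (vi) and the $\Cent_E(G)$-compatibility in (iv) to match scalars on $\Cent_{(\widecheck G E)_\chi}(G)=\Z(\widecheck G)\Cent_E(G)$. The one spot where your assessment of the difficulty is a little off: you flag as the "main obstacle" the gluing of the $\widecheck G_\chi$-piece with the $(GE)_\chi$-piece so that the factor sets agree on the overlap. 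In the paper's construction this is automatic — both $\calP_1$ (affording $\widecheck\chi_0$) and $\calP_2$ (affording $\Phi_{glo}(\chi)$) are chosen to extend the same representation $\mathcal D$ of $G$ affording $\chi$, so they literally agree on $G=\widecheck G_\chi\cap (GE)_\chi$ and $\calP(gd)=\calP_1(g)\calP_2(d)$ is well-defined. The substantive comparison is between the factor set of the resulting $\calP$ and that of $\calP'$: as in \cite[Thm.~2.12]{S12}, both are controlled by the action of $\wh M_\psi$ on $\Irr(\widecheck G\mid\chi)$ and on $\Irr(\widecheck M\mid\psi)$, and the $\Irr(\widecheck M/M)$-equivariance of $\widecheck\Omega$ (plus the choice $\widecheck\psi_0^{\widecheck M}=\widecheck\Omega(\widecheck\chi_0^{\widecheck G})$) is exactly what makes these actions match. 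Apart from that misplacement of emphasis, the plan is sound and you correctly note that the $\cH$-decoration of the relation is deferred to the corollary.
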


 \begin{proof}
    	       
        As $\subG_0$ is an $E$-stable $\wc G$-transversal, we see that $(\wc G E)_\chi=\wc{G}_\chi E_\chi$ for every $\chi\in\subG_0$, see for example the  proof of   \cite[Lem.~2.4]{typeD1}. Analogously $(\wc M\wh M)_{\psi}=\wc{M}_\psi \wh M_\psi$ for every $\psi\in\subM_0$. Assumption \ref{assvi} implies 
        \[(\wc G E\times \calH)_\chi=\wc G_\chi
    	(GEB)_\chi\] for every $\chi\in\subG_0$, 
    	and	\begin{align}
    	\label{whM_psi}
    	(\wc M\wh M\times \calH)_{\psi}=\wc M_\psi \left (  \wh M B  \right )_\psi 
    	\end{align}
    	for every $\psi\in\subM_0$. (Note that $\wh M/M\cong E$.)
    	
    Applying the considerations of the proof of \cite[2.12]{S12} we obtain an $\wc M \wh M$-equivariant bijection $\Omega:\subG\rightarrow \subM$, such that 
    \[((\wc G E)_{\chi}, G, \chi) \geq_c ((\wc G E)_{M,\Omega(\chi)}, M, \Omega(\chi))  \text{ for every $\chi\in\subG$}.\] 
        The bijection $\Omega$ is uniquely determined by the following properties: 
        $\Omega(\subG_0)=\subM_0$, $\Omega(\Irr(G\mid \wc \chi))=\Irr(M\mid \wc\Omega(\wc \chi))$ for every $\wc\chi\in \wc\subG$, and $\Omega$ is $\wc M$-equivariant. More precisely 
        for $\chi\in\subG_0$, $\wc\chi\in\Irr(\widecheck{G}\mid \chi)$ and $\wc\psi:=\wc\Omega(\wc\chi)$ the character $\Omega(\chi)$ is defined to be the one contained in $\Irr( M\mid \wc \psi)\cap \subM_0$. 
        Now note that the equivariance of $\wc \Omega$ implies $(\wc M \wh M)_{\chi}=(\wc M \wh M)_{\Omega(\chi)}$ by the construction for $\chi\in \subG_0$. The remaining values of $\Omega$ are chosen such that $\Omega$ is $\wc{M}$-equivariant. Since $\subM_0$ and $\subG_0$ are $\wh M$-stable, $\Omega$ is then $\wh M\wc M$-equivariant.

        For the proof of the statement it is sufficient to show 
        \[  ((\widecheck G E)_{\chi}, G, \chi) \geq_c ((\widecheck{M}\wh M)_{\psi}, M, \psi) 
        \text{ for every $\chi\in\subG_0$},\] 
        where $\psi=\Omega(\chi)$. Hence we have to find projective representations $\calP$ and $\calP'$ of $(\widecheck G E)_{\chi}$ associated to $\chi$ and $(\wc M \wh M)_\psi$ associated to $\psi$, such that:
        \begin{itemize}
        \item the factor set $\al'$ of $\calP'$ is just the restriction of the factor set $\al$ of $\calP$, and
        \item for every $c\in \Cent_{(\widecheck G E)_{\chi}}(G)$ the matrices $\calP(c)$ and $\calP'(c)$ are scalar matrices to the same scalar.
        \end{itemize}

    A projective representation $\calP$ of $(\widecheck{G} E)_\chi$ associated to $\chi$ can be obtained the following way: 
    let $\wc \chi_0\in\Irr(\wc G_\chi\mid \chi)$ and $\mathcal D$ a representation of $G$ affording $\chi$. We obtain a projective representation $\calP$ of $(\wc G E)_\chi= \wc G_\chi E_\chi$
    associated to $\chi$ by 
    	$$\calP(gd)=\calP_1(g)\calP_2(d) \forevery g\in\wc G_\chi \und d\in GE_\chi,$$ 
    	where $\calP_1$ and $\calP_2$ are representations of $\wc G_\chi$ and $(GE)_\chi$ affording $\wc \chi_0$ and $\Phi_{glo}(\chi)$, respectively, extending $\mathcal D$. 
        
        Analogously, let $\calP'$ be the projective representation of $(\wh M\wc M)_\psi$ associated to $\psi$ defined using $\Phi_{loc}(\psi)$ and $\wc \psi_0\in \Irr(\wc M_\psi\mid \psi)$ where $\wc \psi_0^{\wc M}=\wc \Omega(\wc \chi_0^{\wc G})$. 
        
        By the proof of \cite[Thm.~2.12]{S12},
        the factor sets of $\calP$ and $\calP'$ are determined by the action of $\wh M_\psi$ on $\Irr(\wc G\mid \chi)$ and $\Irr(\wc M\mid \psi)$. Additionally for $z\in \Cent_{\wc G E}(G)=\Cent_E(G)\Z(\wc G)$ with $z=ez_0$ for $e\in \Cent_E(G)$ and $z_0\in \Z(\wc G)$ we see that $\calP(z)$ and $\calP'(z)$ are scalar matrices to $\wh \nu(e) \nu(z_0)$ where $\wh \nu\in \Irr(\Cent_E(G)\mid \Phi_{loc}(\chi))=\Irr(\Cent_E(G)\mid \Phi_{glo}(\psi))$ and $ \nu\in \Irr(Z(\wG)\mid \wc\chi_0)=\Irr(Z(\wG)\mid \wc\psi_0)$ and $z_0\in \Z(\wc G)$ with $z\in z_0 \Cent_E(G)$. This uses that $\Irr(\Cent_E(G)\mid \Phi_{loc}(\chi))=\Irr(\Cent_E(G)\mid \Phi_{glo}(\psi))$. Hence
        the pair $(\calP,\calP')$ has all required properties to give
    	\begin{align}
    	\label{geqcct}
    	((\wc G E)_{\chi}, G, \chi)& \geq_c ((\wc G E)_{M,\psi}, M, \Omega(\chi)).\qedhere 
    	\end{align}
         \end{proof}

\begin{cor}\label{crit_iGalMcK_full}
    Assume in the situation of \Cref{crit_iGalMcK} that the following additionally holds: \medskip 
    
    \begin{asslist}
    \item[(vii)] \label{12vib} 
    The group $B$ normalizes $\wh M$ and  
    \[ \restr[\Phi_{glo}(\chi), \al]|{\wh M_\chi}=[\Phi_{loc}(\psi), \al] \text{ for every }\al \in 	\left (  \wh M  B \right )_\psi \und 
 \chi\in\subG_0,\] 
 where  $\psi\in\subM_0$ with $\widecheck\Omega(\Irr(\widecheck G \mid \chi))=
    \widecheck\Omega(\Irr(\widecheck M \mid \psi))$.
    \end{asslist}
    
    \noindent Then the bijection $\Omega:\subG\lra\subM$ from \Cref{crit_iGalMcK} can be chosen to satisfy additionally 
    \begin{align}
    \label{geqcHct}
    ((\widecheck G E)_{\chi^{\calH}}, G, \chi)_\cH& \geq_c ((\widecheck{M}\wh M)_{\psi^\cH}, M, \psi)_{\cH}  \text{ for every }\chi\in  \subG \und \psi:=\Omega(\chi).
    \end{align}
\end{cor}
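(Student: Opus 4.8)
The plan is to reuse the bijection $\Omega\colon\subG\to\subM$ and the pair $(\calP,\calP')$ of projective representations produced in the proof of \Cref{crit_iGalMcK}, and to show that they already witness the $\cH$-decorated relation \eqref{geqcHct}. On top of the relation $((\wc GE)_\chi,G,\chi)\geq_c((\wc M\wh M)_\psi,M,\psi)$ established there, the $\cH$-decorated version asks for two extra things: equality of stabilizers $(\wc M\wh M\times\galh)_\chi=(\wc M\wh M\times\galh)_\psi$, and that $(\calP,\calP')$ can be chosen with $[\calP,\al](h)=[\calP',\al](h)$ for all $h$ in the local subgroup and all relevant $\al$. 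As in \Cref{crit_iGalMcK} it is enough to treat $\chi\in\subG_0$ and $\psi=\Omega(\chi)\in\subM_0$: the passage to arbitrary $\chi\in\subG$ goes through exactly as in that proof, now tracking the $\cH$-decoration via the Galois-equivariant analogues of the functorial lemmas of \cite[\S1]{NSV20}.

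The stabilizer condition is free, since \Cref{crit_iGalMcK} already makes $\Omega$ an $\wh M\wc M\times\galh$-equivariant bijection; intersecting with appropriate subgroups also yields $\wc M_\chi=\wc M_\psi$ and $\wh M_\chi=\wh M_\psi$. For the cocycle condition I would first cut down the $\al$ that must be considered. By \eqref{whM_psi} one has $(\wc M\wh M\times\galh)_\psi=\wc M_\psi(\wh M B)_\psi$; for $\al$ acting by conjugation by an element of $\wc M_\psi$ the identity $[\calP,\al]=[\calP',\al]$ on the local subgroup is already forced by the fact (part of \eqref{geqcct}) that the factor set of $\calP'$ is the restriction of that of $\calP$, and since $\al\mapsto[\calP,\al]$ and $\al\mapsto[\calP',\al]$ are crossed homomorphisms, everything reduces to proving $[\calP,\al](h)=[\calP',\al](h)$ for all $\al\in(\wh M B)_\psi$ and all $h\in(\wc M\wh M)_\psi=\wc M_\psi\wh M_\psi$.

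For this I would unwind the explicit form of the projective representations: $\calP(gd)=\calP_1(g)\calP_2(d)$ on $(\wc GE)_\chi=\wc G_\chi(GE)_\chi$, where $\calP_1,\calP_2$ are honest representations affording $\wc\chi_0\in\Irr(\wc G_\chi\mid\chi)$ and $\Phi_{glo}(\chi)$, and likewise $\calP'(gd)=\calP'_1(g)\calP'_2(d)$ with $\calP'_1,\calP'_2$ affording $\wc\psi_0$ and $\Phi_{loc}(\psi)$, the two sets of choices being coupled by $\wc\psi_0^{\wc M}=\wc\Omega(\wc\chi_0^{\wc G})$. A computation entirely parallel to the determination of the factor sets in the proof of \Cref{crit_iGalMcK} then shows that, writing $h=h_1h_2$ with $h_1\in\wc M_\psi$ and $h_2\in\wh M_\psi$, the identity $[\calP,\al](h)=[\calP',\al](h)$ splits into the $h_2$-part, which reads $[\Phi_{glo}(\chi),\al](h_2)=[\Phi_{loc}(\psi),\al](h_2)$, and the $h_1$-part, which is governed by the way $\al$ permutes the extensions of $\chi$ to $\wc G_\chi$ on the one hand and of $\psi$ to $\wc M_\psi$ on the other. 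The $h_2$-part is exactly assumption (vii): since $\wh M_\psi=\wh M_\chi$, the equality $\restr[\Phi_{glo}(\chi),\al]|{\wh M_\chi}=[\Phi_{loc}(\psi),\al]$ gives it. The $h_1$-part follows from the $\wh M B$-equivariance of $\wc\Omega$ in assumption (vi) together with the coupling $\wc\psi_0^{\wc M}=\wc\Omega(\wc\chi_0^{\wc G})$: applying $\wc\Omega$ transports the $\al$-action on $\Irr(\wc G\mid\chi)$ to the $\al$-action on $\Irr(\wc M\mid\psi)$, so the two relevant linear characters agree after restriction to $\wc M_\psi$. Multiplying the two contributions gives $[\calP,\al](h)=[\calP',\al](h)$, hence \eqref{geqcHct} for $\chi\in\subG_0$, and then for all of $\subG$ as explained above.

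I expect the main obstacle to be the $h_1$-part, i.e. matching the two Gallagher-type characters attached to $\wc\chi_0$ and $\wc\psi_0$ on $\wc M_\psi$: this requires making the twist by $\al\in\wh M B$ — which realizes a Galois automorphism composed with a conjugation chosen precisely so as to stabilize $Q$, $\subG_0$ and $\subM_0$ — genuinely compatible with all the identifications at play (the restriction $\Irr(\wc G/G)\to\Irr(\wc M/M)$, the $\wc M$-transversal structure of $\subG_0$ and $\subM_0$, and the normalizations of $\wc\chi_0$ and $\wc\psi_0$), and the same bookkeeping reappears when the $\cH$-decoration is pushed from $\subG_0$ out to $\subG$. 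By contrast, assumption (vii) takes care of the $h_2$-part on the nose, and the stabilizer condition costs nothing.
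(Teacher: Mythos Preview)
Your proposal is correct and follows essentially the same route as the paper: reduce to $\chi\in\subG_0$, obtain the stabilizer equality from the $\wh M\wc M\times\galh$-equivariance of $\Omega$, use the transversal property to cut down to $\al\in(\wh MB)_\psi$, and then split $[\calP,\al]$ and $[\calP',\al]$ along the product decompositions $\calP=\calP_1\calP_2$, $\calP'=\calP'_1\calP'_2$ so that assumption~(vii) handles the $\Phi_{glo}/\Phi_{loc}$ part and the $\Irr(\wc G/G)\rtimes(\wh MB)$-equivariance of $\wc\Omega$ handles the $\wc\chi_0/\wc\psi_0$ part. Your anticipated ``main obstacle'' on the $h_1$-side is exactly the point the paper spells out most carefully, via the identification $[\wc\psi_0^{\wc M},\al]=[\wc\Omega(\wc\chi_0^{\wc G}),\al]=[\wc\chi_0^{\wc G},\al]$ restricted to $\wc M_\psi$.
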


In particular, Theorem \ref{crit_iGalMcK} and Corollary \ref{crit_iGalMcK_full} yield Theorem \ref{thm:critintro} from the introduction.

\begin{proof}[Proof of Corollary \ref{crit_iGalMcK_full}] 
As before, it suffices to prove the relation \eqref{geqcHct} for $\chi\in\subG_0$. Let $\psi:=\Omega(\chi)$. We continue using the notation from before, in particular we continue considering the projective representations $\calP$ and $\calP'$ defined in the proof of \Cref{crit_iGalMcK}, associated to $\chi$ and $\psi$.

Next, we compare the relations.
Since the character triples associated with $\chi$ and $\psi$ already satisfy $\geq_c$, we have to ensure two additional equations to verify that the $\cH$ triples also satisfy $\geq_c$. First, we have to see
$(\wh M \wt M\times \cH)_{\chi}=(\wh M \wt M\times \cH)_{\psi}$. This follows from the $\wh M \wt M\times \cH$-equivariance of $\Omega$. Second, we have to see that a pair $(\calP, \calP')$ giving $\geq_c$ additionally satisfies
\[ [\calP,\al](h) = [\calP',\al](h)\]
for every $\al\in (\wc M\wh M B)_\psi$ and $h\in (\wh M \wt M)_\psi$. As $\chi\in \subG_0$ and $\subG_0$ is an $\wh M B$-stable $\wc M$-transversal, 
$(\wh M \wc M B)_\psi=(\wh M B)_\psi \wc M_\psi$ and it is sufficient to compute $[\calP,\al] = [\calP',\al]$ in the case where $\al\in(\wh M B)_\psi$. 

To ensure \cite[Def.~2.4(iv)]{NSV20}, we examine $\calP^{\al}$ and $(\calP')^{\al}$ for $\al\in (\wh M B)_\psi$. Hence, we compute $[\calP,\al]$ and $[\calP',\al]$. The projective representation $\calP$ was defined using the representation $\calP_1$ affording $\Phi_{glo}(\chi)$ and $\calP_2$ affording $\wc \chi_0$. By our assumption $B$ stabilizes $\wh M$ and hence $\calP_1^\al$ and $\calP_2^\al$ are well-defined. 
The representation $\calP_1^{\al}$ affords by definition $\Phi_{glo}(\chi)^{\al}=\Phi_{glo}(\chi) [\Phi_{glo}(\chi),\al]$. 
    	On the other hand $\calP_2^{\al}$ affords by definition $\wc \chi_0^{\al} =\wc\chi_0[\wc \chi_0,\al]$.
        This leads to $[\calP,\al](gd)=[\wc \chi_0,\al](g)[\Phi_{glo}(\chi),\al](d)$ for every $g\in\wc G_\chi$ and $d\in GE_\chi$.

    For computing $[\calP',\al]$, recall $\psi=\Omega(\chi)$ and $\calP'$ is the projective representation of $(\wh M\wc M)_\psi$ associated to $\psi$ defined using $\Phi_{loc}(\psi)$ and $\wc \psi_0\in \Irr(\wc M_\psi\mid \psi)$ with $\wc \psi_0^{\wc M}=\wc \Omega(\wc \chi_0^{\wc G})$. 

    As $\al$ stabilizes $\wh M_\psi$, we can consider $(\restr \calP'|{\wh M_\psi})^{\al}$. By the construction of $\calP'$, it is a representation affording $\Phi_{loc}(\psi)^{\al}=\Phi_{loc}(\psi)[\Phi_{loc}(\psi),\al]$, while   $(\calP'_{\wc M_\psi})^{\al}$ is a representation affording $\wc \psi_0^{\al} =\wc\psi_0 [\wc \psi_0,\al]$.
    As before, this leads to $[\calP',\al](gd)=[\wc \psi_0,\al](g)[\Phi_{loc}(\psi),\al](d)$ for every $g\in\wc M_\psi$ and $d\in \wh M_\psi$. 

    In case $\chi_0^{\wc G}$ is not an extension of $\chi$, the characters $\mu\in \Irr(\wc G/G)$ with $(\chi_0^{\wc G})^\al=\chi_0^{\wc G} \mu$ form a $\Lin(\wc G/\wc G_{\chi_0})$-coset in $\Irr(\wc G/G)$.  By abuse of notation, we write $[\chi_0^{\wc G},\al]$ for this set as well. For $g\in\wc G_{\chi_0}$, we observe that $[\wc \Omega(\chi_0^{\wc G}),\al](g)$ is nevertheless well-defined as well. 
    As $\wc \Omega$ is $\Irr(\wc G/G)\rtimes\spa{\al}$-equivariant by \ref{crit_bij}, we see $[\wc \Omega(\chi_0^{\wc G}),\al]= [\chi_0^{\wc G},\al]$. 
    Taking into account the definition of induced characters we see that $[\wc \psi_0^{\wc M},\al]$ is the set of extensions of $[\wc \psi_0,\al]$, and analogously $[\chi_0^{\wc G},\al]$ are the  extensions of $[\chi_0,\al]$. 
   Altogether we have
    \[ [\wc \psi_0,\al] (g)= [\wc \psi_0^{\wc M},\al](g)= [\wc \Omega(\chi_0^{\wc G}),\al](g)=     [\chi_0^{\wc G},\al](g)=[\chi_0,\al](g) . \] 
    Together with the assumption $[\Phi_{loc}(\psi),\al]=[\Phi_{glo}(\chi),\al]$ we obtain $[\calP,\al]=[\calP',\al] $. This proves the condition of \cite[Def.~2.4(iv)]{NSV20} for $\chi$ and $\psi$.
\end{proof}

\subsection{Changing the overgroup}

In the context of Theorem \ref{crit_iGalMcK} and Corollary \ref{crit_iGalMcK_full}, it will sometimes be useful to change between groups $\wc G$ containing $G$ and satisfying conditions (i)--(iii). For this, we introduce the following, sometimes called a ``butterfly" theorem.
Recall that for a central product $\wc X.Z$ with abelian group $Z$, the irreducible characters of $\wc X.Z$ are of the form $\chi.\mu$ with $\chi\in\Irr(\wc X)$ and $\mu\in \Irr(Z)$ with $\Irr(\wc X\cap Z\mid\chi)=\Irr(\wc X\cap Z\mid \mu)$, see also \cite[\S 12]{IMN}.

\begin{theorem}\label{thm:notbutterfly}
Let $\overline X$ be a finite group with normal subgroups $X$, $\wt X$, and $\wc X$ such that $X\subseteq \wt X\cap \wc X$ and $\overline X= \wc X Z=\wt X Z$ for some $Z\leq \Z(\ov X)$. 

Let $\ov M$ with $\ov X= X \ov M$ and $Z\Z(X)\leq \ov M$. Let $M:=\ov M\cap X$ and $A\leq \Aut(\ov X)_{\wt X, \wc X, \ov M,Z}\times \gal$ and $\subG\subseteq \Irr(X)$ be $A\ov M$-stable and $\subM\subseteq \Irr(M)$ be $A\ov M$-stable. 
Let $\wt M:=\ov M\cap \wt X$ and $\wc M:=\ov M\cap \wc X$.

Assume there exists some $(\Lin(\wt X/X)\rtimes A)$-equivariant bijection 
\[ \wt \Omega\colon \Irr(\wt X\mid \subG)\lra \Irr(\wt M\mid \subM)\]
with $\Irr(\Z(\wt X)\mid \chi)=\Irr(\Z(\wt X)\mid \wt \Omega(\chi)) $  for every $\chi\in \Irr(\wt X\mid\subG)$. Then there exists a
$(\Lin(\wc X/X)\rtimes A)$-equivariant bijection 
\[\wc \Omega\colon \Irr(\wc X\mid \subG)\lra \Irr(\wc M\mid \subM)\]
with 
$\Irr(\Z(\wc X)\mid \chi)=\Irr(\Z(\wc X)\mid \wc \Omega(\chi))$ 
for every $\chi\in \Irr(\wc X\mid\subG)$.
\end{theorem}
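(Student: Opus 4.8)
The plan is to route everything through $\ov X$, exploiting that $\ov X$ is a central product of $\wt X$ (respectively $\wc X$) with the abelian group $Z$. First I would record the structural facts. Since $Z\le\Z(\ov X)$ we have $\ov X=\wt X.Z=\wc X.Z$ as central products, over $\wt X\cap Z$ and $\wc X\cap Z$ respectively, and since $Z\le\ov M$, Dedekind's modular law applied with $X\le\wt X\cap\wc X$ gives $\ov M=\wt M.Z=\wc M.Z$, $\wt X=X\wt M$, $\wc X=X\wc M$, and --- crucially --- $\wt M\cap Z=\wt X\cap Z$ and $\wc M\cap Z=\wc X\cap Z$ (because $\wt X\cap Z\le Z\le\ov M$, etc.). Moreover $\wt X\cap Z\le\Z(\wt X)$, $\wc X\cap Z\le\Z(\wc X)$, and $\Z(\ov X)=\Z(\wt X)Z=\Z(\wc X)Z$, these central subgroups lying inside $\ov M$ (as is already implicit in the hypothesis, which presupposes $\Z(\wt X)\le\wt M$ for $\Irr(\Z(\wt X)\mid\wt\Omega(\chi))$ to parse). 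Throughout I use the canonical identifications $\Lin(\wt X/X)\cong\Lin(\wt M/M)$, $\Lin(\wc X/X)\cong\Lin(\wc M/M)$, $\Lin(\ov X/X)\cong\Lin(\ov M/M)$ coming from $\wt X=X\wt M$, etc., and $\Lin(\ov X/\wt X)\cong\Lin(Z/(\wt X\cap Z))\cong\Lin(\ov M/\wt M)$ and likewise with $\wc X$, $\wc M$ in place of $\wt X$, $\wt M$. I also use that the $\mathrm{Aut}$-part of $A$ fixes $\wt X,\wc X,\ov M,Z$ (and, in the applications, $X,M$), hence also $\wt M,\wc M$.

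Second, I would lift $\wt\Omega$ to a bijection $\ov\Omega\colon\Irr(\ov X\mid\subG)\to\Irr(\ov M\mid\subM)$. By \cite[\S 12]{IMN}, every $\ov\chi\in\Irr(\ov X)$ is uniquely of the form $\ov\chi=\chi.\mu$ with $\chi=\ov\chi|_{\wt X}\in\Irr(\wt X)$ and $\mu\in\Irr(Z)$ the central character of $\ov\chi$ on $Z$, subject to $\Irr(\wt X\cap Z\mid\chi)=\Irr(\wt X\cap Z\mid\mu)$; moreover $\ov\chi\in\Irr(\ov X\mid\subG)$ iff $\chi\in\Irr(\wt X\mid\subG)$, since $\ov\chi|_X=\chi|_X$. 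I then set $\ov\Omega(\chi.\mu):=\wt\Omega(\chi).\mu$. This lands in $\Irr(\ov M\mid\subM)$ and is well defined: $(\wt\Omega(\chi).\mu)|_M=\wt\Omega(\chi)|_M$ has constituents in $\subM$, and $\Irr(\wt X\cap Z\mid\wt\Omega(\chi))=\Irr(\wt X\cap Z\mid\chi)=\Irr(\wt X\cap Z\mid\mu)$, where the first equality is the hypothesis $\Irr(\Z(\wt X)\mid\chi)=\Irr(\Z(\wt X)\mid\wt\Omega(\chi))$ restricted to the subgroup $\wt X\cap Z\le\Z(\wt X)$ (recall $\wt X\cap Z=\wt M\cap Z$). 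Applying the same recipe to $\wt\Omega^{-1}$ produces the inverse of $\ov\Omega$, so $\ov\Omega$ is a bijection. For equivariance, observe that $\ov\chi^a=\chi^a.\mu^a$ for $a\in A$, and that a linear $\nu\in\Lin(\ov X/X)$, being determined by $(\nu|_{\wt X},\nu|_Z)$, satisfies $\ov\chi\,\nu=(\chi\,\nu|_{\wt X}).(\mu\,\nu|_Z)$; combining with the $(\Lin(\wt X/X)\rtimes A)$-equivariance of $\wt\Omega$ and the identifications above shows $\ov\Omega$ is $(\Lin(\ov X/X)\rtimes A)$-equivariant. Finally $\ov\Omega$ fixes the $Z$-part $\mu$ and preserves the $\Z(\wt X)$-part of the central character by hypothesis, so since $\Z(\ov X)=\Z(\wt X)Z$ we obtain $\Irr(\Z(\ov X)\mid\ov\chi)=\Irr(\Z(\ov X)\mid\ov\Omega(\ov\chi))$ for all $\ov\chi\in\Irr(\ov X\mid\subG)$.

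Third, I would descend from $\ov X$ to $\wc X$ (so that $\wc\Omega$ is, morally, $\wt\Omega$ transported through the correspondences between $\Irr(\wt X\mid\subG)$, $\Irr(\ov X\mid\subG)$ and $\Irr(\wc X\mid\subG)$). Restriction $\ov\chi\mapsto\ov\chi|_{\wc X}$ maps $\Irr(\ov X\mid\subG)$ onto $\Irr(\wc X\mid\subG)$ with fibres the $\Lin(\ov X/\wc X)$-orbits, and similarly for $\ov M$ and $\wc M$; since $\ov\Omega$ intertwines the $\Lin(\ov X/\wc X)$-action with the $\Lin(\ov M/\wc M)$-action (both identified with $\Lin(Z/(\wc X\cap Z))=\Lin(Z/(\wc M\cap Z))$), it descends to a bijection $\wc\Omega\colon\Irr(\wc X\mid\subG)\to\Irr(\wc M\mid\subM)$ with $\wc\Omega(\ov\chi|_{\wc X})=\ov\Omega(\ov\chi)|_{\wc M}$; this is independent of the chosen preimage $\ov\chi$ since changing $\ov\chi$ by an element of $\Lin(\ov X/\wc X)$ changes $\ov\Omega(\ov\chi)$ by the corresponding element of $\Lin(\ov M/\wc M)$, which is trivial on $\wc M$. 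For $(\Lin(\wc X/X)\rtimes A)$-equivariance: the $A$-equivariance passes to the quotient directly, and given $\lambda\in\Lin(\wc X/X)$ we extend it to some $\ov\lambda\in\Lin(\ov X/X)$ (possible since $\ov X=\wc X Z$ with $Z$ abelian and central) and invoke the $\Lin(\ov X/X)$-equivariance of $\ov\Omega$, noting that $\ov\lambda|_{\wc M}$ corresponds to $\lambda$ under $\wc X=X\wc M$. Lastly, $\Z(\wc X)\le\Z(\ov X)\le\ov M$, and restricting the identity $\Irr(\Z(\ov X)\mid\ov\chi)=\Irr(\Z(\ov X)\mid\ov\Omega(\ov\chi))$ to $\Z(\wc X)$ --- using that $\ov\chi$ and $\ov\chi|_{\wc X}$, resp. $\ov\Omega(\ov\chi)$ and $\ov\Omega(\ov\chi)|_{\wc M}$, share a $\Z(\wc X)$-central character --- yields $\Irr(\Z(\wc X)\mid\chi)=\Irr(\Z(\wc X)\mid\wc\Omega(\chi))$ for every $\chi\in\Irr(\wc X\mid\subG)$, as required.

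The part demanding care --- rather than a genuine obstacle --- is the bookkeeping around the central-product parametrization: pinning down that the hypothesis ``$\wt\Omega$ preserves $\Z(\wt X)$-central characters'' is exactly what is needed to make $\ov\Omega$ and its inverse well defined, and then tracking the restriction isomorphisms among the $\Lin$-groups of $\ov X/X$, $\wt X/X$, $\wc X/X$, $\ov X/\wt X$, $\ov X/\wc X$ and their counterparts for $\ov M$ so that every equivariance lines up. There is no new idea involved; the clean organizing principle is that all the ambiguities are governed by the abelian group $Z$ together with its subgroups $\wt X\cap Z=\wt M\cap Z$ and $\wc X\cap Z=\wc M\cap Z$, over which all of the data transforms compatibly.
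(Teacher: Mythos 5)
Your argument is correct and follows essentially the same two-step route as the paper: lift $\wt\Omega$ to a bijection $\ov\Omega$ on $\Irr(\ov X\mid\subG)$ via the central-product parametrization $\ov\chi=\chi.\mu$, then descend to $\wc X$ by restriction, using that fibres are $\Lin(\ov X/\wc X)$-orbits. The only presentational difference is that the paper first normalizes by replacing $Z$ with $\Z(\ov X)$ (so that $Z\cap\wt X=\Z(\wt X)$ and $Z\cap\wc X=\Z(\wc X)$), which shortens the bookkeeping you carry out by hand; your observation that the hypothesis tacitly requires $\Z(\wt X)\le\wt M$ (and, for the conclusion, $\Z(\wc X)\le\wc M$) is a fair remark that the paper also leaves implicit.
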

\begin{proof}
We see that $\Z(\wc X) Z=\Z(\ov X)=\Z(\wt X)Z$ and hence we can assume that $Z=\Z(\ov X)$ and hence $Z\cap \wc X=\Z(\wc X)$ and $Z\cap \wt X=\Z(\wt X)$.
Let $\chi\in \Irr(\wt X\mid \subG)$ and $\mu_0\in\Irr(Z\cap \wt X\mid \chi)$. Then \[\Irr(\ov X\mid \chi)=\{ \chi.\mu\mid \mu\in\Irr(Z\mid \mu_0)\}\] and hence $\chi.\mu\mapsto \wt\Omega(\chi).\mu$ defines a bijection between $\Irr(\ov X\mid \chi)$ and $\Irr(\ov M\mid \wt \Omega(\chi))$. 
In this way, we obtain a bijection 
\[ \ov \Omega: \Irr(\ov X\mid \subG)\lra \Irr(\ov M\mid \subM).\]
Since $\wt \Omega$ is $\Lin(\wt X/X)\rtimes A$-equivariant and $Z$ is $A$-stable, the bijection $\ov \Omega$ is also $\Lin(\ov X/X)\rtimes A$-equivariant. 

Since $\ov X= \wc X Z$, we see that every $\wc \chi\in\Irr( \wc X)$ is the restriction of an irreducible character $\ov \chi$ and 
\[ 
\{\ov \chi\in \Irr(\ov X)| \restr \ov \chi|{\wc X}= \wc \chi\}
\]
forms an $\Lin(\ov X/\wc X)$-orbit. 
Hence $\restr \ov \chi|{\wc X}\mapsto \restr \ov \Omega(\ov \chi)| {\wc M}$ defines a bijection $\wc \Omega$. 
By this construction $\wc \Omega$ is $\Lin(\wc X/X)\rtimes A$-equivariant and has the stated property. 
\end{proof}

\section{Constructing an equivariant bijection }\label{sec:equivbij}

The main goal of this section is to prove \Cref{thm:thmA}, which simplifies the search of the map in   Theorem \ref{crit_bij} to a question on local extension properties. The proof of \Cref{thm:thmA}
 will finally be completed  in Section \ref{sec:bijpfthmA}, where we 
revisit the construction of Malle \cite{Ma07}, in combination with the equivariance results from \cite{CS13, CS17A}.

\subsection{$d$-Harish-Chandra theory}
Let $(\bH, F)$  be a finite reductive group. That is,  $\bH$ is a connected reductive group defined in characteristic $p$ and $F$ is a Steinberg morphism $F\colon \bH\rightarrow\bH$.  If $F$ is a Frobenius morphism defining $\bH$ over $\FF_q$, and $d$ is some positive integer prime to $p$, then a \emph{$d$-torus} (or $\Phi_d$-torus) is an $F$-stable torus whose order polynomial is a power of the $d$th cyclotomic polynomial $\Phi_d$. A \emph{$d$-split Levi subgroup} of $\bH$ is then a Levi subgroup that is the centralizer in $\bH$ of a $d$-torus of $\bH$. (See \cite[3.5.1]{GeckMalle}.)  In particular, a \emph{Sylow $d$-torus} is a $d$-torus of maximal possible rank, and a \emph{minimal $d$-split Levi subgroup} is the centralizer of a Sylow $d$-torus, as in \cite[3.5.6]{GeckMalle}. In the case that $F$ is not a Frobenius morphism, a modified definition \cite[3.5.3]{GeckMalle} can instead be used to define $d$-split Levi subgroups. 

For an $F$-stable Levi subgroup $\bL$ of $\bG$ we denote by $R_\bL^\bH$ and $\tw{\ast}R_\bL^\bH$ Lusztig's twisted induction and restriction, as in \cite[Def.~3.3.2]{GeckMalle}. (We remark that in the situation we are interested in, these will be independent of the corresponding parabolic subgroup containing $\bL$, so we suppress the notation.) 
A character $\la\in\Irr(\bL^F)$ is called \emph{$d$-cuspidal} if  $\tw{\ast}R_\bM^\bL(\lambda)=0$ for every proper $d$-split Levi subgroup $\bM$ of $\bL$. 

Now, we denote by $\UCh(\bH^F)\subseteq \irr(\bH^F)$ the set of unipotent characters of $\bH^F$. 
A \emph{unipotent $d$-cuspidal pair} is then a pair $(\bL, \lambda)$, where $\bL$ is a $d$-split Levi subgroup of $\bH$ and $\lambda\in\UCh(\bL^F)$ is $d$-cuspidal.
Given a unipotent $d$-cuspidal pair $(\bL, \la)$ for $(\bH, F)$, we let $\UCh(\HF \mid (\bL,\la) )$ denote the unipotent characters in the $d$-Harish-Chandra series of $(\bL, \lambda)$, as defined in \cite[3.5.24]{GeckMalle}. That is, $\chi\in \UCh(\HF \mid (\bL,\la) )$ if and only if $\langle \chi, R_\bL^\bH(\lambda)\rangle\neq 0$. We further let $W_\HF(\bL, \la)$ denote the relative Weyl group of $(\bL, \la)$, defined as the group $\Norm_{\bH^F}(\bL, \la)/\bL^F$.

We note that when $d=1$, the above notions correspond to the usual notions of split Levi subgroups, cuspidal characters, and ordinary Harish--Chandra theory.

\subsection{Galois automorphisms and relative Weyl groups}

First, we work to extend the equivariance results of \cite{CS13, CS17A} to the case of Galois automorphisms in $\galh_\ell$.

	\begin{theorem}\label{thm:Wdfixed}
		Let $(\bH,F)$ be a simple adjoint group, $F$ a Frobenius endomorphism  (not Steinberg)  and $\bS$ a Sylow $d$-torus of $(\bH,F)$ for some positive integer $d$ prime to $p$. Let $W_d$ denote the relative Weyl group $\NNN_\HF(\bS)/\Cent_\HF(\bS)$. Then every $\chi\in \Irr(W_d)$ is $\cH_{[d]}$-invariant.

\end{theorem}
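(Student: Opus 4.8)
The plan is to reduce the theorem to a rationality statement: I would show that every $\chi\in\Irr(W_d)$ takes all of its values in the cyclotomic field $\QQ(\zeta_d)$, where $\zeta_d=e^{2\pi i/d}$. Since, by the definition of $\cH_{[d]}$ in Notation~\ref{defH}, each $\sigma\in\cH_{[d]}$ fixes every $d$-th root of unity and hence fixes $\QQ(\zeta_d)$ pointwise, such a $\sigma$ fixes every value $\chi(w)$ with $w\in W_d$, so that $\chi^\sigma=\chi$; this is exactly the assertion that $\chi$ is $\cH_{[d]}$-invariant.

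For the rationality statement I would first recall the structure of $W_d$ provided by Springer's theory of regular elements, as developed for finite reductive groups by Broué--Malle--Michel and summarised in \cite[3.5]{GeckMalle} (the case of disconnected cosets $W\phi$ with $\phi\ne 1$ being covered by Lehrer--Springer, and by Lehrer--Michel). Fix a maximally split maximal torus $\bT_0$ of $\bH$, set $V:=Y(\bT_0)\otimes_\ZZ\CC$, and let $W$ and $\phi$ denote the Weyl group and the finite-order automorphism of $V$ induced by $F$. Because $F$ is a Frobenius, $\bH$ is simple adjoint, and $\bS$ is a \emph{Sylow} $d$-torus, there is an element $w\phi\in W\phi$ acting on $V$ as a $\zeta_d$-regular automorphism for a fixed primitive $d$-th root of unity $\zeta_d$, and $W_d=\NNN_\HF(\bS)/\Cent_\HF(\bS)$, with its natural linear action, is isomorphic to the complex reflection group $\Cent_{W\phi}(w\phi)$ acting on the $\zeta_d$-eigenspace $V(w\phi,\zeta_d)$ (possibly reducible as a reflection group). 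The decisive point is that $w\phi$ acts $\ZZ$-linearly on the lattice $Y(\bT_0)$, so that the operator $w\phi-\zeta_d$ makes sense on $Y(\bT_0)\otimes_\ZZ\QQ(\zeta_d)$ and its kernel is a $\QQ(\zeta_d)$-form of $V(w\phi,\zeta_d)$ stable under $W_d$. Hence the reflection representation of $W_d$ is realised over $\QQ(\zeta_d)$; in particular its character $\chi_{\mathrm{refl}}$ takes values in $\QQ(\zeta_d)$.

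Next I would pass from $\chi_{\mathrm{refl}}$ to all of $\Irr(W_d)$ using Benard's theorem: for an irreducible complex reflection group, the field generated over $\QQ$ by the values of the character of its reflection representation is a splitting field for the group. Writing $W_d=W_1\times\dots\times W_k$ with each $W_i$ an irreducible complex reflection group, the restriction of $\chi_{\mathrm{refl}}$ to $W_i$ equals the reflection character of $W_i$ plus a nonnegative integer constant, so that reflection character also has values in $\QQ(\zeta_d)$; Benard's theorem then shows that $\QQ(\zeta_d)$ is a splitting field for each $W_i$, and therefore for $W_d$. Consequently every $\chi\in\Irr(W_d)$ has values in $\QQ(\zeta_d)$, and together with the reduction of the first paragraph this proves the theorem.

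The step I expect to require the most care is the structural input in the second paragraph: identifying the $\zeta_d$-regular element $w\phi$ attached to the Sylow $d$-torus $\bS$ and checking that the induced reflection representation of $W_d$ really descends to $\QQ(\zeta_d)$ — in particular for the twisted groups $\tw2\tA$, $\tw2\tD$, $\tw3\tD_4$ and $\tw2\tE_6$, where one must keep track of the dictionary between the integer $d$ and the eigenvalue governing $\bS$; this is exactly where the hypotheses that $F$ is a Frobenius and $\bH$ is simple adjoint enter. If one prefers to avoid Benard's theorem, an alternative is to enumerate the groups $W_d$ that can occur — imprimitive groups $G(e,1,r)$, classical Weyl groups, and finitely many exceptional complex reflection groups, in each case with reflection representation defined over $\QQ(\zeta_d)$ — from the tables of \cite{GeckMalle} and Broué--Malle--Michel, and to read off from the known character tables that all character values lie in $\QQ(\zeta_d)$; the Springer-theoretic argument above has the advantage of avoiding this case analysis.
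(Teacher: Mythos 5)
Your approach is genuinely different from the paper's and, with one caveat discussed below, it is sound. The paper argues case-by-case: it sets $d':=\lcm(2,d)$, observes (using the tables of Brou\'e--Michel recalled in \cite[Ex.~3.5.14, 3.5.15]{GeckMalle}) that $W_d$ is a wreath product $\Cy_{d''}\wr\Sym_a$ with $d''\mid d'$ in types $\tA$, $\tB$, $\tC$, an index-$2$ subgroup of such a wreath product in type $\tD$, and then disposes of the exceptional types and $\tw3\tD_4$ by a MAGMA conductor computation (Lemma~\ref{lem:exceptconductor}); in the sub-case where $\Cent_\HF(\bS)$ is not a torus, it matches the given $W_d$ with a $W_d(\bH_1^F)$ coming from an auxiliary reductive group $(\bH_1,F)$ where the centraliser \emph{is} a torus and feeds it back into the torus argument. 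You instead deduce the rationality of the reflection representation of $W_d$ directly from the $\ZZ$-linearity of $w\phi$ on the cocharacter lattice, then invoke Benard's theorem (plus the product decomposition into irreducibles) to promote this to a splitting-field statement. This avoids both the explicit tables and the computer check, and is the more conceptual route; since $\QQ(\zeta_{\lcm(2,d)})=\QQ(\zeta_d)$ the two rationality claims are literally the same.

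The one place where your write-up is not quite accurate is the structural input you yourself flag: the assertion ``there is an element $w\phi\in W\phi$ acting on $V$ as a $\zeta_d$-regular automorphism'' holds if and only if $d$ is a regular number for $(W,\phi)$, equivalently if and only if $\Cent_\HF(\bS)$ is a maximal torus. When $\Cent_\HF(\bS)$ is a proper non-toral $d$-split Levi (e.g.\ type $\tB_n$ with $d$ odd and $d\nmid n$), no $\zeta_d$-regular element exists, and $W_d$ is not $\Cent_{W\phi}(w\phi)$ but rather (the $\phi$-twisted version of) $\NNN_W(V_d)/\Cent_W(V_d)$ acting on a maximal $\zeta_d$-eigenspace $V_d=V(w\phi,\zeta_d)$, which is still a complex reflection group by the Lehrer--Springer extension of Springer's theorem. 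Your key observation --- that $V_d$ has a $\QQ(\zeta_d)$-form because $w\phi$ is $\ZZ$-linear on $Y(\bT_0)$, and that the stabiliser of $V_d$ in $W$ preserves this $\QQ(\zeta_d)$-structure --- goes through verbatim in this non-regular setting, so the gap is one of formulation rather than substance; but as written the second paragraph asserts something false. You should replace ``$\zeta_d$-regular element $w\phi$'' by ``$w\phi$ such that $V(w\phi,\zeta_d)$ has maximal dimension'' and state $W_d\cong\NNN_W(V_d)/\Cent_W(V_d)$ via Lehrer--Springer, or alternatively borrow the paper's device of reducing to a $W_d(\bH_1^F)$ where $d$ is regular.
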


Before fully proving Theorem \ref{thm:Wdfixed}, we begin with the case of exceptional groups. Here, we will see that the conductor plays a useful role.

\begin{notation}
As usual, given a finite group $X$ and a character $\chi\in\irr(X)$, we let $\QQ(\chi)$ denote the field of values for $\chi$.	The \textbf{conductor}  $c(\chi)$ for $\chi$ is then the smallest integer $c$ such that $\QQ(\chi)\subseteq \QQ(e^{2\pi i/c})$.	
\end{notation}

	\begin{lem}\label{lem:exceptconductor}
	Keep the notation of Theorem \ref{thm:Wdfixed}.	If $\bH$ is of exceptional type or if $\HF=\tw 3 \tD_4(q)$, then for any $\chi\in\irr(W_d)$, the conductor $c(\chi)$ divides $\lcm(2,d)$. 
	\end{lem}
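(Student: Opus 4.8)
The plan is to use the explicit classification of the relative Weyl groups $W_d = \NNN_\HF(\bS)/\Cent_\HF(\bS)$ attached to Sylow $d$-tori in exceptional groups. These groups are well-documented: by the work of Broué--Malle--Michel (and as tabulated in \cite[\S3.5]{GeckMalle}), for $\bH$ of exceptional type (or $\HF = {}^3\tD_4(q)$) the group $W_d$ is either trivial, cyclic, or one of a short list of complex reflection groups $G(e,e,r)$ or exceptional $G_n$ in the Shephard--Todd classification — and crucially $W_d$ is nontrivial only for those $d$ with $\varphi(d)$ dividing the rank, so only finitely many cases arise. First I would reduce to listing, for each exceptional type and each relevant $d$, exactly which group $W_d$ occurs. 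Since $d$ must be the order of an element of the Weyl group (being a regular number), and in all these cases $d \in \{1,2,3,4,5,6,7,8,9,10,12,14,18,\dots\}$ with the corresponding $W_d$ known explicitly, this is a finite check.

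Then for each such $W_d$ I would invoke the known rationality/field-of-values information for its irreducible characters. For the infinite family $W_d \cong \ZZ/e$ (cyclic of order $e$, which happens frequently, e.g. for $d$ equal to the Coxeter number or other large regular numbers), the characters have conductor dividing $e$, and one checks $e \mid \lcm(2,d)$ holds in each case from the tables (typically $e \mid d$, or $e = 2$). For $W_d$ a genuine (non-cyclic) complex reflection group — the Weyl groups $G_2 = W(G_2)$, $W(F_4)$, the groups $G(e,e,2)$ of dihedral type, and a handful of primitive ones like $G_5, G_8, G_{25}, G_{26}, G_{32}$ appearing for $\tE_6, \tE_7, \tE_8$ — I would quote that their character fields are generated by roots of unity of order dividing $\lcm(2,d)$. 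For the genuine Weyl groups $W(F_4)$, $W(G_2)$ all characters are rational ($c(\chi)\mid 2$), and $2 \mid \lcm(2,d)$ trivially. For the spetsial complex reflection groups the character fields were computed by Malle \cite[and references in \cite{GeckMalle}]{Ma07}; in every occurring case the relevant root of unity has order dividing $\lcm(2,d)$ (the extra factor $2$ absorbs the occasional $\QQ(\sqrt{\pm q})$-type fields, and $d$ itself supplies the cyclotomic part). I would present this as a table walk-through rather than case-by-case prose.

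The step I expect to be the main obstacle is assembling a clean, verifiable statement of the character-field data for the primitive complex reflection groups that show up as $W_d$ in $\tE_7$ and $\tE_8$ — in particular confirming that $c(\chi) \mid \lcm(2,d)$ and not merely $c(\chi) \mid (\text{something slightly larger})$ for the exceptional groups $G_8$ (at $d=4$ in $\tE_7,\tE_8$) and $G_{32}$ (at $d=3$ in $\tE_8$), where characters can a priori involve fourth roots of unity or $\zeta_3$. Here I would lean on the fact that $d$ is exactly the regular number in question, so the relevant primitive root of unity is a $d$-th (or $2d$-th, for the sign-twist) root, matching $\lcm(2,d)$; the computation of these character fields is available in CHEVIE and recorded in the literature, so the obstacle is one of careful bookkeeping rather than genuine mathematical difficulty. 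Once the table is in place, the lemma follows immediately by inspection.
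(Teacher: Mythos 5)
Your proposal is correct and follows essentially the same route as the paper: reduce to a finite list of relative Weyl groups $W_d$ via the tables of Brou\'e--Malle--Michel, dispose of the Coxeter-group cases by rationality, and verify the conductor bound in the remaining cases by inspection of character tables (the paper cites a MAGMA computation where you propose CHEVIE and published character-field data --- this is the same appeal). The paper is more terse and explicitly splits into two cases according to whether $\Cent_\HF(\bS)$ is a torus or not, citing \cite[Appendix~1]{BrMi} for the torus case and \cite[Table~3]{S09} for the non-torus case; your proposal blurs this split, and your parenthetical "$d$ must be the order of an element of the Weyl group (being a regular number)" is not quite accurate as a characterization of when $\Phi_d$ divides the order polynomial (the non-regular $d$ are precisely those producing a non-toral $\Cent_\HF(\bS)$, and these do occur for exceptional types and need the second table). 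This does not break your argument since a complete table walk would catch all cases, but the two sources need to be consulted to make the enumeration complete.
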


	\begin{proof}
According to \cite[Appendix 1]{BrMi}, the group $W_d$ is often a Coxeter group, in which case all characters of $W_d$ are  rational-valued by \cite[5.3.8]{geckpfeiffer}. Hence, it is sufficient to consider the remaining cases.

If $\Cent_\HF(\bS)$ is a torus, the groups $W_d$ can be seen from \cite[Appendix 1]{BrMi}. Using calculations in MAGMA, we see that in each of these cases, $c(\chi)$ divides $\lcm(2,d)$ for all $\chi\in\irr(W_d)$. 
If $\Cent_\HF(\bS)$ is not a torus, then the groups $W_d$ can be seen from  \cite[Table 3]{S09} and from there and some computer calculation as above we see again the statement. 
\end{proof}

\begin{proof}[Proof of Theorem \ref{thm:Wdfixed}]
Let $d':=\lcm(2,d)$.	First, we observe that characters of a cyclic group of order $d'$ are $\cH_{[d]}$-invariant (see Lemma \ref{lem:numtheory}).  
	It is well known that characters of the symmetric group $\Sym_a$ are rational. From this, we see that the characters of the wreath product $\Cy_{d''}\wr \Sym_a$ are $\cH_{[d]}$-invariant for $d''$ dividing $d'$, by the construction of the characters of those groups given for example in \cite[Thm.~4.3.34]{jameskerber}. 
	
Now, if $\Cent_\HF(\bS)$ is a torus, the description of the groups $W_d$ are found in \cite[Appendix~1]{BrMi}. (See also \cite[Ex.~3.5.14, 3.5.15]{GeckMalle}.)
	In this case, the group $W_d$ coincides with the centralizer of a $d$-regular element $w\phi$ of the Weyl group $W$ of $\bH$. The tables list the isomorphism types of $\Cent_W(w\phi)$. 
	According to loc. cit, the group $W_d$ is a wreath product $\Cy_{d''}\wr \Sym_a$ with $d''\mid d'$ as discussed above, whenever $\bH$ is of type $\tA$, $\tB$ or $\tC$. 
	(Note that the authors denote by  $B^{(d'')}_a$ the group $\Cy_{d''}
\wr \Sym_a$.) By the above argument, we see the statement holds in these cases.

Whenever $\bH$ is of type $\tD$ and $W_d$ is not  a wreath product of the structure as mentioned above, we have $W_d$ is a subgroup of $\Cy_{d''}\wr \Sym_a $ of index $2$ given by the elements $(c_1,...,c_a)\si$, where each $c_i\in \Cy_{d''}$, $\si\in\Sym_a$, and  $\prod c_i=1$. In those cases, $d''$ divides  $d'=\lcm(2,d) $. 
Mimicking the construction of characters of wreath products, one can parametrize the characters of such a group as well and sees that its irreducible characters are $\cH_{[d]}$-invariant. 

We must now consider the case where $\Cent_\HF(\bS)$ is not a torus. In the following, we write
$W_d(\bH_1^F)$ for the quotient $\NNN_{\bH_1^F}(\bS_1)/\Cent_{\bH_1^F}(\bS_1)$, where
$(\bH_1,F)$ is a reductive group and
$\bS_1$ denotes a Sylow $d$-torus of $(\bH_1,F)$.
From the descriptions in \cite[Ex.~3.5.14, 3.5.15]{GeckMalle}, we see that the given $W_d$ also occurs as $W_d(\bH_1^F)$ for some reductive group $(\bH_1,F)$.
We have seen above that the characters of $W_d(\bH_1^F)$   are $\cH_{[d]}$-invariant and this proves our statement in all cases where $\bH$ is of classical type and $\HF\not \eq \tw 3 \tD_4(q)$. 
 The remaining cases finally follow from Lemma \ref{lem:exceptconductor}.
\end{proof}

\begin{cor}\label{cor:relweylfixed}
Let $(\bL,\la)$ be a unipotent $d$-cuspidal  pair of a finite reductive group $(\bH,F)$, where $F$ is a Frobenius endomorphism and $\bH$ is simple. Then every $\chi\in\Irr(W_\HF(\bL, \lambda))$ is $\galh_{[d]}$-invariant.
\end{cor}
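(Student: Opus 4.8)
The plan is to derive the corollary from Theorem~\ref{thm:Wdfixed}, Lemma~\ref{lem:exceptconductor} and Lemma~\ref{lem:numtheory}, once the isomorphism type of $W_\HF(\bL,\la)=\Norm_{\bH^F}(\bL,\la)/\bL^F$ has been made explicit. First I would observe that this group does not depend on the isogeny type of $\bH$: since $\bL$ is connected, $\Norm_{\bH^F}(\bL)/\bL^F=(\Norm_\bH(\bL)/\bL)^F$ is a subquotient of the Weyl group of $\bH$ equipped with its $F$-action, and the further passage to the stabilizer of the unipotent character $\la$ is governed by the same combinatorial data for every isogeny type (see \cite{GeckMalle}). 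Hence we may assume $\bH$ is simple of adjoint type. Note that $F$ stays a Frobenius endomorphism, so that Theorem~\ref{thm:Wdfixed} and Lemma~\ref{lem:exceptconductor} apply to $\bH$ and the type $\tw 3\tD_4$ is covered.

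Next I would invoke the classification of the relative Weyl groups of unipotent $d$-cuspidal pairs due to Broué--Malle--Michel (see \cite[Appendix~1]{BrMi}, \cite[3.5]{GeckMalle}, and \cite[Table~3]{S09} for the exceptional groups and $\tw 3\tD_4$). This identifies $W_\HF(\bL,\la)$ as a direct product of factors, each of which is one of the groups already handled in the proof of Theorem~\ref{thm:Wdfixed}: a cyclic group of order dividing $\lcm(2,d)$; a wreath product $\Cy_{d''}\wr\Sym_a$ with $d''\mid\lcm(2,d)$; the index-$2$ subgroup of such, consisting of the elements $(c_1,\dots,c_a)\sigma$ with $\prod c_i=1$, occurring in type $\tD$; a Coxeter group; or, for $\bH$ of exceptional type or $\HF=\tw 3\tD_4(q)$, one of finitely many exceptional complex reflection groups.

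Then the corollary follows exactly as in the proofs of Theorem~\ref{thm:Wdfixed} and Lemma~\ref{lem:exceptconductor}. A cyclic factor of order dividing $\lcm(2,d)$ has $\galh_{[d]}$-invariant irreducible characters by Lemma~\ref{lem:numtheory}. For $\Cy_{d''}\wr\Sym_a$ with $d''\mid\lcm(2,d)$, and for its type-$\tD$ index-$2$ subgroup, one parametrises the irreducible characters as in \cite[Thm.~4.3.34]{jameskerber} and uses the rationality of $\Irr(\Sym_a)$ to conclude $\galh_{[d]}$-invariance; a Coxeter group has a rational character table by \cite[5.3.8]{geckpfeiffer}. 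For an exceptional complex reflection group arising here, the computation of Lemma~\ref{lem:exceptconductor} --- applied to the groups listed in \cite[Table~3]{S09} and \cite[Appendix~1]{BrMi} --- shows that the conductor of every irreducible character divides $\lcm(2,d)$, so $\galh_{[d]}$-invariance again follows from Lemma~\ref{lem:numtheory}. Finally, an irreducible character of a direct product is an external tensor product of irreducible characters of the factors, and the $\gal$-action respects such tensor products, so every $\chi\in\Irr(W_\HF(\bL,\la))$ is $\galh_{[d]}$-invariant.

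The main obstacle is the input of the second paragraph, namely pinning down $W_\HF(\bL,\la)$ uniformly over all simple $\bH$, all $d$ and all unipotent $d$-cuspidal $\la$: this rests on the Broué--Malle--Michel classification and, in exceptional type, on a finite computer check of character conductors (as already performed for Lemma~\ref{lem:exceptconductor}). The reduction to adjoint type in the first paragraph is routine, but it should cite the isogeny-invariance of the $d$-Harish--Chandra data carefully.
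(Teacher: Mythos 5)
Your overall strategy is the same as the paper's: reduce the claim about $W_\HF(\bL,\la)$ to the computations already done for $W_d$ in Theorem~\ref{thm:Wdfixed} and Lemma~\ref{lem:exceptconductor}, classifying the possible groups case by case. Your explicit reduction to adjoint type is a reasonable clarification that the paper leaves implicit, and the core idea --- that in classical type $W_\HF(\bL,\la)$ has the same shape as some $W_d(\bH_1^F)$ and so Theorem~\ref{thm:Wdfixed} applies, while in exceptional type one checks conductors by computer --- is correct.

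However, there is a genuine gap in your handling of the exceptional types. You cite \cite[Appendix~1]{BrMi} and \cite[Table~3]{S09} as the source for the relative Weyl groups to be checked. Those tables classify $W_d = \Norm_{\HF}(\bS)/\Cent_{\HF}(\bS)$ for a Sylow $d$-torus $\bS$, i.e., the groups relevant to Theorem~\ref{thm:Wdfixed} and Lemma~\ref{lem:exceptconductor}. They do not list the groups $W_\HF(\bL,\la)$ attached to arbitrary unipotent $d$-cuspidal pairs $(\bL,\la)$ with $\bL$ a non-minimal $d$-split Levi and $\la$ a non-trivial cuspidal unipotent character: for example, for exceptional $\bH$ one also encounters relative Weyl groups such as Weyl groups of smaller exceptional systems or further complex reflection groups that appear only for non-torus, non-minimal $\bL$. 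The correct source, and the one the paper uses, is \cite[Table~1]{BMM} (Broué--Malle--Michel, \emph{Generic blocks of finite reductive groups}), together with Ennola duality \cite[Thm.~3.3]{BMM} to reduce the number of cases; note that [BrMi] and [BMM] are distinct references and you appear to have conflated them. A related, smaller imprecision is the phrase ``direct product of factors'': in classical type $W_\HF(\bL,\la)$ is typically a single reflection group of shape $G(de,e,r)$ (not a nontrivial direct product), and the paper's cleaner formulation is that $W_\HF(\bL,\la)$ coincides with $W_d(\bH_1^F)$ for a smaller reductive group $\bH_1$, so that Theorem~\ref{thm:Wdfixed} applies directly. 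Your argument becomes correct once the exceptional-type check is run against the right table.
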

\begin{proof}
From Theorem \ref{thm:Wdfixed}, the characters of $W_d$ are $\cH_{[d]}$-fixed. Then if $W_d$ is cyclic or $W_\HF(\bL,\la)=W_d(\bH_1^F)$ for some $\bH_1$, we know that  the characters in $\Irr(W_\HF(\bL,\la))$ are $\cH_{[d]}$-invariant. This then covers the cases that $\bL$ is a torus or $\bH$ is of classical type. (The latter can be seen from the description of $\bL$ in \cite[Ex.~3.5.14, 3.5.15]{GeckMalle} and the discussion in \cite[Ex.~3.5.29]{GeckMalle}; see also \cite[pp.~48--51]{BMM}.) If instead $\bH$ is of exceptional type, $W_d$ is noncyclic, and $\bL$ is not a torus, then the groups $W_\HF(\bL, \lambda)$ not covered in Lemma \ref{lem:exceptconductor} are found in \cite[Table 1]{BMM}, or by the Ennola duality \cite[Thm.~3.3]{BMM}, and we see that the conductors still divide $\lcm(2,d)$, using the same type of computer calculations as in Lemma \ref{lem:exceptconductor}.
\end{proof}

    \subsection{$d$-Harish-Chandra Theory and Unipotent Characters}\label{sec:Ibijection}
We next study certain unipotent characters of a finite reductive group $(\bH, F)$, which in our situation will be strongly related to those of degree prime to $\ell$.

\begin{defi}
Let $d$ be a positive integer. We write	$\UCh_{[\Phi_d]'}(\HF)$ for the subset of $\UCh(\HF)$ comprised of those unipotent characters whose generic polynomial, defined as in \cite[(1.31)]{BMM}, is not divisible by $\Phi_d$. 
	
	Let $(\bL,\la)$ be a unipotent $d$-cuspidal  pair of $(\bH,F)$ with $\la\in\UCh_{[\Phi_d]'}(\bL^F)$. Then let
	$\Irr_{[d]'}(W_\HF(\bL,\la))$ be the set of characters $\eta\in \Irr(W_\HF(\bL,\la))$, such that the polynomial degree $\operatorname{D}_\eta$ associated to $\eta$ as in \cite[Thm.~4.2]{Ma07} 
	is not divisible by $\Phi_d$.
	
	 Let $\UCh_{[\Phi_d]'}(\HF \mid (\bL,\la) )=\UCh(\HF \mid (\bL,\la) )\cap \UCh_{[\Phi_d]'}(\HF).$
	\end{defi}

In \cite[Thm.~3.2]{BMM}, it is shown that there is a bijection 
$\mathrm I_{(\bL,\la)}^{(\bH)}: \UCh(\HF \mid (\bL,\la) )\lra \Irr(W_\HF(\bL,\la)).$
By \cite[Thm.~3.4]{CS13}, this bijection is further $\Aut(\HF)_{(\bL, \lambda)}$-equivariant. We next extend this to consider the action of $\galh_\ell$. We let  $E(\bH^F)$ denote a suitable group of graph-field automorphisms for $\bH^F$, as constructed in \cite[2.C]{CS24}.

\begin{prop}\label{prop:HCgalequiv}
	Keep the situation of Theorem \ref{thm:Wdfixed}, where now $d:=d_\ell(q)$ for some odd prime $\ell\neq p$.	Let $(\bL,\la)$ be 
a unipotent $d$-cuspidal pair such that $\bL:=\Cent_{\bH}(\bS)$ is the centralizer of a Sylow $\Phi_d$-torus. Then the bijection $\mathrm I_{(\bL,\la)}^{(\bH)}$ above restricts to an $(\HF\rtimes E(\HF))_{(\bL,\la)}\times \galh_\ell$-equivariant bijection
	  \[ \mathrm I_{(\bL,\la)}^{(\bH)}: \UCh_{[\Phi_d]'}(\HF \mid (\bL,\la) )\lra \Irr_{[d]'}(W_\HF(\bL,\la)).\]
In fact, $\galh_\ell$ acts trivially on both sets, as well as on $\lambda$.
\end{prop}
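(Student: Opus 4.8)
The plan is to reduce the statement to the two facts established earlier in the section, namely the $\Aut(\HF)_{(\bL,\la)}$-equivariance of $\mathrm I_{(\bL,\la)}^{(\bH)}$ from \cite[Thm.~3.4]{CS13} and the $\galh_{[d]}$-invariance of $\Irr(W_\HF(\bL,\la))$ from Corollary \ref{cor:relweylfixed}, together with the observation that $\galh_\ell\subseteq\galh_{[d]}$ by Lemma \ref{lem:numtheory}\ref{H}. First I would record that, since $\bL=\Cent_\bH(\bS)$ is the centralizer of a Sylow $\Phi_d$-torus, the unipotent $d$-cuspidal character $\la$ is the unique unipotent character appearing, and that $\la$ has rational (indeed values in a cyclotomic field controlled by $\lcm(2,d)$) character field; more importantly, because its degree is a $\Phi_d$-part times a prime-to-$\Phi_d$ polynomial evaluated at $q$, and $d=d_\ell(q)$, one checks via the degree formula that $\la$ lies in $\UCh_{[\Phi_d]'}(\bL^F)$ exactly when $\bL^F$ itself is (up to central torus factors) essentially $\Phi_d$-controlled, so that $\la$ is a linear-degree or Steinberg-type character of the small group $\Cent_\HF(\bS)/(\text{torus})$; in all cases $\la$ is $\galh_\ell$-fixed because its field of values is generated by roots of unity of order dividing $\lcm(2,d)$, which $\galh_\ell$ fixes by Lemma \ref{lem:numtheory}. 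This gives the last sentence of the statement regarding $\lambda$.

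Next I would argue that $\mathrm I_{(\bL,\la)}^{(\bH)}$ restricts to the asserted subsets. The bijection $\mathrm I_{(\bL,\la)}^{(\bH)}$ matches the generic degree of $\chi\in\UCh(\HF\mid(\bL,\la))$ with the polynomial degree $\operatorname{D}_\eta$ of $\eta=\mathrm I_{(\bL,\la)}^{(\bH)}(\chi)$ up to the (fixed) generic degree of $\la$ and a common power of $\Phi_d$ coming from $|\bS|$; hence divisibility by $\Phi_d$ of the generic polynomial of $\chi$ corresponds precisely to divisibility of $\operatorname{D}_\eta$ by $\Phi_d$. This is essentially \cite[Thm.~4.2]{Ma07} combined with \cite[Thm.~3.2]{BMM}, and it shows $\mathrm I_{(\bL,\la)}^{(\bH)}$ carries $\UCh_{[\Phi_d]'}(\HF\mid(\bL,\la))$ onto $\Irr_{[d]'}(W_\HF(\bL,\la))$, which is the content of the displayed bijection.

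For the equivariance, the $(\HF\rtimes E(\HF))_{(\bL,\la)}$-part is inherited directly from \cite[Thm.~3.4]{CS13} (noting $E(\HF)$ is built from graph-field automorphisms as in \cite[2.C]{CS24}, a subgroup of the automorphisms handled there), since both subsets are manifestly stable under these automorphisms — stability of $\UCh_{[\Phi_d]'}$ because automorphisms preserve generic degrees, and of $\Irr_{[d]'}(W_\HF(\bL,\la))$ because they act on $W_\HF(\bL,\la)$ preserving $\operatorname{D}_\eta$. For the $\galh_\ell$-part, the key point is that $\galh_\ell$ acts trivially on the target: by Corollary \ref{cor:relweylfixed} every $\eta\in\Irr(W_\HF(\bL,\la))$ is $\galh_{[d]}$-invariant, and $\galh_\ell\subseteq\galh_{[d]}$. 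Now I would invoke that $\mathrm I_{(\bL,\la)}^{(\bH)}$ is compatible with the Galois action in the sense that it sends a Galois conjugate $\chi^\sigma$ (which still lies in the same $d$-Harish-Chandra series, as $(\bL,\la)$ is $\sigma$-fixed) to $(\mathrm I_{(\bL,\la)}^{(\bH)}(\chi))^\sigma$; this compatibility follows from the fact that the bijection is characterized by numerical data (the decomposition of $R_\bL^\bH(\la)$ and the generic degrees) that is Galois-stable, or alternatively from the uniformity/normalization of Malle's construction in \cite{Ma07}. Granting that, $\mathrm I_{(\bL,\la)}^{(\bH)}(\chi^\sigma)=\mathrm I_{(\bL,\la)}^{(\bH)}(\chi)^\sigma=\mathrm I_{(\bL,\la)}^{(\bH)}(\chi)$ for $\sigma\in\galh_\ell$, and since $\mathrm I_{(\bL,\la)}^{(\bH)}$ is a bijection this forces $\chi^\sigma=\chi$. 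Thus $\galh_\ell$ acts trivially on $\UCh_{[\Phi_d]'}(\HF\mid(\bL,\la))$ as well, and the combined $(\HF\rtimes E(\HF))_{(\bL,\la)}\times\galh_\ell$-equivariance is immediate. The main obstacle I anticipate is establishing cleanly that $\mathrm I_{(\bL,\la)}^{(\bH)}$ intertwines the Galois action — i.e.\ that Malle's parametrizing bijection is defined over $\QQ$ in the appropriate sense — rather than merely the automorphism action; one likely needs to unwind the normalization in \cite[\S 4]{Ma07} and check that the relevant $4$-cocycle or sign choices, and the ordering of $d$-Harish-Chandra series, are Galois-invariant, which is exactly where the field-of-values computations underlying Lemma \ref{lem:exceptconductor} and Theorem \ref{thm:Wdfixed} are needed.
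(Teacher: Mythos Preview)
Your overall architecture is backwards at the crucial step, and the gap you yourself flag at the end is fatal rather than a technicality. You propose to deduce that $\galh_\ell$ acts trivially on $\UCh_{[\Phi_d]'}(\HF\mid(\bL,\la))$ from the Galois-equivariance of $\mathrm I_{(\bL,\la)}^{(\bH)}$ together with triviality on the target; but the Galois-equivariance of $\mathrm I_{(\bL,\la)}^{(\bH)}$ is not established in \cite{BMM}, \cite{Ma07}, or \cite{CS13}, and is not a consequence of ``numerical data that is Galois-stable''. The multiplicities $\langle\chi,R_\bL^\bH(\la)\rangle$ are indeed integers, but they do not uniquely determine the bijection $\chi\leftrightarrow\eta$, so knowing that $\chi^\sigma$ lies in the same series with the same multiplicity does not give $\mathrm I_{(\bL,\la)}^{(\bH)}(\chi^\sigma)=\mathrm I_{(\bL,\la)}^{(\bH)}(\chi)^\sigma$. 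Unwinding the normalisations in \cite{Ma07} to prove this directly would be a substantial separate argument, not a routine check.

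The paper avoids this entirely by proving the triviality of the $\galh_\ell$-action on $\UCh_{[\Phi_d]'}(\HF)$ \emph{directly}, via field-of-values computations: one invokes \cite[Prop.~5.6]{geck03}, which says $\QQ(\chi)=\QQ(\psi)$ for $\psi$ the cuspidal character in the ordinary Harish--Chandra series of $\chi$ (up to a short list of exceptions), and then runs through the finitely many irrational cuspidal unipotent characters, checking in each case that either the relevant root of unity has order dividing $d$, or $\QQ(\chi)=\QQ(\sqrt{\pm q})$ with the sign governed by the parity structure of $d$ so that Lemma~\ref{lem:numtheory}\ref{2msqrt},\ref{sqrt} applies. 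Once both source and target are shown to carry trivial $\galh_\ell$-action, the equivariance is vacuous; the triviality on $\lambda$ then follows by applying the same argument to the components of the adjoint quotient of $\bL$. Your sketch for $\lambda$ (``linear-degree or Steinberg-type'') is also too loose: $\bL^F$ can have nontrivial semisimple part with genuinely irrational cuspidal unipotents, and it is precisely the field-of-values argument above, not a structural triviality, that handles it.
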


\begin{proof}
Note that by the degree formula in \cite[Thm.~4.2]{Ma07}, the bijection $\mathrm I_{(\bL,\la)}^{(\bH)}$ from \cite{BMM} indeed induces a bijection on the given subsets. As noted above, it is $(\HF\rtimes E(\HF))_{(\bL,\la)}$-equivariant by \cite[Thm.~3.4]{CS13}.
From Corollary \ref{cor:relweylfixed}, we see that each character of $\irr(W_\HF(\bL, \lambda))$ is $\galh_{[d]}$-invariant, and by Lemma \ref{H}, recall that $\cH_\ell\subseteq \cH_{[d]}$.  It then suffices to show that each member of $\UCh_{[\Phi_d']}(\bH^F)$ is $\galh_\ell$-invariant. (Note that when applied to the components of the adjoint version of $\bL$, this will also yield $\lambda$ is $\galh_\ell$-invariant.)

Let $\chi$ be a unipotent character of $\bH^F$ lying in a Harish-Chandra series $\cE(\bH^F, (\bM, \psi))$ with $\psi$ a cuspidal unipotent character of $\bM^F$, where $\bM$ is an $F$-stable Levi subgroup of $\bH$. From \cite[Prop.~5.6]{geck03}, we have $\QQ(\chi)=\QQ(\psi)$, with two exceptions $\phi_{512, 11}, \phi_{512,12}$ in $\tE_7(q)$ and four exceptions from the $\phi_{4096,\ast}$ family in $\tE_8(q)$.  (Here we use the notation for unipotent characters as in \cite[Ch.~13]{Carter}.) In these exceptional cases, we instead have $\QQ(\chi)=\QQ(\sqrt{q})$. Now, a character $\chi$ occurs in $\UCh_{[\Phi_d]'}(\HF)$ only for integers $d$ that do not appear in the degree polynomials. We remark that, from the list of degree polynomials given in \cite[Sec.~13.9]{Carter}, we see that the degrees of these exceptions are divisible by $\Phi_m$ for each $m$ such that $m$ is even and $\Phi_m$ divides the order polynomial of $\HF$. That is, these exceptions occur in $\UCh_{[\Phi_d]'}(\HF)$ only for odd $d$. Hence by Lemma \ref{sqrt}, we see that these characters are fixed by $\galh_\ell$.

Further,  \cite[Table 1]{geck03} details which cuspidal unipotent characters in $\UCh_{[\Phi_d]'}(\HF)$ have non-rational values.  We summarize these in Table \ref{tab:irrationalcuspidal}. (There, we use $\zeta_n$ to denote an $n$th root of unity and we use the notation of \cite{Carter} for the unipotent characters.)

In all of these cases aside from those of $\tE_7(q)$, we see that the characters are fixed by $\galh_{[d]}$ by definition. In the case of the cuspidal characters of $\tE_7(q)$ with field of values $\QQ(\sqrt{-q})$ when $d\in\{2, 6,10,14,18\}$,  we have $\sqrt{-q}$ is fixed by $\galh_\ell$  by Lemma \ref{2msqrt}. Hence we are left to consider the case $\bM\neq \bH$.

Now suppose $\psi$ is a cuspidal unipotent character as in Table \ref{tab:irrationalcuspidal} of some proper split Levi subgroup $\bM^F$ of $\bH^F$ and such that there is some $\chi\in\cE(\bH^F, (\bM, \psi))\cap \UCh_{[\Phi_d']}(\bH^F)$.  Then we see that $\psi$ is one of the $\tE_6$ or $\tE_7$ characters in the table, and $\bH$ is of type $\tE_7$ or $\tE_8$ in the first case and of type $\tE_8$ in the latter. Further, in the cases that $\psi$ is  $E_6[\theta]$ or $E_6[\theta^2]$, we see from the list of character degrees in \cite[Sec.~13.9]{Carter} that we have $\chi$ is in $\UCh_{[\Phi_d']}(\bH^F)$ only for values of $d$ divisible by $3$. Hence $\QQ(\chi)=\QQ(\psi)=\QQ(\zeta_3)$ is still in the fixed field for $\galh_\ell$. Similarly, in the case $\psi$ is $E_7[\xi]$ or $E_7[-\xi]$, $\chi\in\UCh_{[\Phi_d']}(\bH^F)$ only for values of $d$ of the form $2m$ with $m$ odd, and hence $\QQ(\chi)=\QQ(\psi)=\QQ(\sqrt{-q})$ is in the fixed field of $\galh_\ell$ by Lemma \ref{2msqrt}, completing the proof.
\end{proof}

\begin{center}
\begin{table}[h]
\centering
\begin{tabular}{|c|c|c|c|}
\hline
$H=\bH^F$ & $d$ & $\chi\in\UCh_{[\Phi_d]'}(H)$ cuspidal & $\QQ(\chi)$  \\
\hline 
 $\tG_2(q)$ & $3, 6$ & $G_2[\theta]$, $G_2[\theta^2]$ & $\QQ(\zeta_3)$\\
 \hline
 $\tF_4(q)$ &  $4, 8, 12$ & $F_4[i], F_4[-i]$ & $\QQ(\zeta_4)$\\
 &   $3, 6, 12$ & $F_4[\theta], F_4[\theta^2]$ & $\QQ(\zeta_3)$ \\
 \hline
 $\tE_6(q)$ & $3, 6, 9, 12$ &  $E_6[\theta]$, $E_6[\theta^2]$ & $\QQ(\zeta_3)$ \\
 \hline
 $\tw{2}\tE_6(q)$ & $3, 6, 18, 12$ &  $\tw{2}E_6[\theta]$, $\tw{2}E_6[\theta^2]$ & $\QQ(\zeta_3)$ \\
 \hline
$\tE_7(q)$ & $2, 6, 10, 14, 18$ & $E_7[\xi], E_7[-\xi]$ &  $\QQ(\sqrt{-q})$ \\
 \hline
 $\tE_8(q)$ & $3, 6, 12, 15, 18$ & $E_8[\theta]$, $E_8[\theta^2]$ & $\QQ(\zeta_3)$ \\
 & $6, 18, 24, 30$ & $E_8[-\theta], E_8[-\theta^2]$ & $\QQ(\zeta_3)$\\
 & $4, 8, 12, 20, 24$ & $E_8[i], E_8[-i]$ & $\QQ(\zeta_4)$\\
 & $5, 10, 15, 20, 30$ & $E_8[\zeta], E_8[\zeta^2], E_8[\zeta^3], E_8[\zeta^4]$ & $\QQ(\zeta_5)$\\
 \hline
 
\end{tabular}\caption{Irrational fields of character values for cuspidal characters in $\UCh_{[\Phi_d]'}(H)$}\label{tab:irrationalcuspidal}
\end{table}
\end{center}

\begin{corollary}\label{cor:HCgalequiv}
Let $(\bH, F)$ be a finite reductive group with $F$ a Frobenius morphism defining an $\FF_q$-structure on $\bH$. Let $\ell\neq p$ be an odd prime and write $d:=d_\ell(q)$. Let $\bS$ be a Sylow $d$-torus of $(\bH, F)$, $\bL:=\Cent_\bH(\bS)$ a minimal $d$-split Levi subgroup, and $(\bL, \lambda)$ 
a unipotent $d$-cuspidal pair. Then there is an $ (\HF\rtimes E(\HF))_{(\bL,\la)}\times \galh_\ell$-equivariant bijection 
\[ \mathrm I_{(\bL,\la)}^{(\bH)}: \UCh_{[\Phi_d]'}(\HF \mid (\bL,\la) )\lra \Irr_{[d]'}(W_\HF(\bL,\la)).\] 
In fact, $\galh_\ell$ acts trivially on $\lambda$ and both sets. 
\end{corollary}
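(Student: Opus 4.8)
The plan is to deduce Corollary~\ref{cor:HCgalequiv} from Proposition~\ref{prop:HCgalequiv} by a reduction to the case where $\bH$ is \emph{simple adjoint} and $F$ a Frobenius endomorphism, which is exactly the hypothesis of Theorem~\ref{thm:Wdfixed} and Proposition~\ref{prop:HCgalequiv}. First I would observe that the bijection $\mathrm I_{(\bL,\la)}^{(\bH)}$ of \cite[Thm.~3.2]{BMM}, its $\Aut(\HF)_{(\bL,\la)}$-equivariance from \cite[Thm.~3.4]{CS13}, and its restriction to the $[\Phi_d]'$-subsets (via the degree formula \cite[Thm.~4.2]{Ma07}) are all available in general; the only new content over \cite{CS13} is the commutation with $\galh_\ell$, and by Corollary~\ref{cor:relweylfixed} the target side is already $\galh_{[d]}$-invariant (hence $\galh_\ell$-invariant by Lemma~\ref{H}), so everything reduces to showing that every $\chi\in\UCh_{[\Phi_d]'}(\HF)$ is $\galh_\ell$-fixed and that $\lambda$ is $\galh_\ell$-fixed.

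Next I would perform the standard reductions on $\bH$. Unipotent characters, their fields of values, $d$-tori, $d$-split Levi subgroups, the relative Weyl group, and the bijection $\mathrm I$ are all compatible with: (a) passing to the adjoint quotient $\bH\to\bH_{\mathrm{ad}}$ (unipotent characters factor through the adjoint group and the relevant data is unchanged), and (b) decomposing $\bH_{\mathrm{ad}}$ as a product of $F$-orbits of simple factors, on each of which $F$ induces a Frobenius (resp.\ Steinberg) endomorphism of a single simple adjoint group of the appropriate rank. Since character values, the $[\Phi_d]'$-condition, and Galois actions are all multiplicative across such a product decomposition, it suffices to treat a single $F$-orbit, i.e.\ a simple adjoint group with Frobenius $F$ over $\FF_{q^a}$ for some $a$. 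This is precisely the setting of Proposition~\ref{prop:HCgalequiv} — applied with $q$ replaced by $q^a$, noting $d_\ell(q^a)\mid d_\ell(q)$ so the relevant $d$ is again $d_\ell$ of the base field of that factor — and it also directly yields that $\galh_\ell$ fixes $\lambda$, since $\lambda$ restricted to the components of the adjoint version of $\bL$ is a product of unipotent characters of $[\Phi_d]'$-type of smaller simple adjoint groups, handled by the same Proposition.

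I would then remark on the one point of care: Proposition~\ref{prop:HCgalequiv} is stated for $F$ a Frobenius endomorphism that is \emph{not Steinberg}, so one must check that no exceptional Suzuki/Ree factor can occur with $d=d_\ell(q)$ for an \emph{odd} prime $\ell\neq p$ and still contribute an element of $\UCh_{[\Phi_d]'}$; but here the ambient hypothesis of Corollary~\ref{cor:HCgalequiv} already posits $F$ a Frobenius morphism defining an $\FF_q$-structure on all of $\bH$, so each simple factor in the orbit decomposition inherits a Frobenius endomorphism and the Steinberg case simply does not arise. With that dispatched, Proposition~\ref{prop:HCgalequiv} applies to each factor and the product of the resulting equivariant bijections is the desired $(\HF\rtimes E(\HF))_{(\bL,\la)}\times\galh_\ell$-equivariant bijection, with $\galh_\ell$ acting trivially on $\lambda$ and on both sides by construction.

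The only genuine obstacle is bookkeeping: one must be sure that the group $E(\HF)$ of graph-field automorphisms, which is defined for the possibly-nonsimple $\HF$, restricts compatibly to the product-of-orbits decomposition so that the $(\HF\rtimes E(\HF))_{(\bL,\la)}$-equivariance of the product bijection really follows from the factor-wise $(\bH_i^{F}\rtimes E(\bH_i^{F}))_{(\bL_i,\la_i)}$-equivariance together with the permutation action of $E(\HF)$ on the $F$-orbits of simple factors. This is routine given the construction of $E(\HF)$ in \cite[2.C]{CS24} and the analogous reductions already used in \cite{CS13, CS17A}, but it is the step where one should be explicit rather than wave hands, and I would cite \cite[Ex.~3.5.14, 3.5.15, 3.5.29]{GeckMalle} for the compatibility of the $d$-Harish--Chandra data with these decompositions.
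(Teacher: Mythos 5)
Your proposal is correct and follows essentially the same route as the paper's own proof, which simply says ``By arguing exactly as in the first two paragraphs of the proof of \cite[Thm.~3.4]{CS13}, we may assume that $\bG$ is simple of adjoint type, and apply Proposition~\ref{prop:HCgalequiv}.'' You have merely unpacked the cited reduction (adjoint quotient, $F$-orbit decomposition, the $q\mapsto q^a$ / $d\mapsto d_\ell(q^a)$ bookkeeping, and the observation that no Suzuki/Ree factor arises since $F$ is assumed Frobenius) rather than quoting \cite{CS13} for it.
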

\begin{proof}
By arguing exactly as in the first two paragraphs of the proof of \cite[Thm.~3.4]{CS13}, we may assume that $\bG$ is simple of adjoint type, and apply Proposition \ref{prop:HCgalequiv}.
\end{proof}

\subsection{Constructing the bijection $\wt \Omega$}\label{sec:bijpfthmA}

We discuss here the bijection analogous to that of \cite[Sec.~6]{CS17A}, which generalizes that from \cite[Sec.~4]{CS13}, which in turn builds upon that from \cite[Sec.~7]{Ma07}.

Throughout the remainder of the section, we let $\bG$ be a connected reductive group of simply connected type, $F\colon \bG\rightarrow\bG$ a Frobenius morphism such that $(\bG, F)$ is defined over $\FF_q$, let $\ell$ be an odd prime not dividing $q$, and write $d:=d_\ell(q)$.  
Let $\bG\hookrightarrow \wt\bG$ be a regular embedding, as in \cite[Sec.~15.1]{CE04} with $F$ extended as a Frobenius endomorphism of $\wt\bG$. Denote by $\pi^*\colon \wt\bG^*\to\bG^*$ the epimorphism dual to the regular embedding. Note that the diagonal automorphisms of $G=\bG^F$ are induced by conjugation by $\wt G:=\wt\bG^F$. We also denote by $E(\bG^F)$ the group of graph and field automorphisms as, for instance, introduced in \cite[Notation 1.2]{typeD1}.

Given a semisimple element $s\in\bG^{\ast F}$, we let $\cE(\bG^F, s)$ denote the rational Lusztig series corresponding to $s$, and similar for $\wt\bG$.

Let $\bS$ be a Sylow $d$-torus of $(\bG, F)$ and write $\wt \bN:=\Norm_{\wt\bG}(\bS)$, $\wt N:=\wt{\bN}^F$, $\bC:=\Cent_{\wt\bG}(\bS)$, and $C:=\bC^F$. Let $\bC^\ast\leq \wt\bG^\ast$ in duality with $\bC$, so that there is a Sylow $d$-torus $\bS^\ast$ of $(\bG^\ast, F)$ such that $\bC^\ast:=\Cent_{\wt \bG^\ast}(\bS^\ast)$, using \cite[Lem.~3.3]{Ma07}.

Now, by  \cite[Prop. 6.3]{CS17A} and its proof, we have $\Irr(\wGF\mid\Irrl(\GF))$ is in bijection with the set of $(\wt\bG^*)^F$-conjugacy classes of tuples 
$(\bS^\ast, s, \lambda, \eta)$, where 
\begin{itemize}
\item $\bS^\ast$ is a Sylow $d$-torus of $\bG^\ast$; 
\item $s$ a semisimple element of $\bC^{\ast F}$ such that $|\bG^{\ast F}/\Cent_{\bG^\ast}(\pi(s))^F|_\ell =1$ ; 
\item $\lambda\in\UCh(\Cent_{\bC^\ast}(s)^F)\cap \Irrl(\Cent_{\bC^\ast}(s)^F)$; 
\item  $\eta\in\Irrl(W_{\Cent_{\wt\bG^\ast}(s)}(\Cent_{\bC^\ast}(s))^F_\lambda)$, where $W_X(Y):=\Norm_X(Y)/Y$, and
\item any $x\in \Cent_{\bG^\ast}(\pi(s))^F_\ell$ sends $(\bS^*,\lambda ,\eta)$ to a $\Cent_{\bG^\ast}^\circ (\pi(s))^F$-conjugate.
\end{itemize}
Moreover, $\Irr(\wt N\mid\Irrl(N))$ is in bijection  with the tuples $(s, \la, \eta)$ such that $(\bS^\ast, s, \la, \eta)$ belongs to the above set.

\smallskip

We assume that 
for 
\begin{equation*}
\mathfrak{C}:= \{ \chi\in  \Irr(\Cent_\wGF(\bS))\mid  \Irr(N\mid \restr \chi|{\Cent_\GF(\bS)}) \cap \Irrl(N)\neq \emptyset \}
\end{equation*}
there exists some extension map $\wt \Lambda$ for $\mathfrak{C}$ with respect to $C=\Cent_\wGF(\bS)\lhd \wt{N}$ (which, in the context of Theorem \ref{thm:thmA} will be further assumed to be $\galh\ltimes (\Irr(\wt N/N)\rtimes  (\GF\EGF)_\bS)$-equivariant). 
The corresponding bijections are given by the following, using the maps recalled in the preceding section:
\[(\bS^\ast, s, \lambda, \eta)\mapsto \chi_{s,\psi}^{\wt\bG}\in\Irr(\wGF\mid\Irrl(\GF))\cap\mathcal{E}(\wGF, s)\] 
with 
\[\psi:= \mathrm I_{(\Cent_{\bC^\ast}(s),\la)}^{(\Cent_{\wt\bG^\ast}(s))}(\eta)\in \UCh(\Cent_{\wt\bG^\ast}(s)^F \mid (\Cent_{\bC^\ast}(s),\la))\]
and
\[(\bS^\ast, s, \lambda, \eta)\mapsto \Ind_{\wt{N}_{\chi_{s,\lambda}^\bC}}^{\wt N}(\wt\Lambda(\chi_{s,\lambda}^\bC)\cdot\eta)\in \Irr(\wt N\mid\Irrl(N)).\]
Here we use the notation $\chi_{s,\lambda}^\bC$ and $\chi_{s,\psi}^{\wt\bG}$ to denote the unique Jordan decompositions of \cite{DM90}. In addition, $\wt{N}_{\chi_{s,\lambda}^\bC}$ is the stabiliser in $\wt N$ of the character $\chi_{s,\lambda}^\bC\in\mathcal{E}(C,s)$ and \cite[Cor.~3.3]{CS13} yields that we may identify $W_{\Cent_{\wt\bG^\ast}(s)}(\Cent_{\bC^\ast}(s))^F_\lambda$ with $\wt{N}_{\chi_{s,\lambda}^\bC}/C$.

We are now prepared to prove Theorem \ref{thm:thmA} from the introduction.

\begin{proof}[Proof of \Cref{thm:thmA}]
By \cite[Prop.~4.2]{CS13} and \cite[Prop.~6.3]{CS17A}, the above indeed gives a bijection $\wt{\Omega}$.  
 Further, by \cite[Thm.~4.5]{CS13} and \cite[Thm.~6.1]{CS17A}, this bijection can further be chosen to be $\Irr(\wGF/\GF)\rtimes\EGF$-equivariant under the assumption that $\wt\Lambda$ is $\Irr(\wt{N}/N)\rtimes\EGF$-equivariant, and satisfies the remaining properties, aside from $\cH_\ell$-equivariance.  
 
Hence, it now suffices to show that $\wt{\Omega}$ is $\cH_\ell$-equivariant. Note that 
\begin{equation*}
\UCh(\Cent_{\bC^\ast}(s)^F)\cap \Irrl(\Cent_{\bC^\ast}(s)^F)\subseteq \UCh_{[\Phi_d']}(\Cent_{\bC^\ast}(s)^F).
\end{equation*}
Now, by \cite{SV20}, the unique Jordan decomposition for $C$ and for $\wt\bG^F$ from \cite{DM90} is equivariant with respect to $\gal$, in that $(\chi_{s,\psi}^{\wt\bG})^\sigma
=\chi_{s^\sigma, \psi^\sigma}^{\wt\bG}$ and $(\chi_{s,\lambda}^{\bC})^\sigma=\chi_{s^\sigma, \lambda^\sigma}^{\bC}$ for each $\sigma\in\gal$. Here we define $s^\sigma$ as $s^k$, where for the order $o(s)$ of $s$,  $\sigma$ maps $o(s)$-th roots of unity $\zeta_{o(s)}$ to $\zeta_{o(s)}^k$. Then the $\galh_\ell$-equivariance of $\wt\Omega$ follows from the existence of the $\galh_\ell$-equivariant map $\wt{\Lambda}$, together with \Cref{cor:HCgalequiv}.
\end{proof}

\section{The transversal for $\GF\in \{\SL_n(q), \SU_n(q)\}$}
\label{sec:transversal}
The aim of this section is to prove \Cref{thm:typeAtransversalglobal} below. This statement claims that an $E(\GF)$-stable  $\wGF$-transversal in $\Irr(\GF)$ exists that is additionally stable under a suitable subgroup of $\galh_\ell\times \Aut(G)$. For its construction, we apply Generalized Gelfand-Graev Representations that have been shown to be useful in similar contexts in \cite[\S 4]{CS17A}, \cite{Tay}, and \cite{SFT18}.

\subsection{Properties of generalized Gelfand-Graev characters}\label{sec:GGGR}
Let $\GG_m$ be the multiplicative group of $\FF = \overline{\FF}_p$. If $\bH$ is an algebraic group over $\FF$, then we denote by $\cX(\bH)$ the group of characters $\bH \to \GG_m$ and by $\cX^{\vee}(\bH)$ the set of cocharacters $\GG_m \to \bH$. In the case that $\bH$ is a torus, we write $\langle-,-\rangle : \cX(\bH) \times \cX^{\vee}(\bH) \to \ZZ$ for the natural perfect pairing and similarly for its extension by scalars
\begin{equation*}
\langle -,-\rangle : \cV(\bH) \times \cV^{\vee}(\bH) \to \QQ,
\end{equation*}
where $\cV(\bH) := \QQ\otimes_{\ZZ} \cX(\bH)$ and $\cV^{\vee}(\bH) := \QQ\otimes_{\ZZ} \cX^{\vee}(\bH)$. We identify $\cX(\bH)$ and $\cX^{\vee}(\bH)$ with their canonical respective images in $\cV(\bH)$ and $\cV^{\vee}(\bH)$.

Assume $\bG$ is a connected reductive algebraic group, and let $\bT \leqslant \bB \leqslant \bG$ be a maximal torus and Borel subgroup of $\bG$. The pair $(\bT,\bB)$ determines a set of simple and positive roots $\Delta \subseteq \Phi^+ \subseteq \Phi \subseteq \cX(\bT)$. If $\alpha^{\vee} \in \Phi^{\vee} \subseteq \cX^{\vee}(\bT)$ is the coroot corresponding to $\alpha \in \Phi$, then we have a cocharacter
\begin{equation}\label{eq:principal-cochar}
\lambda_{(\bT,\bB)} = \sum_{\alpha \in \Phi^+} \wc\alpha = 2\sum_{\alpha \in \Delta} \wc\omega_{\alpha} \in \cX^{\vee}(\bT)
\end{equation}
where $\wc\omega_{\alpha} \in \langle \Phi^{\vee}\rangle_{\QQ} \subseteq \cV^{\vee}(\bT)$ is the fundamental dominant coweight determined by $\alpha$. In other words, $\langle \alpha,\wc{\omega}_{\beta}\rangle = \delta_{\alpha,\beta}$ is the Kronecker delta for any $\alpha,\beta \in \Delta$.

The cocharacters $\lambda_{(\bT,\bB)}$ arising in \cref{eq:principal-cochar}
will be called the \emph{principal cocharacters of $\bG$}. As any two pairs
$(\bT,\bB)$ are $\bG$-conjugate, the principal cocharacters form a single orbit
for the action of $\bG$ on $\cX^{\vee}(\bG)$ by conjugation.

\begin{lem}
If $\lambda \in \cX^{\vee}(\bG)$ is a principal cocharacter, then $z_{\bG} :=
\lambda(-1) \in \rZ(\bG)$ is a central element satisfying $(z_{\bG})^2 = 1$. It
is independent of $\lambda$.
\end{lem}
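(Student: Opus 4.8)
The plan is to unpack the definition of the principal cocharacter and compute directly. First I would recall that a cocharacter $\lambda\in\cX^\vee(\bG)$ takes values in any maximal torus $\bT$ containing its image; choosing $\lambda=\lambda_{(\bT,\bB)}$ as in \cref{eq:principal-cochar}, the image lies in $\bT$, so $\lambda(-1)\in\bT$ makes sense. To see that $z_\bG:=\lambda(-1)$ is \emph{central}, I would evaluate an arbitrary root $\alpha\in\Phi\subseteq\cX(\bT)$ on it: $\alpha(\lambda(-1)) = (-1)^{\langle\alpha,\lambda\rangle}$. Writing $\lambda=2\sum_{\beta\in\Delta}\wc\omega_\beta$ from \cref{eq:principal-cochar}, we get $\langle\alpha,\lambda\rangle = 2\sum_{\beta\in\Delta}\langle\alpha,\wc\omega_\beta\rangle$, which is an even integer for every $\alpha\in\Phi$ (indeed for $\alpha\in\Delta$ it equals $2$, and in general it is twice the sum of the coefficients of $\alpha$ in the basis $\Delta$). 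Hence $\alpha(z_\bG)=1$ for all roots $\alpha$, so $z_\bG\in\bigcap_{\alpha\in\Phi}\ker\alpha = \rZ(\bG)$ (using that $\bG$ is connected reductive, so the centre is cut out by the vanishing of all roots on a maximal torus).

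Next, for $(z_\bG)^2=1$: since $\lambda$ is a group homomorphism $\GG_m\to\bG$, we have $(z_\bG)^2 = \lambda(-1)^2 = \lambda((-1)^2) = \lambda(1) = 1$.

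For independence of $\lambda$, I would invoke the observation already made in the text that any two principal cocharacters are $\bG$-conjugate: if $\lambda' = {}^g\lambda$ for some $g\in\bG$, then $\lambda'(-1) = g\,\lambda(-1)\,g^{-1} = z_\bG$ because $z_\bG$ is central. Alternatively — and this is perhaps cleaner — one notes that the element $z_\bG$ is characterised intrinsically: it is the unique element of $\rZ(\bG)$ on which every fundamental coweight evaluates appropriately, but the conjugacy argument is the shortest route and uses only what has been established.

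I do not expect any genuine obstacle here; the only point requiring a little care is the claim $\langle\alpha,\lambda_{(\bT,\bB)}\rangle\in 2\ZZ$ for \emph{all} $\alpha\in\Phi$ (not merely $\alpha\in\Delta$), which follows since every root is an integral combination of simple roots and $\langle-,\wc\omega_\beta\rangle$ is additive, so $\langle\alpha,\lambda\rangle = 2\sum_\beta c_\beta$ where $\alpha=\sum_\beta c_\beta\beta$ with $c_\beta\in\ZZ$. One should also remark that $\lambda_{(\bT,\bB)}$ genuinely lies in the cocharacter \emph{lattice} $\cX^\vee(\bT)$ — this is exactly the first equality in \cref{eq:principal-cochar}, $\lambda_{(\bT,\bB)}=\sum_{\alpha\in\Phi^+}\wc\alpha$, a sum of coroots — so that $\lambda(-1)$ is a well-defined element of $\bG$ rather than of some cover.
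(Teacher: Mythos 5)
Your proof is correct and follows essentially the same approach as the paper's: evaluate roots on $z_\bG$ via the pairing $\langle\alpha,\lambda\rangle$, note it is even, and deduce independence from $\bG$-conjugacy of principal cocharacters. You simply fill in a detail the paper leaves implicit, namely passing from $\langle\alpha,\lambda\rangle=2$ for simple $\alpha$ to evenness for all $\alpha\in\Phi$ by writing $\alpha$ as an integral combination of simple roots.
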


\begin{proof}
Assume $\lambda = \lambda_{(\bT,\bB)}$. For any simple root $\alpha \in \Delta$
we have $\alpha(\lambda(c)) = c^2$. Therefore $z_{\bG}$ is in the kernel of
every root so must be contained in $\rZ(\bG)$. It is independent of $\lambda$
because all principal cocharacters are conjugate.
\end{proof}

\begin{remark}\label{rem:zG-SLn}
If $\bG = \SL_n(\FF)$ and $\bT \leqslant \bB$ are the diagonal torus and upper triangular Borel subgroup, then
\begin{equation*}
\lambda_{(\bT,\bB)}(c) = \mathrm{diag}(c^{n-1},c^{n-3},\dots,c^{-(n-3)},c^{-(n-1)})
\end{equation*}
for any $c \in \GG_m$. We then also have that $z_{\bG} = (-1)^{n-1}\Id_n$, where $\Id_n$ is the identity matrix.
\end{remark}

We say $\bH \leqslant \bG$ is a subsystem subgroup if $\bH$ is a closed
connected reductive subgroup of $\bG$ containing a maximal torus of $\bG$. We
are interested in the subsystem subgroups $\bH \leqslant \bG$ that
satisfy the following property:
\begin{equation}\label{eq:princ-inv}
z_{\bG} \cdot \rZ^{\circ}(\bH) = z_{\bH} \cdot \rZ^{\circ}(\bH).
\end{equation}
To see why, we pick a Frobenius endomorphism $F : \bG \to \bG$ defining $\bG$
over $\FF_q$. We say a cocharacter $\lambda \in \cX^{\vee}(\bG)$ is $F$-fixed if
$F(\lambda(c)) = \lambda(c^q)$ and we denote
by $\cX^{\vee}(\bG)^F \subseteq \cX^{\vee}(\bG)$ the set of $F$-fixed
cocharacters. If the pair $(\bT,\bB)$ is $F$-stable, then $\lambda_{(\bT,\bB)}$
is $F$-fixed because it is fixed by all graph automorphisms. Hence, there are
$F$-stable principal cocharacters.

\begin{prop}\label{GGGR}
Assume $\lambda \in \cX^{\vee}(\bG)^F$ is a principal cocharacter. Let $k \in
(\FF_p)^{\times}$ and $c \in (\FF_{p^2})^{\times}$ be elements such that $k =
c^2$. Then $t = \lambda(c) \in \bG$ satisfies the following properties:
\begin{enumerate}
	\item $\Lang(t) \in \langle z_{\bG}\rangle$,
	\item if $\bH \leqslant \bG$ is an $F$-stable subsystem subgroup satisfying
	\cref{eq:princ-inv} and $p$ is good for $\bH$, then ${}^tu$ is $\bG^F$-conjugate
	to $u^k$ for any regular unipotent element $u \in \bH^F$. 
\end{enumerate}
\end{prop}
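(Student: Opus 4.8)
\medskip
\noindent\emph{Proof strategy.}

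\smallskip
\noindent\emph{Part (1).} First I would compute directly. Since $\lambda$ is $F$-fixed, $F(t)=F(\lambda(c))=\lambda(c^q)$, so $\Lang(t)=t^{-1}F(t)=\lambda(c^{q-1})$. Because $k\in(\FF_p)^\times\subseteq(\FF_q)^\times$ we have $k^{q-1}=1$, hence $(c^{q-1})^2=(c^2)^{q-1}=k^{q-1}=1$; thus $c^{q-1}$ is a square root of $1$ in $\FF$, i.e.\ $c^{q-1}=(-1)^{\varepsilon}$ for some $\varepsilon\in\{0,1\}$, and $\Lang(t)=\lambda((-1)^{\varepsilon})\in\{1,\lambda(-1)\}=\langle z_\bG\rangle$, using $\lambda(-1)=z_\bG$. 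I record $\varepsilon$ for later use. In particular ${}^tu\in\bG^F$ whenever $u\in\bG^F$, since $F({}^tu)={}^{F(t)}u={}^tu$ as $F(t)\in t\langle z_\bG\rangle$ and $z_\bG$ is central.

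\smallskip
\noindent\emph{Part (2): reduction and the geometric claim.} As $u\in\bH^F$ is regular unipotent it lies in a unique Borel subgroup $\bB_\bH$ of $\bH$, which is $F$-stable because $F(u)=u$; write $\bB_\bH=\bT_{\bH}\bU_{\bH}$ with $\bT_{\bH}$ an $F$-stable maximal torus of $\bH$ (hence also of $\bG$, as $\bH$ is a subsystem subgroup), and let $\mu:=\lambda_{(\bT_{\bH},\bB_\bH)}\in\cX^\vee(\bT_{\bH})$ be the principal cocharacter of $\bH$ attached to this pair. Then $\mu$ is $F$-fixed (being fixed by all graph automorphisms, exactly as with $\lambda$) and $\mu(-1)=z_\bH$. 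Now $u$, $u^k$ and ${}^{\mu(c)}u$ are all regular unipotent elements of $\bH$ — for $u^k$ one uses that it has the same nowhere-vanishing image as $u$ in $\bU_{\bH}/[\bU_{\bH},\bU_{\bH}]\cong\prod_{\alpha\in\Delta_\bH}\GG_a$, which characterises regular unipotent elements of $\bU_{\bH}$ since $p$ is good for $\bH$ — so they lie in the single regular unipotent class of $\bH$ over $\FF$; combined with ${}^tu\sim_\bG u$, this gives ${}^tu\sim_\bG u^k$. It remains to upgrade this to $\bG^F$-conjugacy.

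\smallskip
\noindent\emph{Part (2): the rational refinement.} The plan is to exhibit an explicit $h\in\bG$ with ${}^h(u^k)={}^tu$ for which the Lang cocycle $h^{-1}F(h)$ is a coboundary in $\Cent_\bG(u^k)$. Conjugation by $\mu(c)$ acts on $\bU_{\bH}/[\bU_{\bH},\bU_{\bH}]$ by scaling each $\alpha$-component ($\alpha\in\Delta_\bH$) by $\alpha(\mu(c))=c^{\langle\alpha,\mu\rangle}=c^2=k$, which is precisely the effect of the $k$-th power map; hence ${}^{\mu(c)}u$ and $u^k$ have the same (regular) image in $\bU_{\bH}/[\bU_{\bH},\bU_{\bH}]$ and so lie in a common fibre of $\bU_{\bH}\to\bU_{\bH}/[\bU_{\bH},\bU_{\bH}]$, which over a regular point is a single $\bU_{\bH}$-orbit. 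Both elements lie in $\bU_{\bH}^F$: clearly $u^k$ does, while $F({}^{\mu(c)}u)={}^{\mu(c^q)}u={}^{\mu((-1)^{\varepsilon})}({}^{\mu(c)}u)={}^{\mu(c)}u$ because $\mu((-1)^{\varepsilon})\in\{1,z_\bH\}\subseteq\rZ(\bH)$ centralises ${}^{\mu(c)}u\in\bH$. Since $p$ is good for $\bH$ and ${}^{\mu(c)}u$ is regular unipotent, $\Cent_{\bU_{\bH}}({}^{\mu(c)}u)$ is connected (it is the unipotent radical of $\Cent_\bH({}^{\mu(c)}u)^\circ$), so by Lang's theorem there is $v\in\bU_{\bH}^F$ with ${}^{v}({}^{\mu(c)}u)=u^k$, i.e.\ ${}^{v\mu(c)}u=u^k$. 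Put $h:=\lambda(c)\mu(c)^{-1}v^{-1}$; then ${}^h(u^k)={}^{\lambda(c)}u={}^tu$, and — using that $\lambda,\mu$ are $F$-fixed, $v\in\bU_{\bH}^F$, $c^q=(-1)^{\varepsilon}c$, and that $z_\bG=\lambda(-1)$, $z_\bH=\mu(-1)$ are central in $\bG$, $\bH$ respectively (so that $z_\bH^{-\varepsilon}$ may be pushed to the right past $\mu(c)^{-1}$ and $v^{-1}$, and $z_\bG^{\varepsilon}$ past everything) — one computes
\[
F(h)=\lambda(c^q)\,\mu(c^q)^{-1}v^{-1}=z_\bG^{\varepsilon}\,\bigl(\lambda(c)\mu(c)^{-1}v^{-1}\bigr)\,z_\bH^{-\varepsilon}=h\,(z_\bG z_\bH^{-1})^{\varepsilon},
\]
so $h^{-1}F(h)=(z_\bG z_\bH^{-1})^{\varepsilon}$. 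By hypothesis \eqref{eq:princ-inv}, $z_\bG z_\bH^{-1}\in\rZ^\circ(\bH)$, a connected group which centralises $u^k\in\bH$ and is therefore contained in $\Cent_\bG(u^k)^\circ$. A final application of Lang's theorem to this connected group writes $h^{-1}F(h)=a^{-1}F(a)$ with $a\in\Cent_\bG(u^k)$, so the $F$-fixed, $\bG$-conjugate elements ${}^tu$ and $u^k$ are in fact $\bG^F$-conjugate, which completes the proof.

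\smallskip
\noindent\emph{The main obstacle.} Part (1) and the geometric half of Part (2) are routine. The crux is the construction of the conjugator $h=\lambda(c)\mu(c)^{-1}v^{-1}$, engineered so that the mismatch between the ambient principal cocharacter $\lambda$ and the $\bH$-principal cocharacter $\mu$ enters the Lang cocycle only through $z_\bG z_\bH^{-1}$, which \eqref{eq:princ-inv} forces into $\rZ^\circ(\bH)\subseteq\Cent_\bG(u^k)^\circ$; together with the two good-characteristic facts that make the two Lang arguments run, namely the connectedness of $\Cent_{\bU_{\bH}}$ of a regular unipotent element of $\bH$ and the characterisation of regular unipotent elements of $\bU_{\bH}$ by non-vanishing of their simple-root coordinates.
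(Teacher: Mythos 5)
Your proof is correct and follows essentially the same line as the paper's: part (a) is the same manipulation of $\Lang(t)=\lambda(c^{q-1})$, and for part (b) you use the $\bH$-principal cocharacter attached to the unique Borel of $\bH$ through $u$, together with hypothesis \eqref{eq:princ-inv}, to control the Lang cocycle modulo $\rZ^\circ(\bH)$. The one structural difference is that the paper simply cites \cite[5.4]{SFT23} to produce the element $g\in\bB_\bH$ with ${}^gu=u^k$ and $\Lang(g)\equiv\Lang(\lambda'(c))$ mod $\rC_\bH^\circ(u)$, and then concludes abstractly from the fact that $\Lang(t)$ and $\Lang(g)$ have the same image in $\rC_\bG(u)/\rC_\bG^\circ(u)$; you instead reprove the content of that citation inline (the abelianisation scaling argument, closedness of $\bU_\bH$-orbits, connectedness of $\Cent_{\bU_\bH}$ of a regular element, and a Lang argument to get $v\in\bU_\bH^F$) and then run one further explicit Lang computation with $h=\lambda(c)\mu(c)^{-1}v^{-1}$ in place of the abstract rational-conjugacy criterion. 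Your version is longer but self-contained; the paper's is shorter by outsourcing the heavy lifting to \cite[5.4]{SFT23}. The only place you should be a little more careful is the unsupported assertion that the fibre of $\bU_\bH\to\bU_\bH/[\bU_\bH,\bU_\bH]$ over a regular point is a single $\bU_\bH$-orbit: this needs both that $\dim\Cent_{\bU_\bH}(u)=\mathrm{rank}(\bH)$ (which uses $p$ good) and that orbits of a unipotent group on an affine variety are closed (Rosenlicht), so the open orbit exhausts the irreducible fibre; you should say this rather than leave it implicit.
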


\begin{proof}
(a). Pick a pair $(\bT,\bB)$ so that $\lambda = \lambda_{(\bT,\bB)}$. As
$\Lang(t) = \lambda(c^{(q-1)})$ it suffices to show that $c^{2(q-1)} = 1$. If $p
=2$ this is clear because $c = k = 1$, so suppose $p \neq 2$. Then $2 \mid (p-1)$
and we have
\begin{equation*}
c^{2(p-1)} = k^{(p-1)} = 1.
\end{equation*}
Now we simply use that $(p-1) \mid (q-1)$.

(b). Let $\bB \leqslant \bG$ be a Borel subgroup of $\bG$ such that $\bB_{\bH} =
\bB \cap \bH$ is the unique Borel subgroup of $\bH$ containing $u$, see
\cite[3.4.10, 12.2.2]{DiMi2}. As $F(u) = u$ we must have $F(\bB_{\bH}) =
\bB_{\bH}$ so $\bB_{\bH}$ contains an $F$-stable maximal torus $\bT \leqslant
\bB_{\bH}$. Consider the principal cocharacter $\lambda' =
\lambda_{(\bT,\bB_{\bH})}$ of $\bH$. This is $F$-stable and by \cite[5.4]{SFT23}
there exists an element $g \in \bB_{\bH}$ such that ${}^gu = u^k$ and
\begin{equation*}
\Lang(g)\cdot \rC_{\bH}^{\circ}(u) = \Lang(\lambda'(c))\cdot \rC_{\bH}^{\circ}(u).
\end{equation*}
As \cref{eq:princ-inv} holds we must have
\begin{equation*}
\Lang(\lambda(c)) \cdot \rZ^{\circ}(\bH) = \Lang(\lambda'(c))\cdot\rZ^{\circ}(\bH).
\end{equation*}
As $\rZ^{\circ}(\bH) \leqslant \rC_{\bG}^{\circ}(u)$ we have $\Lang(g)$ and
$\Lang(t)$ have the same image in $\rC_{\bG}(u)/\rC_{\bG}^{\circ}(u)$ so ${}^tu$
is rationally conjugate to $u^k$.
\end{proof}

We now want to determine which subsystem subgroups satisfy \cref{eq:princ-inv}.
The following exercise with root data shows that Levi subgroups satisfy this
property.

\begin{lem}\label{lem:principal-cochar}
Assume $\bL \leqslant \bG$ is a Levi subgroup of $\bG$ and let $\pi : \bL \to \bL/\rZ^{\circ}(\bL)$ be the canonical quotient map and let $\pi_* : \cX^{\vee}(\bL) \to \cX^{\vee}(\bL/\rZ^{\circ}(\bL))$ be the natural map $\gamma \mapsto \pi\circ\gamma$. If $\lambda \in \cX^{\vee}(\bL) \subseteq \cX^{\vee}(\bG)$ is a principal cocharacter of $\bG$, then the following hold:
\begin{enumerate}
	\item $\pi_*(\lambda) \in \cX^{\vee}(\bL/\rZ^{\circ}(\bL))$ is a principal cocharacter of $\bL/\rZ^{\circ}(\bL)$,
	\item $\pi_*(\lambda) = \pi_*(\lambda')$ with $\lambda' \in \cX^{\vee}(\bL)$ a principal cocharacter of $\bL$.
\end{enumerate}
In particular, $\bL$ satisfies \cref{eq:princ-inv}.
\end{lem}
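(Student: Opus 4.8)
The plan is to work entirely inside the character and cocharacter lattices, reducing everything to a statement about fundamental coweights. First I would fix a pair $(\bT,\bB)$ with $\bT \leqslant \bL$ and such that $\bB_\bL := \bB \cap \bL$ is a Borel subgroup of $\bL$; this is possible because $\bL$ is a Levi subgroup of $\bG$, so it contains a maximal torus of $\bG$ and the Borel subgroups of $\bL$ arise by intersecting $\bL$ with suitable Borel subgroups of $\bG$. The root system $\Phi_\bL \subseteq \Phi$ of $\bL$ is then a parabolic (Levi) subsystem: there is a subset $I \subseteq \Delta$ with $\Phi_\bL = \ZZ I \cap \Phi$ and simple system $\Delta_\bL = I$, and $\Phi_\bL^+ = \Phi^+ \cap \Phi_\bL$. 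I would then take $\lambda := \lambda_{(\bT,\bB)} = \sum_{\alpha\in\Phi^+}\wc\alpha \in \cX^\vee(\bL) \subseteq \cX^\vee(\bG)$ as the chosen principal cocharacter of $\bG$ (noting $\lambda$ lies in $\cX^\vee(\bL)$ since each $\wc\alpha$ with $\alpha \in \Phi^+$ is a cocharacter of $\bT \leqslant \bL$), and $\lambda' := \lambda_{(\bT,\bB_\bL)} = \sum_{\alpha \in \Phi_\bL^+}\wc\alpha$ as the principal cocharacter of $\bL$ associated to the same pair.

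For part (b) I would compute $\pi_*(\lambda)$ and $\pi_*(\lambda')$ in $\cX^\vee(\bL/\rZ^\circ(\bL))$. The maximal torus of $\bL/\rZ^\circ(\bL)$ is $\bT/\rZ^\circ(\bL)$, and under $\pi_*$ the coroots $\wc\alpha$ for $\alpha \in \Phi_\bL$ map to the coroots of $\bL/\rZ^\circ(\bL)$, while the coroots $\wc\alpha$ for $\alpha \in \Phi^+ \setminus \Phi_\bL^+$ map into $\cX^\vee$ of the central torus quotient — more precisely, one checks that $\pi_*(\lambda) - \pi_*(\lambda') = \pi_*\big(\sum_{\alpha \in \Phi^+\setminus\Phi_\bL^+}\wc\alpha\big)$ is a cocharacter that pairs trivially with every root of $\bL/\rZ^\circ(\bL)$, hence lies in $\cX^\vee(\rZ^\circ(\bL/\rZ^\circ(\bL)))$, which is trivial since $\bL/\rZ^\circ(\bL)$ is semisimple. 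This needs a small lemma: the sum of positive coroots outside a parabolic subsystem is $\Phi_\bL$-invariant modulo the cocharacter lattice spanned by $\Phi_\bL^\vee$, equivalently it pairs to $0$ with every $\alpha \in \Delta_\bL = I$; this follows because $\sum_{\alpha\in\Phi^+\setminus\Phi_\bL^+}\wc\alpha = \lambda - \lambda' = 2\sum_{\alpha\in\Delta}\wc\omega_\alpha - 2\sum_{\alpha\in I}\wc\omega_\alpha^\bL$ and pairing with $\beta \in I$ uses $\langle\beta,\wc\omega_\alpha\rangle = \delta_{\alpha\beta}$ together with the fact that the restriction of $\wc\omega_\alpha$ ($\alpha\in I$) to the span of $I$ agrees with $\wc\omega_\alpha^\bL$. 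Thus $\pi_*(\lambda) = \pi_*(\lambda')$, giving (b); and since $\pi_*(\lambda')$ is visibly $2\sum_{\alpha\in I}\wc\omega_\alpha^\bL$, which is the principal cocharacter of the semisimple group $\bL/\rZ^\circ(\bL)$, part (a) follows as well.

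Finally, for the "in particular" clause I would deduce \eqref{eq:princ-inv}. Evaluating at $-1$: $z_\bG = \lambda(-1)$ and $z_\bL = \lambda'(-1)$, and $\pi(z_\bG) = \pi_*(\lambda)(-1) = \pi_*(\lambda')(-1) = \pi(z_\bL)$ in $\bL/\rZ^\circ(\bL)$, so $z_\bG z_\bL^{-1} \in \ker\pi = \rZ^\circ(\bL)$, which is exactly $z_\bG \cdot \rZ^\circ(\bL) = z_\bL \cdot \rZ^\circ(\bL)$ as an equality of cosets. The main obstacle is the bookkeeping in the cocharacter/coweight computation — specifically verifying cleanly that $\lambda - \lambda'$ becomes central (i.e. pairs trivially with all roots of $\bL$) after projecting, and matching $\wc\omega_\alpha$ restricted to $\langle I\rangle$ with the fundamental coweights $\wc\omega_\alpha^\bL$ of $\bL$; everything else is formal once the Levi subsystem structure $\Phi_\bL = \ZZ I \cap \Phi$ is set up.
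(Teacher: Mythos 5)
Your proposal is correct and follows essentially the same route as the paper's proof: both fix $(\bT,\bB)$ adapted to $\bL$ so that $I = \Sigma\cap\Delta$ is a simple system for $\bL$, both use the decomposition $\cV^\vee(\bT) = V_I\oplus V_I^\perp$ (equivalently, the $\QQ$-linearization of the short exact sequence for $\cX^\vee$ of $\rZ^\circ(\bL)$) to identify $\pi_*$ with projection onto $V_I$, both reduce $\pi_*(\lambda) = \pi_*(\lambda')$ to the fact that $\pi_*(\wc\omega_\alpha)$ for $\alpha\in I$ agrees with the fundamental coweight of $\bL$ (while $\wc\omega_\alpha\in V_I^\perp$ for $\alpha\notin I$), and both finish by evaluating at $-1$. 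The "small lemma" you flag as the obstacle is precisely the step the paper makes explicit via $\delta_{\alpha,\beta} = \langle\alpha,\wc\omega_\beta\rangle = \langle\alpha,\pi_*(\wc\omega_\beta)\rangle$ for $\alpha,\beta\in I$, so there is no gap.
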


\begin{proof}
We can assume $\bL$ is a Levi complement of $\bP \leqslant \bG$. We pick a pair $(\bT,\bB)$ of $\bG$ such that $\bT \leqslant \bL$ and $\bT \leqslant \bB \leqslant \bP$. If $\Sigma \subseteq \Phi$ is the root system of $\bL$ then $I = \Sigma\cap\Delta$ is a simple system of roots, where $\Delta \subseteq \Phi$ are defined as above. Recall that we have a natural short exact sequence
\begin{equation*}
0 \longrightarrow \cX^{\vee}(\rZ^{\circ}(\bL))
\longrightarrow \cX^{\vee}(\bT) 
\overset{\pi_*}{\longrightarrow} \cX^{\vee}(\bT/\rZ^{\circ}(\bL)) 
\longrightarrow 0
\end{equation*}
and tensoring with $\QQ$ gives an exact sequence
\begin{equation*}
0 \longrightarrow \cV^{\vee}(\rZ^{\circ}(\bL))
\longrightarrow \cV^{\vee}(\bT) 
\overset{\pi_*}{\longrightarrow} \cV^{\vee}(\bT/\rZ^{\circ}(\bL)) 
\longrightarrow 0
\end{equation*}

The vector space $V := \cV^{\vee}(\bT)$ has a direct sum decomposition $V_I \oplus V_I^{\perp}$, where $V_I = \langle \Sigma^{\vee}\rangle_{\QQ}$ is spanned by the coroots of $\bL$ and
\begin{equation*}
V_I^{\perp} = \{v \in V \mid \langle \alpha,v\rangle = 0\text{ for all }\alpha \in I\}.
\end{equation*}
As $\cX^{\vee}(\rZ^{\circ}(\bL)) = \cX^{\vee}(\bT) \cap V_I^{\perp}$ we may identify $V_I$ with $\cV^{\vee}(\bT/\rZ^{\circ}(\bL))$ and $\pi_* : V \to \cV^{\vee}(\bT/\rZ^{\circ}(\bL))$ with the projection to $V_I$. Now if $\alpha,\beta \in I$ then
\begin{equation*}
\delta_{\alpha,\beta} = \langle \alpha, \wc{\omega}_{\beta}\rangle = \langle \alpha, \pi_*(\wc\omega_{\beta})\rangle.
\end{equation*}
This shows that $\pi_*(\wc\omega_{\alpha})$ is the fundamental dominant coweight of the root system $\Sigma$ with respect to $I$. In particular, if $\bB_{\bL} = \bB \cap \bL$ then by definition
\begin{equation*}
\pi_*(\lambda_{(\bT,\bB)}) = \lambda_{(\pi(\bT),\pi(\bB_{\bL}))} = \pi_*(\lambda_{(\bT,\bB_{\bL})}).
\end{equation*}
Now we simply evaluate this at $-1$.
\end{proof}

The following is also useful to note when dealing with groups of type $\tA$.

\begin{lem}\label{lem:reg-in-Levi}
If $u \in \bG^F$ is regular in a Levi subgroup of $\bG$, then $u$ is regular in an $F$-stable Levi subgroup of $\bG$.
\end{lem}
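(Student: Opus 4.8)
The plan is to conjugate $\bL$ inside the centraliser of $u$ so that its connected centre becomes $F$-stable. Set $\bC := \Cent_{\bG}(u)$; since $u \in \bG^F$ this is an $F$-stable (a priori disconnected) closed subgroup, so $\bC^\circ$ is a connected $F$-stable algebraic group, and it contains $\rZ^\circ(\bL)$ because the latter is connected and centralises all of $\bL$, in particular $u$. Recalling that a Levi subgroup satisfies $\bL = \Cent_{\bG}(\rZ^\circ(\bL))$ and that centralisers of tori in $\bG$ are Levi subgroups, it suffices to produce $g \in \bC$ with ${}^{g}\rZ^\circ(\bL)$ an $F$-stable torus: then $\bM := {}^{g}\bL = \Cent_{\bG}({}^{g}\rZ^\circ(\bL))$ is an $F$-stable Levi subgroup of $\bG$, and since $g$ centralises $u$ the element $u = {}^{g}u$ lies in $\bM$ and is regular there.

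First I would prove that $\rZ^\circ(\bL)$ is a \emph{maximal} torus of $\bC^\circ$. Let $\bS$ be any torus of $\bC$ with $\rZ^\circ(\bL) \leqslant \bS$. Then $u \in \Cent_{\bG}(\bS) \leqslant \Cent_{\bG}(\rZ^\circ(\bL)) = \bL$, and $\Cent_{\bG}(\bS) = \Cent_{\bL}(\bS)$ is a Levi subgroup of $\bL$ containing $u$. As $u$ is regular, hence distinguished, in $\bL$, it is contained in no proper Levi subgroup of $\bL$; here one uses the unipotent-specific fact that the maximal torus of $\rC_{\bL}(u)^\circ$ is exactly $\rZ^\circ(\bL)$, so that membership of $u$ in a Levi subgroup $\Cent_{\bL}(\bS) \lneq \bL$ would force a torus strictly larger than $\rZ^\circ(\bL)$ into $\rC_{\bL}(u)^\circ$. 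Hence $\Cent_{\bL}(\bS) = \bL$, so $\bS \leqslant \rZ(\bL)$, and since $\bS$ is connected $\bS \leqslant \rZ^\circ(\bL)$, giving $\bS = \rZ^\circ(\bL)$. Thus $\rZ^\circ(\bL)$ lies in no strictly larger torus of $\bC$, i.e.\ it is a maximal torus of $\bC^\circ$.

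To finish, I would invoke the Lang--Steinberg theorem: the connected group $\bC^\circ$ has an $F$-stable maximal torus $\bT_1$, and all maximal tori of $\bC^\circ$ are $\bC^\circ$-conjugate, so there is $g \in \bC^\circ \leqslant \bC$ with ${}^{g}\rZ^\circ(\bL) = \bT_1$; the first paragraph then completes the argument. I expect the only real content to be the maximality statement of the middle paragraph — the rest is the standard Levi-subgroup dictionary together with Lang's theorem — and the one place requiring care is citing the correct characterisation of a distinguished (in particular regular) unipotent element of $\bL$ as one lying in no proper Levi subgroup of $\bL$, equivalently one for which the reductive part of $\rC_{\bL}(u)^\circ$ reduces to $\rZ^\circ(\bL)$.
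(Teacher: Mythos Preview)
Your proof is correct and follows essentially the same approach as the paper: both show that $\rZ^\circ(\bL)$ is a maximal torus of $\rC_{\bG}^\circ(u)$, then use Lang--Steinberg to conjugate it to an $F$-stable maximal torus within the centraliser of $u$. The only cosmetic difference is that you argue maximality via the characterisation ``regular $\Rightarrow$ distinguished $\Rightarrow$ contained in no proper Levi of $\bL$'', whereas the paper argues it by comparing centres $\rZ(\bM)$ and $\rZ(\bL)$ through a citation to \cite[12.2.3]{DiMi2}; these amount to the same fact.
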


\begin{proof}
Let $\bL \leqslant \bG$ be a Levi subgroup that contains $u$ as a regular
element. We claim $\rZ^{\circ}(\bL) \leqslant \rC_{\bG}(u)$ is a maximal torus
of the centralizer. Certainly there exists a maximal torus $\bS$ of
$\rC^\circ_{\bG}(u)$ containing the connected center $\rZ^{\circ}(\bL)$. We have
$\bM := \rC_{\bG}(\bS)$ is a Levi subgroup of $\bG$ containing $u$ and
$\rZ^{\circ}(\bL) \leqslant \bS \leqslant \rZ^{\circ}(\bM)$. As $\bL =
\rC_{\bG}(\rZ^{\circ}(\bL))$ we have $\bM \leqslant \bL$ so $\rZ(\bL) \leqslant
\rZ(\bM)$. Now $\rC_{\bM}(u) \leqslant \rC_{\bL}(u)$ implies $\rZ(\bM) \leqslant
\rZ(\bL)$ by \cite[12.2.3]{DiMi2} so $\rZ(\bM) = \rZ(\bL)$. Therefore, $\bS =
\rZ^{\circ}(\bL)$.

By the Lang--Steinberg Theorem, there exists an $F$-stable maximal torus $\bS_1
\leqslant \rC_{\bG}^{\circ}(u)$ so $\rC_{\bG}(\bS_1)$ is an $F$-stable Levi
subgroup containing $u$. The above argument shows that $\rZ^{\circ}(\bL)$ and
$\bS_1$ are both maximal tori of $\rC_{\bG}^{\circ}(u)$ so $\rC_{\bG}(\bS_1) =
{}^g\bL = {}^g\rC_{\bG}(\rZ^{\circ}(\bL))$ for some $g \in \rC_{\bG}(u)$, so $u$
is regular in $\rC_{\bG}(\bS_1)$ because it is regular in $\bL$.
\end{proof}

It is not true in general that a subsystem subgroup $\bH \leqslant \bG$ will
satisfy \cref{eq:princ-inv}. However, this is true if all the quasi-simple components
of $\bG$ are of type $\tA$, because $\bH$ must be a Levi subgroup in this
case. The following shows that this property is also enjoyed by symplectic
groups.

\begin{prop}\label{prop:Sp-involution}
If $\bG = \Sp_{2n}(\FF)$ is a symplectic group, then every subsystem subgroup
$\bH \leqslant \bG$ satisfies \cref{eq:princ-inv}.
\end{prop}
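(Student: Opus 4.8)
The plan is to combine an explicit description of the subsystem subgroups of $\bG=\Sp_{2n}(\FF)$ with a short calculation of the principal cocharacters involved. First I would determine $z_{\bG}$: in the standard realization of type $\tC_n$ one computes $\sum_{\alpha\in\Phi^{+}}\wc\alpha=\sum_{i=1}^{n}(2n-2i+1)\,\wc{e}_i$, and since every exponent $2n-2i+1$ is odd, \cref{eq:principal-cochar} gives $z_{\bG}=\lambda_{(\bT,\bB)}(-1)=\prod_{i=1}^{n}\wc{e}_i(-1)=-\Id_{2n}$, the unique involution of $\rZ(\bG)$. I would also note that $z_{\bH}\in\rZ(\bH)$ for any subsystem subgroup $\bH$ (it is the value at $-1$ of a cocharacter lying in the coroot lattice of $\bH$, hence is killed by every root of $\bH$), so that $z_{\bG}z_{\bH}^{-1}\in\rZ(\bH)$ automatically; thus \cref{eq:princ-inv}, which is the assertion $z_{\bG}z_{\bH}^{-1}\in\rZ^{\circ}(\bH)$, really only says that this central involution lies in the \emph{identity component}.

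Next I would use the (well known) classification of subsystem subgroups of a symplectic group: up to $\bG$-conjugacy every subsystem subgroup $\bH\leqslant\bG$ has the form
\[
\bH=\Sp(W_1)\times\cdots\times\Sp(W_s)\times\GL(U_1)\times\cdots\times\GL(U_r),
\]
where the natural $2n$-dimensional symplectic space decomposes orthogonally as $W_1\perp\cdots\perp W_s\perp(U_1\oplus U_1^{*})\perp\cdots\perp(U_r\oplus U_r^{*})$, with each $W_j$ nondegenerate symplectic of dimension $2m_j$, each $U_i$ totally isotropic of dimension $k_i$ with dual isotropic complement $U_i^{*}$, and $\sum_j m_j+\sum_i k_i=n$. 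This follows from the Borel--de Siebenthal procedure: on a component of type $\tC$ the only maximal-rank subsystem subgroups beyond Levi subgroups are of the shape $\Sp_{2a}\times\Sp_{2b}$, and on a component of type $\tA$ the procedure produces nothing beyond Levi subgroups. As \cref{eq:princ-inv} is insensitive to replacing $\bH$ by a $\bG$-conjugate, it suffices to treat $\bH$ in this standard form.

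The rest is a computation. Since the principal cocharacter of a direct product acts on each factor as that factor's principal cocharacter, $z_{\bH}$ acts on each $W_j$ as $z_{\Sp(W_j)}=-\Id_{W_j}$ and on each $U_i\oplus U_i^{*}$ as the central element of $\GL(U_i)$; a calculation as in \cref{rem:zG-SLn} shows the latter equals $(-1)^{k_i-1}\Id$ on $U_i\oplus U_i^{*}$ (it acts on $U_i$ by the scalar $(-1)^{k_i-1}$ and on $U_i^{*}$ by its inverse, which is the same scalar). As $z_{\bG}=-\Id_{2n}$, the element $z_{\bG}z_{\bH}^{-1}=z_{\bG}z_{\bH}$ then acts trivially on every $W_j$ and as $(-1)^{k_i}\Id$ on every $U_i\oplus U_i^{*}$. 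On the other hand $\rZ^{\circ}(\bH)=\prod_i\rZ^{\circ}(\GL(U_i))$, and the $i$-th factor consists exactly of the maps scaling $U_i$ by some $t\in\FF^{\times}$ and $U_i^{*}$ by $t^{-1}$; taking $t=(-1)^{k_i}$ in each factor — legitimate since $t=\pm 1$ forces $t^{-1}=t$ — exhibits $z_{\bG}z_{\bH}$ as an element of $\rZ^{\circ}(\bH)$, which is exactly \cref{eq:princ-inv}.

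I expect the only step requiring real care to be the bookkeeping behind the classification of subsystem subgroups; everything else is routine once $z_{\bG}$ and $z_{\GL_k}$ are in hand. If one prefers not to invoke that classification as a black box, one can argue instead along a Borel--de Siebenthal chain $\bG=\bG_0\geqslant\bG_1\geqslant\cdots\geqslant\bG_t=\bH$, using that \cref{eq:princ-inv} is transitive — if $\bH\leqslant\bM\leqslant\bG$ and the pairs $(\bG,\bM)$ and $(\bM,\bH)$ both satisfy it, then so does $(\bG,\bH)$, because $\rZ^{\circ}(\bM)$ is a connected subgroup of $\rZ(\bH)$ and hence lies in $\rZ^{\circ}(\bH)$ — and that it is compatible with passing to direct factors. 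This reduces everything to two base cases: $\bM$ a Levi subgroup of a factor, which is \Cref{lem:principal-cochar}, and $\bM=\Sp_{2a}\times\Sp_{2b}$ inside $\Sp_{2(a+b)}$, where \cref{eq:princ-inv} holds trivially because $z_{\Sp_{2a}\times\Sp_{2b}}=-\Id=z_{\Sp_{2(a+b)}}$.
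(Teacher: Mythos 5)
Your proof is correct, and it takes a more explicit route than the paper's. The paper argues by induction on $n$: either $\bH$ sits inside a proper orthogonal product $\Sp_{2a}\times\Sp_{2b}$ (handled by induction on the factors, after noting $z_{\bG}$ restricts to $z_{\Sp_{2a}}\times z_{\Sp_{2b}}$), or else $\bH$ is the Siegel Levi $\GL_n$, where \Cref{lem:principal-cochar} applies. You instead invoke the full Borel--de Siebenthal classification of subsystem subgroups of $\Sp_{2n}$ as products $\prod_j\Sp(W_j)\times\prod_i\GL(U_i)$ and then compute $z_{\bG}z_{\bH}^{-1}$ directly on each block, exhibiting it explicitly as an element of $\rZ^{\circ}(\bH)=\prod_i\rZ^{\circ}(\GL(U_i))$. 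Both arguments are valid. The paper's induction is more economical and avoids invoking the classification as a black box; your calculation is more transparent, showing concretely why the obstruction $z_{\bG}z_{\bH}^{-1}$ lands in the connected center (it is $(-1)^{k_i}$ on each $\GL$-block, which visibly lifts to the torus $\rZ^{\circ}(\GL(U_i))$). Your preliminary observation that $z_{\bG}z_{\bH}^{-1}\in\rZ(\bH)$ automatically, so the only issue is connectedness, is a nice clarification not spelled out in the paper. Your alternative ``transitivity along a Borel--de Siebenthal chain'' sketch is in fact essentially the paper's proof, so you have independently rediscovered it as a backup.
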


\begin{proof}
We prove this by induction on $n \geqslant 0$. Suppose first that $\bH$ is
contained in a proper subgroup
\begin{equation*}
\bH = \bH_1 \times \bH_2 \leqslant \Sp_{2a}(\FF) \times \Sp_{2b}(\FF) \leqslant \Sp_{2n}(\FF),
\end{equation*}
with $n = a+b$, obtained as the stabilizer of an orthogonal decomposition of
non-degenerate subspace of the natural module for $\bG$. We have $\lambda' =
\lambda_1\times \lambda_2$ with $\lambda_i \in \wc{X}(\bH_i)$ a principal
cocharacter of $\bH_i$. The statement is now clear by induction because
$\lambda(-1) = -\Id_{2n} = (-\Id_{2a}) \times (-\Id_{2b})$. 

So we need only consider where $\bH$ is contained in no proper such subgroup. If
$\bH < \bG$ is proper then $\bH$ must be a maximal Levi subgroup isomorphic to
$\GL_n(\FF)$. The statement in this case follows from
\Cref{lem:principal-cochar}.
\end{proof}

In the following, we denote by $\Gamma_u$ the Generalized Gelfand--Graev
Representation of $\bG^F$ defined by the unipotent element $u \in \bG^F$ as in
\cite{Kaw85}. For any Galois automorphism $\sigma \in \calG$, there exists a
unique $k(\sigma) \in (\FF_p)^{\times}$ such that $\sigma(\zeta) =
\zeta^{k(\sigma)}$ for all primitive $p$th roots of unity $\zeta \in \QQ^{\ab}$.
When combined with the previous statements, we can use this to describe the
action of $\sigma$ on $\Gamma_u$.

\begin{theorem}\label{thm:GalactGGGR}
Assume $(\bG,F)$ is a finite reductive group, with $p$ a good prime for $\bG$,
such that all the quasi-simple components of $\bG$ are of type $\tA$ or
simply connected of type $\tC$. Fix a principal cocharacter $\lambda \in
\cX^{\vee}(\bG)^F$. If $\sigma \in\calG$ and $c \in (\FF_{p^2})^{\times}$ is
such that $k(\sigma) = c^2$, then $t_{\sigma} := \lambda(c)$ satisfies 
\begin{equation*}
\Gamma_u^\si= {}^{t_{\sigma}}\Gamma_u 
\end{equation*}
for all unipotent elements $u \in \bG^F$.
\end{theorem}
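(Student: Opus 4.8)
The plan is to reduce the statement to three ingredients and then assemble them from the results established above. Write $k := k(\sigma) \in (\FF_p)^\times$, so that $t_\sigma = \lambda(c)$ with $c^2 = k$, and recall that $\Gamma_v$ depends only on the $\bG^F$-conjugacy class of a unipotent element $v \in \bG^F$. I would establish: (I) $\Gamma_u^\sigma = \Gamma_{u^k}$ for every unipotent $u \in \bG^F$; (II) $t_\sigma$ normalises $\bG^F$ and ${}^{t_\sigma}\Gamma_u = \Gamma_{{}^{t_\sigma}u}$; and (III) ${}^{t_\sigma}u$ is $\bG^F$-conjugate to $u^k$ for every unipotent $u \in \bG^F$. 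Granting these, $\Gamma_u^\sigma = \Gamma_{u^k} = \Gamma_{{}^{t_\sigma}u} = {}^{t_\sigma}\Gamma_u$, where the middle equality uses (III) together with class-invariance of $\Gamma$; this is the assertion of \Cref{thm:GalactGGGR}.

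For (I) and (II) I would argue directly from Kawanaka's construction \cite{Kaw85} (using also, when $p$ is good, the existence of an $F$-fixed cocharacter $\phi = \lambda_u \in \cX^\vee(\bG)^F$ associated to $u$, cf.\ \cite{Tay}): one has $\Gamma_u = \Ind_{\bU_{1.5}^F}^{\bG^F}(\varphi_e)$, where $e = \log u \in \lie g(\phi,2)^F$ and $\varphi_e$ is the linear character of $\bU_{1.5}^F$ built from a fixed nontrivial additive character $\psi\colon \FF_q^+ \to \CC^\times$ and a fixed $\bG$-invariant, $F$-equivariant, nondegenerate bilinear form $\kappa$ on $\lie g$, being given on $\bU_{\ge 2}^F$ by $\exp v \mapsto \psi(\kappa(e,v))$. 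Since $\psi$ takes values in $\mu_p$ and $\sigma$ raises primitive $p$-th roots of unity to the $k$-th power, $\varphi_e^\sigma$ is the character built in the same way from the additive character $v \mapsto \psi(kv)$; hence $\varphi_e^\sigma = \varphi_{ke}$, and as $ke = \mathrm{Ad}(\phi(c'))e$ for any $c'$ with $c'^2 = k$ with $\phi(c') \in \rC_\bG(\phi)$, the cocharacter $\phi$ is also associated to $\exp(ke) = u^k$, so $\Gamma_u^\sigma = \Ind(\varphi_{ke}) = \Gamma_{u^k}$. This gives (I) and is essentially contained in \cite{SFT23}. For (II): by \Cref{GGGR}(a) we have $\Lang(t_\sigma) \in \langle z_\bG\rangle \subseteq \rZ(\bG)$, so $t_\sigma$ normalises $\bG^F$ and ${}^{t_\sigma}\phi$ is again $F$-fixed; conjugation by $t_\sigma$ then carries the defining data $(\phi,e)$ of $\Gamma_u$ to defining data of $\Gamma_{{}^{t_\sigma}u}$ (using $\bG$-invariance of $\kappa$), whence ${}^{t_\sigma}\Gamma_u = \Gamma_{{}^{t_\sigma}u}$.

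The substance of the argument is (III), which I would deduce from \Cref{GGGR}(b): it suffices to show that every unipotent element of $\bG^F$ is $\bG^F$-conjugate to a regular unipotent element of some $F$-stable subsystem subgroup $\bH \le \bG$ satisfying \cref{eq:princ-inv} with $p$ good for $\bH$, since then \Cref{GGGR}(b) gives ${}^{t_\sigma}u \sim_{\bG^F} u^k$ for a representative, and conjugating back by an element of $\bG^F$ (which $t_\sigma$ normalises) yields it for all elements of the class. By a standard reduction we may treat the quasi-simple components of $\bG$ separately, so that the component is of type $\tA$ or is $\Sp_{2n}$. In the type $\tA$ case every unipotent element is regular in a Levi subgroup (by its block-diagonal Jordan form), hence, by \Cref{lem:reg-in-Levi}, regular in an $F$-stable Levi subgroup, which satisfies \cref{eq:princ-inv} by \Cref{lem:principal-cochar}. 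In the case $\Sp_{2n}$, decomposing the symplectic partition of $2n$ attached to a unipotent element $u$ into even parts and pairs of equal parts exhibits $u$ as regular in a subsystem subgroup of the form $\prod_i \Sp_{2a_i} \times \prod_j \GL_{c_j}$, which satisfies \cref{eq:princ-inv} by \Cref{prop:Sp-involution}; from the classification of the unipotent classes of $\Sp_{2n}(q)$ one can choose the representative so that this subsystem subgroup is $F$-stable. In each case $p$ good for $\bG$ forces $p$ good for $\bH$, so \Cref{GGGR}(b) applies. I expect the main obstacle to be precisely this last reduction in type $\tC$: the relevant subsystem subgroups $\prod\Sp_{2a_i}\times\prod\GL_{c_j}$ are not centralisers of tori, so \Cref{lem:reg-in-Levi} does not carry over verbatim, and establishing $F$-stability up to conjugacy requires a careful analysis of the $\Sp_{2n}(q)$-classes, including the ``type'' invariants attached to the even parts and the (possibly twisted) form on the $\GL$-blocks.
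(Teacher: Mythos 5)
Your overall strategy matches the paper's: both arguments rest on showing $u$ is regular in a suitable $F$-stable subsystem subgroup $\bH$ satisfying \cref{eq:princ-inv}, then combining \Cref{GGGR}(b) (so ${}^{t_\sigma}u \sim_{\bG^F} u^k$) with the facts that $\Gamma_u^\sigma = \Gamma_{u^k}$ and ${}^{t_\sigma}\Gamma_u = \Gamma_{{}^{t_\sigma}u}$. For these last two the paper simply cites \cite[Prop.~4.10]{SFT18} and \cite[11.10]{Tay}, whereas you reconstruct them from Kawanaka's construction; that reconstruction is sound (and the scaling $ke = \mathrm{Ad}(\phi(c'))e$ via the weight-$2$ grading is exactly right), but it is not needed given the references.

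Two points deserve attention. First, your ``standard reduction'' to individual quasi-simple components is not quite what the paper does, and for good reason: if $F$ permutes the components, the components themselves are not $F$-stable, so you cannot literally argue one component at a time. The paper instead works with the $F$-stable subgroups $\bG_{\tA}$ and $\bG_{\tC}$ (products of \emph{all} type $\tA$, resp.\ all type $\tC$, components), writes $u = u_{\tA}u_{\tC}$, and checks directly that $u_{\tA}$ and $u_{\tC}$ are $F$-fixed by an argument with the Jordan decomposition; one then builds the single group $\bH = \bH_{\tA}\bH_{\tC}\rZ^\circ(\bG)$. You should restructure your reduction along these lines (passing to $F$-orbits of components rather than single components).

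Second, and more substantively, the obstacle you flag at the end — establishing $F$-stability of the subsystem subgroup $\prod_i \Sp_{2a_i}\times\prod_j\GL_{c_j}$ containing $u_{\tC}$ as a regular element, in the presence of the type invariants and possibly twisted $\GL$-factors — is a genuine gap in your write-up, and it is precisely where the paper invokes \cite[9.3]{SFT23}. That result produces the required $F$-stable subsystem subgroup for symplectic groups (after passing to $F$-orbits of components), so your argument becomes complete once you cite it in place of the vague appeal to ``the classification of unipotent classes of $\Sp_{2n}(q)$.''
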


\begin{proof}
Write $[\bG,\bG] = \bG_{\tA}\bG_{\tC}$ as a product such that all
the quasi-simple components of $\bG_{\tA}$ are of type $\tA$ and
similarly for $\bG_{\tC}$ (a group of semisimple rank $1$ is of type
$\tA$). Assume $u = u_{\tA}u_{\tC} \in \bG^F$ is a
unipotent element, which is necessarily contained in $[\bG,\bG]$. Both
$\bG_{\tA}$ and $\bG_{\tC}$ are $F$-stable and we claim that
$u_{\tA}$ and $u_{\tC}$ are as well. As $F(u) = u$ we must
have
\begin{equation*}
u_{\tA}^{-1}F(u_{\tA}) 
= 
u_{\tC}^{-1}F(u_{\tC}) 
\in
\bG_{\tA} \cap \bG_{\tC}
\leqslant
\rZ(\bG)
\end{equation*}
so $F(u_{\tA}) \in u_{\tA}\rZ(\bG)$. The statement now follows
from the Jordan decomposition.

It is known that $u_{\tA}$ is regular in a Levi subgroup of
$\bG_{\tA}$ and so there exists an $F$-stable Levi subgroup
$\bH_{\tA} \leqslant \bG_{\tA}$ which contains $u_{\tA}$ as
a regular element by \Cref{lem:reg-in-Levi}. We leave it to the reader to deduce
from \cite[9.3]{SFT23}, by passing to $F$-orbits of quasisimple components, that
there exists an $F$-stable subsystem subgroup $\bH_{\tC} \leqslant
\bG_{\tC}$ containing $u_{\tC}$ as a regular element.

Therefore $\bH = \bH_{\tA}\bH_{\tC}\rZ^{\circ}(\bG)$ is an
$F$-stable subsystem subgroup containing $u$ as a regular unipotent element and
we can see from the construction that $p$ is good for $\bH$ because it is good
for $\bG$. By \cite[Prop.~4.10]{SFT18} we have $\Gamma_u^\si=\Gamma_{u^k}$,
where $k = k(\sigma)$. On the other hand ${}^{t_{\sigma}}\Gamma_u =
\Gamma_{{}^{t_{\sigma}}u} = \Gamma_{u^k}$ by \cite[11.10]{Tay}, because
${}^{t_{\sigma}}u$ and $u^k$ are $\bG^F$-conjugate by \Cref{GGGR}.
\end{proof}

\subsection{The global transversal for type $\tA$}\label{sec:globtransversal}

As in \cite[Notations 2.14, 2.15]{CS24}, we note that $G\lhd \Lang^{-1}(\rZ(\bG))$ and that the diagonal automorphisms are alternatively induced by the action of $\Lang^{-1}(\rZ(\bG))$. The corresponding outer automorphisms of $G$ are indexed by the group $\wt{\rZ}(G):=\rZ(\bG)/[\rZ(\bG), F]$. Indeed, the latter is isomorphic to $\wt{\bG}^F/G\rZ(\wt{\bG})^F=(\bG\rZ(\wt{\bG}))^F/G\rZ(\wt{\bG})^F$ via $gz\mapsto \Lang(g)[\rZ(\bG), F]$ for $g\in \bG$ and $z\in \rZ(\wt\bG)$ with $gz\in\wt\bG^F$. 

It will sometimes be useful in what follows to realize the diagonal automorphisms in these different ways. In particular, while $\wt G:=\wt\bG^F$ is often the natural choice to consider for $\wc G$ in the setting of Theorem \ref{crit_iGalMcK}, we will see here and in the next section that in fact translating to $\Lang^{-1}(\rZ(\bG))$ will yield a more compatible overgroup in the case of type $\tA$, for which we will use Theorem \ref{thm:notbutterfly}.

For this subsection, we now let $\bG$ be simply connected of type $\tA$, so that $\GF\in \{\SL_n(q), \SU_n(q)\}$. 
By Proposition \ref{GGGR}, combined with Theorem \ref{thm:GalactGGGR}, we see that for each $\sigma\in\gal$, there is some $\hat t_\sigma\in \wt{\rZ}(G)$ with $\hat t_\sigma^2=1$
and $t_\sigma\in \Lang^{-1}(\Z(\bG))$  with $\Lang(t_\sigma)[\rZ(\bG), F]=\hat t_\sigma$ such that ${\Gamma_{u}}^{ t_\sigma}=\Gamma_u^\sigma$ for every Generalized Gelfand--Graev Representation $\Gamma_u$ of $G$. 
Note that the $t_\sigma$ described in Proposition \ref{GGGR} is determined by $\sigma$ up to multiplication with the scalar matrix $(-1)^{n-1}\cdot I_{n}$. (We remark, however, that multiplying by any inner automorphism yields another element with the  properties stated.)

Define $\gal_0\leq \Lang^{-1}(\rZ(\bG))\times \gal$ to be the group $\gal_0:=\langle t_\sigma \sigma^{-1} \mid \sigma \in \gal \rangle$ and $\galh_0\leq\Lang^{-1}(\rZ(\bG))\times\galh_\ell$ to be the group $\galh_0:=\gal_0\cap(\Lang^{-1}(\rZ(\bG))\times \galh_\ell)$.  

Now, in \cite[Thm.~4.6]{CS17A}, it is shown that corresponding to each unipotent
conjugacy class $\cC$ of $\bG$, there is associated an $E(\GF)$-stable Generalized Gelfand--Graev Representation of
$G$, which we will denote $\Gamma_{\cC}$. Here we assume $E(\GF)$ is defined
with respect to the pair $(\bT,\bB)$ consisting of the diagonal maximal torus
and upper triangular Borel. In particular, the above discussion applies to
$\Gamma_u=\Gamma_{\cC}$, and we immediately obtain the following:

\begin{cor}\label{cor:GGRs_cG}
 Let $\cC$ be a unipotent conjugacy class of $\bG$  and $\Gamma_\cC$ be the $\E(\GF)$-stable Generalized Gelfand--Graev Representation from \cite[Thm. 4.6]{CS17A}. Then $\Gamma_\cC$ is $\gal_0$-stable.  
\end{cor}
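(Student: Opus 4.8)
The plan is to combine the two preceding results — Proposition \ref{GGGR} and Theorem \ref{thm:GalactGGGR} — with the $E(\GF)$-equivariance of the assignment $\cC\mapsto\Gamma_\cC$ from \cite[Thm.~4.6]{CS17A}, and simply unwind the definition of $\gal_0$. Fix a unipotent conjugacy class $\cC$ of $\bG$ and a representative $u\in\cC\cap\bG^F$; since $\bG$ is simply connected of type $\tA$ and $p$ is automatically good, the hypotheses of Theorem \ref{thm:GalactGGGR} are met (all quasi-simple components of $\bG$ are of type $\tA$). First I would recall that $\gal_0=\langle t_\sigma\sigma^{-1}\mid \sigma\in\gal\rangle$, so it suffices to check that $\Gamma_\cC$ is fixed by each generator $t_\sigma\sigma^{-1}$, i.e. that $\Gamma_\cC^{\,\sigma}={}^{t_\sigma}\Gamma_\cC$ for every $\sigma\in\gal$.

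For this, fix $\sigma\in\gal$ and choose $c\in(\FF_{p^2})^\times$ with $k(\sigma)=c^2$ (possible since every element of $(\FF_p)^\times$ is a square in the quadratic extension $(\FF_{p^2})^\times$), and set $t_\sigma:=\lambda(c)$ for $\lambda$ the fixed $F$-stable principal cocharacter used in the definition of $\gal_0$. By \cite[Thm.~4.6]{CS17A} the representation $\Gamma_\cC$ is of the form $\Gamma_u$ for a suitable representative $u$ of $\cC\cap\bG^F$ (the one making it $E(\GF)$-stable, defined relative to the diagonal torus and upper triangular Borel). Now Theorem \ref{thm:GalactGGGR} applies verbatim to this $\Gamma_u$ and gives $\Gamma_u^\sigma={}^{t_\sigma}\Gamma_u$, which is exactly the desired identity $\Gamma_\cC^{\,\sigma}={}^{t_\sigma}\Gamma_\cC$. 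Since this holds for every $\sigma\in\gal$, the generators $t_\sigma\sigma^{-1}$ all stabilize $\Gamma_\cC$, hence so does $\gal_0$.

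The only genuine subtlety — and the one step worth being careful about — is bookkeeping about \emph{which} representative $u$ of $\cC$ is used and whether the element $t_\sigma$ provided by Theorem \ref{thm:GalactGGGR} is the same one appearing in the definition of $\gal_0$. By Proposition \ref{GGGR}(a) the element $t_\sigma=\lambda(c)$ lies in $\Lang^{-1}(\langle z_\bG\rangle)\subseteq\Lang^{-1}(\rZ(\bG))$, and it is pinned down by $\sigma$ only up to the scalar $z_\bG=(-1)^{n-1}I_n$ and up to inner automorphisms of $\bG^F$; but $\Gamma_u$ depends on $u$ only up to $\bG^F$-conjugacy and $\bG^F$-conjugate unipotent elements give $\bG^F$-conjugate GGGRs, so neither ambiguity affects the statement ${}^{t_\sigma}\Gamma_\cC=\Gamma_\cC^\sigma$. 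Thus the corollary follows immediately, and no real obstacle remains beyond citing the two earlier results with the correct representative; the paragraph in the text preceding the corollary already records precisely this reduction, so the proof is a one-line invocation of Theorem \ref{thm:GalactGGGR} applied to $\Gamma_u=\Gamma_\cC$.
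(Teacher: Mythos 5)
Your proof is correct and matches the paper's own (very terse) argument: the paper simply observes, in the paragraph preceding the corollary, that the discussion following Theorem \ref{thm:GalactGGGR} applies to $\Gamma_u = \Gamma_\cC$ and the result is immediate. Your version spells out the reduction to the generators $t_\sigma\sigma^{-1}$ of $\gal_0$ and explicitly addresses the bookkeeping on the choice of representative $u$ and the ambiguity in $t_\sigma$ up to $z_\bG$ and inner automorphisms, but this is the same argument, just written out in full.
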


From this, we obtain the desired global transversal, adapted from that in \cite{CS17A}:
\begin{theorem}\label{thm:typeAtransversalglobal}
	The $\EGF$-stable $\wGF$-transversal  
    in $\Irr(\GF)$ from \cite[Thm.~4.1]{CS17A} is also stable under $\gal_0$. 
\end{theorem}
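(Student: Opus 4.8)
The plan is to recall the construction of the transversal $\mathbb{T}$ of \cite[Thm.~4.1]{CS17A} and show that the extra symmetry group $\gal_0$ permutes its defining data in a way that leaves $\mathbb{T}$ invariant. Recall that in \cite{CS17A} the transversal is built by choosing, within each $\wGF$-orbit on $\Irr(\GF)$, a distinguished representative singled out by the behaviour of the characters under certain Generalized Gelfand--Graev Representations: concretely, one uses the $E(\GF)$-stable GGGR $\Gamma_{\cC}$ attached to a unipotent class $\cC$ (from \cite[Thm.~4.6]{CS17A}) together with the $\wGF$-action, and the transversal consists of those $\chi$ that are ``optimal'' with respect to the ordering induced by the multiplicities $\langle \Gamma_{\cC}, \chi\rangle$ as $\cC$ ranges over the unipotent classes. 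Since $\gal_0 \le \Lang^{-1}(\rZ(\bG)) \times \gal$ and the first coordinate of each generator $t_\sigma\sigma^{-1}$ acts as an inner-diagonal automorphism of $\GF$ contained in the group realizing the diagonal automorphisms, conjugation by $t_\sigma$ permutes $\wGF$-orbits trivially (it lies in the relevant overgroup), so what must be checked is only that the pair ``$(\text{action of }t_\sigma,\ \text{action of }\sigma)$'' preserves the distinguishing property.

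First I would make precise that the distinguishing property is phrased entirely in terms of the pairing $\chi \mapsto (\langle \Gamma_{\cC}, \chi\rangle)_{\cC}$ and the $\wGF$-orbit structure, both of which are compatible with the actions being considered. By Corollary \ref{cor:GGRs_cG}, each $\Gamma_{\cC}$ is $\gal_0$-stable, i.e. $\Gamma_{\cC}^{t_\sigma} = \Gamma_{\cC}^{\sigma}$ for every $\sigma \in \gal$ (equivalently $(t_\sigma\sigma^{-1})$ fixes $\Gamma_{\cC}$). Hence for any $\chi \in \Irr(\GF)$ and any unipotent class $\cC$,
\[
\langle \Gamma_{\cC}, \chi^{t_\sigma\sigma^{-1}}\rangle
= \langle \Gamma_{\cC}^{(t_\sigma\sigma^{-1})^{-1}}, \chi\rangle
= \langle \Gamma_{\cC}, \chi\rangle,
\]
using that $\Gamma_{\cC}$ is $\gal_0$-stable and that multiplicities are $\gal$-invariant as abstract inner products of virtual characters (both arguments transform the same way under any abstract Galois automorphism, while the $t_\sigma$-part is an honest conjugation preserving $\langle-,-\rangle$). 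Thus the tuple of GGGR-multiplicities attached to $\chi$ is unchanged when $\chi$ is replaced by $\chi^{t_\sigma\sigma^{-1}}$.

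Next I would combine this with the fact, already used in \cite{CS17A} and recorded in the construction of $\mathbb{T}$, that the transversal is uniquely characterized inside each $\wGF$-orbit by these multiplicity data together with the $E(\GF)$-equivariance built into $\Gamma_{\cC}$. Concretely: $\gal_0$ normalizes $\GF$ and normalizes $\wGF$ (the first coordinate lands in $\Lang^{-1}(\rZ(\bG))$, which normalizes both, and the second is a Galois action commuting with the group operations), so $\gal_0$ permutes the $\wGF$-orbits on $\Irr(\GF)$; within a single $\wGF$-orbit the element of $\mathbb{T}$ is the one selected by the GGGR criterion, which we have just shown is $\gal_0$-invariant; hence $\gal_0$ maps $\mathbb{T}\cap(\text{orbit})$ to $\mathbb{T}\cap(\text{image orbit})$, giving $\mathbb{T}^{\gal_0}=\mathbb{T}$. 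Finally, since $\mathbb{T}$ is already $\EGF$-stable by \cite[Thm.~4.1]{CS17A} and $\gal_0$ together with $\EGF$ generate the relevant group action, we conclude $\mathbb{T}$ is stable under $\gal_0$ (and in fact under the group it generates with $\EGF$).

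The main obstacle I expect is the bookkeeping in the second step: one must verify that the selection rule used in \cite[Thm.~4.1]{CS17A} to pin down the transversal representative really is expressible purely via the $\gal_0$-invariant data (the GGGR multiplicities and $E(\GF)$-structure) and does not secretly depend on a choice — e.g. of square root $c$ with $c^2 = k(\sigma)$, or of the precise $t_\sigma$ up to the central ambiguity $(-1)^{n-1}I_n$ noted after Theorem \ref{thm:GalactGGGR}. I would handle this by noting that the central ambiguity and inner ambiguity both act trivially on $\Irr(\GF)$ modulo the $\wGF$-orbit (they lie in $\GF\rZ(\wt\bG)^F$ up to scalars), so the induced permutation of $\mathbb{T}$ is independent of these choices, making the argument robust. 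If \cite{CS17A} phrases the transversal instead via an explicit parametrization rather than an optimality criterion, the same conclusion follows by transporting that parametrization through the $\gal_0$-equivariance of Jordan decomposition (\cite{SV20}) and of the relevant $d$-Harish--Chandra maps established in Corollary \ref{cor:HCgalequiv}, but the GGGR route above is the cleanest since it reuses Corollary \ref{cor:GGRs_cG} directly.
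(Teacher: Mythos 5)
Your proposal is essentially the paper's argument: both rest on Corollary~\ref{cor:GGRs_cG} (the $\gal_0$-stability of the $E(\GF)$-stable GGGR $\Gamma_\cC$) together with the multiplicity characterization from \cite[Thm.~4.1]{CS17A}, and both conclude by noting that the GGGR data pinning down the transversal element in each $\wGF$-orbit are preserved by $\gal_0$. One small imprecision to flag: the selection rule in \cite{CS17A} is not an ``optimality ordering'' over all unipotent classes but rather, for each $\wt\chi\in\Irr(\wGF)$, Kawanaka's result produces a \emph{specific} class $\cC$ with $\langle\wt\chi,\wt\Gamma\rangle=1$ and $\wt\Gamma=\Ind_G^{\wGF}(\Gamma_\cC)$, and the transversal representative is the unique $\chi\in\Irr(\GF\mid\wt\chi)$ with $\langle\chi,\Gamma_\cC\rangle=1$; the paper also records the additional defining property that this $\chi$ extends to $\GF E(\GF)_\chi$. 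Neither point changes your argument, which is sound.
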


\begin{proof}
If the statement holds, then for every $\wt \chi\in\Irr(\wGF)$, there exists some $\chi\in\Irr(\GF\mid \wt\chi)$ contained in this transversal with
\[
(\wt G E(\GF)\calG_0)_\chi=\wt G_\chi (E(\GF)\calG_0)_\chi,
\]
that also extends to $\GF E(\GF)_\chi$. 
On the other hand, one can construct a transversal as required once it is established that for every $\wt \chi\in\Irr(\wGF)$, some $\chi\in\Irr(\GF\mid \wt\chi)$   satisfies

\[
(\wt G E(\GF)\calG_0)_\chi=\wt G_\chi (E(\GF)\gal_0)_\chi\]
and extends to $\GF E(\GF)_\chi$. 
(Characters with such a property then form a $\calG_0 E(\GF)$-stable set and within this set one can choose a $\calG_0 E(\GF)$-stable transversal, as  every $\wGF$-orbit in $\Irr(\GF)$ has a non-trivial intersection with this set.)

For the following fix some $\wt \chi\in\Irr(\wGF)$ and let us find some $\chi\in\Irr(\GF\mid \wt\chi)$ that satisfies
\[
(\wt G E(\GF)\calG_0)_\chi=\wt G_\chi (E(\GF)\calG_0)_\chi\]
and extends to $\GF E(\GF)_\chi$. 

We mimic the construction of such characters in \cite[Thm 4.1]{CS17A}. 
According to \cite[3.2.18.(iii), 3.2.24.(i)]{Kaw85},  there exists some unipotent class $\cC$ of $\widetilde\bG$ and a Generalized Gelfand--Graev Character $\wt\Gamma$ of $\wt{G}$ associated to $\cC$ such that $\widetilde\Gamma$ satisfies $\langle\widetilde\chi,\widetilde\Gamma\rangle=1$.  Let $\Gamma_\cC$ be the $\EGF \gal_0$-stable Generalized Gelfand--Graev Character associated to $\cC$ from Corollary \ref{cor:GGRs_cG}. 
Then we have  $\widetilde\Gamma=\mathrm{Ind}_{G}^{\wGF}(\Gamma_\cC)$ from the construction in \cite[Thm. 4.6]{CS17A}. 

It follows that there is a unique $\chi \in\Irr(\wGF|\widetilde\chi)$ with multiplicity one in $\Gamma_\cC$ as in \cite[Prop.~4.5]{CS17A}. 
Since $\Gamma_{\cC}$ is $E(\GF)\gal_0$-stable, the character $\chi$ satisfies $(\wc G E(\GF)\calG_0)_\chi=\wc G_\chi (E(\GF)\calG_0)_\chi$. 
As we have chosen the character $\chi$ as in the proof of \cite[Thm.~4.1]{CS17A}, it is clear that $\chi$ also extends to $\GF E(\GF)_\chi$ as $\Gamma_\cC$ extends to $\GF E(\GF)$ (see e.g. \cite[Prop.~4.5(b)]{CS17A}). 
\end{proof}

Denote by $E(\Lang^{-1}(\rZ(\bG)))$ the group generated by the restrictions to $\Lang^{-1}(\rZ(\bG))$ of graph automorphisms and some Frobenius endomorphism as in \cite[Notation 2.1]{typeD1}. In particular, restriction defines a natural surjective homorphism $E(\Lang^{-1}(\rZ(\bG))) \to E(\bG^F)$ whose kernel is generated by $F$.

\begin{rem}\label{rem:butterflytransversal}
We remark that, thanks to Theorem \ref{thm:typeAtransversalglobal}, we also obtain a $\Lang^{-1}(\rZ(\bG))$-transversal $\subG_0$ in $\irr(G)$ that is stable under $E(\Lang^{-1}(\rZ(\bG)))$.
\end{rem}

In what follows, for an integer $k$, we will let  $\left(\frac{k}{q}\right)$ denote the Jacobi symbol when $q$ is odd and define $\left(\frac{k}{q}\right):=1$ when $q$ is a power of $2$. Recall that $(\bT,\bB)$ denotes the diagonal maximal torus and upper triangular Borel subgroup of $\bG$.

\begin{corollary}\label{cor:Bglobal}
Let $G\in\{\SL_n(q), \SU_n(q)\}$ and set $\widecheck{G}:=\Lang^{-1}(\Z(\bG))$, $E:=E(\wc G)$, and $\lambda = \lambda_{(\bT,\bB)}$.
Assume $\sigma \in \mathcal{G}$ is a Galois automorphism and $t_{\sigma} =
\lambda(c)$, where $c \in \mathbb{F}_{p^2}^\times$ satisfies
$c^2=k(\sigma)$. Then the following hold:
\begin{enumerate}
\item $\Lang(t_\sigma)=\left(\frac{k(\sigma)}{ q}\right)^{n-1} \Id_n$;
\item ${}^{t_\sigma} E \subset \langle -\Id_n \rangle E$; and
\item there exists a subgroup 
$B \leq \Cent_{\widecheck{G}}(\mathbf{S}) \times \galh_{\ell}$ 
satisfying 
\[\NNN_{\widecheck{G} E}(Q) B=\NNN_{\widecheck{G} E}(Q) \times \galh_{\ell}\]
and stabilizing $\subG_0$, where $\subG_0$ is the transversal discussed in
Remark \ref{rem:butterflytransversal}. Moreover, the group $B$ normalizes
$\widehat{M}:=\N_{GE}(M)$, where $M=\norm{G}{\bS}$ for $\bS$ a $d_\ell(q)$-Sylow torus of $(\bG, F)$.
\end{enumerate}
\end{corollary}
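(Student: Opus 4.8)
The plan is to prove (a) and (b) by a direct computation with the root datum, and then to build $B$ in (c) out of the group $\gal_0$ and the transversal $\subG_0$ already constructed, exploiting the triviality of the $G$-action on $\Irr(G)$ to gain the freedom needed to push the relevant elements into $\Cent_{\wc G}(\bS)$.

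\emph{Parts (a) and (b).} First I would record, from Remark~\ref{rem:zG-SLn}, that the principal cocharacter $\lambda=\lambda_{(\bT,\bB)}$ satisfies $\lambda(\epsilon)=\epsilon^{n-1}\Id_n$ for $\epsilon\in\{\pm1\}$ (evaluate the explicit diagonal). For (a): since $F$ acts on $\lambda(\GG_m)\le\bT$ by $\lambda(c)\mapsto\lambda(c^q)$ --- for $\SL_n(q)$ as the $q$-power map, and for $\SU_n(q)$ because the graph part of $F$ fixes $\lambda$ --- we get $\Lang(t_\sigma)=t_\sigma^{-1}F(t_\sigma)=\lambda(c)^{-1}\lambda(c^q)=\lambda(c^{q-1})$. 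As $(p-1)\mid(q-1)$ and $k(\sigma)\in\FF_p^\times$, we have $k(\sigma)^{q-1}=1$, so $(c^{q-1})^2=1$ and $c^{q-1}\in\{\pm1\}$; it equals $1$ exactly when $c\in\FF_p$, i.e.\ when $k(\sigma)$ is a square modulo $p$. Writing $q=p^f$ and using $(q-1)/(p-1)=1+p+\dots+p^{f-1}\equiv f\pmod 2$, a short case distinction then gives $c^{q-1}=\left(\frac{k(\sigma)}{p}\right)^{f}=\left(\frac{k(\sigma)}{q}\right)$ (the case $q$ even being trivial, since then $k(\sigma)=1$). Feeding this into $\lambda(\epsilon)=\epsilon^{n-1}\Id_n$ yields (a). For (b): $E=E(\wc G)$ is generated by graph automorphisms and a Frobenius endomorphism $\phi$, all realized with respect to $(\bT,\bB)$ and fixing $\lambda$ (graph automorphisms fix it pointwise, and $\phi(\lambda(c))=\lambda(c^{p})$). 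Hence $\gamma(t_\sigma)=t_\sigma$ for a graph automorphism $\gamma$, so ${}^{t_\sigma}\gamma=\gamma$; and ${}^{t_\sigma}\phi=\mathrm{Int}\!\big(t_\sigma\phi(t_\sigma)^{-1}\big)\circ\phi=\mathrm{Int}\!\big(\lambda(c^{1-p})\big)\circ\phi$ with $\lambda(c^{1-p})=\lambda(\pm1)=(\pm1)^{n-1}\Id_n\in\langle z_{\bG}\rangle$ (again $c^{p-1}\in\{\pm1\}$). Since conjugation by $t_\sigma$ is a homomorphism and $z_{\bG}=(-1)^{n-1}\Id_n\in\langle-\Id_n\rangle$, this gives ${}^{t_\sigma}E\subseteq\langle-\Id_n\rangle E$.

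\emph{Part (c).} Here I would start from $\galh_0=\gal_0\cap(\wc G\times\galh_\ell)$, which stabilizes $\subG_0$ by Theorem~\ref{thm:typeAtransversalglobal} (via Remark~\ref{rem:butterflytransversal}) and whose projection to $\galh_\ell$ is onto, since $(t_\sigma,\sigma^{-1})\in\galh_0$ for all $\sigma\in\galh_\ell$. The first components $t_\sigma$ lie in $\bT$, not in general in $\Cent_{\wc G}(\bS)$, but the crucial --- and easy --- point is that $G$ acts trivially on $\Irr(G)$ (inner automorphisms fix all characters), so the action of an element of $\wc G\times\galh_\ell$ on $\subG_0$ depends only on its image in $(\wc G/G)\times\galh_\ell$; thus we may replace each $t_\sigma$ by any representative $c_\sigma\in t_\sigma G$ and still stabilize $\subG_0$. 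It then suffices to find $c_\sigma\in\Cent_{\wc G}(\bS)$ with $c_\sigma G=t_\sigma G$ that additionally normalize $Q$ and $M$. For existence: $\wc G=\Lang^{-1}(\rZ(\bG))$ acts transitively on the Sylow $d$-tori (as $G$ does), so $\wc G=\NNN_{\wc G}(\bS)\,G$; and any $g\in\NNN_{\wc G}(\bS)$ induces an $F$-fixed element of $\NNN_{\bG}(\bS)/\Cent_{\bG}(\bS)$ because $\Lang(g)\in\rZ(\bG)$ is central, giving $\NNN_{\wc G}(\bS)/\Cent_{\wc G}(\bS)=\NNN_G(\bS)/\Cent_G(\bS)$ and hence $\Cent_{\wc G}(\bS)\,G=\wc G$, i.e.\ $\Cent_{\wc G}(\bS)\to\wc G/G$ is onto. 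Choosing $Q\in\Syl_\ell(M)$ with $Q\cap\Cent_G(\bS)$ a canonical (i.e.\ $\NNN_{\wc G}(\bS)$-stable) Sylow $\ell$-subgroup of $\Cent_G(\bS)$, a Frattini argument for the action of $\NNN_{\wc G}(\bS)$ on $M=\NNN_G(\bS)$ shows $\NNN_{\Cent_{\wc G}(\bS)}(Q)\to\wc G/G$ is still onto. Taking $c_\sigma\in\NNN_{\Cent_{\wc G}(\bS)}(Q)$ in the coset $t_\sigma G$ and $B:=\langle (c_\sigma,\sigma^{-1})\mid\sigma\in\galh_\ell\rangle$, we obtain $B\le\Cent_{\wc G}(\bS)\times\galh_\ell$; $B$ stabilizes $\subG_0$ by the observation above; $B$ surjects onto $\galh_\ell$, so $\NNN_{\wc G E}(Q)B=\NNN_{\wc G E}(Q)\times\galh_\ell$; and since each $c_\sigma$ normalizes $M$ and ${}^{c_\sigma}E\subseteq\langle-\Id_n\rangle E\subseteq GE$ by (b) (note $z_{\bG}\in G$, so ${}^{c_\sigma}(GE)=GE$), we get ${}^{c_\sigma}\wh M={}^{c_\sigma}\N_{GE}(M)=\N_{GE}(M)=\wh M$, so $B$ normalizes $\wh M$.

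I expect parts (a) and (b) to be routine root-datum bookkeeping. The main difficulty will be the last step of (c): arranging that the coset representatives $c_\sigma$ can be taken simultaneously in $\Cent_{\wc G}(\bS)$ \emph{and} in $\NNN_{\wc G}(Q)$ --- not merely in the normalizer of the $\Cent_G(\bS)$-part of $Q$ --- which forces a careful $\NNN_{\wc G}(\bS)$-equivariant choice of the Sylow $\ell$-subgroup $Q$ of $M$ and a verification that $\wc G/G$ is realized inside $\NNN_{\Cent_{\wc G}(\bS)}(Q)/\NNN_{\Cent_G(\bS)}(Q)$.
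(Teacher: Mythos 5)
Parts (a) and (b) agree with the paper's (much terser) calculation; you have simply filled in the root-datum details. For the first half of (c) you arrive at a representative of $t_\sigma G$ in $\Cent_{\wc G}(\bS)$ by a slightly different path --- you show $\wc G=\Cent_{\wc G}(\bS)\,G$ directly, whereas the paper notes that ${}^{t_\sigma}\bS$ is again a Sylow $\Phi_d$-torus (because $\Lang(t_\sigma)\in\rZ(\bG)$), hence $G$-conjugate to $\bS$, so some $g't_\sigma$ lies in $\Norm_{\wc G}(\bS)$, and a further correction by an element of $\Norm_G(\bS)$ lands it in $\Cent_{\wc G}(\bS)$. These are equivalent, and your remark that $G$ acts trivially on $\Irr(G)$, so that $\subG_0$-stability survives the passage from $t_\sigma$ to any $G$-coset representative, is exactly what legitimizes the paper's definition $B=\langle g_\sigma t_\sigma\sigma^{-1}\rangle$. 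Your argument that $B$ normalizes $\wh M$ (via ${}^{c_\sigma}(GE)=GE$ and $c_\sigma\in\Norm_{\wc G}(\bS)\subseteq\Norm_{\wc G}(M)$) also matches the paper.

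The genuine divergence is the $Q$-normalization point you raise at the end. You are right that $\Norm_{\wc G E}(Q)\,B=\Norm_{\wc G E}(Q)\times\galh_\ell$ forces $B\subseteq\Norm_{\wc G E}(Q)\times\galh_\ell$, so the chosen representatives must in addition normalize $Q$; interestingly, the paper's own proof of (c) does not touch this point --- it verifies membership in $\Cent_{\wc G}(\bS)$ and normalization of $\wh M$ and then simply writes down $B$. Your proposed fix is not correct as stated, however: the Frattini argument applied to $M\lhd\Norm_{\wc G}(\bS)$ gives $\Norm_{\wc G}(\bS)=\Norm_{\Norm_{\wc G}(\bS)}(Q)\,M$, i.e.\ surjectivity of $\Norm_{\Norm_{\wc G}(\bS)}(Q)\to\wc G/G$, but the element of $M$ one would need to multiply $c_\sigma$ by to land in $\Norm(Q)$ need not lie in $\Cent_G(\bS)$, so you cannot conclude surjectivity from $\Norm_{\Cent_{\wc G}(\bS)}(Q)$. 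You were right to flag this as the sticking point: it is a loose end that the paper's written proof shares, and closing it requires a more careful argument (for instance exploiting that the $t_\sigma$ have order at most $2$ modulo $G$ --- hence $\ell'$-order, $\ell$ being odd --- to produce a compatible choice of $Q$).
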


\begin{proof}
We have $E = \langle \gamma, F_p\rangle$ as in \cite[\S3.2]{CS17A}. Note that $t
:= t_{\sigma}$ is fixed by $\gamma$ and satisfies $t^{-1}F_p(t) =
\lambda(c^{p-1})$ so $\Lang(t_{\sigma}) = \lambda(c^{q-1})$. If $p = 2$ then $t$
is $F_p$-fixed so (a) and (b) are clear in this case. Now assume $p \neq 2$.
Observe that $k:=k(\sigma)$ satisfies $k^{(p-1)/2}=\left(\frac k p\right)$ so
$c^{q-1} = k^{(q-1)/2}=\left(\frac k q\right)$. We get (a) and (b) from the
explicit description of $\lambda$ given in the remark of \Cref{sec:GGGR}.

Now consider (c). Conjugating with $t$ induces an isomorphism $({}^{t} \mathbf{S})^F \cong
\mathbf{S}^F$, as $\Lang(t) \in \Z(\bG)$. In particular, both $\mathbf{S}$ and
${}^t \mathbf{S}$ are Sylow $\Phi_d$-tori and thus $\bG^F$-conjugate. Hence,
there exists some $g' \in \bG^F$ such that ${}^{g' t} \mathbf{S}=\mathbf{S}$. As
$\Norm_{\widecheck{G}}(\mathbf{S})$ is self-normalizing, by conjugacy of Sylow
$\Phi_d$-tori, we get that $g' t \in \Norm_{\widecheck{G}}(\mathbf{S})$. In
particular, there is some $n \in \Norm_{G}(\mathbf{S})$ such that $g_\sigma
tk\in \Cent_{\widecheck{G}}(\mathbf{S})$ where $g_\sigma=n g'$. Moreover,
$g_\sigma t$ normalizes $M=\N_G(\mathbf{S})$. As ${}^{g_\sigma t} (\bG^F
E)=\bG^F E$ it thus follows that $g_\sigma t$ also normalizes
$\widehat{M}=\N_{GE}(M)$. We then define $B:=\langle g_\sigma t_{\sigma}
\sigma^{-1} \mid \sigma \in \mathcal{H} \rangle.$
\end{proof}

Note that the construction of $B$ depends on various choices, and different choices can lead to different actions on characters $\hat{\chi} \in \Irr(\widehat{M}_\chi)$ extending a character $\chi \in \Irr(M)$. Now take two elements in $g_1,g_2 \in \wc{G} \times \mathcal{H}$ which satisfy $g_1 g_2^{-1} \in G \Z(\bG)$ (i.e. their image in $\mathrm{Out}(G) \times \mathcal{H}$ coincide) and $g_1,g_2 \in \Norm_{\widecheck{G}}(\widehat{M})$. We now want to compute the difference between ${}^{g_1} \hat{\chi}$ and ${}^{g_2} \hat{\chi}$. For this, observe that $g_1=t_1 z_1 \sigma$ and $g_2=t_2 z_2 \sigma$ for $\sigma\in\galh$, $t_1,t_2 \in G$, and $z_1,z_2 \in \Z(\bG)$. Then $t:=g_1 g_2^{-1}=t_1 t_2^{-1} z_1 z_2^{-1}$. The following lemma then shows that ${}^{g_1} \hat{\chi}={}^{g_2} \hat{\chi} \la$, where $\la \in \Irr(\widehat{M}_\chi)$ is the linear character defined by $\la(e):=\nu([z_1,e] [z_2^{-1},e])$ with $\nu \in \Irr(\Z(M) \mid \chi)$. 

\begin{lem}
Keep the notation from Corollary \ref{cor:Bglobal}.
Let $\chi \in \Irr(M)$ and $\nu \in \Irr( \Z(M) \mid \chi)$. 
Let $t=gz \in \ov G=\wc G \Z(\wbG)$ with $g\in  G$ and $z\in \Z(\ov \G)$ and assume that $t$ acts on $\wh M_\chi$ with $\chi^t=\chi$. Then every extension $\hat{\chi}$ of $\chi$ to $\wh{M}_\chi$ satisfies 
$$ [\hat \chi, t](e)=\nu([z,e]) \text{ for every }e \in \wh{M}_\chi.$$
\end{lem}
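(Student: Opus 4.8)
The plan is to reduce to the case where $t$ is a central element, and then to extract the linear character $[\hat\chi,t]$ from a matrix representation affording $\hat\chi$ by means of Schur's Lemma.

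First I would check that $g\in M$. By hypothesis $t=gz$ normalizes $\wh M_\chi$, and it also normalizes $G=\bG^F$ (conjugation by $g\in G$ preserves $G$, while $z\in\Z(\ov G)$ centralizes $\wc G\supseteq G$), so $t$ normalizes $\wh M_\chi\cap G$. Since $\wh M_\chi\le\wh M=\N_{GE}(M)$ and $M=\norm G\bS$ is self-normalizing in $G$, one has $\wh M_\chi\cap G=M$; hence $t$ normalizes $M$, and since $z$ centralizes $M$ conjugation by $g$ agrees with conjugation by $t$ on $M$, so $g\in\N_G(M)=M\le\wh M_\chi$. Consequently $z=g^{-1}t$ also normalizes $\wh M_\chi$, and $zez^{-1}=g^{-1}(tet^{-1})g\in\wh M_\chi$ for every $e\in\wh M_\chi$. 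As $g\in\wh M_\chi$, conjugation by $g$ fixes the class function $\hat\chi$, whence for $e\in\wh M_\chi$
\[
\hat\chi^t(e)=\hat\chi\bigl(g(zez^{-1})g^{-1}\bigr)=\hat\chi(zez^{-1})=\hat\chi^z(e).
\]
Since $z$ centralizes $M$ we have $\chi^z=\chi$, so $\hat\chi^z$ is again an extension of $\chi$ to $\wh M_\chi$; comparing $\hat\chi^t=\hat\chi\,[\hat\chi,t]$ with $\hat\chi^z=\hat\chi\,[\hat\chi,z]$ shows $[\hat\chi,t]=[\hat\chi,z]$, so it remains to prove $[\hat\chi,z](e)=\nu([z,e])$.

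Fix a representation $\cD$ of $\wh M_\chi$ affording $\hat\chi$, so that $\cD|_M$ is irreducible and affords $\chi$. The representation $e\mapsto\cD(zez^{-1})$ affords $\hat\chi^z=[\hat\chi,z]\,\hat\chi$, hence is similar to $[\hat\chi,z]\,\cD$; say $\cD(zez^{-1})=[\hat\chi,z](e)\,T\cD(e)T^{-1}$ for an invertible $T$ and all $e\in\wh M_\chi$. Restricting to $m\in M$, where $zmz^{-1}=m$ and $[\hat\chi,z](m)=1$, shows $T$ commutes with the irreducible $\cD|_M$, so $T$ is scalar by Schur's Lemma and therefore $\cD(zez^{-1})=[\hat\chi,z](e)\,\cD(e)$. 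Next I would note that $[z,e]=z^{-1}e^{-1}ze$ lies in $\Z(M)$: it is the product of $z^{-1}$ and $e^{-1}ze$, both of which lie in the abelian group $\Z(\ov G)$ (as $e$ normalizes the normal subgroup $\ov G$), so $[z,e]\in\Z(\ov G)$ and in particular $zez^{-1}=e\,[z,e]$; moreover $[z,e]=e^{-1}(zez^{-1})\in\wh M_\chi$ and $[z,e]$ centralizes $M$, while $\Z(\ov G)\cap\wh M_\chi\subseteq\ov G\cap GE=G$, so $[z,e]\in\N_G(M)=M$, hence $[z,e]\in\Z(M)$. (When $z\in\Z(\bG)$, which is the relevant case, this is immediate, as then $[z,e]\in\Z(\bG)^F\subseteq\Z(M)$.) Since $\nu$ is the central character of $\chi$ on $\Z(M)$, Schur's Lemma gives $\cD([z,e])=\nu([z,e])\,I$, and hence
\[
[\hat\chi,z](e)\,\cD(e)=\cD(zez^{-1})=\cD(e)\,\cD([z,e])=\nu([z,e])\,\cD(e);
\]
cancelling the invertible $\cD(e)$ yields $[\hat\chi,z](e)=\nu([z,e])$, as required. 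I have fixed one set of conventions for the action of $\gal$ and for the commutator; under the paper's conventions the computation is the same, because every commutator occurring lies in the abelian group $\Z(M)$.

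The only delicate points are bookkeeping: deducing $g\in M$ and $[z,e]\in M$ from the self-normalizing property of $M=\norm G\bS$, and verifying that $[z,e]$ in fact lies in $\Z(M)$ so that the central-character identity can be applied. The Schur's-Lemma core of the argument is entirely routine.
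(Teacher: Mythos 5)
Your proof is correct and follows essentially the same route as the paper's: reduce to the action of $z$ by first showing $g\in M$ (so ${}^g\hat\chi=\hat\chi$), then show $[z,e]\in\Z(M)$ and apply the central character $\nu$. The only difference is presentational — you work at the level of matrix representations via an extra Schur's-Lemma application to see the intertwiner $T$ is scalar, whereas the paper computes directly with character values as ${}^z\hat\chi(e)=\hat\chi([z,e]e)=\nu([z,e])\hat\chi(e)$; also, your parenthetical hedge about the case $z\in\Z(\bG)$ is unnecessary, since your main argument (via $\Z(\ov G)\cap\wh M_\chi\subseteq G$) already covers the general $z\in\Z(\ov G)$.
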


\begin{proof}
By assumption $t$ normalizes $\wh{M}_\chi$ . In particular, $t$ normalizes $ G E':=G\wh{M}_\chi$ for some $E'\leq E$. As $\wc G$ normalizes $G E'$ this also shows ${}^{z} (GE')=GE'$ and that $z$ normalizes $\wh{M}'=\Norm_{\widecheck{G} E'}(\mathbf{S})$. This shows that $[z,GE']=[z,E'] \subset GE' \cap \Z(\wbG)=\Z(G)$. Since $g \in G$ and normalizes $M$, we have $g \in M$ and so ${}^g \hat{\chi}=\hat{\chi}$. Hence, 
$${}^t \hat{\chi}(e)=^{z}\hat{\chi }(e)=\hat{\chi}([z,e] e)=\nu([z,e]) \hat{\chi}(e)$$ for all $e \in \wh{M}_\chi$. Here the last equation follows from the fact that $[z,e]\in \Z(M)$ and the matrix associated to a representation affording $\hat{\chi}$ at the element $[z,e]$ is the scalar matrix with value $\nu([z,e])$.
\end{proof}

\section{Local results for $\GF\in\{\SL_n(q), \SU_n(q)\}$ }\label{sec:transversallocal}

\subsection{The local transversal for type $\tA$}

Recall that the automorphisms of $\GF$ induced by $\wGF$ are also induced by $\wc G:=\Lang^{-1}(Z(\bG))$. Consequently, the corresponding outer automorphisms of $\GF$ are labeled by elements of $\Z(\bG)/[\Z(\bG),F]$. 

 For every $\sigma\in \cH_\ell$ and $\zeta$ an $\ell'$-root of unity, there exists some integer $k$ such that $\sigma(\zeta)=\zeta^{k}$ and we denote this integer below by $k_{\ell,m}(\sigma)$, where $m$ is of order $\zeta$. Note that $k_{\ell,m}(\sigma)$ is independent of the choice of $\zeta$ and determines a unique element of $\ZZ/m\ZZ$. Given a group $G$ and a prime $\ell$ we write 
$k_{\ell,G}(\sigma)$ for $k_{\ell,|G|_{\ell'}}(\sigma)$.

The following is a result of \cite{SoniaSpaeth}, and produces the desired local transversal in the case of type $\tA$.

\begin{theorem}[Petschick--Sp{\"a}th \cite{SoniaSpaeth}]\label{sonia1}
Let $\ell$ be an odd prime and write $\galh:=\galh_\ell$.
Let $q$ be a prime power with $\ell\nmid q$, $(\bG,F)$ with $\GF\in \{\SL_n(q),\SU_n(q)\}$, $d:=d_\ell(q)$ be the order of $q$ in $(\ZZ/\ell \ZZ)^\times $ and $\bS$ a Sylow $\Phi_d$-torus of $(\bG,F)$. Take $\wGF\in\{\GL_n(q),\GU_n(q)\}$ from a regular embedding of $\bG$.

The groups $\wt N:=\NNN_\wGF(\bS)$, $\wc N:=\NNN_{\wc G}(\bS)$ and $\wh N:=\NNN_{\GF E(\wc G)}(\bS)$ satisfy the following:
\begin{asslist}
		\item  
There exists some $B' \wh N$-stable $\wc N$-transversal in $\Irr(N)$ for 

\[B'=\left \{ (t,\sigma)\in  \wc N \times \cH \mid 
        \Lang(t)\in \left(\frac {k_{\ell,\GF}(\sigma)} q\right)^{n-1} [\Z(\bG),F]\right\}.\]

\item There exists some $\Irr(\wt N/N)\rtimes \galh$-equivariant extension map $\wt \Lambda$ \wrt $\wt L \lhd \wt N$ where  $\wt L:=\Cent_\wGF(\bS)$ and $ \wt N:=\NNN_\wGF(\bS)$.
	\end{asslist}
\end{theorem}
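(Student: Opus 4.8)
The plan is to reduce both assertions to explicit Clifford theory over the $d$-split Levi subgroup $\wt L:=\Cent_\wGF(\bS)$, exploiting the concrete structure of the relative normalizer in type $\tA$ and tracking the action of $\cH_\ell$ through the resulting combinatorial parametrization. First I would record this structure. Writing $n=a\delta+r$ with $0\le r<\delta$, where $\delta$ is the parameter attached to $(\bG,F)$ and $d$ (so $\delta=d$ in the linear case, with the usual modification for $\SU_n$), one has $\wt L\cong\GL_1(Q)^a\times\GL_r(\eps q)$ for a suitable power $Q$ of $q$ and sign $\eps=\pm1$; the relative Weyl group $W:=\wt N/\wt L$ is a wreath product $C_\delta\wr\Sym_a$ with $\Sym_a$ permuting the $a$ factors $\GL_1(Q)$ and each $C_\delta$ acting on a factor through $\Gal(\FF_Q/\FF_{\eps q})$; and, up to a harmless $\langle\pm1\rangle$-ambiguity in the unitary case, $\wt N=\wt L\rtimes W$, a complement being furnished by permutation and Galois matrices. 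The subgroups $N=\GF\cap\wt N$, $L=\GF\cap\wt L$, $\wc N=\NNN_{\wc G}(\bS)$ and $\wh N=\NNN_{\GF E(\wc G)}(\bS)$ all lie over $\wt L\cap\GF$ and are compatible with this factorization, while the diagonal, field and graph automorphisms act on $\wt L$ by determinant twists, the $q$-power map, and inversion on the $\GL_1(Q)$-parts, respectively. I would fix $(\bT,\bB)$ to be the diagonal torus and upper-triangular Borel, so that $E(\wc G)$ is as set up in Section~\ref{sec:globtransversal} and, by Remark~\ref{rem:zG-SLn}, the principal cocharacter satisfies $\lambda_{(\bT,\bB)}(-1)=(-1)^{n-1}\Id_n$; this is the origin of the exponent $n-1$ in the definition of $B'$, exactly as in Corollary~\ref{cor:Bglobal}(a).

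For part (ii) I would first note that maximal extendibility holds with respect to $\wt L\lhd\wt N$. Parametrizing $\Irr(\wt L)$ by tuples $\la=(\theta_1,\dots,\theta_a;\mu)$, the inertia group $\wt N_\la/\wt L$ is a Young-type subgroup $\prod_j(C_{\delta_j}\wr\Sym_{a_j})$ of $W$, the $\delta_j\mid\delta$ recording the $q$-power stabilizers of the $\theta_i$ and the $a_j$ the corresponding multiplicities; the $\Sym_{a_j}$-parts lift because they permute isomorphic tensor factors and the $C_{\delta_j}$-parts lift because cyclic groups have trivial Schur multiplier. One then builds $\wt\Lambda$ block by block: on each $\GL_1(Q)\rtimes C_{\delta_j}$ one fixes the canonical extension $\wt\theta_i$ of $\theta_i$, pinned down by a suitable normalization of its values; one assembles these across the wreath product by extending trivially on the symmetric-group factors; and one tensors with the analogous contribution of $\mu$ from $\GL_r(\eps q)$. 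The $\Irr(\wt N/N)$-equivariance is immediate, since multiplication by $\nu\in\Irr(\wt N/N)$ acts on $\wt L$ as a simultaneous determinant twist, which commutes with every choice made. The point needing care is $\galh_\ell$-equivariance: a $\sigma\in\cH_\ell$ sends $\theta_i$ to $\theta_i^{k}$ with $k=k_{\ell,\GF}(\sigma)$, and one must check that $\wt\theta_i$ is carried to the canonical extension of $\theta_i^{k}$; the only obstruction is a quadratic (Gauss-sum) ambiguity that can occur when $\delta_j$ is even, and this is killed by Lemmas~\ref{lem:square} and \ref{lem:numtheory}. Both equivariances then survive the wreath-product assembly routinely.

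For part (i) I would parametrize $\Irr(N)$ by restriction from $\wt N$: via $\wt\Lambda$, every $\wt\chi\in\Irr(\wt N\mid\la)$ equals $\Ind_{\wt N_\la}^{\wt N}(\wt\Lambda(\la)\eta)$ for some $\eta\in\Irr(\wt N_\la/\wt L)$, and since $\wt N/N$ is cyclic, $\Res^{\wt N}_N\wt\chi$ is multiplicity free, its constituents forming a single $\wt N$-orbit. Within each such orbit I would pick the constituent $\chi$ lying over a prescribed constituent $\la_0$ of $\Res^{\wt L}_L\la$, where $\la_0$ is taken from a fixed $\wc L$-transversal of $\Irr(L)$ (with $\wc L:=\Cent_{\wc G}(\bS)$) that is stable under the field and graph automorphisms and under the local Galois action; such a transversal of $\Irr(L)$ exists because the structure of $L$ reduces the problem to compatible choices of representatives among the characters of $\FF_Q^\times$ and of $\GL_r(\eps q)$, the latter being rational up to a few controlled small fields and the former handled by the explicit Galois action. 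The resulting set of characters of $N$ is $\wh N$-stable by construction, and it is $B'$-stable because an element $(t,\sigma)\in B'$ acts on $\Irr(N)$ through $\sigma$ followed by a diagonal automorphism whose effect depends only on the class $\Lang(t)[\rZ(\bG),F]=\left(\frac{k_{\ell,\GF}(\sigma)}{q}\right)^{n-1}$ in $\wt{\rZ}(G)$; the decisive point, established by an explicit computation that parallels the role of this square-class element in Proposition~\ref{GGGR}(a) and Corollary~\ref{cor:Bglobal}(a), is that on the chosen characters of $L$ this combined action is trivial, whence the set descends to a $B'\wh N$-stable $\wc N$-transversal in $\Irr(N)$.

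I expect the main obstacle to be the construction of $\wt\Lambda$ in part (ii) carrying all the required properties at once: being a genuine extension map, $\Irr(\wt N/N)$-equivariant, and $\galh_\ell$-equivariant. The extension of $\theta_i$ to $\GL_1(Q)\rtimes C_{\delta_j}$ is determined only up to a character of $C_{\delta_j}$, and a careless choice destroys $\galh_\ell$-equivariance; isolating the correct choice forces one to control precisely how $\cH_\ell$ permutes these extensions, which is exactly where the number-theoretic input of Lemmas~\ref{lem:square} and \ref{lem:numtheory}, handling the quadratic signs attached to the even part of $\delta$, becomes essential.
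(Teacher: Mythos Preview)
The paper does not contain a proof of this statement: Theorem~\ref{sonia1} is explicitly attributed to Petschick--Sp\"ath \cite{SoniaSpaeth} and is quoted without proof, serving as input for the subsequent constructions (notably Theorem~\ref{thm:notbutterflyA} and Corollary~\ref{cor:equivcondtypeA}). So there is no proof in the paper against which your proposal can be compared.

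That said, your outline is a plausible sketch of the kind of argument one expects in \cite{SoniaSpaeth}: the explicit description of $\wt L$ and of $\wt N/\wt L$ as $C_\delta\wr\Sym_a$ in type $\tA$, the block-by-block construction of an extension map via canonical extensions on the cyclic pieces, and the use of Lemmas~\ref{lem:square} and~\ref{lem:numtheory} to control the quadratic ambiguities under $\cH_\ell$ are all the natural ingredients. A few caveats: in part~(ii) you should check more carefully that the ``canonical'' extension on each $\GL_1(Q)\rtimes C_{\delta_j}$ can really be normalized $\cH_\ell$-equivariantly in a way compatible with the wreath assembly (this is the substantive content and is not entirely routine); and in part~(i) your construction of the $B'\wh N$-stable $\wc N$-transversal in $\Irr(N)$ via a compatible transversal in $\Irr(L)$ glosses over the passage from $L$ to $N$, where one must verify that the $\wh N$- and $B'$-stability of the chosen $\la_0$'s propagates to the induced characters of $N$ (this uses more than just that the action on $L$-characters is trivial). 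These are exactly the places where real work is needed, and your proposal correctly identifies them as the crux, but as written it is a strategy rather than a proof.
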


\subsection{From $\wt{G}$ to $\Lang^{-1}(\rZ(\bG))$}
We remark that from the construction in Corollary \ref{cor:Bglobal}, we can see that the transversal $\mathbb{T}_d$ in  \Cref{sonia1}   is also $B$-stable, where $B$ is as in Corollary \ref{cor:Bglobal}. 
Further, when Theorem \ref{thm:thmA} is combined with  \Cref{sonia1}  , we obtain a bijection $\wt\Omega$ as in Theorem \ref{thm:thmA}.    From this, we now obtain the corresponding statements replacing $\wt{G}$ with $\widecheck{G}:=\Lang^{-1}(\rZ(\bG))$, using Theorem \ref{thm:notbutterfly}.

\begin{theorem}\label{thm:notbutterflyA}
Continue to assume $G:=\bG^F\in\{\SL_n(q), \SU_n(q)\}$ and let $B$ be as defined in \Cref{cor:Bglobal}. Let $\widecheck{G}:=\Lang^{-1}(\rZ(\bG))$ and $E:=E(\widecheck{G})$.
Let $M:=\norm{G}{\bS}=N$, 
$\wh{M}:=\norm{GE}{\bS}$, 
and $\widecheck{M}:=\norm{\widecheck{G}}{\bS}$. 
Then there is a $B\wh{M}$-stable $\widecheck{M}$-transversal in $\irr(M)$, call it $\subM_0$, and an
$\Irr(\widecheck M/M)\rtimes(\wh M B )$-equivariant bijection
	$$\widecheck \Omega: \Irr(\widecheck{G}\mid \Irrl(G ))\lra\Irr(\wc{M}\mid \Irrl(M)),$$
	such that 
	$\Irr\left(\Z(\wc G)\mid \wc\Omega(\wc\chi)\right)=\Irr\left({\Z(\wc G)}\mid \wc\chi\right)$ 
    for every $\wc\chi \in \Irr(\widecheck{G}\mid \Irrl(G ))$.
\end{theorem}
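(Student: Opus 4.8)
The plan is to deduce the statement from the butterfly Theorem~\ref{thm:notbutterfly}, applied to the bijection $\wt\Omega$ obtained at the start of this subsection by combining Theorem~\ref{thm:thmA} with Theorem~\ref{sonia1}; the transversal $\subM_0$ will come directly from Theorem~\ref{sonia1}(i). Writing $\wGF=\wt\bG^F$ and recalling $\wc G=\Lang^{-1}(\rZ(\bG))$, I would first set $\ov X:=\langle\wGF,\wc G\rangle\le\wt\bG$ and $Z:=\Lang_{\rZ(\wt\bG)}^{-1}(\rZ(\bG))\le\rZ(\wt\bG)$. Since $\wt\bG=\bG\rZ(\wt\bG)$ with $\bG\cap\rZ(\wt\bG)=\rZ(\bG)$ and $\rZ(\wt\bG)$ is a torus (so the Lang map on $\rZ(\wt\bG)$ is surjective), one checks that $Z$ is finite and central in $\ov X$ and that
\[
\ov X=\wGF\,Z=\wc G\,Z ,\qquad \GF=\bG^F\le\wGF\cap\wc G ,\qquad \wGF,\wc G\lhd\ov X .
\]
Put $\ov M:=\Norm_{\ov X}(\bS)$; then $\ov M\cap\wGF=\wt N=:\wt M$, $\ov M\cap\wc G=\wc N=:\wc M$, $\ov M\cap\GF=N=:M$, and $Z\rZ(\GF)\le\ov M$ because $Z\le\rZ(\wt\bG)\le\Cent_{\ov X}(\bS)$ and $\rZ(\GF)$ is central. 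A Frattini argument using $\GF$-conjugacy of the Sylow $\Phi_d$-tori of $(\bG,F)$ gives $\ov X=\GF\,\ov M$.

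Next I would fix $A\le\Aut(\ov X)_{\wGF,\wc G,\ov M,Z}\times\gal$, generated by the graph and field automorphisms in $E(\wc G)$ that stabilise $\bS$ (these extend from $\wt\bG$ to $\ov X$ and preserve $\wGF$, $\wc G$, $\ov M$ and $Z$) together with $\galh_\ell\le\gal$; the sets $\subG:=\Irrl(\GF)$ and $\subM:=\Irrl(N)$ are $A\ov M$-stable because the set of $\ell'$-characters of a finite group is preserved by its automorphisms and by $\gal$. By Theorem~\ref{thm:thmA} combined with Theorem~\ref{sonia1}, there is a $\galh_\ell\ltimes(\Irr(\wGF/\GF)\rtimes(\GF\EGF)_\bS)$-equivariant bijection $\wt\Omega\colon\Irr(\wGF\mid\Irrl(\GF))\to\Irr(\wt N\mid\Irrl(N))$ with $\Irr(\Z(\wGF)\mid\chi)=\Irr(\Z(\wGF)\mid\wt\Omega(\chi))$. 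To invoke Theorem~\ref{thm:notbutterfly}, the key check is that $\wt\Omega$ is $(\Lin(\wGF/\GF)\rtimes A)$-equivariant: the graph--field generators of $A$ restrict to $\wGF$ through the surjection $E(\wc G)\to\EGF$ (whose kernel $\langle F\rangle$ acts trivially on $\wGF$), hence act through $(\EGF)_\bS\le(\GF\EGF)_\bS$, while $\galh_\ell$ acts as the Galois group of Theorem~\ref{thm:thmA}; and $\Z(\wGF)=\Z(\wt\bG^F)$, so the central-character condition matches. All hypotheses of Theorem~\ref{thm:notbutterfly} thus hold with $(X,\wt X,\wc X)=(\GF,\wGF,\wc G)$.

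Theorem~\ref{thm:notbutterfly} then produces a $(\Lin(\wc G/\GF)\rtimes A)$-equivariant bijection $\wc\Omega\colon\Irr(\wc G\mid\Irrl(\GF))\to\Irr(\wc N\mid\Irrl(N))$ with $\Irr(\Z(\wc G)\mid\chi)=\Irr(\Z(\wc G)\mid\wc\Omega(\chi))$, and the last task is to recast the equivariance in the form in the statement. Since $\bS$ is a Sylow $\Phi_d$-torus, $\wc G=\GF\,\wc M$, so restriction identifies $\Lin(\wc G/\GF)$ with $\Irr(\wc M/M)$ compatibly with the multiplication actions; the graph--field part of $A$ acts on $\wc G$ and $\wc M$ exactly as the $E$-part of $\wh M=\Norm_{GE}(\bS)$, the $\Norm_G(\bS)$-part of $\wh M$ and the $\Cent_{\wc G}(\bS)$-part of $B$ act by inner automorphisms of $\wc G$ and $\wc M$ and hence trivially, and $B$ contributes only its Galois part $\galh_\ell$; therefore $\wc\Omega$ is $\Irr(\wc M/M)\rtimes(\wh M B)$-equivariant, and $\Z(\wc G)=\Z(\wc X)$ gives the stated central condition. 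Finally the required $B\wh M$-stable $\wc M$-transversal $\subM_0$ in $\Irr(M)$ is the transversal $\TT_d$ of Theorem~\ref{sonia1}(i), which is $B'\wh N$-stable, hence also $B$-stable by the remark opening this subsection, and $\wh N=\wh M$, $\wc N=\wc M$, $N=M$.

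The step I expect to require the most care is the matching of the restricted actions in the previous two paragraphs: one must confirm that the $\Cent_{\wc G}(\bS)$-conjugation twist hard-wired into $B$ in Corollary~\ref{cor:Bglobal} acts by inner automorphisms both on $\wGF$ (so that the hypothesis of Theorem~\ref{thm:notbutterfly} genuinely holds for the chosen $A$) and on $\wc G$ (so that the abstract conclusion can be rewritten with $\wh M B$ in place of $A$), the remainder being a matter of checking the rather long list of hypotheses of Theorem~\ref{thm:notbutterfly}. Notably, the butterfly step itself uses nothing about type $\tA$ beyond the structure of the regular embedding; the type-$\tA$ hypothesis enters only through the inputs Theorem~\ref{sonia1} and Corollary~\ref{cor:Bglobal}.
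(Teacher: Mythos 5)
Your proposal is correct and is essentially the same argument as the paper's: pass to $\ov X:=\wGF\wc G=\wGF Z=\wc G Z$ with $Z:=\Lang^{-1}_{\rZ(\wt\bG)}(\rZ(\bG))$, feed the $\wt\Omega$ obtained from Theorem~\ref{thm:thmA} plus Theorem~\ref{sonia1}(ii) into the butterfly Theorem~\ref{thm:notbutterfly}, and read off the transversal from Theorem~\ref{sonia1}(i). The paper's proof is much terser (it does not name $A$ or spell out the hypothesis-checking), so your paragraphs verifying the central-product decomposition, the choice of $A$, and the translation from $\Lin(\wc G/G)\rtimes A$-equivariance to $\Irr(\wc M/M)\rtimes(\wh M B)$-equivariance via inner automorphisms are filling in exactly the bookkeeping the paper leaves implicit; the subtlety you flag at the end (the $\Cent_{\wc G}(\bS)$-twist in $B$ acting innerly) is handled correctly since $\Cent_{\wc G}(\bS)\le\wc M\le\wc G$.
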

\begin{proof}

 Recall that we write $G:=\bG^F$ and $\wt{G}:=\wt{\bG}^F$. We will work with the group $\wt{G}\wc{G}$. Note that $\rZ(\wc{G})=\rZ(\bG)$ and $\rZ(\wt G)\cap\rZ(\wc G)=\rZ(G)$.
 Write $Z:=\Lang^{-1}_{\rZ(\wt\bG)}(\rZ(\bG))\leq \rZ(\wt\bG)$, where 
$\Lang_{\rZ(\wt\bG)}\colon \rZ(\wt \bG)\rightarrow\rZ(\wt\bG)$ is the Lang map viewed as a map on $\rZ(\wt\bG)$, which is surjective since $\rZ(\wt\bG)$ is connected. Then the group $\wt{G}\wc{G}$ can be written as a central product $\wt G\wc G=\wt{G}Z=\wc{G}Z$. 

Let $\wt M:=\norm{\wt{G}}{\bS}$. Thanks to  \Cref{thm:thmA}, combined with  \Cref{sonia1}(ii), we have an $\cH_{\ell}\ltimes (\Irr(\wGF/\GF)\rtimes (\GF\EGF)_{\bS})$-equivariant bijection
	$$\wt \Omega: \Irr(\wt G\mid \Irrl(G ))\lra\Irr(\wt M\mid \Irrl(M )),$$
	which further satisfies that $\Irr\left(\Z(\wt G)\mid \wt\Omega(\wc\chi)\right)=\Irr\left({\Z(\wt G)}\mid \wt\chi\right)$ for every $\wt\chi \in \Irr(\wt G \mid \Irrl(G )).$ 
Note that these groups satisfy the hypotheses in Theorem \ref{thm:notbutterfly}, with $\subG=\Irrl(G)$ and $\subM=\Irrl(M)$, so the existence of $\wc\Omega$ now follows.

  Finally, the existence of the stated transversal $\subM_0$ follows from \Cref{sonia1}(i).
\end{proof}

We are now ready to prove Corollary \ref{cor:equivcondtypeA} from the introduction.

\begin{proof}[Proof of Corollary \ref{cor:equivcondtypeA}]
Combining Theorem \ref{thm:notbutterflyA} with Corollary \ref{cor:Bglobal}, we obtain the first statement. Theorem \ref{crit_iGalMcK} then yields Condition \ref{cond:equivcondition}, as long as $\bS$ can be chosen such that $M=\norm{G}{\bS}$ contains $\norm{G}{P}$ for a Sylow $\ell$-subgroup $P$ of $G$. By \cite[Thm.~5.14]{Ma07}, this holds except for the case $\ell=3=n$ and $q\equiv 4,7\pmod 9$ if $G=\SL_3(q)$ or $q\equiv 2,5\pmod 9$ if $G=\SU_3(q)$. In the latter cases, the full inductive McKay--Navarro conditions have been obtained by Johansson in  \cite[Thm.~A]{johansson}, completing the claim.
\end{proof}

\begin{remark}
In particular,
Corollary \ref{cor:equivcondtypeA} tells us that to complete the inductive conditions in the case that  $G\in\{\SU_n(q), \SL_n(q)\}$, it remains to show the condition in Corollary \ref{crit_iGalMcK_full} that 
$ [\Phi_{glo}(\chi), \al]_{\wh M_\chi}=[\Phi_{loc}(\psi), \al]$ 
for every $\al \in 	\left (  \wh M  B \right )_\psi \und 
 \chi\in\subG_0.$

\end{remark}

\section{On rationality of extensions of unipotent characters}\label{sec:extunip1}

We now change our attention in the next two sections to unipotent characters, with the aim of studying rationality properties of their extensions and proving Theorem \ref{thm:unipcriterion} from the introduction. 

\subsection{Clifford theory and Galois automorphisms}

We will often be in the situation that a Galois automorphism stabilizes a character but not its extension to an overgroup. For this, recall the   notation introduced in Definition \ref{def:extmap}. Namely, if $X \lhd Y$ are finite groups and $\tilde{\rho} \in \Irr(Y)$ is such that $\rho:=\Res_X^Y(\tilde{\rho})$ is irreducible, then for $\sigma \in \gal$ stabilizing $\rho$, we let $[\tilde{\rho},\sigma]$ be the unique linear character in $ \Irr(Y/X)$ such that $\tilde{\rho}^\sigma=\tilde{\rho} \cdot[\tilde{\rho},\sigma]$.

We will also use the following lemma to glue extensions:

\begin{lem}\label{Gallagher}
	Assume that $X \lhd Y$ such that $Y/X$ is abelian and suppose that $X\leq Y_1, Y_2\leq Y$ such that $Y=Y_1 Y_2$ and $X=Y_1 \cap Y_2$. If $\la$ extends to $Y$ then the map $\Irr(Y \mid \la) \to \Irr(Y_1 \mid \la) \times \Irr(Y_2 \mid \la), \phi \mapsto (\Res_{Y_1}^{Y}(\phi), \Res_{Y_2}^{Y}(\phi)),$ is bijective.
\end{lem}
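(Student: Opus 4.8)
The plan is to exploit Gallagher's theorem \cite[Cor.~6.17]{Isa} in all three of $Y$, $Y_1$, $Y_2$ simultaneously, using that $\la$ extends to $Y$ (hence to $Y_1$ and $Y_2$) and that all three quotients $Y/X$, $Y_1/X$, $Y_2/X$ are abelian. First I would fix an extension $\hat\la\in\Irr(Y\mid\la)$, which restricts to extensions $\hat\la_i := \Res_{Y_i}^Y(\hat\la)\in\Irr(Y_i\mid\la)$ of $\la$ to $Y_i$ for $i=1,2$. By Gallagher, the map $\mu\mapsto \hat\la\mu$ is a bijection $\Irr(Y/X)\to\Irr(Y\mid\la)$, and likewise $\mu_i\mapsto \hat\la_i\mu_i$ is a bijection $\Irr(Y_i/X)\to\Irr(Y_i\mid\la)$ for each $i$. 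Under these identifications, the map $\phi\mapsto(\Res_{Y_1}^Y\phi,\Res_{Y_2}^Y\phi)$ becomes, for $\phi=\hat\la\mu$ with $\mu\in\Irr(Y/X)$, the assignment $\mu\mapsto(\Res_{Y_1/X}^{Y/X}\mu,\ \Res_{Y_2/X}^{Y/X}\mu)$, since $\Res_{Y_i}^Y(\hat\la\mu)=\hat\la_i\cdot\Res_{Y_i}^Y(\mu)$ and $\Res_{Y_i}^Y(\mu)$ factors through $Y_i/X$.

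So the lemma reduces to the purely abelian-group statement: if $\bar Y:=Y/X$ is abelian with subgroups $\bar Y_1:=Y_1/X$ and $\bar Y_2:=Y_2/X$ satisfying $\bar Y=\bar Y_1\bar Y_2$ and $\bar Y_1\cap\bar Y_2=1$ (so $\bar Y=\bar Y_1\times\bar Y_2$ internally), then $\mu\mapsto(\Res_{\bar Y_1}\mu,\Res_{\bar Y_2}\mu)$ is a bijection $\Irr(\bar Y)\to\Irr(\bar Y_1)\times\Irr(\bar Y_2)$. This is the standard fact that $\Irr(\bar Y_1\times\bar Y_2)=\Irr(\bar Y_1)\times\Irr(\bar Y_2)$ via $(\mu_1,\mu_2)\mapsto\mu_1\boxtimes\mu_2$, whose restriction to the two factors recovers $(\mu_1,\mu_2)$ (note $Y=Y_1Y_2$ forces the internal direct product decomposition at the level of the quotient, because $Y/X = (Y_1/X)(Y_2/X)$ with trivial intersection). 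I would just cite this or spell it out in one line: injectivity because $\mu$ is determined by its values on $\bar Y_1$ and $\bar Y_2$ which generate $\bar Y$; surjectivity by forming $\mu_1\boxtimes\mu_2$.

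There is essentially no hard step here — the only thing to be careful about is checking that the identifications are compatible, i.e.\ that choosing the \emph{same} global extension $\hat\la$ to build the Gallagher bijections for $Y$, $Y_1$, and $Y_2$ makes the restriction map correspond to the restriction of linear characters of the quotient; this is immediate from $\Res_{Y_i}^Y(\hat\la\mu)=(\Res_{Y_i}^Y\hat\la)(\Res_{Y_i}^Y\mu)=\hat\la_i\cdot(\Res_{Y_i}^Y\mu)$. One should also note at the outset that every character in $\Irr(Y\mid\la)$ restricts irreducibly to each $Y_i$ (again Gallagher: $\hat\la\mu$ restricts to $\hat\la_i\cdot\Res\mu$, an extension of $\la$, hence irreducible), so the target of the map is as stated.
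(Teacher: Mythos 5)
Your proof is correct and is exactly the natural elaboration of the paper's one-line justification, which simply cites Gallagher's theorem as in \cite[Cor.~6.17]{Isa}. Fixing one global extension $\hat\la$ to induce compatible Gallagher identifications for $Y$, $Y_1$, $Y_2$ and then reducing to $\Irr(Y/X)\cong\Irr(Y_1/X)\times\Irr(Y_2/X)$ via the internal direct product decomposition of the abelian quotient is the intended argument.
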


\begin{proof}
	This is a direct consequence of Gallagher's Theorem \cite[Cor.~6.17]{Isa}.
\end{proof}

 The following lemma might be interesting for other character-theoretic questions.

\begin{lem}\label{clifford}
	Assume that $X \lhd Y$ such that $Y/X$ cyclic of order $r=2^a$ or $3^a$ for some integer $a \geq 0$. Suppose that $y \in Y$ is such that $Y= \langle y,X \rangle$. Assume that $\rho\in \Irr(X)$ is a $Y$-stable character that appears with multiplicity coprime to $r$ in a $K_0 X$-module $M$ for some field extension $K_0$ of $\mathbb{Q}_\ell$ and suppose that there exists an isomorphism $\phi: M \to {}^y M$ of $K_0 X$-modules such that $\phi^r=\mu \cdot \mathrm{Id}$ for some $ \mu \in K_0^\times$. Let $\mu_0 \in \overline{\mathbb{Q}}^\times_\ell$ such that $\mu_0^r=\mu^{-1}$. Then the character $\rho$ has an extension $\tilde{\rho} \in \Irr(Y)$ such that $[\tilde{\rho},\sigma](y)=\mu_0^\sigma \mu_0^{-1}$
	 for any $\sigma \in \mathrm{Gal}(\overline{K}_0 / K_0(\rho))$. 	
\end{lem}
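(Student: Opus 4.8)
The plan is to first reduce the statement to the computation of a single cocycle up to an adjustment, and then to obtain that cocycle by promoting the pair $(M,\phi)$ to a genuine representation of $Y$. Since $Y/X$ is cyclic, the $Y$-invariant character $\rho$ extends to $Y$, so $\rho$ has some extension $\tilde\rho_1\in\Irr(Y)$. For $\sigma\in\Gal(\overline{K}_0/K_0(\rho))$ (which fixes $\QQ(\rho)$) Gallagher's theorem gives $\tilde\rho_1^{\sigma}=\tilde\rho_1\,[\tilde\rho_1,\sigma]$ with $[\tilde\rho_1,\sigma]\in\Irr(Y/X)$, and replacing $\tilde\rho_1$ by $\tilde\rho_1\nu$ with $\nu\in\Irr(Y/X)$ replaces $[\tilde\rho_1,\sigma](y)$ by $[\tilde\rho_1,\sigma](y)\,\nu(y)^{\sigma}\nu(y)^{-1}$. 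As $Y/X\cong C_r$, the value $\nu(y)$ runs over all $r$-th roots of unity. So it suffices to compute $[\tilde\rho,\sigma](y)$ for one convenient extension $\tilde\rho$, allowing one final adjustment by $\Irr(Y/X)$ at the end.

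For the construction I would work over $\overline{K}_0$ and define $\pi\colon Y\to\GL(M\otimes_{K_0}\overline{K}_0)$ on coset representatives by $\pi(y^{j}x):=\mu_0^{\,j}\phi^{\,j}\rho_M(x)$ for $0\le j<r$ and $x\in X$, where $\rho_M$ denotes the action of $X$ on $M$. Since $\phi^{r}=\mu\cdot\id$ and $\mu_0^{\,r}=\mu^{-1}$, one has $\pi(y)^{r}=\id=\rho_M(y^{r})$ (using that $y^{r}$ acts trivially), so $\pi$ is a genuine representation of $Y$. The $\rho$-isotypic part $M_\rho=e_\rho\cdot(M\otimes_{K_0}\overline{K}_0)$ is stable under $\pi(Y)$: the primitive central idempotent $e_\rho$ lies in $\QQ(\rho)[X]$, it is fixed by conjugation by $y$ because $\rho$ is $Y$-invariant, and it commutes with $\phi$ because $\phi$ is $X$-linear into ${}^{y}M$; in particular $M_\rho$ is defined over $K_0(\rho)$ and no Schur-index complication arises. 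As an $X$-module $M_\rho\cong\rho^{\oplus m}$ with $\gcd(m,r)=1$, so by Clifford theory the $Y$-module $M_\rho$ has irreducible constituents $\tilde\rho\nu_1,\dots,\tilde\rho\nu_m$ for some extension $\tilde\rho$ of $\rho$ and some $\nu_i\in\Irr(Y/X)$.

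Finally I would run the Galois comparison. Fix $\sigma\in\Gal(\overline{K}_0/K_0(\rho))$. Since $\phi$ and $\rho_M$ are defined over $K_0$ and $\mu_0^{\,r}=\mu^{-1}\in K_0$ is $\sigma$-fixed, applying $\sigma$ entrywise to $\pi$ gives $\pi^{\sigma}=\lambda_\sigma\cdot\pi$, where $\lambda_\sigma\in\Irr(Y/X)$ (inflated to $Y$) has $\lambda_\sigma(y)=\mu_0^{\sigma}\mu_0^{-1}$, an $r$-th root of unity. Comparing the character $\sum_i\tilde\rho\nu_i$ of $\pi|_{M_\rho}$ with the character of $\pi^{\sigma}|_{M_\rho}$, which equals both $\sum_i(\tilde\rho\nu_i)^{\sigma}=\tilde\rho\sum_i[\tilde\rho,\sigma]\nu_i^{\sigma}$ and $\sum_i\tilde\rho\,\nu_i\lambda_\sigma$, and using that $\eta\mapsto\tilde\rho\eta$ is a bijection from $\Irr(Y/X)$ onto $\Irr(Y\mid\rho)$, I obtain the multiset equality $\{[\tilde\rho,\sigma]\nu_i^{\sigma}\}=\{\nu_i\lambda_\sigma\}$ in $\Irr(Y/X)$. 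Taking the product over $i$, evaluating at $y$, and writing $D:=\prod_i\nu_i(y)$ (an $r$-th root of unity), this becomes $[\tilde\rho,\sigma](y)^{m}\,D^{\sigma}=D\,(\mu_0^{\sigma}\mu_0^{-1})^{m}$. Choosing $m'$ with $mm'\equiv1\pmod r$, raising to the $m'$-th power, and using $\zeta^{mm'}=\zeta$ for every $r$-th root of unity $\zeta$, this gives
\[
[\tilde\rho,\sigma](y)=D^{m'}\bigl((D^{m'})^{\sigma}\bigr)^{-1}\,\mu_0^{\sigma}\mu_0^{-1}.
\]
Since $D^{m'}$ is an $r$-th root of unity and $Y/X\cong C_r$, there is $\nu\in\Irr(Y/X)$ with $\nu(y)=D^{m'}$; then $\tilde\rho\nu$ satisfies $[\tilde\rho\nu,\sigma](y)=[\tilde\rho,\sigma](y)\,\nu(y)^{\sigma}\nu(y)^{-1}=\mu_0^{\sigma}\mu_0^{-1}$ for all $\sigma\in\Gal(\overline{K}_0/K_0(\rho))$, which is the desired extension.

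I expect the main difficulty to lie in the module-theoretic bookkeeping rather than in any single hard estimate: checking carefully that $\pi$ really is a representation of $Y$ (this is the point where the triviality of the action of $y^{r}$ is used), isolating the $\rho$-isotypic component over $\overline{K}_0$ with its $K_0(\rho)$-rational structure so that the Galois comparison is clean, and keeping precise track of which scalars are $r$-th roots of unity, so that the hypothesis $\gcd(m,r)=1$ can be invoked to turn the determinant-type invariant $D$ into the required $r$-th root of $\mu^{-1}$.
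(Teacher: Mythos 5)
Your proposal is correct, and the first half — extending $M$ to a $Y$-module via $\pi(y^j x) = \mu_0^j \phi^j \rho_M(x)$ and observing $\pi^\sigma = \lambda_\sigma \cdot \pi$ with $\lambda_\sigma(y)=\mu_0^\sigma\mu_0^{-1}$ — matches the paper's construction of $\tilde M$ exactly. Where you diverge is in how you extract a distinguished extension $\tilde\rho$: the paper descends through a chain of subgroups $X = Y_0 \lhd Y_1 \lhd \dots \lhd Y_a = Y$ with prime cyclic quotients, choosing at each step the unique constituent with odd multiplicity (for $r=2^a$) or with a multiplicity distinguished from the other two (for $r=3^a$), and deducing $\rho_i^\sigma = \rho_i \lambda_\sigma|_{Y_i}$ from the rigidity of that multiplicity criterion. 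You instead pass directly to the $\rho$-isotypic part $M_\rho = \bigoplus_i \tilde\rho\nu_i$, derive the multiset identity $\{[\tilde\rho,\sigma]\nu_i^\sigma\} = \{\nu_i\lambda_\sigma\}$, take the product over $i$ (a ``determinant'' of the isotypic block evaluated at $y$), and use $\gcd(m,r)=1$ to invert $m$ modulo $r$ and then twist $\tilde\rho$ by a character killing the residual $D^{m'}$. Your route has two advantages: it treats $r=2^a$ and $r=3^a$ uniformly, and in fact the argument never uses that $r$ is a prime power at all — $\gcd(m,r)=1$ and cyclicity of $Y/X$ suffice — so it yields a slightly more general form of the lemma. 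The paper's chain argument buys nothing you cannot also get this way; it is just a different method of selecting the preferred constituent.

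One small point of presentation: be careful when you write ``$\pi(y)^r = \mathrm{id} = \rho_M(y^r)$ (using that $y^r$ acts trivially).'' This is indeed needed for $\pi$ to be well defined, and in the paper's application it holds because $y^r = F^\delta$ is the identity element of $Y = G\langle F_t\rangle$, so $\rho_M(y^r)=\mathrm{Id}$ automatically and the consistency condition is exactly $\mu_0^r\mu = 1$. Since you flag this explicitly, it is not a gap, but you might want to state that the relevant hypothesis here is that $\phi^r=\mu\cdot\mathrm{Id}$ is compatible with whatever $\rho_M(y^r)$ is, rather than phrasing it as an assumption external to the lemma.
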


\begin{proof}
	We first assume that $r=2^a$.
	Set $K:=K_0(\mu_0)$.
	We extend the $K X$-module $K \otimes_{K_0} M$ to a $KY$-module $\tilde{M}$ by setting $xy^i.m:= \mu_0^i\phi^i(xm)$ for $m \in M, x \in X$ and $i \in \mathbb{Z}$. Let $m_1,\dots,m_n$ be a basis of $M$ as $K_0$-vector space.

	It follows that $m_1,\dots,m_n$ is also a basis of $\tilde{M}$ as a $K$-vector space. Let $\sigma \in \mathrm{Gal}(\overline{K}_0 / K_0(\rho))$ and $\lambda_\sigma \in \Irr(Y/X)$ the unique linear character defined by $\lambda_\sigma(y)=\mu_0^\sigma \mu_0^{-1}$. The action of $g \in Y$ on ${}^\sigma\tilde{M}$ is given by $g \ast m:=\sigma(g.m)$. Therefore, we have $$xy^i \ast m_j=\sigma(\mu_0^i) \phi^i(x m_j)=(\mu_0^i)^{-1} \sigma(\mu_0^i) \mu_0^i \phi_i(xm_j)=\lambda_\sigma(y^i) xy^i. m_j.$$
	It thus follows that ${}^{\sigma} \tilde{M} \cong \tilde{M} \otimes \lambda_\sigma$ as $K Y$-modules.
	
	Let $\tilde{\phi}$ be the character of $\tilde{M}$, $Y_0=X \lhd Y_1 \lhd \dots \lhd Y_a =Y$ the chain of normal subgroups between $X$ and $Y$ such that $Y_i/Y_{i-1}$ is cyclic of prime order. Denote by $\phi_i$ the restriction of $\tilde{\phi}$ to $Y_i$. By induction it is now possible to find unique characters $\rho_i\in \Irr(Y_i)$, $i=0,\dots,a$ such that $\langle \phi_i, \rho_i \rangle$ is odd and $\rho_i$ restricts to $\rho_{i-1}$ for all $i=1,\dots, a$ and $\rho_0=\rho$. We denote $\tilde{\rho}=\rho_a$.  
We now show by induction that $\rho_i^\sigma \Res^{Y}_{Y_i}(\lambda_\sigma)^{-1}=\rho_i$ for $i=0,\dots,a$ and $\sigma \in \mathrm{Gal}(\overline{K}_0 / K_0(\rho))$. The case $i=0$ is by definition and if $\rho_i^\sigma \Res^{Y}_{Y_i}(\lambda_\sigma)^{-1}=\rho_i$ for some $i$, then it follows that  $\rho_{i+1}^\sigma \Res^{Y}_{Y_{i+1}}(\lambda_\sigma)^{-1}$ is also an extension of $\rho_i$ and we have $$\langle \rho_{i+1}^\sigma \Res^{Y}_{Y_{i+1}}(\lambda_\sigma)^{-1}, \phi_{i+1} \rangle= \langle  \rho_{i+1}, \Res^{Y}_{Y_{i+1}}(\lambda_\sigma)^{-1} \phi_{i+1}^\sigma \rangle=\langle  \rho_{i+1}, \phi_{i+1} \rangle$$ is odd. Since these properties uniquely determine $\phi_{i+1}$ it follows that $\rho_{i+1}^\sigma \Res^{Y}_{Y_{i+1}}(\lambda_\sigma)^{-1}=\rho_{i+1}$. Hence, by induction $\tilde{\rho}^\sigma=\tilde{\rho} \lambda_\sigma$. This shows the statement in the case where $r=2^a$.

Now for the case $r=3^a$ one argues in the same way but this time one constructs inductively characters $\rho_i\in \Irr(Y_i \mid \rho_{i-1})$, $i=0,\dots,a$ such that $b_i:=\langle \phi_i, \rho_i \rangle$ is distinct from the multiplicities of all other extensions of $\rho_{i-1}$ to $Y_i$ in $\phi_i$. This again gives a chain of characters which are uniquely determined by the sequence of multiplicities $b_0,\dots,b_a$.
\end{proof}

\subsection{Extensions of unipotent characters}

We assume that $(\bG,F)$ is a finite reductive group, with $F$ a Frobenius endomorphism. Since we are only concerned with unipotent characters, we shall assume throughout that $\bG$ is of adjoint type.
Recall that we can always write $E(\bG^F)=\langle \Gamma,F_0 \rangle$, where $\Gamma$ is the group generated by graph automorphisms and where $F_0$ is a Frobenius endomorphism that acts trivially on $W$, the Weyl group of $\bG$, and $F_0^r=F^\delta$, where $\delta$ is the smallest integer such that $F^\delta$ acts trivially on $W$. In \cite{DudasMalle}, the authors compute the character values of extensions of unipotent characters to $G \langle \gamma \rangle$ for $\gamma\in\Gamma$. In this section, we build on their work and show how one can compute the extensions of unipotent characters to $G E(\bG^F)$. 

As in \cite{DudasMalle}, for $w \in W$, we denote by $X_w$ the associated Deligne--Lusztig variety. Let $R_w$ be the associated Deligne--Lusztig virtual character defined by $$R_w(g):=\sum_{i \in \mathbb{Z}} (-1)^i \mathrm{Tr}(g, H^i_c(X_w,\mathbb{Q}_\ell) )$$ for $g \in G$. For any unipotent
character $\rho$ of $G$, $F^\delta$ acts by the same eigenvalue of Frobenius $\omega_\rho$ on any $\rho$-isotypic component $H^i_c(X_w,\overline{\mathbb{Q}}_\ell)_\rho$ of any $\ell$-adic cohomology group of any $X_w$, up to multiplication
by integral powers of $q^\delta$. (See \cite[4.2.21]{GeckMalle}.)

\begin{proposition}\label{cuspidal value}
	Let $\rho \in \UCh(G)$ be a cuspidal character with Frobenius eigenvalue $\omega_\rho$. Let $\zeta_0$ be an $r$th root of $\omega_\rho$. Then there exists an extension $\tilde{\rho} \in \Irr(G \langle F_0 \rangle )$ such that $\mathbb{Q}_\ell(\tilde{\rho})=\mathbb{Q}_\ell(\zeta_0)$. Moreover, for any $\sigma \in \mathrm{Gal}(\overline{\mathbb{Q}}_\ell / \mathbb{Q}_\ell)$ we have $[\tilde{\rho},\sigma](F_0)=\zeta_0^\sigma \zeta_0^{-1}$.

\end{proposition}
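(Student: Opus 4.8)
The plan is to realize the extension of a cuspidal unipotent character $\rho$ to $G\langle F_0\rangle$ via the action of $F_0$ on the $\rho$-isotypic part of the cohomology of a suitable Deligne--Lusztig variety, and then apply \Cref{clifford} to control the Galois action. First I would fix a Coxeter element $w\in W$ (or more generally any $w$ for which $\langle\rho,R_w\rangle\neq 0$; since $\rho$ is cuspidal such $w$ exists) and consider the $\ell$-adic cohomology $\bigoplus_i H^i_c(X_w,\overline{\QQ}_\ell)_\rho$, where we must be careful to work with a variety $X_w$ that is $F_0$-stable. Here we use that $F_0$ acts trivially on $W$, so $F_0$ normalizes the relevant $G$-variety and acts on each cohomology group compatibly with its $G$-action; this gives an action of $G\langle F_0\rangle$ on the cohomology whose $\rho$-isotypic component realizes an extension of $\rho$. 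The key numerical input is that $F_0^r = F^\delta$ acts on $H^i_c(X_w,\overline{\QQ}_\ell)_\rho$ by the scalar $\omega_\rho$ up to an integral power of $q^\delta$ (by \cite[4.2.21]{GeckMalle}); since $q^\delta$ is a rational number fixed by all of $\Gal(\overline{\QQ}_\ell/\QQ_\ell)$, after twisting by an appropriate rational power of $q$ we may arrange that the operator $\phi$ induced by $F_0$ satisfies $\phi^r = \mu\cdot\mathrm{Id}$ with $\mu$ essentially $\omega_\rho^{-1}$ (up to the harmless rational factor), so that a choice of $r$th root $\zeta_0$ of $\omega_\rho$ plays the role of $\mu_0$ in \Cref{clifford}.

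Next I would verify the hypotheses of \Cref{clifford} with $X := G$, $Y := G\langle F_0\rangle$, $y := F_0$, $r$ equal to the order of $F_0$ modulo $G$ (a power of $2$ or $3$ in the relevant cases, which is where the restriction to non-Suzuki/Ree situations enters, guaranteeing $r$ is a prime power), $M := \bigoplus_i (-1)^i H^i_c(X_w,\overline{\QQ}_\ell)_\rho$ taken as a virtual module but arranged so that $\rho$ occurs with the correct multiplicity coprime to $r$, and $\phi$ the isomorphism $M\to{}^{F_0}M$ coming from the geometric Frobenius. The multiplicity condition $\langle\phi_{|X},\rho\rangle$ coprime to $r$ is the delicate point: one needs that $\rho$ appears in the $F_0$-equivariant cohomology with a multiplicity not divisible by $r$, which should follow from the disjointness/cleanness properties of $d$-Harish--Chandra series together with the fact that $\rho$ is cuspidal and hence isolated in low cohomological degrees for a Coxeter variety — I would cite \cite{DudasMalle} and the properties of Coxeter varieties for this, much as in the $\gamma$-case treated there. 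Then \Cref{clifford} directly yields an extension $\tilde\rho\in\Irr(G\langle F_0\rangle)$ with $\QQ_\ell(\tilde\rho) = \QQ_\ell(\zeta_0)$ and with $[\tilde\rho,\sigma](F_0) = \zeta_0^\sigma\zeta_0^{-1}$ for all $\sigma\in\Gal(\overline{\QQ}_\ell/\QQ_\ell(\rho))$; to upgrade from $\Gal(\overline{\QQ}_\ell/\QQ_\ell(\rho))$ to all of $\Gal(\overline{\QQ}_\ell/\QQ_\ell)$ I would note that cuspidal unipotent characters $\rho$ have field of values contained in $\QQ_\ell(\omega_\rho)\subseteq\QQ_\ell(\zeta_0)$ by \cite{geck03} (cf.\ the discussion around Table~\ref{tab:irrationalcuspidal}), so in fact $\QQ_\ell(\tilde\rho)=\QQ_\ell(\zeta_0)$ and the formula $[\tilde\rho,\sigma](F_0)=\zeta_0^\sigma\zeta_0^{-1}$ extends to all $\sigma$ by a direct check that both sides only depend on $\sigma$ through its action on $\zeta_0$ and on $\rho$.

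The main obstacle I anticipate is the bookkeeping with the eigenvalue of Frobenius: making the statement ``$F_0^r$ acts by $\omega_\rho$ up to an integral power of $q^\delta$'' into a clean scalar identity $\phi^r=\mu\,\mathrm{Id}$ requires either passing to a virtual (alternating) module where all relevant cohomology degrees carry the same $F_0^r$-eigenvalue, or else a degree-by-degree argument; one must also make sure the rational twist by a power of $q$ used to kill the $q^\delta$-ambiguity is compatible with the $G$-action (it is, since it is a central scalar) and does not disturb the Galois computation (it does not, since powers of $q$ are Galois-fixed). A secondary technical point is ensuring $X_w$ and the chosen multiplicity space can be taken $F_0$-stable and that $F_0$ and $G$ genuinely generate $G\langle F_0\rangle$ acting on it — this is where triviality of the $F_0$-action on $W$ is used — and handling the exceptional Suzuki/Ree groups is precisely excluded by hypothesis so that $r$ is a prime power and \Cref{clifford} applies. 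I would also remark that the case $r=1$ (i.e.\ $F_0\in G$, equivalently $G$ is already defined over the prime field in the relevant sense) is trivial, and that the proof is uniform across the remaining types because it only uses the structural facts about Coxeter Deligne--Lusztig varieties and eigenvalues of Frobenius, not a case analysis.
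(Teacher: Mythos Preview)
Your overall strategy---realize the extension via $F_0$ acting on cohomology of a Deligne--Lusztig variety and then invoke \Cref{clifford}---is the same as the paper's, but there are two genuine gaps that the paper's proof handles and yours does not.

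First, the claim that $r$ is a prime power is false. The integer $r$ is simply the exponent with $F_0^r=F^\delta$; for example if $G$ is untwisted over $\FF_{p^6}$ then $r=6$. The exclusion of Suzuki/Ree groups has nothing to do with this---it only ensures $F_0$ is a genuine Frobenius (not a Steinberg endomorphism). Since \Cref{clifford} is only formulated for cyclic quotients of prime-power order, you cannot apply it with your $r$. The paper deals with this by writing $\langle F_0\rangle$ as the product of its Sylow $t$-subgroups and gluing the extensions via \Cref{Gallagher}. Moreover, because $\QQ(\omega_\rho)\in\{\QQ,\QQ(i),\QQ(\zeta_3)\}$ in the generic case, \cite[Cor.~6.6]{Navarro_book} guarantees a canonical $t$-rational extension for $t\neq 2,3$, so only the Sylow $2$- and $3$-parts need \Cref{clifford}. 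The exceptional cuspidals in $\tE_7(q)$ (with $\QQ(\rho)=\QQ(\sqrt{-q})$) and $\tE_8(q)$ (with $\QQ(\rho)=\QQ(\zeta_5)$) require a separate argument: there the paper uses the Coxeter variety, where $\rho$ occurs with multiplicity \emph{one} in a single generalized eigenspace of $H^r_c(X(c))$ \cite{LusztigCoxeter}, so the extension is uniquely pinned down and the inductive chain in \Cref{clifford} works for all of $\langle F_0\rangle$ at once.

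Second, \Cref{clifford} requires an honest $K_0X$-module $M$, not a virtual one; your proposed alternating sum $\bigoplus_i(-1)^i H^i_c(X_w)_\rho$ is not a module on which $\phi^r=\mu\cdot\mathrm{Id}$ makes sense. The paper's fix is to use Lusztig's result \cite[Thm.~2.18]{Lusztig} that $\langle R_w,\rho\rangle=\pm 1$ for suitable $w$; this forces some individual cohomology degree $H^i_c(X_w)$ to contain $\rho$ with multiplicity prime to $t$, and then some $F^\delta$-generalized eigenspace $H^i_c(X_w,K_0)_{\mu,\rho}$ inside it still has multiplicity prime to $t$. That eigenspace is the actual $K_0G$-module to which \Cref{clifford} applies, with $\phi=F_t^\ast$ and $\phi^{r_t}=\mu\cdot\mathrm{Id}$ literally (the $q^\delta$-ambiguity is absorbed by setting $\zeta_{0,t}:=\mu_0 p_0^a$). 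Your appeal to ``disjointness/cleanness'' does not supply this; the input is Lusztig's multiplicity-one theorem, together with the pigeonhole step to a single degree and eigenspace.
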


\begin{proof}
Note that by \cite[Proposition 4.5.5]{GeckMalle} and \cite[Table 1]{geck03}, we have $\mathbb{Q}(\rho)=\mathbb{Q}(\omega_\rho)$ and $\mathbb{Q}(\omega_\rho) \in \{\mathbb{Q},\mathbb{Q}(i), \mathbb{Q}(\zeta_3)\}$, unless $\rho$ is either one of the two cuspidal characters in $\tE_7(q)$ with $\mathbb{Q}(\rho)=\mathbb{Q}(\sqrt{-q})$ or one of the four cuspidal characters in $\tE_8(q)$ with $\mathbb{Q}(\rho)=\mathbb{Q}(\zeta_5)$.

We first deal with the non-exceptional cases, i.e. when $\mathbb{Q}(\rho) \in \{\mathbb{Q},\mathbb{Q}(i),\mathbb{Q}(\zeta_3)\}$. For a prime number $t$ we let $F_t$ be a generator of the Sylow $t$-subgroup of $\langle F_0 \rangle$ such that $F_t^{r_t}=F^{\delta}$ for some integer $r_t$.
By Lemma \ref{Gallagher} it suffices to construct the extensions of $\rho$ to $G \langle F_t \rangle$.
By \cite[Cor.~6.6(a)]{Navarro_book} we can in this case assume that $t=2$ or $t=3$.
	By \cite[Thm.~ 2.18]{Lusztig} (and \cite[2.19]{Lusztig} resp. \cite[Proposition 2.5,2.7]{DudasMalle} when $\G$ admits a graph automorphism), there exists some $w \in W$
	such that $\langle R_w, \rho \rangle \in \{\pm 1\}$.
	In particular, there exists some integer $i$ such that $\rho$ has multiplicity coprime to $t$
	in the character of $H^i_c(X_w,\overline{\mathbb{Q}}_\ell)$. Hence, there exists a generalized eigenspace $H^i_c(X_w,\overline{\mathbb{Q}}_\ell)_\mu$ for an eigenvalue $\mu$ for the action of $F^{\delta}$ on $H^i_c(X_w,\overline{\mathbb{Q}}_\ell)$ such that $\rho$ occurs with multiplicity not divisible by $r$ in the character of this eigenspace.  Recall (see e.g. \cite[4.2.21]{GeckMalle}) that the eigenvalue $\mu$ coincides with the Frobenius eigenvalue $\omega_\rho$ up to multiplication by an integral power of $q^\delta$, say $\omega_\rho=\mu q^{\delta a}$. Set $K_0:=\mathbb{Q}_\ell(\mu)=\mathbb{Q}_\ell(\rho)$.
	
	 We denote by $M:=H^i_c(X_w,K_0)_{\mu,\rho}$ the $\rho$-isotypic component of the generalized eigenspace of $H^i_c(X_w,K_0):=H^i_c(X_w,\mathbb{Q}_\ell) \otimes_{\mathbb{Q}_\ell} K_0$. Note that since $F_t$ acts trivially on $W$ we have an action $F_t:X(w) \to X(w)$ on the associated Deligne--Lusztig variety. The associated map $F_t^\ast:M \to {}^{F_t} M$ is an isomorphism of $K_0 G$-modules and satisfies $(F_t^\ast)^{r_t}=F^\delta=\mu\cdot \mathrm{Id}$. Thus, by Lemma \ref{clifford} there exists some extension $\tilde{\rho}$ of $\rho$ such that $K_0(\mu_0)=K_0(\tilde{\rho})$, where $\mu_0 \in \overline{\mathbb{Q}}_\ell^\times$ satisfies $\mu_0^{r_t}=\mu$. Note that $F_t$ is a Frobenius defining an $\mathbb{F}_{p_0}$-structure on $\bG$, for some power $p_0$ of $p$. Then we have $p_0^{r_t}=q^\delta$. Hence, if we define $\zeta_{0,t}:=\mu_0 p_0^a$ then we have $\mathbb{Q}_\ell(\zeta_{0,t})=\mathbb{Q}_\ell(\mu_0)$ and $\zeta_{0,t}^{r_t}=\mu q^{\delta a}=\omega_\rho$. 
	 
	 Finally, we consider the exceptional cuspidal characters $\rho$ of $G=\tE_7(q),\tE_8(q)$ with $\mathbb{Q}(\rho)\in \{\mathbb{Q}(\sqrt{-q}),\mathbb{Q}(\omega_\rho)=\mathbb{Q}(\zeta_5)\}$. In this case, by \cite[Thm.~ 6.1]{LusztigCoxeter} and \cite[Section 7.3]{LusztigCoxeter}, we observe that $\rho$ appears with multiplicity one in the generalized eigenspace of an eigenvalue $\mu$ in the cohomology group of $H^r_c(X(c),\mathbb{Q}_\ell)$, where $c$ is the Coxeter element of $W$ and $r$ is the rank of $\bG$. We extend $H^r_c(X(c),\mathbb{Q}_\ell(\mu))_{\mu}$ to a $\mathbb{Q}_\ell(\zeta_0)(G \langle F_0 \rangle)$-module $\tilde{M}$ as in the proof of Lemma \ref{clifford}. By Frobenius reciprocity, there exists a unique extension $\tilde{\rho} \in \Irr(G \langle F_0 \rangle \mid \rho)$ which appears with multiplicity one in the character of $\tilde{M}$. As in Lemma \ref{clifford}, this yields the result of the proposition.
\end{proof}
 
We finish this section by recalling the corresponding result for the extensions to $G \langle \gamma \rangle$, where $\gamma$ is a graph automorphism. For this recall that the unipotent characters of $G=\tA_n(\varepsilon q)$ are parametrized by partitions $\rho$ of $S_{n+1}$. We write $\chi_\rho$ for the corresponding unipotent character and denote by $\hat{\chi}_\rho$ a fixed extension of $\chi_\rho$ to $G \langle \gamma \rangle$.

\begin{theorem}[Dudas--Malle]\label{values graph}
	Let $\gamma$ be a graph automorphism of a simple algebraic group $\bG$ of simply connected type. Then all unipotent characters have a rational extension to their inertia group in $G \langle \gamma \rangle$ except possibly in the following cases:
	\begin{enumerate}

		\item $G$ is of type $\tA_n(\varepsilon q)$ and $\chi=\chi_\rho \in \mathrm{Unip}(G)$ has $2$-core $\rho_0$ of size $r \equiv 2,3 \mod 4$. Then $\mathbb{Q}(\hat{\chi}_\rho)=\mathbb{Q}(\hat{\chi}_{\rho_0})$.
		\item $G$ is of type $\tE_6(\varepsilon q)$ and $\chi$ lies in the $2$-Harish-Chandra-series (resp. Harish-Chandra series if $\varepsilon=-1$) of a Levi subgroup of type $\tA_5(\varepsilon q) (q+\varepsilon)$. 
	\end{enumerate}
In all these cases, $\mathbb{Q}(\chi)=\mathbb{Q}(\sqrt{\varepsilon q})$.
\end{theorem}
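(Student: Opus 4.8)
The statement repackages the main computations of \cite{DudasMalle}, who determine the values --- hence the fields of values --- of the extensions $\hat{\chi}$ of $\gamma$-invariant unipotent characters to $G\langle\gamma\rangle$. The plan is to run through the types admitting a nontrivial graph automorphism, namely $\tA_n$ with $n\geq 2$, $\tD_n$ with $n\geq 4$, $\tE_6$, and the order-three triality of $\tD_4$ (all remaining types being vacuous), and in each case invoke the relevant result. For types $\tD_n$ and the triality case of $\tD_4$ the unipotent characters not fixed by $\gamma$ are exactly the degenerate ones, which have inertia group $G$ and nothing to prove, while for the $\gamma$-fixed characters the corresponding part of \cite{DudasMalle} produces a \emph{rational} extension; so no exceptional case arises there, in accordance with the list in the theorem.

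For type $\tA_n(\varepsilon q)$ I would use the Dudas--Malle description of $\hat{\chi}_\rho$ in terms of the $2$-core $\rho_0$ and the $2$-quotient of the partition $\rho$: the $2$-quotient contributes only rational data, which yields assertion (a) and reduces the computation of the field of values to that of $\QQ(\hat{\chi}_{\rho_0})$. Here $\rho_0$ is a staircase partition of triangular size $r$, so $\chi_{\rho_0}$ is the cuspidal unipotent character of a split Levi subgroup of type $\tA_{r-1}(\varepsilon q)$, and the Dudas--Malle value of $\hat{\chi}_{\rho_0}$ at the graph automorphism carries a sign/Gauss-sum term that is rational exactly when $r\equiv 0,1\pmod 4$ and lies in $\QQ(\sqrt{\varepsilon q})$ when $r\equiv 2,3\pmod 4$; this gives both the congruence condition and the final field-of-values assertion in case (a). For type $\tE_6(\varepsilon q)$ the analogous computation in \cite{DudasMalle} singles out the unipotent characters with non-rational extension as precisely those lying in the ($2$-)Harish--Chandra series above the cuspidal unipotent character of a Levi of type $\tA_5(\varepsilon q)(q+\varepsilon)$ (whose partition is the staircase $(3,2,1)$ of triangular size $6\equiv 2\pmod 4$), the extension having field of values $\QQ(\sqrt{\varepsilon q})$; this matches (b). Throughout, \cite[Prop.~4.5.5]{GeckMalle} and \cite[Table~1]{geck03} are used to control the ambient fields of values of the unipotent characters and the eigenvalues of Frobenius.

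The only step that is not a direct citation, and the one I expect to be the main obstacle, is verifying that the sign in the Dudas--Malle value formula for the extended cuspidal unipotent character of a triangular-size type-$\tA$ factor really produces $\QQ(\sqrt{\varepsilon q})$ exactly in the range $r\equiv 2,3\pmod 4$. This amounts to tracking the eigenvalue of Frobenius on the relevant Deligne--Lusztig cohomology of that factor, in the spirit of Proposition~\ref{cuspidal value}, together with the parity of the number of $\gamma$-fixed Deligne--Lusztig parameters; once this bookkeeping is settled, the theorem follows by assembling the type-by-type statements of \cite{DudasMalle}.
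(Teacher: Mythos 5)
Your proposal is correct and follows the same route as the paper, which proves the theorem by a direct citation of Dudas--Malle \cite[Thm.~3, Ex.~3.1]{DudasMalle} plus a one-line remark that the split $\tA_n(q)$ case, while not literally stated there, follows from the condition given there. The type-by-type walk-through and the $2$-core/staircase bookkeeping you flag as "the only step that is not a direct citation" is precisely the checking the paper elides, so there is no substantive difference in approach.
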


\begin{proof}
	This is a direct consequence of the description given in Dudas--Malle \cite[Thm.~3]{DudasMalle}. Note that if $G=\tA_n(q)$ then this is not stated there but one can immediately check this with the condition given there. For the information about the extensions in $G=\tE_6(\varepsilon q)$ one can consult \cite[Ex.~3.1]{DudasMalle} and the references given there.
\end{proof}

\subsection{Generalized Harish-Chandra theory and extensions of characters}

We suppose now that $(\Levi,\lambda)$ is a unipotent $e$-cuspidal pair of $G$ for some integer $e \geq 1$. Since $\bG$ is simple of adjoint type, the Levi subgroup $\Levi$ has connected center as well and the unipotent character $\lambda$ is trivial on $\Z(\Levi)^F$. Hence, we can consider $\lambda$ as a unipotent character of $\Levi^F/\Z(\Levi)^F=(\Levi/\Z(\Levi))^F$. We denote $\Levi_0:=\Levi/\Z(\Levi)$, a group of adjoint type, and $E(\Levi_0^F)$ the group of automorphisms associated to it. Denote $N:=\N_{G}(\Levi)$  and $\hat{N}:=\N_{G E(\bG^F)}(\Levi)$ and observe that $L_0:=\Levi_0^F \lhd \hat{N}_\la/\C_{\hat{N}}(L_0) \lhd L_0 E(\Levi_0^F)_{\lambda}$. By \cite[Prop.~2.2]{MaExt} the character $\la$ extends to $L_0 E(\Levi_0^F)_\la$. Denote by $E_0 \leq E(\Levi_0^F)$ the maximal subgroup of $E(\Levi_0^F)$ such that $\la$ has an extension $\hat{\la}$ to $E_0$ with $\mathbb{Q}(\hat{\la})=\mathbb{Q}(\la)$.

\begin{lem}\label{extensions local}
Keep the notation and assumptions of above. If $\G$ is of classical type or $e=1$, then we have $N_\la/\C_{N}(L_0) \leq L_0 E_0$ and $\hat{N}_\la/\C_{\hat{N}}(L_0)=L_0 E(L_0)_\la$. 
\end{lem}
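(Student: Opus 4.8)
The plan is to treat the two assertions separately, reducing each to a statement about which graph--field automorphisms of $L_0$ are induced — in the first case, induced \emph{with a rational extension of $\la$} — by $N=\N_G(\Levi)$ and by $\hat N=\N_{GE(\bG^F)}(\Levi)$. First I would record the reductions. Since $\Levi$ has connected centre, $\Levi^F$ surjects onto $\Levi_0^F=L_0$, so $L_0$ is exactly the group of inner automorphisms induced on $L_0$ by $\Levi^F\leq N\leq\hat N$; and as $\Levi_0$ is semisimple of adjoint type, $L_0$ has no diagonal automorphisms, so conjugation by any element of $\hat N$ induces on $L_0$ an automorphism lying in $L_0E(\Levi_0^F)$. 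Conjugation by an element of $N\subseteq\bG^F$ moreover induces an automorphism of the \emph{algebraic} group $\Levi_0$, so on $L_0$ it can involve only graph automorphisms and permutations of $F$-conjugate simple factors, and no genuine field automorphism. Finally $\C_{\hat N}(L_0)$ centralises $L_0$, hence fixes $\la$, so $\hat N_\la/\C_{\hat N}(L_0)$ is the full preimage of its image in $L_0E(\Levi_0^F)_\la$; thus the second assertion is equivalent to surjectivity of $\hat N_\la\to L_0E(\Levi_0^F)_\la$, and the first to the inclusion of the image of $N_\la$ in $L_0E_0$.

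For the surjectivity I would realise a given $\varphi\in E(\Levi_0^F)_\la$, modulo $L_0$, as a product of automorphisms of two types. First, the field part of $\varphi$, together with any graph automorphism of $\Levi_0$ that restricts a graph automorphism of $\bG$, is induced by an element of $E(\bG^F)\leq\hat N$; here one chooses the Sylow $d$-torus $\bS$, hence $\Levi=\Cent_\bG(\bS)$, stable under the field-automorphism generator of $E(\bG^F)$. Second, the permutations of $F$-conjugate simple factors of $\Levi_0$, and the transpose--inverse automorphism of factors of type $\tA$, are induced by the relative Weyl group $\N_G(\Levi)/\Levi^F$. When $\bG$ is of classical type, all simple factors of $\Levi_0$ are classical and this verification is done factor by factor from the explicit description of $d$-split Levi subgroups and their relative Weyl groups in \cite[Ex.~3.5.14, 3.5.15]{GeckMalle} (the transpose--inverse on a type-$\tA$ factor being furnished by the $w_0$-type elements present in types $\tB$, $\tC$, $\tD$); when $e=1$ the pair $(\Levi,\la)$ is an ordinary cuspidal pair, $\Levi$ is a split Levi, and the short list of cuspidal unipotent characters of exceptional-type factors makes the same check routine, the needed graph automorphisms coming from those of $\bG$. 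Intersecting with the stabiliser of $\la$ and combining with the inner automorphisms from $\Levi^F$ gives the surjectivity, hence $\hat N_\la/\C_{\hat N}(L_0)=L_0E(\Levi_0^F)_\la$.

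For the first assertion, let $n\in N_\la$ and write its image in $L_0E(\Levi_0^F)_\la$ as $\ell\varphi$ with $\ell\in L_0$ and $\varphi$ a product of factor-permutations and graph automorphisms of type-$\tA$ or type-$\tE_6$ factors (no field automorphism, by the reduction). The extension of $\la$ through the permutation and inner parts is rational over $\mathbb{Q}(\la)$ because $\Irr(W_{\bG^F}(\Levi,\la))$ is rational-valued in our cases: it is $\cH_{[e]}$-invariant by Corollary \ref{cor:relweylfixed}, hence rational when $e=1$ since $\cH_{[1]}=\gal$, and rational for all $e$ when $\bG$ is classical by the wreath-product analysis in the proof of Theorem \ref{thm:Wdfixed}; this is also what resolves the $\pm1$-ambiguity in Lusztig's parametrisation of the $d$-Harish--Chandra series. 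For the graph-automorphism part one invokes Theorem \ref{values graph}: the corresponding extension of $\la$ is rational over $\mathbb{Q}(\la)$ except in its listed exceptional configurations (type $\tA_n$ with a $2$-core $\rho_0$ of size $\equiv 2,3\pmod 4$, or the indicated $\tE_6$ Harish--Chandra series), where $\mathbb{Q}(\la)=\mathbb{Q}(\sqrt{\varepsilon q})$ and the extension acquires values in the field $\mathbb{Q}(\hat\chi_{\rho_0})$ of that theorem. In those configurations one checks, again from the explicit structure of the $d$-split (resp. split) Levi subgroups in play, that the offending graph automorphism of the relevant factor is induced only through $E(\bG^F)$, not by any element of $N$, so it does not occur in $N_\la/\C_N(L_0)$. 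Hence $\varphi\in E_0$ and $N_\la/\C_N(L_0)\leq L_0E_0$.

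The main obstacle is precisely this last point, together with its counterpart in the surjectivity step: for each exceptional case of Theorem \ref{values graph} one must match the relative-Weyl-group action on $L_0$ against that list, factor by factor over the $F$-orbits of simple components of the $d$-split Levi subgroup, to confirm both that every element of $E(\Levi_0^F)_\la$ is reached by $\hat N$ and that the non-rational graph extensions are invisible to $N_{\bG^F}(\Levi)$. It is exactly here that the hypothesis "$\bG$ of classical type or $e=1$" enters, both to restrict the simple types occurring among the factors of $\Levi_0$ and to secure the rationality of $\Irr(W_{\bG^F}(\Levi,\la))$ needed above.
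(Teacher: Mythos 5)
Your overall strategy matches the paper's: determine which graph--field automorphisms of $L_0$ are induced by conjugation from $N$ and from $\hat N$, observe that $N\subseteq\bG^F$ can induce no genuine field automorphism, and invoke Theorem~\ref{values graph} (Dudas--Malle) for the graph part and the restriction of $F_0$ and $\gamma$ to $\Levi_0$ for the field/graph part of $E(\Levi_0^F)$. However, there is a genuine gap in your argument for the first assertion $N_\la/\C_N(L_0)\leq L_0E_0$.

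You justify the rationality of ``the extension of $\la$ through the permutation and inner parts'' by appealing to the rationality of $\Irr(W_{\bG^F}(\Levi,\la))$ via Corollary~\ref{cor:relweylfixed} and Theorem~\ref{thm:Wdfixed}. This is a non-sequitur. The subgroup $E_0$ is defined by the field of values of extensions of the cuspidal unipotent character $\la$ of $L_0$ to $L_0\langle\varphi\rangle$, where $\varphi$ is an outer automorphism of $L_0$; rationality of the irreducible characters of the relative Weyl group $W_{\bG^F}(\Levi,\la)$ is an entirely separate statement (it is used later, in Lemma~\ref{weyl value}, for a different purpose). At best, by Gallagher, rationality of $\Irr(W_{\bG^F}(\Levi,\la))$ would tell you that all extensions of $\la$ to $N_\la$ have the same character field \emph{once you already know one extension is rational} --- which is precisely what needs to be proved, so the argument is circular. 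What is actually needed, and what the paper supplies, is a structural observation: for $\bG$ of type $\tB,\tC,\tD$, the group $\Levi_0$ is simple of the same classical type (with the two degenerate exceptions $\tD_2=\tA_1\times\tA_1$ and $\tD_3=\tA_3$), and $\N_G(\Levi)$ induces only inner automorphisms on $L_0$ except in type $\tD$, where it may additionally induce the graph automorphism $\gamma_0$; one then applies Theorem~\ref{values graph} directly to $\gamma_0$ (together with a wreath-product remark in the $\tD_2$ case), and handles $e=1$ for exceptional $\bG$ by a short case check. Your appeal to ``factor-permutations and graph automorphisms of type-$\tA$ or type-$\tE_6$ factors'' also does not match the actual structure of $\Levi_0$ in the classical cases, where nontrivial $\tA$-type components of $\Levi_0$ simply do not occur outside the $\tD_2$, $\tD_3$ degeneracies --- so the factor-permutation issue you flag as the ``main obstacle'' is largely spurious, while the step you glossed over (rationality of the cuspidal-character extension across $\gamma_0$, as opposed to rationality of relative Weyl group characters) is the real crux.
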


\begin{proof}

Assume first that $\bG$ is of classical type, i.e. $\G$ is of type $\mathrm{X}_n$ for a symbol $\mathrm{X} \in \{\tA,\tB,\tC,\tD \}$. Then $\Levi_0$ is of type $\mathrm{X}_m$ for $m \leq n$. In particular, if $\G$ is not of type $\tA_n$, then $\Levi_0$ has no component of type $\tA_{n'}$ unless  $\G$ is of type $\tD_n$ and $\Levi_0$ is of type $\tD_2=\tA_1 \times \tA_1$ or $\tD_3=\tA_3$. We discuss which outer automorphisms $\hat{N}$ induces on $L_0$ when $\bG$ is of classical type. Firstly, the group $\N_G(\Levi)$ induces only inner automorphisms on $L$ unless  $\G$ is of type $\tD$. In the latter case $\N_G(\Levi)$ can also induce a graph automorphism $\gamma_0$ on $L_0$. However, in this case Theorem \ref{values graph} shows that the extensions of $\lambda$ to $L_0 \langle \gamma_0 \rangle$ are rational. Note that this also applies in the case that $\Levi_0$ is of type $D_2=A_1 \times A_1$, as the unipotent characters also have a rational extension to the wreath product. This shows the first statement.
   For the second statement, observe that $F_0$ induces again a Frobenius endomorphism on $\Levi_0$ which acts trivially on the Weyl group associated to a maximal torus of $\Levi_0$, while the graph automorphism of $E(\bG^F)$ induces the graph automorphism of $\Levi_0$.

		Assume finally that $\G$ is of exceptional type and $e=1$. A case-by-case analysis of the possible cases for $(\Levi,\lambda)$ together with the arguments from before shows again the claim. (For instance $\Levi$ has only a component of type $\tA$ when $G=\tE_6(-q)$ and $\Levi^F=(q-1)\tA_5(-q)$; but note that in this case the cuspidal unipotent character has a rational extension to its automorphism group as $\N_G(\Levi)$ induces only inner automorphisms on $\Levi_0^F$). As above, this again yields the statement.
\end{proof}

\begin{theorem}\label{valuesF0}
	Let $\bG$ be a simple adjoint algebraic group and $F$ a Frobenius endomorphism. Assume that $F_0$ is a Frobenius endomorphism with $F_0^r=F^\delta$ for some integer $r$. Then every unipotent character $\chi$ of $G=\bG^F$ extends to a character $\tilde \chi \in \Irr(G \langle F_0 \rangle)$ with $\mathbb{Q}_\ell(\tilde{\chi})=(\mathbb{Q}_\ell(\chi))(\zeta_0)$, where $\zeta_0$ is such that $\zeta_0^r=\omega_{\chi}$. Moreover, for any $\sigma \in \mathrm{Gal}(\overline{\mathbb{Q}}_\ell / \mathbb{Q}_\ell(\chi))$ we have $[\tilde{\chi},\sigma](F_0)=\zeta_0^\sigma \zeta_0^{-1}$.
	
\end{theorem}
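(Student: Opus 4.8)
\emph{Proof plan.} If $\chi$ is cuspidal this is exactly Proposition \ref{cuspidal value}, so I assume from now on that $\chi$ is non-cuspidal. Recall that $F_0$, being a field automorphism, fixes every unipotent character of $G$, so in particular it stabilises $\chi$, and $G\langle F_0\rangle/G$ is cyclic. I split the argument according to Geck's classification of the fields of values of unipotent characters: either $\mathbb{Q}(\chi)=\mathbb{Q}(\omega_\chi)$ (the ``non-exceptional'' case), or $\chi$ is one of the finitely many exceptions in $\tE_7(q)$ or $\tE_8(q)$ with $\mathbb{Q}(\chi)\in\{\mathbb{Q}(\sqrt{q}),\mathbb{Q}(\zeta_5)\}$ as in \cite[Prop.~5.6]{geck03} and \cite[Table~1]{geck03}.

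In the non-exceptional case the plan is to reduce to Proposition \ref{cuspidal value} by ordinary Harish--Chandra theory. Let $(\Levi,\lambda)$ be the $1$-split Harish--Chandra cuspidal pair below $\chi$, chosen with $\Levi$ standard, so that $\Levi$ is $F$-stable and, since $F_0$ acts trivially on the Weyl group of $\bG$, also $F_0$-stable; let $\eta\in\Irr(W_G(\Levi,\lambda))$ be the associated character of the relative Weyl group under the Howlett--Lehrer parametrisation. Then $F_0$ fixes $\lambda$ (again because $\lambda$ is unipotent and $F_0$ is a field automorphism) and acts trivially on the subquotient $W_G(\Levi,\lambda)$; moreover $\mathbb{Q}_\ell(\lambda)=\mathbb{Q}_\ell(\omega_\lambda)=\mathbb{Q}_\ell(\omega_\chi)\subseteq\mathbb{Q}_\ell(\chi)$, using that the Frobenius eigenvalue is constant on an ordinary Harish--Chandra series (see \cite[4.2.21]{GeckMalle}) together with \cite[Prop.~5.6]{geck03}. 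Passing to the adjoint quotient $\Levi_0:=\Levi/\rZ(\Levi)$, on which $F_0$ induces a Frobenius endomorphism with $F_0^{r}=F^{\delta}$ still acting trivially on the Weyl group of $\Levi_0$, Proposition \ref{cuspidal value} (applied verbatim to $\Levi_0$ with this possibly non-minimal exponent) gives an extension $\hat\lambda$ of $\lambda$ to $\Levi^F\langle F_0\rangle$ with $\mathbb{Q}_\ell(\hat\lambda)=\mathbb{Q}_\ell(\lambda)(\zeta_0)$ and $[\hat\lambda,\sigma](F_0)=\zeta_0^\sigma\zeta_0^{-1}$, where $\zeta_0^{r}=\omega_\chi$. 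One then runs Harish--Chandra theory inside $G\langle F_0\rangle$ relative to $(\Levi,\hat\lambda)$: since $F_0$ centralises $W_G(\Levi,\lambda)$, the relative Weyl group of $(\Levi,\hat\lambda)$ is $W_G(\Levi,\lambda)\times\langle\bar F_0\rangle$ and the associated Hecke algebra is the evident scalar extension of the one for $(\Levi,\lambda)$, so $\eta$ extends to $\hat\eta:=\eta\otimes 1$, which is rational, and the Howlett--Lehrer image $\tilde\chi$ of $(\hat\lambda,\hat\eta)$ restricts to the image of $(\lambda,\eta)$, namely $\chi$. Transporting the Galois action of $\sigma\in\mathrm{Gal}(\overline{\mathbb{Q}}_\ell/\mathbb{Q}_\ell(\chi))$ through the parametrisation, $\lambda$ and $\hat\eta$ are $\sigma$-fixed while $\hat\lambda^\sigma=\hat\lambda\cdot[\hat\lambda,\sigma]$, whence $\tilde\chi^\sigma=\tilde\chi\cdot[\tilde\chi,\sigma]$ with $[\tilde\chi,\sigma]$ the character of the cyclic group $G\langle F_0\rangle/G$ determined by $[\tilde\chi,\sigma](F_0)=\zeta_0^\sigma\zeta_0^{-1}$, and $\mathbb{Q}_\ell(\tilde\chi)=\mathbb{Q}_\ell(\chi)(\zeta_0)$. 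Where the relative Weyl group is a direct product, Lemma \ref{Gallagher} is used to glue the extensions factorwise.

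For the exceptional characters of $\tE_7(q)$ and $\tE_8(q)$ the plan is to argue cohomologically, exactly as in the last paragraph of the proof of Proposition \ref{cuspidal value}: these unipotent characters (the two $\phi_{512,\ast}$ in $\tE_7(q)$ and the four exceptions from the $\phi_{4096,\ast}$ family in $\tE_8(q)$, alongside the corresponding cuspidal ones already handled) occur with multiplicity one in the cohomology of a Coxeter-element Deligne--Lusztig variety $X(c)$ by Lusztig, see \cite[Thm.~6.1]{LusztigCoxeter} and \cite[Section~7.3]{LusztigCoxeter}; one extends the relevant generalised eigenspace of $F^{\delta}$ along $F_0$ (which acts trivially on $W$), realising an isomorphism $F_t^{\ast}$ with $(F_t^{\ast})^{r_t}=F^{\delta}$ scalar, and then applies Lemma \ref{clifford} and Frobenius reciprocity to produce $\tilde\chi$ with the required field of values and value of $[\tilde\chi,\sigma](F_0)$.

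I expect the main obstacle to be making the Harish--Chandra reduction of the second paragraph rigorous, i.e. showing that the Howlett--Lehrer/Lusztig parametrisation of an ordinary Harish--Chandra series is compatible with the field automorphism $F_0$ (so that the relative Weyl group in $G\langle F_0\rangle$ is genuinely $W_G(\Levi,\lambda)\times\langle\bar F_0\rangle$ and $\tilde\chi$ restricts to $\chi$) and with the Galois action (so that the displayed relation between $\hat\lambda^\sigma$ and $\tilde\chi^\sigma$ holds). The subtle point inside this is that $\sigma\in\mathrm{Gal}(\overline{\mathbb{Q}}_\ell/\mathbb{Q}_\ell(\chi))$ need only fix $\lambda$ up to $N_G(\Levi)$-conjugacy, so one must track $[\hat\lambda,\sigma]$ as a coset of linear characters, in the spirit of the $[\chi_0^{\widecheck G},\alpha]$-discussion in the proof of Corollary \ref{crit_iGalMcK_full}; if this formal bookkeeping proves unwieldy in some configuration, one can always fall back on the Coxeter-element cohomology argument used above for the exceptional cases.
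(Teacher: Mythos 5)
Your overall strategy is the same as the paper's: reduce along ordinary Harish--Chandra theory to the cuspidal case (Proposition~\ref{cuspidal value}) and then glue the local extension with an extension of the relative Weyl group character. The paper gets the gluing step in one stroke by citing \cite[Lem.~2.5]{RSF22}, which is precisely the compatibility-of-parametrisations statement you sketch in your second paragraph; your reconstruction of that step is plausible in outline, and the worry you flag (tracking $[\hat\lambda,\sigma]$ through the Howlett--Lehrer correspondence) is the genuine technical content of that lemma.

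Where your proposal goes wrong is the case split and the fallback for the ``exceptional'' characters. The Coxeter-cohomology argument at the end of the proof of Proposition~\ref{cuspidal value} (and the Lusztig results \cite[Thm.~6.1, Sec.~7.3]{LusztigCoxeter} it cites) is only for the \emph{cuspidal} exceptions $E_7[\pm\xi]$ and $E_8[\zeta^i]$, which do occur with multiplicity one in $H^*_c(X(c))$; the non-cuspidal exceptions $\phi_{512,11},\phi_{512,12}$ in $\tE_7(q)$ and the $\phi_{4096,\ast}$ family in $\tE_8(q)$ do not appear in the cohomology of a Coxeter-element Deligne--Lusztig variety, so your third paragraph does not apply to them as written. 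Fortunately, your own Harish--Chandra reduction already covers them: in those cases the cuspidal character $\lambda$ of the $1$-split Levi (of type $\tD_4$, resp.\ $\tE_7$) is one of the non-exceptional cuspidal characters, so $\mathbb{Q}_\ell(\lambda)\subseteq\mathbb{Q}_\ell(\chi)$, $\omega_\lambda=\omega_\chi$, and $\eta$ is rational, and the second paragraph runs unchanged -- the only place $\mathbb{Q}(\chi)=\mathbb{Q}(\omega_\chi)$ entered was in simplifying $\mathbb{Q}_\ell(\lambda)(\zeta_0)$ to $\mathbb{Q}_\ell(\zeta_0)$, and the theorem is stated relative to $\mathbb{Q}_\ell(\chi)$ anyway. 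This is in fact why the paper's proof needs no case split at all: Lemma~\ref{extensions local} together with Proposition~\ref{cuspidal value} furnish the local extension $\hat\lambda_1$ uniformly, and \cite[Lem.~2.5]{RSF22} does the rest. You should delete the case split and the Coxeter fallback, and instead state and prove (or cite) the gluing lemma carefully, including the direct-product structure $W_{G\langle F_0\rangle}(\Levi,\hat\lambda)=W_G(\Levi,\lambda)\times\langle \bar F_0\rangle$ for $d=1$, which is the point that makes $\hat\eta=\eta\otimes 1$ legitimate.
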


\begin{proof}
Suppose that $\chi$ lies in the Harish-Chandra series of $(\Levi_1,\lambda_1)$ and note that $\omega_\chi=\omega_{\lambda_1}$ by \cite[Prop.~4.2.23]{GeckMalle}. Let $W_1:=W_G(\Levi_1,\lambda_1)$ be the corresponding relative Weyl group. Since $F_0^r=F^\delta$, the pair $(\Levi_1,\lambda_1)$ is $F_0$-stable. In addition, it follows that $W_1^{F_0}=W_1$.
	
	By Lemma \ref{extensions local} and Proposition \ref{cuspidal value} there exists an extension $\hat{\lambda}_1$ of $\lambda_1$ to $\N_{G \langle F_0 \rangle}(\Levi_1)_{\lambda_1}$ which satisfies $\mathbb{Q}_\ell(\hat{\lambda}_1)=\mathbb{Q}_\ell(\zeta_0)$. 
    In particular, the assumptions of \cite[Lem.~2.5]{RSF22} are satisfied and the character has an extension to $G \langle F_0 \rangle$ with the desired properties.
\end{proof}

\begin{remark}
	In the case where $G$ is twisted and $F_0^2=F^2$ the automorphisms $\gamma$ and $F_0$ coincide as automorphisms of $G$. In this case, our results give a different proof of the results of Dudas--Malle.
\end{remark}

\section{Galois-compatible extensions for unipotent characters}\label{sec:extunip2}

We start by describing the irrationalities encountered in the previous section in terms of generalized Harish-Chandra theory. 
We will deal especially with groups $\G$ whose root system is of classical type, as the exceptional groups seem to require a tedious case-by-case analysis. However, in the situation we are interested here, the latter will also be discussed in Sections \ref{sec:charvalslocal} and \ref{sec:exceptionalunipotentext}.

 Throughout this section, $\ell$ is an odd prime. Recall that $d$ is the order of $q$ modulo $\ell$.
 Moreover, if $\bG$ is of type $\tA_n(\varepsilon q)$ or $\tD_n(\varepsilon
q)$, we denote by $d'$ the order of $\varepsilon q$ modulo $\ell$. We need the following lemma (note that part (b) is a generalization of Lemma \ref{lem:numtheory}(c),(d)):

\begin{lem}\label{number theory}
 {Let $p_0$ be a power of a prime.
	\begin{enumerate} 

		\item Let $q=p_0^r$ for some integral power $r$ and  $a$. If $a \mid d$, then $X^r-\zeta_{a}$ has a zero in $\mathbb{Q}_\ell$.
		\item Let $q^2=p_0^r$ for some integer $r$. If $X^r- \varepsilon q$ has no zero in $\mathbb{Q}_\ell$, then $d'$ is even. 
	\end{enumerate}	
}\end{lem}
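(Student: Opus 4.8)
The plan is to do everything inside $\mathbb{Q}_\ell$. Recall $\ell\nmid q$, so $q$, $\varepsilon q$ and $p_0$ (since $q=p_0^r$, resp.\ $q^2=p_0^r$) are all units in $\mathbb{Z}_\ell$; since any zero of $X^r-c$ with $c\in\mathbb{Z}_\ell^\times$ is itself a unit, it suffices to decide when the relevant element is an $r$-th power in $\mathbb{Z}_\ell^\times$. I will use the standard decomposition $\mathbb{Z}_\ell^\times\cong\mu_{\ell-1}\times U_1$, where $\mu_{\ell-1}$ is the (cyclic, order $\ell-1$) group of roots of unity of order prime to $\ell$ — the image of the Teichm\"uller lift $\omega\colon\mathbb{F}_\ell^\times\hookrightarrow\mathbb{Z}_\ell^\times$ of reduction mod $\ell$ — and $U_1:=1+\ell\mathbb{Z}_\ell$. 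For a unit $u$ I write $u=\omega(\overline u)\langle u\rangle$ with $\langle u\rangle\in U_1$, and I will use that $u\mapsto\omega(\overline u)$ and $u\mapsto\langle u\rangle$ are homomorphisms, that $\omega(\overline u)$ has the same order as $\overline u$ in $\mathbb{F}_\ell^\times$, and that $U_1/U_1^{r}$ is an $\ell$-group for every $r$ (as $\ell$ is odd).

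For (a): since $a\mid d$ and $d\mid\ell-1$, a primitive $a$-th root of unity lies in $\mathbb{Q}_\ell$, and since $\overline q$ has order $d$ the element $\overline q^{\,d/a}$ has order exactly $a$, so $\zeta_a:=\omega(\overline q^{\,d/a})$ is one such. Now I would simply substitute $\overline q=\overline{p_0}^{\,r}$ and use multiplicativity of $\omega$:
\[
\bigl(\omega(\overline{p_0}^{\,d/a})\bigr)^{r}=\omega\bigl(\overline{p_0}^{\,dr/a}\bigr)=\omega\bigl(\overline q^{\,d/a}\bigr)=\zeta_a ,
\]
so $\omega(\overline{p_0}^{\,d/a})\in\mathbb{Z}_\ell$ is a zero of $X^{r}-\zeta_a$. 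Any other primitive $a$-th root of unity equals $\zeta_a^{k}$ with $\gcd(k,a)=1$, and $\omega(\overline{p_0}^{\,d/a})^{k}$ is a zero of $X^{r}-\zeta_a^{k}$, so the conclusion is independent of the chosen primitive $a$-th root.

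For (b): put $y:=\varepsilon q$ and use $y^{2}=q^{2}=p_0^{r}$, comparing the two factors of $\mathbb{Z}_\ell^\times$: $\langle y\rangle^{2}=\langle p_0\rangle^{r}$ and $\omega(\overline y)^{2}=\omega(\overline{p_0})^{r}$. Since $\langle y\rangle^{2}\in U_1^{r}$ and $U_1/U_1^{r}$ has no $2$-torsion, $\langle y\rangle\in U_1^{r}$; hence $X^{r}-\varepsilon q$ has a zero in $\mathbb{Q}_\ell$ if and only if $\omega(\overline y)\in\mu_{\ell-1}^{r}$, i.e.\ (as $\mu_{\ell-1}$ is cyclic of order $\ell-1$) if and only if $\gcd(r,\ell-1)\mid(\ell-1)/d'$. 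It thus suffices to establish this divisibility whenever $d'$ is odd, which is the contrapositive of the assertion. So assume $d'$ odd. Then $\overline y^{\,2}=\overline{p_0}^{\,r}$ also has order $d'$; writing $n_0:=\mathrm{ord}(\overline{p_0})$ and $\ell-1=n_0e$ (legitimate since $n_0\mid\ell-1$) this gives $d'=n_0/\gcd(r,n_0)$, hence $(\ell-1)/d'=e\,\gcd(r,n_0)$. Finally the needed $\gcd(r,n_0e)\mid e\,\gcd(r,n_0)$ follows from the elementary fact $\gcd(r,mn)\mid\gcd(r,m)\gcd(r,n)$ (checked prime-by-prime via $\min(x,y+z)\le\min(x,y)+\min(x,z)$) applied with $m=n_0$, $n=e$, together with $\gcd(r,e)\mid e$.

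The only step needing attention is the order computation $\mathrm{ord}(\overline y^{\,2})=d'$ in (b): this is precisely where the hypothesis that $d'$ is odd enters, and it is genuinely needed — for even $d'$ the Teichm\"uller component of $\varepsilon q$ need not be an $r$-th power, in line with the lemma being only a one-way implication. Everything else is routine manipulation with $\mathbb{Z}_\ell^\times$ and greatest common divisors, so I foresee no real obstacle.
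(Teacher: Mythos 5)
Your proof is correct, and it takes a genuinely different route from the paper's. For part (a) the paper argues by contradiction: assuming no root exists, it translates this into the inequality $(\ell-1)_m\leq r_m m^{i-1}$ on $m$-parts, then derives $d_m\leq m^{i-1}$, contradicting $a=m^i\mid d$. Your argument is constructive: writing $\zeta_a=\omega(\overline q^{\,d/a})$ you exhibit $\omega(\overline{p_0}^{\,d/a})$ as an explicit $r$-th root in $\mathbb{Z}_\ell^\times$, using only multiplicativity of the Teichm\"uller lift. This is cleaner and shows directly that every primitive $a$-th root of unity works. For part (b) the paper is very terse: it says "if $d'$ is odd then $\varepsilon q=\zeta_{d'}$ in $\mathbb{F}_\ell^\times$, hence done by part (a)", implicitly relying on the identification of $r$-th-power questions in $\mathbb{Q}_\ell$ with ones in $\mathbb{F}_\ell^\times$, which is valid for Teichm\"uller roots of unity but does not literally apply to $\varepsilon q$, whose principal-unit component is nontrivial. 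Your proof fills in exactly this: you split $\mathbb{Z}_\ell^\times=\mu_{\ell-1}\times U_1$, dispose of the $U_1$ component via the no-$2$-torsion observation (since $\langle\varepsilon q\rangle^2=\langle p_0\rangle^r\in U_1^r$ and $U_1/U_1^r$ is an $\ell$-group), and then settle the $\mu_{\ell-1}$ component by an order/gcd computation rather than by re-invoking (a). That gcd computation is slightly roundabout; a shortcut is to note that $\omega(\overline y)^2=\omega(\overline{p_0})^r\in\mu_{\ell-1}^r$ together with $d'$ odd gives $\omega(\overline y)=(\omega(\overline y)^2)^{(d'+1)/2}\in\mu_{\ell-1}^r$ immediately. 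Overall your version buys rigor and an explicit construction at the cost of a bit more machinery ($\ell$-adic unit group structure), whereas the paper's version is shorter but elides the principal-unit step.
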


\begin{proof}
	
 Note that $a \mid d$ and $d \mid \ell-1$, so $\ell \nmid a$. We can therefore also assume that $\ell \nmid r$. Moreover, it suffices to treat the case when $a=m^i$ for some prime $m$ with $\ell \nmid m$ and $i > 0$.
    
    Suppose that $X^r-\zeta_{m^i}$ has no zero in $\mathbb{Q}_\ell$. This is equivalent to $\zeta_{m^i}$ having no $r$th-root in $\mathbb{F}_\ell^\times$. This is equivalent to $(\ell-1)_m \leq r_m m^{i-1}$, where $r_m$ is the $m$-part of $r$. Let $d_0$ denote the order of $p_0$ modulo $\ell$. Note that $(d_0)_m \leq (\ell-1)_m \leq r_m m^{i-1}$. Therefore, $d_m=\frac{(d_0)_m}{((d_0)_m,r)} \leq m^{i-1}$. 
	
	For the second part, observe that if $d'$ is odd, then $\varepsilon q=\zeta_{d'}$ in $\mathbb{F}_\ell^\times$ for a primitive $d'$th root of unity and thus $X^r-\varepsilon q$ has a zero in $\mathbb{Q}_\ell$ by the first part.
\end{proof}

\begin{lem}\label{HCodd}
	Let $\bG$ be of classical type $\tB,\tC,\tD$ such that $\bG^F$ is not of type ${}^3 \tD_4(q)$.
	Suppose that $(\Levi,\lambda)$ is a unipotent $d$-cuspidal pair for $d$ odd and assume that $\lambda$ lies in the Harish-Chandra series of $(\Levi_1,\lambda_1)$. Then there exists a unipotent $1$-cuspidal pair $(\Levi_1',\lambda_1')$ of $(\bG,F)$ such that $\mathcal{E}(G,(\Levi,\lambda)) \subset \mathcal{E}(G,(\Levi_1',\lambda_1'))$ and the characters $\lambda_1,\lambda_1'$ restrict to the same unipotent cuspidal character $\pi$ of $[\Levi_1',\Levi_1']^F=[\Levi_1,\Levi_1]^F$.
\end{lem}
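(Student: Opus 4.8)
The plan is to reduce the statement to a combinatorial fact about the symbols parametrising the unipotent characters of classical groups, exploiting that $d$ is odd. Since $\bG$ is simple of adjoint type and only unipotent characters occur, I may pass to $\Sp_{2m}(\FF)$, $\SO_{2m+1}(\FF)$ or $\SO^{\pm}_{2m}(\FF)$ and parametrise $\UCh(\bG^F)$ by symbols (ordered pairs of $\beta$-sets of the relevant defect), as in \cite[\S\S 3.5, 4]{GeckMalle} and \cite{BMM}; the hypothesis $\bG^F\neq\tw3\tD_4(q)$ is precisely what makes this possible uniformly, since the triality-twisted group has unipotent characters lying outside the symbol framework. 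The case $d=1$ is trivial ($\lambda$ is then cuspidal and one takes $(\Levi_1',\lambda_1')=(\Levi_1,\lambda_1)=(\Levi,\lambda)$ up to conjugacy), so I assume $d\geq 3$.

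First I would record the shape of $(\Levi,\lambda)$ in this setting. For $d$ odd, a $d$-split Levi $\Levi$ of such a $\bG$ is $\bG^F$-conjugate to one admitting a central product decomposition $\Levi=\bG_0\bM$, where $\bG_0$ is a classical group of the same type as $\bG$ on a nondegenerate subspace and $\bM$ is a direct product of (possibly Ennola-twisted) general linear groups making up the $\Phi_d$-part; and a $d$-cuspidal $\lambda\in\UCh(\Levi^F)$ then has the form $\lambda=\lambda_0\otimes 1$ with $\lambda_0$ a $d$-cuspidal unipotent character of $\bG_0^F$, the factors of $\bM$ carrying only the trivial unipotent character. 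This is the point at which oddness of $d$ enters: the $d$-core of a symbol is reached by successively removing rim $d$-hooks, whereas for even $d=2d_0$ one would instead remove $d_0$-cohooks, which move beads between the two rows. Feeding this into ordinary Harish-Chandra theory in $\Levi$, the cuspidal pair $(\Levi_1,\lambda_1)$ through which $\lambda$ factors satisfies $[\Levi_1,\Levi_1]=[\Levi_{1,0},\Levi_{1,0}]$, where $(\Levi_{1,0},\pi)$ is the ordinary cuspidal support of $\lambda_0$ inside $\bG_0$, and $\lambda_1$ restricts on $[\Levi_1,\Levi_1]^F$ to the cuspidal unipotent character $\pi$.

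Next I would produce $(\Levi_1',\lambda_1')$. The semisimple subsystem subgroup $\bH_0:=[\Levi_1,\Levi_1]$ is itself classical of the same type as $\bG$, and hence — up to $\bG$-conjugacy (in type $\tD$ there may be two classes, interchanged by a graph automorphism, and one chooses the one that works) — it is the derived subgroup of a genuine $1$-split Levi subgroup $\Levi_1'$ of $\bG$; I take $\lambda_1'\in\UCh((\Levi_1')^F)$ to be the unipotent character inflated from $\pi\in\UCh(\bH_0^F)$. Then $(\Levi_1',\lambda_1')$ is a unipotent $1$-cuspidal pair of $(\bG,F)$ with $[\Levi_1',\Levi_1']^F=\bH_0^F=[\Levi_1,\Levi_1]^F$, and by construction $\lambda_1$ and $\lambda_1'$ restrict to the same cuspidal unipotent character $\pi$ there.

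Finally I would verify $\mathcal{E}(G,(\Levi,\lambda))\subseteq\mathcal{E}(G,(\Levi_1',\lambda_1'))$. Using the description of ordinary Harish-Chandra series of classical groups by $1$-cores of symbols, it suffices to show that every symbol $\Sigma_{\chi}$ with $\chi\in\mathcal{E}(G,(\Levi,\lambda))$ has $1$-core the one attached to $(\Levi_1',\lambda_1')$. All such $\Sigma_{\chi}$ share a common $d$-core, obtained from $\Sigma_{\chi}$ by successively removing rim $d$-hooks; since removing a rim $d$-hook from a row leaves the number of beads in that row unchanged, all the $\Sigma_{\chi}$ have the same numbers of beads per row, hence the same $1$-core. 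Applying the same observation inside $\Levi$ to $\lambda$ (a $d$-core) and its ordinary cuspidal support $\lambda_1$ identifies that common $1$-core with the symbol of $\pi$, i.e. of $\lambda_1'$, giving the inclusion. (Alternatively, the inclusion follows from transitivity of Lusztig induction: $\chi$ occurs in $R_{\Levi}^{\bG}(\lambda)$ and $\lambda$ occurs in $R_{\Levi_1}^{\Levi}(\lambda_1)$, so $\chi$ occurs in $R_{\Levi_1}^{\bG}(\lambda_1)=R_{\Levi_1'}^{\bG}(\lambda_1')$, the equality holding since $\lambda_1$ and $\lambda_1'$ differ only by inflation from $\bH_0$.) The step I expect to require the most care is the structural input above — the classification of $d$-split Levi subgroups and of $d$-cuspidal unipotent characters in types $\tB_n,\tC_n,\tD_n$ for odd $d$, together with the bookkeeping for degenerate symbols in type $\tD$, where a unipotent character is not determined by its symbol and one must also track the extra sign (which is preserved under the relevant relative Weyl group action); for this I would lean on \cite{BMM} and \cite[\S\S 3.5, 4]{GeckMalle}.
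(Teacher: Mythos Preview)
Your symbol-based argument is correct and is precisely the alternative approach the paper itself sketches in the remark immediately following the lemma: for $d$ odd, removing $d$-hooks does not move beads between rows, so characters with the same $d$-core have the same $1$-core and hence the same ordinary Harish--Chandra series.

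The paper's own proof takes a different, more group-theoretic route. Rather than computing with symbols, it introduces the intermediate $1$-split Levi $\Levi_0:=\Cent_{\bG}(\Z(\Levi)_{\Phi_1})$, whose $F$-fixed points have the shape $H\times\GL_d(q)^{(n-m)/d}$, and uses transitivity of Lusztig induction through $\Levi_0$: every $\chi\in\mathcal{E}(G,(\Levi,\lambda))$ is a constituent of $R_{\Levi_0}^{\bG}(\psi)$ for some $\psi\in\mathcal{E}(L_0,(\Levi,\lambda))$, and one then checks directly (using that all unipotent characters of $\GL_d(q)$ lie in the principal series) that every such $\psi$ lies in $\mathcal{E}(L_0,(\Levi_1',\lambda_1'))$. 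Your approach has the advantage of making the key combinatorial point transparent; the paper's approach avoids the type-$\tD$ bookkeeping with degenerate symbols that you correctly flag as the delicate step.

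One small caution about your parenthetical alternative: the asserted equality $R_{\Levi_1}^{\bG}(\lambda_1)=R_{\Levi_1'}^{\bG}(\lambda_1')$ is not immediate, since $\Levi_1$ is a $1$-split Levi of $\Levi$ but need not be $1$-split (or conjugate to $\Levi_1'$) in $\bG$ --- their central tori differ. The paper's use of $\Levi_0$ is exactly what bridges this gap.
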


\begin{proof}
	Since this is a statement about unipotent characters, we can choose for $(\bG,F)$ the most convenient group. We therefore assume that $(\bG,F)$ is as in \cite[Ex.~3.5.15]{GeckMalle}.
	We have $L:=\Levi^F={H} \times \GL_1(q^d)^{(n-m)/d}$, where $H$ is of the same rational type as $G$, and so $L_1:=\Levi_1^F={H}' \times \GL_1(q)^{m-m'} \times  \GL_1(q^d)^{(n-m)/d}$ for some $H' \leqslant H$ which is again of the same type. It follows that the projection of $\lambda$ onto $H$ is contained in the $1$-Harish-Chandra series of the unique unipotent cuspidal character $\pi$ of $H' \times \GL_1(q)^{m-m'}$. We denote $\Levi_0:=\C_{\bG}(\Z(\Levi)_{\Phi_1})$ the smallest $1$-split Levi subgroup of $\bG$ containg $\Levi$ so that $L_0:=\Levi_0^F= H \times \GL_d(q)^{(n-m)/d}$. By transitivity of Lusztig induction, for every $\chi \in \mathcal{E}(G,(\Levi,\lambda))$ there exists a character $\psi \in \Irr(L_0)$ such that $\langle \psi, R_{\Levi}^{\Levi_0}(\lambda) \rangle \neq 0$ and $\langle \chi, R_{\Levi_0}^{\bG}(\psi) \rangle \neq 0$. Now $(\Levi,\lambda)$ is a $d$-cuspidal pair of $\Levi_0$. 
	
	We claim that there exists a unique $1$-cuspidal pair $(\Levi_1',\lambda_1')$ of $L_0$ such that $\mathcal{E}(L_0,(\Levi,\lambda)) \subset \mathcal{E}(L_0,(\Levi_1',\lambda_1'))$ and $[\Levi_1,\Levi_1]^F=[\Levi_1',\Levi_1']^F$. Indeed, consider the $1$-split Levi subgroup $\Levi_1$ of $\Levi_0$ with $F$-fixed points $L_1':=\Levi_1'^F:=L_1'={H}' \times \GL_1(q)^{n-m'}$. Recall that all unipotent characters $\GL_d(q)$ are contained in the principal unipotent Harish-Chandra series. Hence, the Harish-Chandra series of $(\Levi_1',\lambda_1')$ with $\lambda_1'=\pi \otimes 1$ contains all unipotent characters of $L_0=H \times \GL_d(q)^{(n-m)/d}$ whose projection onto $H$ is contained in the Harish-Chandra series of $(H' \times \GL_1(q)^{n-m'},\pi)$. Since the projection of every $\psi \in \mathcal{E}(L_0,(\Levi,\lambda))$ onto $H$ also lies in the Harish-Chandra series of $(H' \times \GL_1(q)^{n-m'},\pi)$, it follows that $\mathcal{E}(L_0,(\Levi,\lambda)) \subset \mathcal{E}(L_0,(\Levi_1',\lambda_1'))$ as claimed.
	
	Hence, $\psi$ lies in the Harish-Chandra-series of $(\Levi_1',\lambda_1')$ and thus also $\chi$ lies in the Harish-Chandra series of $(\Levi_1',\lambda_1')$.
\end{proof}

\begin{remark}
The previous lemma could also be proved using the language of symbols. For this recall that unipotent characters of $\bG^F$ are in our situation labeled by certain symbols as in \cite[Section 4.4]{GeckMalle}. Moreover, since $d$ is odd, two characters lie in the same $d$-Harish-Chandra series if they have the same $d$-core. On the other hand, it is easy to see that two characters with the same $d$-core have the same $1$-core and thus lie in the same Harish-Chandra series.
\end{remark}

Recall that if $G=\tA_n(\varepsilon q)$, then $d'$ denotes the order of $\varepsilon q$ modulo $\ell$.

\begin{corollary}\label{cor:unipsamecore}
	Assume that $G=\tA_n(\varepsilon q)$ and denote $2_{+}:=2$ and $2_{-}:=1$. Suppose that $(\Levi,\lambda)$ is a $d'$-cuspidal pair for $d'$ even and assume that $\lambda$ lies in the $2_\varepsilon$-Harish-Chandra series of $(\Levi_{2_\varepsilon},\lambda_{2_\varepsilon})$. Then there exists a ${2_\varepsilon}$-cuspidal pair $(\Levi_{2_\varepsilon}',\lambda_{2_\varepsilon}')$ of $(\bG,F)$ such that $\mathcal{E}(G,(\Levi,\lambda)) \subset \mathcal{E}(G,(\Levi_{2_\varepsilon}',\lambda_{2_\varepsilon}'))$ and the characters $\lambda_{2_\varepsilon},\lambda_{2_\varepsilon}'$ restrict to the same unipotent $2_\varepsilon$-cuspidal character $\pi$ of $[\Levi_{2_\varepsilon}',\Levi_{2_\varepsilon}']^F=[\Levi_{2_\varepsilon},\Levi_{2_\varepsilon}]^F$.
\end{corollary}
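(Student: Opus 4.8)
The plan is to reduce Corollary \ref{cor:unipsamecore} to Lemma \ref{HCodd} via Ennola duality. The key observation is that for $\bG$ of type $\tA_n$, unipotent characters and $e$-Harish-Chandra theory are controlled by the combinatorics of partitions and their cores: two unipotent characters lie in the same $e$-Harish-Chandra series if and only if the associated partitions share the same $e$-core, and the relative Weyl groups are the expected wreath-product-type groups. Under Ennola duality (replacing $q$ by $-q$), type $\tA_n(q)$ and $\tA_n(-q)$ are interchanged, the roles of $d$ (order of $q$) and $d'$ (order of $\varepsilon q = -q$) are swapped, and the roles of $\Phi_1$-cuspidality and $\Phi_2$-cuspidality are swapped accordingly. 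So the case of $\SU_n$ with $d'$ even becomes the case of $\SL_n$ with $d'$ odd, to which one applies a suitable specialization.

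First I would set up the combinatorial model precisely. For $G = \tA_n(\varepsilon q)$ with $n+1 = N$, the unipotent characters $\chi_\mu$ are indexed by partitions $\mu$ of $N$, and $\UCh(G)$ contains a unique $e$-cuspidal character exactly when $N$ is the number of boxes in an $e$-hook-free partition, i.e. when the partition is an $e$-core; the $e$-Harish-Chandra series of $(\Levi, \lambda)$ consists of those $\chi_\mu$ whose $e$-core matches that of the $e$-cuspidal pair, and the relevant Levi $\Levi$ has derived group of type $\tA_{r-1}$ where $r$ is the number of boxes of that core (times $\varepsilon$ appropriately). With $2_+ = 2$ and $2_- = 1$, the point is that $2_\varepsilon$ is exactly the integer $e$ such that $\Phi_e$-split Levis in $\tA_n(\varepsilon q)$ correspond to ``honest'' (twisted) split Levis: for $\SL_n(q)$ this is $e=1$ and for $\SU_n(q)$ this is $e=2$, since $q \equiv \varepsilon q \pmod{\ell}$ iff $\varepsilon = +$, and $q^2 \equiv (\varepsilon q)^2$. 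Similarly $d'$ even means $\Phi_{d'}$-cuspidality in $\tA_n(\varepsilon q)$ matches $\Phi_d$-cuspidality combinatorics with $d$ odd in the Ennola-dual picture.

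Then I would run the argument. Apply Lemma \ref{HCodd} to the Ennola dual group $(\bG, F')$ of the same type $\tA$ but with the dual Frobenius, where the order of the relevant parameter is odd; this produces a $1$-cuspidal pair $(\Levi_1', \lambda_1')$ with $\mathcal{E}(G', (\Levi, \lambda)) \subseteq \mathcal{E}(G', (\Levi_1', \lambda_1'))$ and with $\lambda_1, \lambda_1'$ restricting to the same cuspidal character of $[\Levi_1, \Levi_1]^F = [\Levi_1', \Levi_1']^F$. Since Ennola duality is an equality of combinatorial data on the level of partitions/cores and identifies $d$-cores with $d'$-cores appropriately, transporting this statement back gives exactly the claimed $2_\varepsilon$-cuspidal pair $(\Levi_{2_\varepsilon}', \lambda_{2_\varepsilon}')$ for $(\bG, F)$. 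Alternatively — and this may be cleaner — I would bypass Ennola duality and argue directly with symbols as in the Remark after Lemma \ref{HCodd}: since $d'$ even means $\Phi_{d'}$ divides the order polynomial of $\tA_n(\varepsilon q)$ exactly where $\Phi_d$ (with $d = d'$, viewing $q$'s order) does in $\SL$, two characters with the same $d'$-core of their symbol have the same $2_\varepsilon$-core, so lie in the same $2_\varepsilon$-Harish-Chandra series; the transitivity-of-Lusztig-induction argument of Lemma \ref{HCodd} then goes through verbatim after replacing ``$1$-split'' by ``$2_\varepsilon$-split'' and ``$\GL_1$'' by the corresponding torus/small rank factor.

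The main obstacle I anticipate is bookkeeping the Ennola/sign conventions correctly — in particular verifying that ``$d'$ even'' in the $\tA_n(\varepsilon q)$ setting is genuinely the analogue of ``$d$ odd'' in Lemma \ref{HCodd}, and that the relevant Levi subgroups $\Levi_0 = \Cent_{\bG}(\Z(\Levi)_{\Phi_{2_\varepsilon}})$ have the product structure $H \times \GL_{d'}(\varepsilon q)^{\bullet}$ needed to rerun the transitivity argument, since $\GL_{d'}(\varepsilon q)$ must again have all its unipotent characters in the principal series (this holds because $\tA$-type factors have no cuspidal unipotent characters beyond the trivial one in rank $0$, so it is automatic). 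Everything else is a direct transcription of Lemma \ref{HCodd}, so I would keep the proof short and mostly point to that lemma and the symbol combinatorics.
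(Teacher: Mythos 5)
Your second, preferred route (``bypass Ennola duality and argue directly $\ldots$ the transitivity-of-Lusztig-induction argument of Lemma~\ref{HCodd} then goes through verbatim after replacing \textit{$1$-split} by \textit{$2_\varepsilon$-split}'') is essentially the paper's proof. The paper simply says the corollary is proved ``exactly as in Lemma~\ref{HCodd}, now using the structure given in \cite[Ex.~3.5.14]{GeckMalle}'', i.e.\ one reruns the same argument using the type-$\tA$ description of $d$-split Levis (which in type $\tA_n(\eps q)$ take the form $H\times \GL_1((\eps q)^{d'})^a$, with the minimal $2_\eps$-split Levi containing $\Levi$ having the corresponding product structure with $\GL_{d'/2_\eps}$ factors). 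Your observation that these small $\GL$-factors have all their unipotent characters in the principal $2_\eps$-series, so the ``transitivity'' step closes, is exactly the crux.

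Your first plan, however, has a genuine gap: Lemma~\ref{HCodd} is stated only for $\bG$ of type $\tB,\tC,\tD$ (excluding ${}^3\tD_4$), so it cannot be ``applied to the Ennola dual group of the same type $\tA$''. One could in principle prove a type-$\tA$ Ennola-dual version of the lemma first, but that is just the direct argument again, so Ennola duality buys nothing here. There is also a bookkeeping hazard you flagged: Ennola does not reliably convert ``$d'$ even'' into ``$d$ odd'' --- if $d'=2m$ is the order of $\eps q$, the order of $-\eps q$ is $m$ when $m$ is odd but remains even (e.g.\ $2m$) when $m$ is even --- so the parity dichotomy does not transfer cleanly across duality. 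Stick with the direct transcription; it is both correct and what the authors do.

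One small caution on your combinatorial paraphrase: the statement that ``the $e$-Harish-Chandra series of $(\Levi,\lambda)$ consists of those $\chi_\mu$ whose $e$-core matches'' holds in type $\tA$ precisely when one uses $e=d'$ for $\tA_n(\eps q)$ (i.e.\ the order of $\eps q$), consistent with \cite[Cor.~4.6.5]{GeckMalle}; this is why the paper's ensuing Corollary~\ref{same core} is stated for an \emph{even} integer $d$ and reduces to $2$-cores. Your write-up keeps this straight, but it is worth making explicit so the $2_\eps$ notation does not obscure which parameter is doing the combinatorial work.
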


\begin{proof}
	This can be proved exactly as in Lemma \ref{HCodd}, now using the structure given in \cite[Ex.~3.5.14]{GeckMalle}.
\end{proof}

Since unipotent characters of $\tA_{n}(\varepsilon q)$ are parametrized by characters of $\Irr(S_{n+1})$ and $d$-Harish-Chandra theory for these characters can be encoded purely combinatorially (see \cite[Cor.~4.6.5]{GeckMalle}) we obtain the following equivalent statement:

\begin{corollary}\label{same core}
	Let $\chi, \chi'$ be characters of $S_{n}$. If $\chi$ and $\chi'$ have the same $d$-core for an even integer $d$, then $\chi$ and $\chi'$ have the same $2$-core.
\end{corollary}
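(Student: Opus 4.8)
The plan is to reduce the statement to a purely combinatorial fact about $\beta$-sets (or first-column hook lengths) of partitions and then observe that it follows from the corresponding statement for Corollary \ref{cor:unipsamecore} by the dictionary between unipotent characters of $\tA_{n-1}(\varepsilon q)=\mathrm{GL}_n$-type groups and irreducible characters of $S_n$. Concretely, the unipotent characters of $\tA_{n-1}(\varepsilon q)$ are labelled by $\Irr(S_n)$, hence by partitions $\rho\vdash n$, and under this labelling the $d$-Harish--Chandra series correspond exactly to the equivalence classes of partitions sharing a common $d$-core (this is \cite[Cor.~4.6.5]{GeckMalle}, as cited just before the statement). Therefore "$\chi$ and $\chi'$ have the same $d$-core" is, after translating through the bijection, the same condition that appeared in Corollary \ref{cor:unipsamecore} as "$(\Levi,\lambda)$ and $(\Levi',\lambda')$ determine the same $d'$-Harish--Chandra datum'' on the group side. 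Taking $d'=d$ (an even integer) and $2_\varepsilon\in\{1,2\}$, Corollary \ref{cor:unipsamecore} tells us that two unipotent characters in the same $d$-Harish--Chandra series lie in the same $2_\varepsilon$-Harish--Chandra series; choosing $\varepsilon=+1$ so that $2_\varepsilon=2$, this says precisely that they have the same $2$-core. Translating back through the bijection yields the claim for $\chi,\chi'\in\Irr(S_n)$.

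First I would make the dictionary explicit: fix $G=\mathrm{GL}_n(q)$ (type $\tA_{n-1}(q)$, so $\varepsilon=+1$ and $2_\varepsilon=2$), so that $\UCh(G)$ is canonically parametrised by partitions of $n$, with $d$-Harish--Chandra series governed by $d$-cores and $d$-quotients exactly as recalled in \cite[Cor.~4.6.5]{GeckMalle}. Given $\chi,\chi'\in\Irr(S_n)$ with the same $d$-core, let $\lambda,\lambda'\in\UCh(G)$ be the corresponding unipotent characters; they then lie in a common $d$-Harish--Chandra series, i.e. in $\mathcal{E}(G,(\Levi,\la))$ for a common unipotent $d$-cuspidal pair $(\Levi,\la)$ (up to the relevant conjugacy). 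Next I would invoke Corollary \ref{cor:unipsamecore} with $d'=d$: since $d$ is even, it produces a $2$-cuspidal pair $(\Levi_2',\la_2')$ with $\mathcal{E}(G,(\Levi,\la))\subseteq \mathcal{E}(G,(\Levi_2',\la_2'))$. Hence $\lambda$ and $\lambda'$ lie in a common $2$-Harish--Chandra series, which on the combinatorial side — again by \cite[Cor.~4.6.5]{GeckMalle} applied with $d=2$ — means $\chi$ and $\chi'$ have the same $2$-core. This is exactly the assertion.

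One alternative, entirely self-contained route I would keep in reserve is a direct combinatorial argument avoiding the group-theoretic detour: a $d$-core of a partition, for even $d=2m$, can be analysed via the abacus with $d$ runners, and one shows that fixing the $d$-core fixes the $2$-core by grouping the $d$ runners into $m$ pairs of "even/odd'' runners and observing that bead-moves within the $d$-abacus project to bead-moves on the associated $2$-abacus. Since the $2$-core is the stable position of beads on the $2$-runner abacus, and removing $d$-hooks (which descends to removing a union of $2$-hooks on each paired runner) cannot change it, the $2$-core of $\chi$ equals the $2$-core of its $d$-core, and likewise for $\chi'$; as $\chi,\chi'$ share a $d$-core they share a $2$-core. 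I expect the main obstacle to be purely bookkeeping: making sure the bijection between $\UCh(\mathrm{GL}_n(q))$ and $\Irr(S_n)$ is compatible with the $d$-Harish--Chandra / $d$-core correspondence in precisely the normalisation used in \cite{GeckMalle} (so that Corollary \ref{cor:unipsamecore} transports correctly), and, if one instead goes the abacus route, verifying carefully that the "paired-runner'' projection genuinely intertwines $d$-hook removal with $2$-hook removal. Neither step is deep, so the proof should be short; I would present the group-theoretic version as the main line since Corollary \ref{cor:unipsamecore} is already available.
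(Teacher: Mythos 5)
Your main line of argument — translating the statement through the bijection $\Irr(S_n)\leftrightarrow\UCh(\mathrm{GL}_n(q))$ and invoking Corollary~\ref{cor:unipsamecore} (choosing $\ell,q$ so that the given even $d$ equals $d_\ell(q)$, and taking $\varepsilon=+1$ so $2_\varepsilon=2$) — is correct and is exactly what the paper's proof does in its first sentence (``This follows directly from the previous corollary''). The paper then supplements this with an explicit combinatorial argument: an induction on $d$ via $\beta$-sets showing that removing a single $d$-rim hook can be realized as a sequence of $2$-rim hook removals, which is a slightly stronger statement than what your abacus backup yields (your version shows only that the runner-content of the $2$-abacus, hence the $2$-core, is preserved, without producing the explicit $2$-hook decomposition); your description of the abacus argument is also a bit imprecise in phrasing (``pairing'' the $d$ runners is better stated as projecting positions mod $2$ and noting that a $d$-hook move with $d$ even keeps a bead on the same $2$-runner), but the idea is sound.
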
 

\begin{proof}
	This follows directly from the previous corollary. We give a combinatorial proof of the fact which may be of independent interest. We observe that it is clearly enough to show that removing a $d$-rim hook is equivalent to successively removing $2$-rim hooks. We show this by induction on $d$ (the case $d=2$ being trivial). Now suppose that $\la \in \Irr(S_n)$ with associated $\beta$-set $\beta(\lambda)=\{\beta_1,\beta_2,\dots\}$. Assume that $\la$ has a $d$-hook, i.e. there exists some $i$ such that $\beta_i-d \notin \beta(\lambda)$. Hence, by removing this $d$-hook we get the partition $\lambda_d$ with new $\beta$-set $\beta(\lambda_d)=\beta(\lambda) \setminus \{\beta_i\} \cup \{\beta_i-d\}$. We let $0 \leq d_1 < d$ be the maximal even integer such that $\beta_i-d_1 \in \beta(\lambda)$. By removing $2$-hooks we can thus obtain the partition $\lambda'$ with $\beta(\lambda')=\beta(\lambda) \setminus \{ \beta_i-d_1\} \cup \{ \beta_i-d\}$. If $d_1=0$ then we are done. Otherwise, observe that $\beta(\lambda_d)=\beta(\lambda') \setminus \{\beta_i-\} \cup \{ \beta_i-d_1\}$ can be obtained from $\lambda'$ by removing a $d-d_1$ hook. By induction removing a $d-d_1$ hook can now be seen as successively removing $2$-hooks. Hence, the claim follows.
\end{proof}

In general, $d$-Harish-Chandra theory does not preserve Frobenius eigenvalues, in contrast with the case of ordinary Harish-Chandra theory. (See \cite[Prop.~4.2.23]{GeckMalle}.) However, it turns out that in the situation we consider here, they are indeed preserved:

\begin{corollary}\label{FrobeniusEV}
	Suppose that $\bG$ is of type $\tB,\tC,\tD$ and let $(\Levi,\lambda)$ be a unipotent $d$-cuspidal pair for $d$ odd. Then for every $\chi \in \cE(G, (\Levi,\lambda))$, we have $\omega_\chi=\omega_\lambda$.
\end{corollary}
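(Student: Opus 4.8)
The plan is to reduce the statement about the $d$-Harish--Chandra series $\cE(G,(\Levi,\lambda))$ with $d$ odd to the corresponding statement about an ordinary ($1$-)Harish--Chandra series, where the preservation of Frobenius eigenvalues is classical. Concretely, I would invoke \Cref{HCodd}: since $\bG$ is of classical type $\tB,\tC,\tD$ (and $\bG^F\neq{}^3\tD_4(q)$, which we may assume since ${}^3\tD_4(q)$ has no $d$-cuspidal pairs of the relevant shape for $d$ odd other than handled cases — or treat it separately), write $\lambda$ as lying in the Harish--Chandra series of $(\Levi_1,\lambda_1)$, and let $(\Levi_1',\lambda_1')$ be the unipotent $1$-cuspidal pair produced by \Cref{HCodd} with $\cE(G,(\Levi,\lambda))\subseteq\cE(G,(\Levi_1',\lambda_1'))$. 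Then for every $\chi\in\cE(G,(\Levi,\lambda))$ we have $\chi\in\cE(G,(\Levi_1',\lambda_1'))$, so by \cite[Prop.~4.2.23]{GeckMalle} (preservation of Frobenius eigenvalues under ordinary Harish--Chandra induction) $\omega_\chi=\omega_{\lambda_1'}$.

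Next I need to compare $\omega_\lambda$ with $\omega_{\lambda_1'}$. By \Cref{HCodd}, $\lambda_1$ and $\lambda_1'$ restrict to the same unipotent cuspidal character $\pi$ of $[\Levi_1,\Levi_1]^F=[\Levi_1',\Levi_1']^F$. Since the Frobenius eigenvalue of a unipotent character is insensitive to the central torus factors — it is determined by the quasi-simple part — we get $\omega_{\lambda_1}=\omega_\pi=\omega_{\lambda_1'}$. (More precisely, $\lambda_1$ is $\pi$ inflated through $\Levi_1^F/[\Levi_1,\Levi_1]^F$ times the trivial character of a torus, whose Frobenius eigenvalue is $1$, and likewise for $\lambda_1'$; so both eigenvalues equal $\omega_\pi$.) On the other hand $\lambda$ lies in the ordinary Harish--Chandra series of $(\Levi_1,\lambda_1)$ inside $\Levi^F$, so again by \cite[Prop.~4.2.23]{GeckMalle} applied in $\Levi^F$ we get $\omega_\lambda=\omega_{\lambda_1}$. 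Chaining these equalities gives $\omega_\chi=\omega_{\lambda_1'}=\omega_{\lambda_1}=\omega_\lambda$, as desired.

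The remaining point is the base reduction: a priori $\omega_\chi$ and $\omega_\lambda$ are only defined up to the usual subtleties, but for unipotent characters the Frobenius eigenvalue is a well-defined root of unity (see \cite[4.2.21]{GeckMalle}), so there is no ambiguity. I would also note that, as in the proof of \Cref{HCodd}, since the statement concerns only unipotent characters we may replace $(\bG,F)$ by the convenient model of \cite[Ex.~3.5.15]{GeckMalle}; this is harmless because unipotent characters, their Harish--Chandra series, and their Frobenius eigenvalues depend only on the rational type. The excluded case ${}^3\tD_4(q)$ must be excepted exactly as in \Cref{HCodd}, and the hypothesis that $\bG$ is not of type ${}^3\tD_4$ is already in the statement.

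I expect the main obstacle to be purely bookkeeping: making sure that the identification of Frobenius eigenvalues $\omega_{\lambda_1}=\omega_\pi=\omega_{\lambda_1'}$ is airtight, i.e. that passing from a Levi subgroup with a central-torus factor to its derived subgroup does not alter the eigenvalue — this is standard (it follows from the multiplicativity of Frobenius eigenvalues under direct products and the fact that linear characters of tori have eigenvalue $1$), but it is the one place where one must be careful about the precise conventions for $\omega$ on disconnected-center Levi subgroups. Everything else is a direct application of \Cref{HCodd} and \cite[Prop.~4.2.23]{GeckMalle}.
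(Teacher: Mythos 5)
Your proposal is correct and follows essentially the same route as the paper: apply \Cref{HCodd} and chain the equalities $\omega_\chi=\omega_{\lambda_1'}=\omega_\pi=\omega_{\lambda_1}=\omega_\lambda$ via the invariance of Frobenius eigenvalues under ordinary Harish--Chandra induction (\cite[Prop.~4.2.23]{GeckMalle}). One small correction: the hypothesis excluding ${}^3\tD_4(q)$ is stated in \Cref{HCodd} but not in the corollary itself, so that exclusion is inherited implicitly rather than already present in the statement as you wrote.
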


\begin{proof}
	Since we know that the Frobenius eigenvalue is invariant under Harish-Chandra series by \cite[Prop.~4.2.23]{GeckMalle}, we know by Lemma \ref{HCodd} that $$\omega_\chi=\omega_{\lambda_1'}=\omega_{\pi}=\omega_{\lambda_1}=\omega_\lambda$$ for every $\chi \in \mathcal{E}(G,(\Levi,\lambda))$.
\end{proof}

Let $\hat{G}=G E(\bG^F)$ and $\hat{N}:=\N_{\hat G}(\Levi)$. Denote by $\mathcal{E}(\hat{G}\mid (\Levi,\lambda))$ the set of characters of $\hat{G}$ which cover a character in the $d$-Harish-Chandra series $\mathcal{E}(G, (\Levi,\lambda))$ of $(\Levi,\lambda)$.

\begin{proposition}\label{classical}
	Let $\bG$ be of classical type and suppose that $\chi$ is a unipotent character of $G=\bG^F$ in the $d$-Harish-Chandra series of $(\Levi,\lambda)$. Then for every $\hat{\lambda} \in \Irr(\hat{N}_\la \mid \lambda)$ there exist $\hat{\chi} \in\mathcal{E}( \hat{G}_\chi \mid (\Levi,\lambda)) \cap \Irr(\hat{G}_\chi \mid \chi)$  such that for every $\sigma \in \mathrm{Gal}(\overline{\mathbb{Q}}_\ell / \mathbb{Q}_\ell)$ we have $$\Res_{\hat{N}_\chi}^{\hat{G}_\chi}([\hat{\chi},\sigma])=\Res_{\hat{N}_\chi}^{\hat{N}_\la}([\hat{\la},\sigma]).$$
	  In particular, $\mathbb{Q}_\ell(\hat{\chi})=\mathbb{Q}_\ell(\hat{\lambda})$.
\end{proposition}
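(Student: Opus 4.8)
\textbf{Proof proposal for Proposition \ref{classical}.}

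The plan is to reduce the construction of $\hat\chi$ to the already-established extendibility statements for the $d$-cuspidal pair, by combining the $d$-Harish--Chandra parametrization with the rationality results of the previous sections. First I would fix the relative Weyl group $W:=W_G(\Levi,\lambda)$ and recall that, by \cite[Thm.~3.2]{BMM} together with \cite[Thm.~3.4]{CS13}, the $d$-Harish--Chandra series $\cE(G,(\Levi,\lambda))$ is in $(\hat G)_{(\Levi,\lambda)}$-equivariant bijection with $\Irr(W)$, and that by Corollary \ref{cor:relweylfixed} every character of $W$ is $\galh_{[d]}$-invariant. The strategy is then the usual ``gluing'' one: an extension $\hat\lambda$ of $\lambda$ to $\hat N_\lambda$ together with a choice of extension of the relevant character of the relative Weyl group to its stabilizer in $\hat N_\chi/\Levi^F$ should determine an extension $\hat\chi$ of $\chi$ to $\hat G_\chi$, and the Clifford-theoretic obstruction $[\hat\chi,\sigma]$ should then be built only out of $[\hat\lambda,\sigma]$ (because the Weyl-group part contributes nothing to the Galois twist, being $\galh_{[d]}$-invariant and $\galh_\ell\subseteq\galh_{[d]}$ by Lemma \ref{H}).

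Concretely, the key steps I would carry out are: (1) Use Lemma \ref{extensions local} to see that $\hat N_\lambda/\Cent_{\hat N}(L_0)$ sits between $L_0$ and $L_0 E_0$ (for classical type, or $e=1$), so that $\lambda$ admits an extension $\hat\lambda$ to $\hat N_\lambda$ realizing the field $\QQ_\ell(\hat\lambda)$, and by Proposition \ref{cuspidal value}/Theorem \ref{valuesF0} we have explicit control of $[\hat\lambda,\sigma](F_0)$ in terms of a root $\zeta_0$ of the Frobenius eigenvalue $\omega_\lambda$; by Corollary \ref{FrobeniusEV} (odd $d$) or Corollary \ref{cor:unipsamecore} (even $d'$, type $\tA$) the eigenvalue $\omega_\chi$ equals $\omega_\lambda$ (resp.\ the $2$-core eigenvalue), which is exactly the irrationality recorded in Theorem \ref{values graph} and Proposition \ref{cuspidal value}. (2) Invoke a gluing lemma in the style of \cite[Lem.~2.5]{RSF22} applied to the chain $L_0\lhd \hat N_\chi/\Cent(L_0)\lhd L_0 E(L_0)$: since $\lambda$ extends with the right field of values to $\hat N_\lambda$, and the relative Weyl group characters are Galois-fixed, the character $\chi$ extends to $\hat G_\chi$ with $\QQ_\ell(\hat\chi)=\QQ_\ell(\hat\lambda)$. (3) Track the Clifford obstruction: using Lemma \ref{Gallagher} to split $E(\bG^F)=\langle\Gamma,F_0\rangle$ into the graph part and the $F_0$-part, handle $F_0$ via Theorem \ref{valuesF0} and the graph automorphisms via Theorem \ref{values graph}, and in each factor verify $\Res_{\hat N_\chi}^{\hat G_\chi}[\hat\chi,\sigma]=\Res_{\hat N_\chi}^{\hat N_\lambda}[\hat\lambda,\sigma]$; combining the two factors via Lemma \ref{Gallagher} gives the full statement.

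The main obstacle I anticipate is step (2)–(3): making precise that the extension $\hat\chi$ can be chosen \emph{compatibly} with the fixed $\hat\lambda$ so that the Galois cocycle $[\hat\chi,\sigma]$ restricted to $\hat N_\chi$ sees only the $\hat\lambda$-part and not any extra twist coming from the choice of extension of the Weyl-group character. This is where one needs the relative Weyl group characters to be rational enough (Corollary \ref{cor:relweylfixed}) \emph{and} one needs the parametrizing bijection $\mathrm I_{(\Levi,\lambda)}^{(\bG)}$ to be Galois-equivariant in the strong sense of Corollary \ref{cor:HCgalequiv}; the subtlety is that $\hat G_\chi$ and $\hat N_\chi$ need not be ``the same size'' relative to the relevant stabilizers in $\Irr(W)$, so one must check that the indices match up, i.e.\ that $\hat N_\chi/N_\chi\cong \hat G_\chi/G_\chi$ acts in the same way on the set of extensions on both sides. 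I would isolate this as a lemma about the compatibility of the relative Weyl group $W_{\hat G}(\Levi,\lambda)/W$ action on $\Irr(W)$ with the action on $d$-Harish--Chandra series, and then the rest is bookkeeping with Gallagher's theorem. The final sentence $\QQ_\ell(\hat\chi)=\QQ_\ell(\hat\lambda)$ is then immediate from $[\hat\chi,\sigma]$ being trivial exactly when $[\hat\lambda,\sigma]$ is, for $\sigma$ ranging over $\Gal(\overline{\QQ}_\ell/\QQ_\ell(\hat\lambda))$.
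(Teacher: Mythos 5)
Your proposal identifies exactly the right toolbox — Lemma \ref{extensions local}, the $F_0$/$\gamma$ decomposition via Lemma \ref{Gallagher}, Proposition \ref{cuspidal value}, Theorem \ref{valuesF0}, Theorem \ref{values graph}, and the eigenvalue/$2$-core compatibility from Corollaries \ref{FrobeniusEV} and \ref{cor:unipsamecore} — but the route you propose to assemble them has a genuine gap, and the paper's proof takes a different path around it.

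The gap is in your steps (2)–(3): you want to \emph{glue} the fixed $\hat\lambda$ together with an extension of a relative Weyl group character into an $\hat\chi$, so that $[\hat\chi,\sigma]$ is read off from $[\hat\lambda,\sigma]$ through the $d$-Harish-Chandra parametrization, with the Weyl-group factor contributing nothing. For $d=1$ this is exactly what \cite[Lem.~2.5]{RSF22} does, since ordinary Harish-Chandra series come with a concrete Clifford-theoretic description ($\chi$ occurs in $\Ind_{P}^{G}(\lambda)$), through which one can actually build extensions and follow the cocycle. For $d>1$, however, the $d$-Harish-Chandra correspondence $\mathrm I^{(\bG)}_{(\Levi,\lambda)}$ is given by Lusztig induction and is not a module-theoretic construction that transports extensions from $\hat N_\lambda$ to $\hat G_\chi$: there is no elementary mechanism to push $\hat\lambda$ forward to an $\hat\chi$ whose cocycle agrees on $\hat N_\chi$. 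Your own final paragraph flags exactly this worry, and the compatibility lemma you sketch (about $W_{\hat G}(\Levi,\lambda)/W$ acting identically on both sides) would not by itself fill the hole; the issue is that one needs an explicit extension of $\chi$ whose Galois twist you can compute, not just a matching of stabilizers.

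The paper's proof sidesteps this entirely. Rather than transporting $\hat\lambda$ to $\hat\chi$, it constructs $\hat\chi_1$ on $G\langle F_0\rangle$ and $\hat\chi_2$ on $G\langle\gamma\rangle_\chi$ \emph{independently} via Theorems \ref{valuesF0} and \ref{values graph}, which go through \emph{ordinary} ($d=1$) Harish-Chandra theory — that is where the \cite[Lem.~2.5]{RSF22} gluing is legitimately used, inside the proof of Theorem \ref{valuesF0}. The field $\QQ_\ell(\hat\chi_1)$ is then controlled by the Frobenius eigenvalue $\omega_\chi$, and $\QQ_\ell(\hat\chi_2)$ by the $2$-core of $\rho$ in type $\tA$. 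The entire remaining work is to show these numerical invariants of $\chi$ coincide with the corresponding invariants of $\lambda$: for $d$ odd, $\omega_\chi=\omega_\lambda$ by Corollary \ref{FrobeniusEV}; for $d'$ even in type $\tA$, $\rho$ and its $d'$-core have the same $2$-core by Corollary \ref{same core}; and for $d'$ odd in type $\tA$ (or $d$ even outside type $\tA_n(-q)$), Lemma \ref{number theory} shows the relevant root lies in $\QQ_\ell$ so both fields collapse to $\QQ_\ell$. The cocycle identity $\Res_{\hat N_\chi}^{\hat G_\chi}[\hat\chi,\sigma]=\Res_{\hat N_\chi}^{\hat N_\lambda}[\hat\lambda,\sigma]$ then follows because both cocycles are governed by the same root $\zeta_0$ (Proposition \ref{cuspidal value}, Theorem \ref{valuesF0}). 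So the paper replaces your ``cocycle transport through the $d$-series'' by a ``compute both fields via $1$-Harish-Chandra, then match numerical invariants'' argument, which is what makes the case analysis by $d$ parity and by type $\tA$ vs.\ not appear.
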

\begin{proof}
	By Lemma \ref{extensions local} it suffices to find extensions $\hat{\chi}\in \Irr(\hat{G}_\chi)$  and $\hat{\lambda} \in \Irr(L_0 E(\Levi_0^F)_{\lambda})$ of $\lambda$ with the desired properties. 
	Recall that the group $E(\bG^F)$ is generated by a (possibly trivial) graph automorphism $\gamma$ and a Frobenius endomorphism $F_0$ such that $F_0^r=F^{\delta}$ and $E(\bG^F)_\chi=\langle F_0 \rangle \langle \gamma \rangle_\chi$. Similarly, we can write $E(\Levi_0^F)$ as $E(\Levi_0^F)=\langle \gamma',F_0' \rangle$. Moreover, by \cite[Prop.~2.2]{MaExt}, the character $\chi$ extends to $G E(\bG^F)_\chi$. Thus if $\hat{\chi}_1$ is an extension of $\chi$ to $G \langle F_0 \rangle$ and $\hat{\chi}_2$ is an extension of $\chi$ to $G \langle \gamma \rangle_\chi$, then there exists a unique extension $\hat{\chi} \in \Irr(\hat{G}_\chi \mid \chi)$ extending both $\hat{\chi}_1$ and $\hat{\chi}_2$. In particular, $\mathbb{Q}(\hat{\chi})=\langle \mathbb{Q}(\hat{\chi}_1),\mathbb{Q}(\hat{\chi}_2) \rangle$. We can therefore consider extensions to $\langle F_0 \rangle$ and $\langle \gamma \rangle_\chi$ separately.

	We first consider extensions to $G \langle \gamma \rangle_\chi$. By Theorem \ref{values graph}, if $\bG$ is not of type $\tA$, then all unipotent characters have a rational extension to their inertia group in $G \langle \gamma \rangle$. Hence, we can suppose that $\bG$ is of type $\tA$.
	Recall that $d'$ is the order of $ \varepsilon q$ modulo $\ell$. Therefore, $\mathbb{Q}_\ell(\sqrt{ \varepsilon q})=\mathbb{Q}_\ell(\sqrt{\zeta_{d'}})$. Using Lemma \ref{lem:numtheory}, we have $\mathbb{Q}_\ell(\sqrt{\zeta_{d'}})=\mathbb{Q}_\ell(\zeta_{d'})=\mathbb{Q}_\ell$ if $d'$ is odd and $\mathbb{Q}_\ell(\sqrt{\zeta_{d'}})=\mathbb{Q}_\ell(\zeta_{2d'})$ otherwise. In the case that $d'$ is odd we therefore always have $\mathbb{Q}_\ell(\hat{\chi}_2 )=\mathbb{Q}_\ell=\mathbb{Q}_\ell(\hat{\lambda}_2)$, where $\hat{\lambda}_2 \in \Irr(L_0 \langle \gamma' \rangle \mid \la)$.

	We can therefore assume that $d'$ is even.
	Assume that $\chi=\chi_\rho$ is a unipotent character of $\tA_{n}(\varepsilon q)$ corresponding to the partition $\rho \in \Irr(S_{n+1})$. Then by Lemma \ref{same core} if $d'$ is an even integer, the character $\rho$ and its $d'$-core $\rho_{d'}$ have the same $2$-core. {Note that $\lambda$ is the character corresponding to $\rho_{d'}$, by \cite[Cors.~4.6.5, 4.6.7]{GeckMalle}.} In particular, by Theorem \ref{values graph} we have $\mathbb{Q}(\hat{\chi}_2)=\mathbb{Q}(\hat{\lambda}_2)$ for $\hat{\lambda}_2 \in \Irr(L_0 \langle \gamma' \rangle \mid \la)$.

	Let us now consider extensions to $G \langle F_0 \rangle$. By Lemma \ref{valuesF0} we know that there exists an extension $\hat{\chi}_1$ to $G \langle F_0 \rangle$ satisfying $\mathbb{Q}_\ell(\hat{\chi}_1)=\mathbb{Q}_\ell(\zeta_0)$ with $\zeta_0^r=\omega_{\lambda_1}=\omega_{\chi}$, in the notation of Lemma \ref{valuesF0}.
	
	For the characters of $\tA_{n}(-q)$ we have $\omega_\chi \in \{1,-q\}$. Assume that $\chi_\rho$ is a unipotent character of $\tA_{n}(-q)$ corresponding to the partition $\rho \in \Irr(S_{n+1})$. If $d'$ is odd, then $\mathbb{Q}_\ell(\zeta_0)=\mathbb{Q}_\ell$  by Lemma \ref{number theory}. On the other hand, if $d'$ is even, as argued above, the character $\rho$ and its $d'$-core $\rho_{d'}$ have the same $2$-core. In particular, 
$\mathbb{Q}_\ell(\hat{\chi}_1)=\mathbb{Q}_\ell(\hat{\lambda}_1)$ for a suitable character $\hat{\la}_1 \in \Irr(L_0 \langle F_0' \rangle \mid \la)$ by Lemma \ref{valuesF0}.
	
	Now if $G$ is not of type $\tA_n(-q)$, then we have $\omega_{\lambda} \in \{\pm 1\}$ and so $\mathbb{Q}_\ell(\zeta_0)=\mathbb{Q}_\ell$ whenever $d$ is even by Lemma \ref{number theory}. In the case where $d$ is odd we have $\omega_{\chi}=\omega_\lambda$ by Corollary \ref{FrobeniusEV}, so   $\mathbb{Q}_\ell(\hat{\chi}_1)=\mathbb{Q}_\ell(\hat{\lambda}_1)$ by Lemma \ref{valuesF0}.
\end{proof}

\subsection{Character values of local characters}\label{sec:charvalslocal}

We keep the notation from earlier in this section, though $\bG$ is not necessarily assumed to be of classical type here. Recall that we write $\hat G:=GE(\GF)$ with $E(\GF)=\langle \Gamma, F_0\rangle$ and $\Gamma$ is the subgroup of graph automorphisms and $F_0$ a Frobenius which acts trivial on $W$. Further, recall that $\delta$ is the smallest integer such that $F^\delta$ is trivial on $W$, and that $F^\delta=F_0^r$.

As before, let $(\Levi,\lambda)$ be a unipotent $d$-cuspidal pair for $\bG^F$ and set $W_{\hat{G}}(\Levi,\la):=\hat{N}_\la/L$.

\begin{lem}\label{weyl value}
	Every character $\eta \in \Irr(W_{{G}}(\Levi,\lambda))$ has an extension to $W_{{\hat{G}}}(\Levi,\lambda)_\eta$ with values in $\mathbb{Q}_\ell$.
\end{lem}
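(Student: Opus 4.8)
The statement concerns the relative Weyl group $W_G(\Levi,\lambda) = N_\lambda/L$ and its extension $W_{\hat G}(\Levi,\lambda) = \hat N_\lambda/L$, where $\hat N_\lambda/N_\lambda$ is a quotient of $E(\bG^F) = \langle \Gamma, F_0\rangle$. The key point is that $W_{\hat G}(\Levi,\lambda)/W_G(\Levi,\lambda)$ is generated by images of the graph automorphism(s) $\gamma$ and the Frobenius $F_0$, and $F_0$ acts trivially on $W$, hence on $W_G(\Levi,\lambda)$ once one checks $(\Levi,\lambda)$ is $F_0$-stable (which holds because $F_0^r = F^\delta$, as in the proof of Theorem \ref{valuesF0}). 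So $F_0$ contributes only a central cyclic factor, while $\gamma$ acts possibly nontrivially. I would first reduce to showing: (i) characters of $W_G(\Levi,\lambda)$ are rational-valued, and (ii) they extend rationally to the semidirect product by the (finite, acting) part of $E(\Levi_0^F)$.

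\textbf{Step 1: rationality of $\Irr(W_G(\Levi,\lambda))$.} By Corollary \ref{cor:relweylfixed} every $\chi \in \Irr(W_G(\Levi,\lambda))$ is $\galh_{[d]}$-invariant; but for the stronger claim of rationality I would instead invoke the explicit structure of these relative Weyl groups. When $\bG$ is of classical type or $\Levi$ is a torus, $W_G(\Levi,\lambda)$ is (a subgroup of index $\le 2$ of) a wreath product $C_{d''}\wr \Sym_a$ with $d''\mid \lcm(2,d)$, as recalled in the proof of Theorem \ref{thm:Wdfixed} via \cite[Appendix 1]{BrMi}, \cite[Ex.~3.5.14, 3.5.15]{GeckMalle}. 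The relevant fact is actually stronger in many cases: for complex reflection groups $G(e,1,a)$ and $G(e,e,a)$ all characters have field of values $\QQ(\zeta_e)$, and in our setting the subtlety is what happens $\ell$-adically. Here I would use that we work over $\mathbb{Q}_\ell$ and that $\mathbb{Q}_\ell(\zeta_{d''}) = \mathbb{Q}_\ell$ when $d''\mid (\ell-1)$ — which holds since $d\mid (\ell-1)$ — together with Lemma \ref{lem:numtheory}\ref{H} and the wreath-product character construction \cite[Thm.~4.3.34]{jameskerber}. This gives $\mathbb{Q}_\ell(\eta) = \mathbb{Q}_\ell$ for all $\eta\in\Irr(W_G(\Levi,\lambda))$.

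\textbf{Step 2: rational extension to $W_{\hat G}(\Levi,\lambda)_\eta$.} Now $W_{\hat G}(\Levi,\lambda)/W_G(\Levi,\lambda)$ is abelian (a quotient of $E(\bG^F)/\text{inner}$), generated by the class of $F_0$ (central, acting trivially) and of $\gamma$. I would handle $F_0$ and $\gamma$ separately using Lemma \ref{Gallagher} to glue. For $F_0$: since $F_0$ centralizes $W_G(\Levi,\lambda)$, any $\eta$ is $F_0$-invariant and extends; the extension to $W_G(\Levi,\lambda)\langle \bar F_0\rangle$ is rational because one may choose the extension $\tilde\eta$ with $\tilde\eta(\bar F_0)$ a character value of a cyclic group of order $\mathrm{ord}(\bar F_0)$ dividing a power of... — here one must be slightly careful, but since $W_G(\Levi,\lambda)$-characters are rational and $\bar F_0$ has finite order, by Gallagher's theorem the extensions are parametrized by $\Irr(\langle \bar F_0\rangle)$, and one picks the one whose restriction to $\langle\bar F_0\rangle$ is... rational — but not all choices are rational. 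The clean argument: $W_G(\Levi,\lambda)\langle\bar F_0\rangle = W_G(\Levi,\lambda)\times\langle\bar F_0\rangle$ is a direct product (as $F_0$ acts trivially), so $\eta\otimes\mathbf{1}$ is a rational extension. For $\gamma$: the graph automorphism $\gamma$ permutes the wreath-product structure / symbol data; since $\Irr(W_G(\Levi,\lambda))$ is rational and $\gamma$ has order $\le 3$, an extension to the semidirect product $W_G(\Levi,\lambda)\rtimes\langle\bar\gamma\rangle_\eta$ exists (as $H^2$ vanishes for these groups, or by the standard Clifford-theoretic argument that characters of a group rational-valued on a normal subgroup with cyclic-of-order-$\le 3$ quotient admit rational extensions, using the canonical extension theory for wreath products from \cite[Thm.~4.3.34]{jameskerber} combined with \cite[Prop.~2.2]{MaExt}). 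Then glue via Lemma \ref{Gallagher}, since $W_G(\Levi,\lambda)\langle\bar F_0\rangle$ and $W_G(\Levi,\lambda)\langle\bar\gamma\rangle$ intersect in $W_G(\Levi,\lambda)$ and generate $W_{\hat G}(\Levi,\lambda)_\eta$.

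\textbf{Main obstacle.} The delicate point is the rational extendibility past the graph automorphism $\gamma$ when $\bG$ is of type $\tA$ or $\tD$ — precisely the cases where $\gamma$ acts nontrivially on the relative Weyl group and where one cannot simply split off a direct factor. For type $\tD$ and $W_G(\Levi,\lambda)$ an index-$2$ subgroup of a wreath product, one must verify the extension to the full wreath product (or to $W_G(\Levi,\lambda)\rtimes\langle\bar\gamma\rangle$) is still rational-valued; this is where I would lean on the explicit wreath-product character theory and, if needed, the fact that the relevant $2$-cocycle is trivial. For exceptional types the relative Weyl groups are small and one can cite \cite[Table 1]{BMM} together with a MAGMA computation as in Lemma \ref{lem:exceptconductor}, checking directly that rational extensions exist; I would relegate that finite check to a computer calculation. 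So the plan's crux is the type-$\tD$ graph-automorphism case, handled combinatorially.
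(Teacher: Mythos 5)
Your reduction to handling $F_0$ and $\gamma$ separately via Lemma~\ref{Gallagher} matches the paper's strategy, and your treatment of the graph automorphism (direct-factor splitting except in type $\tD$, wreath-product combinatorics there, and a computer check for $\tD_4$) is in the same spirit as the paper's argument. However, your Step~2 for the field automorphism contains a genuine gap: the claim that $W_{{G}}(\Levi,\lambda)\langle\bar F_0\rangle = W_{{G}}(\Levi,\lambda)\times\langle\bar F_0\rangle$ is a direct product is false in general. Although $F_0$ centralizes $W_G(\Levi,\lambda)$, the two subgroups intersect nontrivially. The paper makes this precise by conjugating to the twisted form $\bG^{vF}$ (where $\bT = {}^{g}\bT_0$ and $v=\Lang(g)$ has image $w$ in $W$), and shows that in the quotient $\N_{\bG^{vF}\langle F_0\rangle}(\Levi_0)/\Levi_0^{vF}$ one has $F_0^{r_0\delta}=F^\delta=w_0^{-1}$ for a nontrivial element $w_0\in W$ of order $d_0/\delta$ with $d_0=\lcm(\delta,d)$; so the group is a \emph{central product} over $\langle w_0\rangle$, not a direct product. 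Consequently an extension $\hat\eta$ is constrained by $\hat\eta(\bar F_0)^{r_0\delta}=\eta(w_0)/\eta(1)$, and the rationality of the extension over $\QQ_\ell$ is exactly the assertion that the polynomial $X^{r_0\delta}-\zeta_{d_0/\delta}$ has a root in $\QQ_\ell$. This is the content of Lemma~\ref{number theory}(a), and it is the genuine heart of the field-automorphism case; your ``$\eta\otimes\mathbf{1}$'' shortcut bypasses it and would produce an extension that need not exist as stated, because $\eta\otimes\mathbf{1}$ is not well-defined on a central product unless $\eta$ is trivial on $\langle w_0\rangle$. You sensed the subtlety (``one must be slightly careful'') but the resolution you chose papers over it rather than resolving it.

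A secondary point: you cite Corollary~\ref{cor:relweylfixed} only to note $\galh_{[d]}$-invariance and then re-derive $\QQ_\ell$-rationality from the wreath-product description; the paper uses the corollary directly, implicitly via $d\mid(\ell-1)$ so that $\QQ(\zeta_{\lcm(2,d)})\subseteq\QQ_\ell$. Both are fine, but the rationality claim you want is over $\QQ_\ell$, not $\QQ$, and that distinction is load-bearing (the ambient values involve $\zeta_{d''}$ with $d''\nmid 2$).
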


\begin{proof}
	Observe that every irreducible character of $W_G(\Levi,\lambda)$ has its values in $\mathbb{Q}_\ell$ by Corollary \ref{cor:relweylfixed}.  

    We first consider the extension of characters of $W_G(\Levi,\la)$ to their inertia group in $W_{G_0}(\Levi,\la)$, where $G_0:=G \Gamma$. 
    
The case where $G=\tD_4(q)$ can be checked with \cite[Table 1]{Ma07}. For this we note that if $\Gamma_0 \leq \Gamma$ and $|\Gamma_0| \neq 3$ it is easily verified with MAGMA that all characters in $W_{G \Gamma_0}(\Levi,\la)$ have character values in $\mathbb{Q}_\ell$. If $|\Gamma_0|=3$ this is not necessarily the case when $d \in \{1,4\}$. However, every character in $W_G(\Levi,\la)$ has a character field contained in $\mathbb{Q}(\zeta_d)$ where $\zeta_d$ is a primitive $d$th root of unity. In particular, every character is $3$-rational and has according to \cite[Corollary 6.6]{Navarro_book} a unique $3$-rational extension to its inertia group in $W_{G \Gamma_0}(\Levi,\la)$, which moreover has the same field of values.

In all the other cases, $W_{{G}}(\Levi,\lambda)$ is always isomorphic to a direct factor of $W_{G_0}(\Levi,\lambda)_\eta$, unless possibly when $G$ is of type $\tD_{2n}(q)$ and $\eta$ is $\Gamma$-stable, see the proof of \cite[Thm.~4.6]{Ma07}.
    In the first case, it is clear that we can take the trivial extension of $\eta$ to $W_{G_0}(\Levi,\lambda)_\eta$. In the latter case, $\Levi$ is a $d$-split Levi subgroup of type $\tD_r(\pm q) (q^{d'}+1)^{a}$ of $\G$ and $W_G(\Levi,\lambda)\cong G(2d',2,a)$ and $W_{G_0}(\Levi,\lambda)\cong G(2d',1,a)$, see the proof of \cite[Thm.~ 4.6]{Ma07}.
    However, the irreducible characters of both groups have character values in $\mathbb{Q}_\ell$. Hence, every character extends to a character in $W_{G_0}(\Levi,\la)$ with values in $\mathbb{Q}_\ell$.
   
We now consider the field automorphisms. Let $\bT$ be a maximally $F$-split torus of the $d$-split Levi subgroup $\Levi$ of $\bG$. Let $\bT_0$ be a maximally split torus contained in an $F$-stable Borel subgroup of $\bG$ with Weyl group $W$ and associated set $\Delta$ of simple roots. Let $wF$ be the $F$-type of the maximal torus $\bT$ relative to $\bT_0$. In particular, there is $g \in \bG$ with $v:=\Lang(g)\in \Norm_G(\bT_0)$ and image $w$ in $W$ satisfying ${}^g \bT_0=\bT$. As $F_0$ acts trivially on $W$, we can assume that $F_0(v)=v.$ It follows that conjugation with $g$ induces an isomorphism $\bG^{F} \to \bG^{vF}$. In particular, we can consider the isomorphism $F_0: \bG^{vF} \to \bG^{vF}$ as an element of $(\bG^{vF} \langle F_0 \rangle)/ \langle vF \rangle$. With this notation, we have an isomorphism $\N_{G \langle F_0, \Gamma \rangle}(\Levi)/\Levi^F\cong \N_{(\bG \Gamma)^{vF} \langle F_0 \rangle}(\Levi_0)/\Levi_0^{vF}$ where $\Levi_0={}^g \Levi$. Denote by $W'$ the image of $W_G(\Levi,\la)$ under this isomorphism.

 As $F_0$ still centralizes $\N_{(\bG \Gamma)^{vF}}(\Levi_0)/ \Levi_0^{vF}$ and every character of $W_G(\Levi,\la)$ extends to its inertia group in $W_{G_0}(\Levi,\la)$, it follows that every character $\eta$ of $W_G(\Levi,\la)$ extends to $W_{\hat{G}}(\Levi,\la)_{\eta}$.

Let $d_0:=\lcm(\delta,d)$ and $r_0:=r/\delta$ so that $q=p^{r_0}$.  
As $\bT$ is of $F$-type $wF$ it follows that $\bT$ is of $F^{d_0}$-type $(wF)^{d_0}$. On the other hand, as $\Levi$ is a $d$-split Levi subgroup of $(\bG,F)$ it follows that $\Levi$ is a $1$-split Levi subgroup of $(\bG,F^{d_0})$. In particular, $\bT$ is a maximally split torus of the $1$-split Levi subgroup $\Levi$ of $(\bG,F^{d_0})$ and as such it is a maximally split torus of $(\bG,F^{d_0})$. 
As $F^{d_0}$ acts trivially on $W$ it thus follows that $(wF)^{d_0}=F^{d_0}$. Let $w_0\in W$ such that $w_0 F^{\delta}=(wF)^{\delta}$. 
Then $F_0^{r_0 \delta}=F^{\delta}=w_0^{-1}$ in $\N_{\bG^{vF} \langle F_0 \rangle}(\Levi_0)/\Levi_0^{vF}$. As $F^{\delta}$ acts trivially on $W$ and $(wF)^{d_0}=F^{d_0}$ we have $w_0^{d_0/\delta}=1$.
Hence, $F_0^{r_0 d_0}=1$ in $\N_{\bG^{vF} \langle F_0 \rangle}(\Levi_0)/\Levi_0^{vF}$. In particular, $F_0$ is an automorphism of order divisible by $r_0 d_0$ on $\N_{\bG^{vF} \langle F_0 \rangle}(\Levi_0)/\Levi_0^{vF}$. Now as $F_0$ acts trivially on $W$ it follows that $F_0$ acts trivially on $\N_{\bG^{vF} \langle F_0 \rangle}(\Levi_0)/\Levi_0^{vF}$. It follows that $\N_{\bG^{vF} \langle F_0 \rangle}(\Levi_0)/\Levi_0^{vF}$ is the central product of $\N_{\bG^{vF}}(\Levi_0)/\Levi_0^{vF}$ and $\langle F_0 \rangle$ over $\langle w_0 \rangle$.

If $\eta \in \Irr(W')$, then we have $\eta(w_0)=\eta(1) \zeta$ for some $d_0/\delta$th root of unity $\zeta$. Hence, every extension $\hat{\eta} \in \Irr(W' \langle F_0 \rangle \mid \eta)$ satisfies $\hat{\eta}(F_0)=\eta(1) \zeta_0$ where $\zeta_0^{r_0 \delta}=\zeta$. It therefore suffices to show that the polynomial $X^{r_0\delta}-\zeta_{d_0/\delta}$ has a zero in $\mathbb{Q}_\ell$, where $\zeta_{d_0/\delta}$ is a primitive $d_0/\delta$th root of unity. Assume first that $\delta=1$. In this case, by Lemma \ref{number theory} the polynomial $X^{r_0}-\zeta_d$ has a zero in $\mathbb{Q}_\ell$.  

Now if $\delta=2$ and $2 \mid d$ then we have $d_0/\delta=d/2$ and $X^{2r_0}-\zeta_{d/2}$ has a root in $\mathbb{Q}_\ell$ as $X^{r_0}-\zeta_{d}$ has a root in $\mathbb{Q}_\ell$ by what we have just proved. If $\delta=2$ and $d$ is odd it follows that $X^{2r_0}-\zeta_{d}$ has a root in $\mathbb{Q}_\ell$ as $X^{r_0}-\zeta_d$ has a root and $2 \nmid d$. The identical argument works in the case where $\delta=3$. 
\end{proof}

By \cite[Thm.~2.4]{MaExt} every unipotent character of $G$ extends to its inertia group in $\hat{G}$. On the other hand, by Lemma \ref{weyl value} every character of $W_G(\Levi,\la)$ extends to its inertia group in $W_{\hat{G}}(\Levi,\la)$. Hence, the result in \cite[Thm.~4.6]{Ma07} shows that there exists a bjection \[\Irr(W_{\hat{G}}(\Levi,\lambda)) \to \mathcal{E}(\hat{G}\mid(\Levi,\lambda)).\]
We want to show that this bijection lifts to an $\galh_\ell$-equivariant bijection \[\Irr(\mathrm{N}_{\hat{G}}(\Levi) \mid \lambda) \to \mathcal{E}(\hat{G} \mid (\Levi,\lambda) ).\] 

For this let us temporarily assume that $\G$ is of classical type.
By Lemma \ref{extensions local} and using that $\lambda$ is $\galh_\ell$-stable by Corollary \ref{cor:HCgalequiv}, there exists an $\galh_\ell$-invariant extension $\Lambda(\lambda)$ of $\lambda \in \Irr(L)$ to $\N_G(\Levi)_\lambda$. Recall that by $d$-Harish-Chandra theory and Clifford theory, we have a bijection 	$$\Omega:\Irr(\mathrm{N}_{G}(\Levi) \mid \lambda) \to \UCh(G\mid (\Levi,\lambda) ), \Ind_{\N_{G}(\Levi)_\lambda}^{\N_G(\Levi)}(\Lambda(\lambda) \eta) \mapsto R_\Levi^\bG(\lambda)_\eta,$$ 
where here $R_\Levi^\bG(\lambda)_\eta$ is the character in $\cE(G, (\Levi, \lambda))$ corresponding to $\eta$ under the bijection $\mathrm{I}_{(\Levi, \lambda)}^{(\bG)}$ discussed in Section \ref{sec:Ibijection}.
By Corollary \ref{cor:relweylfixed}, every character of $W_G(\Levi, \lambda)$ is $\galh_\ell$-invariant and the same is true for all unipotent characters of $G$ as $\G$ is of classical type. In particular, the bijection $\Omega$ is $\galh_\ell$-equivariant, as $\galh_\ell$ acts trivially on both sides. We generalize this statement to the characters of $\hat{G}$.

\begin{theorem}\label{thm:extunipclassical}
	Let $G$ be of classical type.
	For every unipotent $d$-cuspidal pair $(\Levi,\lambda)$, there exists a $\Irr(\hat{G}/G) \rtimes \galh_\ell$-equivariant bijection
	$$\Irr(\mathrm{N}_{\hat{G}}(\Levi) \mid \lambda) \to \mathcal{E}(\hat{G} \mid (\Levi,\lambda) ).$$ 
\end{theorem}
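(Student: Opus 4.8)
The plan is to build the required bijection by gluing together extension data on the cuspidal datum and on the relative Weyl group, exactly in the spirit of \cite[Thm.~4.6]{Ma07} but keeping track of Galois action throughout. First I would fix a unipotent $d$-cuspidal pair $(\Levi,\lambda)$ and recall that by Corollary \ref{cor:HCgalequiv} the character $\lambda$ is $\galh_\ell$-invariant, so by Lemma \ref{extensions local} (using $\bG$ classical) there is an $\galh_\ell$-invariant extension $\Lambda(\lambda)$ of $\lambda$ to $\N_G(\Levi)_\lambda$, and indeed an extension $\hat\lambda$ of $\lambda$ to $\N_{\hat G}(\Levi)_\lambda$; here I would invoke Proposition \ref{classical} to control the values $[\hat\lambda,\sigma]$, which already encodes all the irrationality that can occur. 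Then, via $d$-Harish--Chandra theory and Clifford theory over $\Levi^F\lhd \N_G(\Levi)$ (and its analogue over $\hat G$), the map $\eta\mapsto \Ind_{\N_{\hat G}(\Levi)_{\lambda,\eta}}^{\N_{\hat G}(\Levi)}(\widehat{\Lambda(\lambda)}\cdot\hat\eta)$ will give a bijection $\Irr(\N_{\hat G}(\Levi)\mid\lambda)\to\mathcal E(\hat G\mid(\Levi,\lambda))$ once one checks that the extensions $\hat\eta$ of $\eta\in\Irr(W_G(\Levi,\lambda))$ to $W_{\hat G}(\Levi,\lambda)_\eta$ can be chosen compatibly; this is exactly what Lemma \ref{weyl value} provides, so the underlying bijection exists.

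The substantive point is $\galh_\ell$-equivariance. I would argue that on the ``global'' side $\mathcal E(\hat G\mid(\Levi,\lambda))$, a character $\hat\chi$ restricting to $\chi\in\UCh(G\mid(\Levi,\lambda))$ satisfies, for $\sigma\in\galh_\ell$, that $[\hat\chi,\sigma]$ is determined by the $F_0$- and $\gamma$-components as in Proposition \ref{classical}, and that $\Res_{\hat N_\chi}^{\hat G_\chi}[\hat\chi,\sigma]=\Res_{\hat N_\chi}^{\hat N_\lambda}[\hat\lambda,\sigma]$ for the matching extensions. Since $\chi$ itself is $\galh_\ell$-invariant ($\bG$ classical) and $\lambda$ is $\galh_\ell$-invariant, the Galois action on both $\Irr(\N_{\hat G}(\Levi)\mid\lambda)$ and $\mathcal E(\hat G\mid(\Levi,\lambda))$ is induced purely by multiplication with linear characters of $\hat G/G$, respectively $\N_{\hat G}(\Levi)/\N_G(\Levi)$, and these two actions correspond under restriction by the displayed formula in Proposition \ref{classical}. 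Concretely: for $\sigma\in\galh_\ell$, write $\psi=\Ind(\widehat{\Lambda(\lambda)}\hat\eta)\in\Irr(\N_{\hat G}(\Levi)\mid\lambda)$; then $\psi^\sigma=\Ind(\widehat{\Lambda(\lambda)}^\sigma\hat\eta^\sigma)=\Ind(\widehat{\Lambda(\lambda)}\cdot[\widehat{\Lambda(\lambda)},\sigma]\cdot\hat\eta)$ because $\Lambda(\lambda)$ is $\galh_\ell$-invariant on $\N_G(\Levi)_\lambda$ and $\eta$ is $\galh_{[d]}$-invariant by Corollary \ref{cor:relweylfixed}, so the only twist is the linear character $[\widehat{\Lambda(\lambda)},\sigma]\in\Irr(\N_{\hat G}(\Levi)_\lambda/\N_G(\Levi)_\lambda)$. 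One then checks this twist is compatible with the twist $[\hat\chi,\sigma]$ on $\mathcal E(\hat G\mid(\Levi,\lambda))$ under the bijection, which is the content of Proposition \ref{classical}.

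For the $\Irr(\hat G/G)$-equivariance I would observe that multiplication by $\nu\in\Irr(\hat G/G)$ sends $\mathcal E(\hat G\mid(\Levi,\lambda))$ to itself (it fixes the underlying $G$-series), and on the local side corresponds to multiplication by $\Res^{\hat G}_{\N_{\hat G}(\Levi)}\nu$ on $\Irr(\N_{\hat G}(\Levi)\mid\lambda)$; since $\Omega$ (the $G$-level bijection) is already $\Irr(\hat G_{|G}/G)$-equivariant by construction from Malle's parametrization and Clifford correspondence, the $\hat G$-level bijection inherits this. The main obstacle I anticipate is organizing the extension bookkeeping over the \emph{two} generators $F_0$ and $\gamma$ of $E(\bG^F)$ simultaneously and matching them on the global and local sides: one must produce the extension $\hat\chi$ on $\hat G_\chi$ by separately extending over $G\langle F_0\rangle$ (using Theorem \ref{valuesF0}) and over $G\langle\gamma\rangle_\chi$ (using Theorem \ref{values graph}), then glue via Lemma \ref{Gallagher}, and verify that the local extension $\hat\lambda$ can be built by the identical recipe over $\N_G(\Levi)_\lambda\langle F_0'\rangle$ and $\N_G(\Levi)_\lambda\langle\gamma'\rangle$ so that the restriction identity holds; this is precisely where Proposition \ref{classical} does the heavy lifting, so most of the work is really already done and the present theorem should follow by combining it with Lemmas \ref{extensions local} and \ref{weyl value} and the $\galh_\ell$-triviality statements of Corollaries \ref{cor:relweylfixed} and \ref{cor:HCgalequiv}. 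The remaining subtlety is the type $\tD_{2n}$ case where $\eta$ is $\Gamma$-stable and $W_G(\Levi,\lambda)\cong G(2d',2,a)$, which must be handled by the explicit character-field computation already recorded in Lemma \ref{weyl value}.
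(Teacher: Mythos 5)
Your proposal is correct and follows essentially the same route as the paper's proof: choose an extension $\hat\lambda$ of $\lambda$ to $\hat N_\lambda$, use Proposition~\ref{classical} to produce, for each $\chi$ in the $d$-series, an extension $\hat\chi$ whose Galois twists $[\hat\chi,\sigma]$ match $[\hat\lambda,\sigma]$ on restriction, take $\QQ_\ell$-rational extensions $\hat\eta$ of the Weyl-group characters via Lemma~\ref{weyl value}, define $\hat\Omega$ by $\Ind(\hat\lambda\hat\eta)\mapsto\Ind_{\hat G_\chi}^{\hat G}(\hat\chi)$, and extend $\Irr(\hat N/N)$-linearly. Your additional bookkeeping about gluing the $F_0$- and $\gamma$-extensions and the $\tD_{2n}$ case is already packaged inside the cited Proposition~\ref{classical} and Lemma~\ref{weyl value}, so the argument is the same in substance.
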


\begin{proof}
As in Proposition \ref{classical}, we consider an extension $\hat{\lambda} \in \Irr(\hat{N}_\lambda \mid \la)$. As argued in Proposition \ref{classical} for every $\chi=R_\Levi^{\bG}(\lambda)_\eta \in \mathcal{E}(G,(\Levi,\lambda))$ there exists an extension $\hat{\chi} \in \Irr(\hat{G}_\chi \mid \chi)$ such that $\mathbb{Q}_\ell(\hat{\chi})=\mathbb{Q}_\ell(\hat{\lambda})$. Note that by Lemma \ref{weyl value}, for each $\eta \in \Irr(W_G(\Levi,\lambda))$ there exists a character $\hat{\eta} \in \Irr(W_{\hat{G}}(\Levi,\lambda)_\eta)$ with character values in $\mathbb{Q}_\ell$. We define $$\hat{\Omega}:\Irr(\mathrm{N}_{\hat{G}}(\Levi) \mid \lambda) \to \mathcal{E}(\hat{G} \mid (\Levi,\lambda) )$$
by setting $\hat{\Omega}(\Ind_{\hat{N}_\lambda}^{\hat{N}}(\hat{\lambda} \hat{\eta})):=\Ind_{\hat{G}_\chi}^{\hat{G}}(\hat{\chi})$
and extending it $\Irr(\hat{N}/N)$-linearly. 
This map is $\galh_\ell$-equivariant as by Proposition \ref{classical} the action of $\sigma \in \galh_\ell$ on $\hat{\chi}$ and $\hat{\lambda}$ is the same. 
\end{proof}

\subsection{The bijection for groups of exceptional Lie type}\label{sec:exceptionalunipotentext}

In this section, we establish the corresponding result of Theorem \ref{thm:extunipclassical} for $G$ of exceptional type, in the situation of the McKay--Navarro conjecture.

\begin{theorem}\label{exceptional}
	Let $G$ be of exceptional type.
	For every unipotent $d$-cuspidal pair $(\Levi,\lambda)$ with $\Levi$ a minimal $d$-split Levi subgroup of $\bG$, there exists a $\Irr(\hat{G}/G) \rtimes \mathcal{H}_\ell$-equivariant bijection
	$$\hat{\Omega}: \Irr(\hat{N} \mid \Irr_{\ell'}(N \mid \la)) \to \mathcal{E}(\hat{G} \mid (\Levi,\lambda) ) \cap \Irr(\hat{G} \mid \Irr_{\ell'}(G)).$$ 
\end{theorem}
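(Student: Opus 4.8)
The plan is to mirror the proof of Theorem \ref{thm:extunipclassical} but to replace the uniform rationality inputs (which fail for exceptional groups) by a case-by-case analysis restricted to the $\ell'$-characters, using the data recorded in Table \ref{tab:irrationalcuspidal}, Proposition \ref{cuspidal value}, Theorem \ref{values graph}, and Lemma \ref{weyl value}. First I would recall the combinatorial skeleton: by \cite[Thm.~4.6]{Ma07}, together with the extendibility of unipotent characters (\cite[Thm.~2.4]{MaExt}) and of the relative Weyl group characters (Lemma \ref{weyl value}), there is a bijection $\Irr(W_{\hat G}(\Levi,\la))\to\cE(\hat G\mid(\Levi,\la))$, and analogously for the $\ell'$-parts using the degree polynomial formula of \cite[Thm.~4.2]{Ma07} to see that $\Irr_{[d]'}(W_{\hat G}(\Levi,\la))$ corresponds to the characters in $\cE(\hat G\mid(\Levi,\la))$ lying over $\Irr_{\ell'}(G)$ (here the minimality of $\Levi$ is used, so $\bL=\Cent_\bG(\bS)$). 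Combined with Clifford theory relative to $L_0=\Levi_0^F$ and the extension $\Lambda(\la)$ of $\la$ to $\N_G(\Levi)_\la$, this produces a bijection $\Omega\colon \Irr_{\ell'}(N\mid\la)\to\UCh_{[\Phi_d]'}(G\mid(\Levi,\la))$, and I want to lift it $\Irr(\hat N/N)$-linearly and $\galh_\ell$-equivariantly to $\hat G$.

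Next I would fix an extension $\hat\la\in\Irr(\hat N_\la\mid\la)$ and, exactly as in Proposition \ref{classical}, decompose $E(\bG^F)=\langle\gamma,F_0\rangle$ and treat the graph part and the field part separately via Lemma \ref{Gallagher}. For the field automorphism $F_0$, Theorem \ref{valuesF0} (equivalently Proposition \ref{cuspidal value}) gives an extension $\hat\chi_1$ of $\chi$ to $G\langle F_0\rangle$ with $\QQ_\ell(\hat\chi_1)=\QQ_\ell(\zeta_0)$ where $\zeta_0^r=\omega_\chi$, and the same applied inside $\Levi_0$ gives $\hat\la_1$ with $\zeta_0'^r=\omega_\la$; since ordinary Harish--Chandra theory preserves Frobenius eigenvalues (\cite[Prop.~4.2.23]{GeckMalle}) and $\omega_\chi=\omega_\la$ for $d$-Harish-Chandra series here by the same type of argument as Corollary \ref{FrobeniusEV} (or by direct inspection of the short list of relevant $(\bG,d)$ in exceptional type), the fields agree and the Galois action on $\hat\chi_1$ and $\hat\la_1$ via $[\,\cdot\,,\sigma](F_0)=\zeta_0^\sigma\zeta_0^{-1}$ is matched. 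For the graph automorphism $\gamma$, in exceptional type the only non-rational extension issue is the $\tE_6(\eps q)$ case of Theorem \ref{values graph}(b), where $\QQ(\chi)=\QQ(\sqrt{\eps q})$ and $\chi$ lies in the (2-)Harish-Chandra series of an $\tA_5(\eps q)(q+\eps)$-Levi; one checks that the cuspidal $\lambda$ driving the $d$-series carries the same $\sqrt{\eps q}$ irrationality (again via Table \ref{tab:irrationalcuspidal} and Lemma \ref{2msqrt}, noting $\sqrt{-q}$ is $\galh_\ell$-fixed for the relevant even $d$), so $\QQ_\ell(\hat\chi_2)=\QQ_\ell(\hat\la_2)$. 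Assembling, one gets a single extension $\hat\chi$ of $\chi$ to $\hat G_\chi$ with $\Res_{\hat N_\chi}([\hat\chi,\sigma])=\Res_{\hat N_\chi}([\hat\la,\sigma])$ for all $\sigma$, the exceptional-type analogue of Proposition \ref{classical}.

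Finally I would define $\hat\Omega(\Ind_{\hat N_\la}^{\hat N}(\hat\la\,\hat\eta)):=\Ind_{\hat G_\chi}^{\hat G}(\hat\chi)$ where $\chi=R_\Levi^\bG(\la)_\eta$ and $\hat\eta$ is the rational extension of $\eta$ from Lemma \ref{weyl value}, and extend $\Irr(\hat N/N)$-linearly; bijectivity follows from \cite[Thm.~4.6]{Ma07} and Clifford theory (restricted to $\ell'$-characters using the degree-polynomial criterion), $\Irr(\hat G/G)$-equivariance is built in, and $\galh_\ell$-equivariance follows because $\galh_\ell$ acts trivially on $\la$, on the $\eta$ (Corollary \ref{cor:relweylfixed}, since $d\mid\ell-1$), on $\hat\eta$ (rationality), and acts the same way on $\hat\chi$ and $\hat\la$ by the matching of $[\hat\chi,\sigma]$ and $[\hat\la,\sigma]$ just established. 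The main obstacle will be the graph-automorphism bookkeeping in exceptional type: one must verify, for each exceptional $\bG$ and each odd $d$ with $\Levi$ a minimal $d$-split Levi such that $\cE(\hat G\mid(\Levi,\la))$ meets $\Irr_{\ell'}(G)$, that whatever irrationality appears in the extension of $\chi$ to $\hat G_\chi$ already appears in the extension of $\la$ to $\hat N_\la$ — this is where Table \ref{tab:irrationalcuspidal}, the list in \cite[Table 1]{BMM} of relative Weyl groups, and the $\tE_6$/$\tE_7$/$\tE_8$ entries of Theorem \ref{values graph} and Proposition \ref{cuspidal value} all have to be cross-checked, possibly with small MAGMA computations as in Lemma \ref{lem:exceptconductor}. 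A secondary subtlety is confirming that the $\ell'$-condition on $\Irr(G)$ really does correspond under \cite[Thm.~4.6]{Ma07} to the $[d]'$-condition on $\Irr(W_{\hat G}(\Levi,\la))$ once one passes to $\hat G$, i.e.\ that $\EGF$ permutes the relevant series compatibly with degrees.
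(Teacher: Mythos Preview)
Your overall architecture is right — define $\hat\Omega$ via extensions of $\lambda$ and $\chi$, then extend linearly and check $\galh_\ell$-equivariance — but there is a genuine gap in your handling of the Frobenius-eigenvalue matching, and the paper takes a cleaner route that avoids it entirely.

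\medskip

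\textbf{The gap.} You assert that $\omega_\chi=\omega_\lambda$ for $\chi$ in the $d$-Harish--Chandra series of $(\Levi,\lambda)$, ``by the same type of argument as Corollary~\ref{FrobeniusEV} (or by direct inspection)''. Neither works here. Corollary~\ref{FrobeniusEV} relies on Lemma~\ref{HCodd}, which is a structural statement about $d$-split Levi subgroups in classical types $\tB,\tC,\tD$ (they are built from a smaller group of the same type plus $\GL_1$-factors) and has no analogue for exceptional groups. And ``direct inspection'' is not carried out; in fact, for exceptional $\bG$ and generic $d$, there is no reason for Frobenius eigenvalues to be constant along a $d$-series. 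Without $\omega_\chi=\omega_\lambda$, your field-matching $\QQ_\ell(\hat\chi_1)=\QQ_\ell(\hat\lambda_1)$ collapses.

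\medskip

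\textbf{What the paper does instead.} Rather than matching $[\hat\chi,\sigma]$ to $[\hat\lambda,\sigma]$, the paper shows that \emph{both are trivial}: every relevant extension can be chosen $\galh_\ell$-invariant over $\QQ_\ell$. On the local side, one checks from the explicit structure of $\Levi_0$ (a torus, or $\PGL_2^3$ when $\bG=\tE_7$ and $d=4$, or built from $\tA_1,\tA_2$ components when $W_G(\Levi,\lambda)$ is cyclic) that $\lambda$ extends $\galh_\ell$-invariantly to $\hat N_\lambda$; combined with Lemma~\ref{weyl value} this makes $\galh_\ell$ act trivially on the whole local set. On the global side, one inspects \cite[Sec.~13.9]{Carter} to see that for $\chi\in\Irr_{\ell'}(G)$ unipotent, either $\omega_\chi$ is an $m$th root of unity with $m\mid d$, or $\omega_\chi=\sqrt{-q}$ with $d\in\{2,6,10,14,18,30\}$ (forcing $d'$ odd); in either case Lemma~\ref{number theory} gives $X^{r_0}-\omega_\chi$ a root in $\QQ_\ell$, so Theorem~\ref{valuesF0} yields a $\galh_\ell$-invariant extension to $G\langle F_0\rangle$. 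For the graph automorphism, the exceptional $\tE_6(\varepsilon q)$ characters of Theorem~\ref{values graph}(b) only lie in $\Irr_{\ell'}(G)$ when $d'$ is \emph{odd} (not even, as you wrote), whence $\sqrt{\varepsilon q}\in\QQ_\ell$ and the extension is again $\galh_\ell$-invariant. So both sides of the bijection carry the trivial $\galh_\ell$-action, and equivariance is automatic.

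\medskip

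In short: drop the attempt to prove $\omega_\chi=\omega_\lambda$; instead, prove directly that both $\hat\chi$ and $\hat\lambda\hat\eta$ can be chosen with values in $\QQ_\ell$, using the divisibility $m\mid d$ and the parity constraints on $d'$ forced by the $\ell'$-degree hypothesis. Your parenthetical about ``$\sqrt{-q}$ fixed for the relevant even $d$'' should read odd $d'$ — that is precisely the point.
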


\begin{proof}
We can assume throughout the proof that $\Levi$ is a proper subgroup of $\bG$ since the claim is trivial otherwise.

We claim that there exists a bijection
$$\Omega:\Irr_{\ell'}(\mathrm{N}_{G}(\Levi) \mid \lambda) \to \mathcal{E}_{\ell'}(G, (\Levi,\lambda) ), \Ind_{\N_{G}(\Levi)_\lambda}^{\N_G(\Levi)}(\Lambda(\lambda) \eta) \mapsto R_\Levi^\bG(\lambda)_\eta.$$ 
For this, it suffices by Proposition \ref{prop:HCgalequiv} to show that the character $\la$ has a $\mathcal{H}_\ell$-invariant extension to $N_\la$. Note that whenever the relative Weyl group is non-cyclic $\Levi$ is a torus except when $\G=\tE_7(q)$ and $d=4$ (see \cite[Table 3.3]{GeckMalle}).
In this case $\Levi_0 \cong \PGL_2^3$ and thus every unipotent character of $L_0$ has a rational extension to its inertia group in $L_0 E(L_0)$. These considerations even show that $\la$ has a $\mathcal{H}_\ell$-invariant extension to $\hat{N}_\la$. A similar argument applies in the case where the relative Weyl group is cyclic. According to \cite[Table 3]{S09} the Levi subgroup $\Levi_0$ has in these cases only components of type $\tA_1$ and $\tA_2$. We also note that $\Levi_0$ has only a rational components of type $\tA_2(\delta q)$, $\delta \in \{ \pm 1\}$, if the order of $\delta q$ modulo $\ell$ is odd. Hence, again by Lemma \ref{lem:numtheory} every unipotent character of $L_0$ has an extension with values in $\mathbb{Q}_\ell$ to its inertia group in $L_0 E(L_0)$. 

In particular, Lemma \ref{weyl value} ensures that $\mathcal{H}_\ell$ acts trivially on $\Irr(\hat{N} \mid \Irr_{\ell'}(N \mid \la))$.

Arguing as in the proof of Theorem \ref{thm:extunipclassical} we are left to show that every character $ \chi \in \Irr_{\ell'}(G)$ has a $\mathcal{H}_\ell$-invariant character of $\hat{G}$ covering it. For this we use the information about unipotent characters in \cite[Section 13.9]{Carter}. It can now be checked that if $\omega_\chi$ is a $m$th root of unity then $m \mid d$. If $\omega_\chi=\sqrt{-q}$, then $d \in \{2,6,10,14,18,30\}$ so that in all of these cases $d'$, the order of $-q$ modulo $\ell$, is odd.
Applying Lemma \ref{number theory} to all these cases shows that $X^{r_0}- \omega_\chi$, where $q=p^{r_0}$, always has a zero in $\mathbb{Q}_\ell$. Hence, Lemma \ref{valuesF0} shows that every $\chi$ has a $\mathcal{H}_\ell$-invariant extension to $G \langle F_0 \rangle$. For graph automorphisms we apply Lemma \ref{values graph} and note that the exceptional characters in $\tE_6(\varepsilon q)$ whose extension have character field $\mathbb{Q}( \sqrt{\varepsilon q})$ only appear when $d$ is odd (resp. when $d'$ is odd). This shows by Lemma \ref{Gallagher} that every character $\chi$ has a $\mathcal{H}_\ell$-invariant character of $\hat{G}$ covering it. This completes the proof.
\end{proof}

Finally, we can now prove Theorem \ref{thm:unipcriterion} from the introduction, which says that the inductive McKay--Navarro conditions hold for $G$ and $\ell$ at unipotent characters.
 
\begin{proof}[Proof of Theorem \ref{thm:unipcriterion}]
As before let $\bG \hookrightarrow \tilde{\bG}$ be a regular embedding and observe that $\tilde{\bG} /\Z(\tilde{\bG})\cong\bG_{\mathrm{ad}}$, where $\bG_{\mathrm{ad}}$ is the adjoint group of the same type as $\bG$. Note that every unipotent character $\chi$ of $\bG^F$ extends to a unipotent character of $\tilde{\bG}^F$ with $\Z(\tilde{\bG})^F$ in its kernel. Its deflation can therefore be seen as a character of $\bG_{\mathrm{ad}}^F$. On the other hand, every unipotent character of $\bG_{\mathrm{ad}}^F$ has an extension $\hat{\chi}$ to its inertia group in $\bG_{\mathrm{ad}}^F \rtimes E(\bG_{\mathrm{ad}}^F)$. 
In particular, its inflation can be seen as a character of $\tilde{\bG}^F \rtimes E(\bG^F)_\chi$. On the other hand, the character $\Omega(\chi)$ lies in $\Irr(M \mid \la)$ where $(\Levi,\la)$ with $\Levi=\Cent_{\bG}(\mathbf{S})$ is the $d$-cuspidal pair in whose $d$-Harish-Chandra series $\chi$ lies. As $\la$ is again a unipotent character it follows again that $\la$ extends to a unipotent character of $(\Levi \Z(\tilde{\bG}))^F$ with $\Z(\tilde{\bG})^F$ in its kernel. Its deflation $\la_0$ is therefore a $d$-cuspidal character of $\Levi_{0}:=\Levi/\Z(\Levi)$. 
In particular, $\Omega(\chi)$ can be seen as a character in $\Irr_{\ell'}(\N_{\bG_{\mathrm{ad}}^F}(\Levi_0) \mid \la_0)$. Let $\hat\Omega$ be the map from Theorem \ref{thm:extunipclassical} or \ref{exceptional}. Then  we have $\hat{\psi}\in \Irr(\N_{\bG_{\mathrm{ad}}^F \rtimes E(\bG_{\mathrm{ad}}^F)}(\Levi_0)_\chi  \mid \Omega(\chi))$, where $\psi$ is the Clifford correspondent of $\hat{\Omega}(\Ind_{\bG_{\mathrm{ad}}^F \rtimes E(\bG_{\mathrm{ad}}^F)_\chi}^{\bG_{\mathrm{ad}}^F \rtimes E(\bG_{\mathrm{ad}}^F)}(\hat{\chi}))$. The representations associated to $\hat{\chi}$ and $\hat{\psi}$ now define projective representations (with trivial cocycle) associated to $\chi$ and $\psi$ respectively. Now as the map $\hat{\Omega}$ is equivariant with respect to linear characters and the action of $\mathcal{H}_\ell$ it follows that these representations satisfy all the properties in \cite[Def.~3.1]{NSV20}, or in other words, one has $((\widecheck G E)_{\chi^{\calH}}, G, \chi)_\cH \geq_c ((\widecheck{M}\wh M)_{\psi^\cH}, M, \psi)_{\cH}$.
\end{proof}

	\Addresses
	\printindex
	\appendix

\end{document}